\documentclass[12pt]{amsart}

\setlength{\unitlength}{3pt}
\headheight=8pt     \topmargin=0pt
\textheight=624pt   \textwidth=432pt
\oddsidemargin=18pt \evensidemargin=18pt
\usepackage{amssymb}
\usepackage[all]{xy}

\newtheorem{theorem}{Theorem}[section]
\newtheorem{lemma}[theorem]{Lemma}
\newtheorem{prop}[theorem]{Proposition}
\newtheorem{cor}[theorem]{Corollary}
\theoremstyle{definition}
\newtheorem{definition}[theorem]{Definition}

\theoremstyle{remark}
\newtheorem{remark}[theorem]{Remark}
\numberwithin{equation}{section}

\newcommand\B{\mathbb{B}}
\newcommand\C{\mathbb{C}}

\newcommand\Z{\mathbb{Z}}

\newcommand\R{\mathbb{R}}

\newcommand\T{\mathbb{T}}
\newcommand\cA{\mathcal{A}}
\newcommand\cB{\mathcal{B}}
\newcommand\cC{\mathcal{C}}
\newcommand\cD{\mathcal{D}}
\newcommand\cH{\mathcal{H}}

\newcommand\cO{\mathcal{O}}

\newcommand\cU{\mathcal{U}}
\newcommand\cW{\mathcal{W}}
\newcommand\cZ{\mathcal{Z}}
\newcommand\fA{\mathfrak{A}}
\newcommand\fS{\mathfrak{S}}
\newcommand\fc{\mathfrak{c}}
\newcommand\Aut{\operatorname{Aut}}
\newcommand\Out{\operatorname{Out}}

\newcommand\End{\operatorname{End}}
\newcommand\id{\mathrm{id}}
\newcommand\Ad{\mathrm{Ad}}
\newcommand\Hom{\operatorname{Hom}}

\newcommand{\Ind}{\operatorname{Ind}}

\newcommand\inpr[2]{\langle{#1,#2}\rangle}

\newcommand{\talpha}{\tilde{\alpha}}
\newcommand{\trho}{\tilde{\rho}}
\newcommand{\hrho}{\hat{\rho}}
\newcommand{\tsigma}{\tilde{\sigma}}
\newcommand{\hbeta}{\hat{\beta}}
\newcommand{\brho}{\overline{\rho}}
\newcommand{\biota}{\overline{\iota}}

\title[The classification of $3^n$ subfactors]
{The classification of $3^n$ subfactors and related fusion categories} 

\author{Masaki Izumi}
\address{Department of Mathematics\\ Graduate School of Science\\
Kyoto University\\ Sakyo-ku, Kyoto 606-8502\\ Japan}
\email{izumi@math.kyoto-u.ac.jp}

\subjclass[2010]{ 
Primary 46L37; Secondary 18D10}
\keywords{ 
subfactors, fusion categories, Cuntz algebras}

\thanks{Supported in part by JSPS KAKENHI Grant Number JP15H03623}

\begin{document} 

\dedicatory{In memory of Uffe Haagerup and John Roberts} 

\begin{abstract} We investigate a (potentially infinite) series of subfactors, called $3^n$ subfactors, including 
$A_4$, $A_7$, and the Haagerup subfactor as the first three members corresponding to $n=1,2,3$. 
Generalizing our previous work for odd $n$, we further develop a Cuntz algebra method to construct $3^n$ subfactors 
and show that the classification of the $3^n$ subfactors and related fusion categories is reduced 
to explicit polynomial equations under a mild assumption, which automatically holds for odd $n$. 
In particular, our method with $n=4$ gives a uniform construction of 4 finite depth subfactors, up to dual, 
without intermediate subfactors of index $3+\sqrt{5}$. 
It also provides a key step for a new construction of the Asaeda-Haagerup subfactor due to Grossman, 
Snyder, and the author. 
\end{abstract}

\maketitle

\tableofcontents
\section{Introduction} 
The theory of subfactors, introduced by Vaughan Jones \cite{J83}, is a rich source of a new kind of symmetries, 
sometimes called quantum symmetries (see \cite{EK98}). 
One of the ways to describe such a symmetry encoded in a subfactor $N\subset M$ of finite index is to 
consider the category of bimodules generated by two basic bimodules ${}_NM_M$ and ${}_MM_N$ via tensor product over $M$ and $N$, where 
${}_NM_M$ and ${}_MM_N$ are $M$ regarded as $N-M$ and $M-N$ bimodules respectively. 
The $M-M$ bimodules and $N-N$ bimodules arising in this way form rigid tensor categories, called the even part 
of the subfactor, while the $M-N$ bimodules and $N-M$ bimodules form bimodule categories over them, 
giving categorical Morita equivalence of them. 
All information about the original subfactor can be stated in terms of these categories. 
For example, the subfactor is of finite depth if and only if there are only finitely many 
isomorphism classes of irreducible bimodules as above, that is, the two tensor categories are fusion categories. 
The two principal graphs of the subfactor are nothing but the induction-reduction graphs with respect to the 
basic bimodules between the $M-M$ bimodules and $M-N$ bimodules for the one principal graph, 
and the $M-N$ bimodules and $N-N$ bimodules for the other. 
Moreover, this process of passing from the subfactor to these categories can be reversed; 
namely, the original subfactor is recovered from the category of $N-N$ bimodules with an algebra object 
${}_NM_N\cong {}_N(M\otimes_MM)_N$, called a $Q$-system \cite{L94}. 
This is the approach we adopt to construct and classify a specific class of subfactors in this paper. 

The categories arising from a subfactor always carry a special analytic structure. 
Namely, they are C$^*$-categories, which were introduced by Ghez-Lima-Roberts \cite{GLR} as categorical counterparts of C$^*$-algebras. 
Therefore to classify a specific class of finite depth subfactors in our approach, the first task is to 
classify a specific class of C$^*$-fusion categories. 
For this purpose, it is not necessarily convenient to realize C$^*$-fusion categories as bimodules 
over von Neumann factors, and we take an alternative (but of course, mathematically equivalent) approach 
heavily influenced by algebraic quantum field theory. 
Since his epoch-making joint work \cite{DHR71} with Sergio Doplicher and Rudolf Haag, John E. Roberts devoted himself  
to studying categorical aspects of algebraic quantum field theory and related mathematical structure 
(\cite{R76}, \cite{R77}, \cite{GLR}, \cite{LR97}, just to name a few). 
In their work, C$^*$-tensor categories naturally appear as categories consisting of endomorphisms 
of relevant operator algebras. 
In fact, it is known that every C$^*$-fusion category is uniquely embedded in the category of endomorphisms 
of the hyperfinite type III$_1$ factor (\cite{P95}, \cite{HY00}, see also \cite[Section 2]{I15} 
for a precise statement). 
Our Cuntz algebra method relies on this fact. 

Now we turn our attention to specific subfactors appearing in an ongoing project of the classification of small 
index subfactors. 
The fusion categories arising from the Jones subfactors of index less than 4 can be described 
by the quantum $SU(2)$ at roots of unity. 
Likewise, general subfactors with index less than or equal to 4 are related to either 
the quantum $SU(2)$ or ordinary groups in a little more, but not too much, complicated way.  
Uffe Haagerup \cite{H93} was the first to systematically explore subfactors with index beyond 4, 
and in the early 90s he came up with countably many candidate principal graph pairs of potential subfactors with index 
between 4 and $3+\sqrt{3}$. 
Moreover, he showed that the first candidate indeed arises from a subfactor, now called the Haagerup subfactor 
(see \cite{AH99} for the proof).  
The Haagerup subfactor is the first subfactor that is not directly related to either an ordinary group or 
a quantum group, and whether its Drinfeld center is related to a quantum group (conformal field theory) or not 
is an interesting open problem. 
At the time of writing, the classification of finite depth subfactors is completed up to index  5+1/4 
(see \cite{JMS14}, \cite{IMPPS15}, \cite{AMP15}, and references therein), and it turns out that three subfactors actually exist 
among Haagerup's list, namely the Haagerup subfactor, Asaeda-Haagerup subfactors constructed in \cite{AH99}, 
and the extended Haagerup subfactor constructed in \cite{BPMS12}. 
The original construction of the Haagerup subfactor in \cite{AH99} used computation of connections,  
a special type of 6j-symbols. 
Later Peters \cite{Pe10} and the author \cite{I01} gave different constructions, based on the Jones Planar algebra 
and the Cuntz algebra respectively. 
This work is a natural continuation of \cite{I01}, which used the fact that one of the principal graphs of 
the Haagerup subfactor has a $\Z_3$-symmetry. 
It is natural to generalize $\Z_3$ to arbitrary finite groups. 

Let us recall our construction in \cite{I01} briefly. 
The principal graphs of the Haagerup subfactor are as in Figure \ref{Haagerup}, where $\iota$ means ${}_MM_N$, 
or in the endomorphism language, the inclusion map $\iota :N \hookrightarrow M$. 
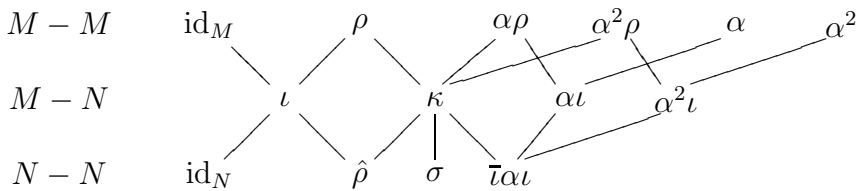
\begin{figure}[h]
${}$
\begin{xy} 
(10,10)*{M-M},(30,10)*{\id_M},(50,10)*{\rho},(70,10)*{\alpha\rho},(84,10)*{\alpha^2\rho},(100,10)*{\alpha},(114,10)*{\alpha^2},
(10,0)*{M-N},(40,0)*{\iota},(60,0)*{\kappa},(78,0)*{\alpha\iota},(92,0)*{\alpha^2\iota},
(10,-10)*{N-N},(30,-10)*{\id_N},(50,-10,)*{\hat{\rho}},(60,-10)*{\sigma},(70,-10)*{\biota\alpha\iota},
{(33,7) \ar @{-} (38,2)},{(42,2) \ar @{-} (48,8)},{(52,8) \ar @{-} (58,2)},{(61,2) \ar @{-} (68,8)},
{(62,2) \ar @{-} (82,8)},{(72,8) \ar @{-} (76,2)},{(86,8) \ar @{-} (90,2)},
{(80,2) \ar @{-} (98,8)},{(94,2) \ar @{-} (112,8)},
{(32,-8) \ar @{-} (38,-2)},{(42,-2) \ar @{-} (48,-8)},{(52,-8) \ar @{-} (58,-2)},{(62,-2) \ar @{-} (68,-8)},
{(60,-2) \ar @{-} (60,-8)},{(71,-8) \ar @{-} (76,-2)},{(72,-8) \ar @{-} (90,-2)},
\end{xy}
\caption{The principal graphs of the Haagerup subfactor}
\label{Haagerup}
\end{figure}
We call the upper principal graph $3^3$ because it has a central vertex $\kappa$ having three legs of length 3 out of it. 
The endomorphisms of $M$ corresponding to the three end points are automorphisms, and they form a group 
$\{\id_M,\alpha,\alpha^2\}$ of order three, and the $M-M$ part has the following fusion rules. 
$$[\alpha^3]=[\id_M],\quad [\alpha][\rho]=[\rho][\alpha^2],$$
$$[\rho^2]=[\id_M]+[\rho]+[\alpha\rho]+[\alpha^2\rho].$$
This fusion category is often referred to as the Haagerup category. 
In \cite{I01}, we explicitly constructed a fusion category with these fusion rules consisting of the endomorphisms 
of the Cuntz algebra $\cO_4$, and with a $Q$-system giving the Haagerup subfactor.  
Moreover, using the explicit formula, we were able to determine the structure of its Drinfeld center. 

In the above construction, we can generalize $\Z_3$ to an arbitrary group. 
We define $3^n$ graph as a graph with a unique central vertex having exactly $n$ legs of length $3$ out of it. 
\begin{figure}[h]
${}$
\begin{xy}
(0,0)*{\bullet},(5,0)*{\bullet},(10,0)*{\bullet},(15,0)*{\bullet},(20,0)*{\bullet},(25,0)*{\bullet},(30,0)*{\bullet},
(15,15)*{\bullet},(15,10)*{\bullet},(15,5)*{\bullet},(15,-5)*{\bullet},(15,-10)*{\bullet},(15,-15)*{\bullet},
{(0,0) \ar @{-} (30,0)},{(15,15) \ar @{-} (15,-15)},
\end{xy}
\caption{$3^4$ graph}
\end{figure}
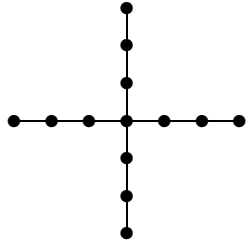
We say that a subfactor $N\subset M$ is $3^n$ if the principal graph between the $M-M$ bimodules and the $M-N$ 
bimodules is the $3^n$ graph. 
As in the case of the Haagerup subfactor, a group $G$ of order $n$ naturally arises from a $3^n$ subfactor 
in the automorphism group of $M$, say $\{\alpha_g\}_{g\in G}$. 
When we would like to specify this group, we can say that the subfactor is $3^G$ instead of $3^n$. 
It is easy to show that the only possible fusion rules, other than the group part, are 
$$[\alpha_g][\rho]=[\rho][\alpha_{g^\tau}]$$
$$[\rho^2]=[\id_M]+\sum_{g\in G}[\alpha_g\rho],$$
where $\tau$ is a group automorphism of $G$ of order two. 
In the case where $G$ is an abelian odd group and $\tau=-1$, in \cite{I01} we obtained polynomial equations 
whose solutions give $3^G$ subfactors via Cuntz algebra endomorphisms, and solved the equations 
for $G=\Z_3$ and $G=\Z_5$. 
Evans-Gannon \cite{EG11} showed that there exist solutions for the polynomial equations 
when $G$ is an odd cyclic group of small order. 
We show that these solutions actually classify $3^n$ subfactor with odd groups $n$ 
($G$ being abelian and $\tau=-1$ automatically hold for odd $G$, see Theorem \ref{odd}). 
One of the main purposes of this paper is to obtain the polynomial equations in the even case too. 
It turns out that we can also classify $3^n$ subfactors with even $n$ by the polynomial equations 
with an extra assumption of $G$ being abelian and $\tau=-1$. 
So far there is no known $3^n$ subfactor not satisfying this condition. 

The classification list \cite{AMP15} of small index subfactors shows that there are relatively few 
finite depth subfactors. 
However, $3+\sqrt{5}$ is an exceptionally rich index value, and there are exactly 4 finite depth subfactors, up to dual, 
without non-trivial intermediate subfactors (see \cite{MP15-1}, \cite{MP15-2}). 
Our method gives uniform construction of them. 
Namely, the four subfactors are the unique $3^{\Z_2\times \Z_2}$ subfactor and its equivariantization by $\Z_3$, and 
the unique $3^{\Z_4}$ subfactor and its de-equivariantization by $\Z_2$. 

Recently, Pinhas Grossman, Noah Snyder, and the author \cite{GIS15} gave a new construction of 
the Asaeda-Haagerup subfactor based on the study \cite{GS12-2} of the Brauer-Picard groupoid 
of the corresponding fusion categories. 
The new construction requires a similar fusion category to the one as above with the group $G=\Z_4$ 
but having non-trivial multiplicity in the fusion rules. 
It turns out that we can construct the desired fusion category from a $3^{\Z_4\times \Z_2}$ subfactor via 
de-equivariantization by $\Z_2$. 
Our new construction solves a lot of open problems about the Asaeda-Haagerup subfactors. 
For example, we can compute the Drinfeld center of fusion categories for the Asaeda-Haagerup subfactor 
(see \cite{GI15}).  

This paper is organized as follows. 
In Section 2, we set up an appropriate class of fusion categories for our classification purpose, 
which we call generalized Haagerup categories. 
Since the definition of the class involves subtlety of cohomological nature, 
we begin with a more general class of fusion categories, and formulate cohomological invariants for them 
mimicking the $E_2$-term of the spectral sequence for the cohomology of semidirect product groups. 
We also prepare the basics of an operator algebraic method to classify C$^*$-fusion categories.  

Using Cuntz algebras, we deduce polynomial equations for generalized Haagerup categories in Section 3, and give a 
reconstruction theorem in Section 4 (supplemented by a free product method in Appendix). 
In Section 5, we obtain a complete classification result for generalized Haagerup categories. 
There is a symmetry group $\Gamma$ acting on the gauge equivalence classes of the solutions of 
the polynomial equations, and each $\Gamma$-orbit corresponds to an equivalence class of 
generalized Haagerup categories, while the outer automorphism group of the category is given by 
the stabilizer subgroup. 

In Section 6, we discuss a necessary and sufficient condition for the existence of a $Q$-system giving rise to 
a $3^G$ subfactor, and shows that it significantly simplifies the polynomial equations. 
This condition Eq.(\ref{Q1}) was not separated from the other conditions in \cite{I01}, where only odd groups were treated.  
We also discuss a strategy to solve the polynomial equations without assuming the existence of the $Q$-system. 
In Section 7, we state our classification results for $3^G$ subfactors putting the results obtained 
in the preceding sections together. 
We also compute the dual principal graphs. 

In Section 8, we discuss several methods to obtain new fusion categories out of 
a given generalized Haagerup category, including de-equivariantization and equivariantization. 
In Section 9, we give solutions of the polynomial equations for abelian groups of small order. 
There exists a unique solution (up to equivalence in an appropriate sense) for $\Z_2\times \Z_2$, 
and it gives rise to a $3^{\Z_2\times \Z_2}$ subfactor. 
There exist two solutions for $\Z_4$, only one of which gives rise to a $3^{\Z_4}$ subfactor.

This work started with a conversation with Terry Gannon in 2010 asking whether the previous result on odd abelian 
groups can extend to more general groups, and the author is grateful to him. 
The author would like to thank Scott Morrison for his kind explanation of the use of formal codegrees, 
Victor Ostrik for providing an elementary proof of Lemma \ref{fixed-point-free}, 
and Vaughan Jones for drawing the author's attention to the spectral sequence for the cohomology of 
a semidirect product group.

\section{Preliminaries}
Our basic references are \cite{EGNO15} for fusion categories, \cite{EK98} for operator algebras and subfactors, 
and \cite{BKLR15} for the category of endomorphisms of von Neumann algebras. 
There are unfortunate discrepancies of terminology and notation in \cite{EGNO15} and \cite{BKLR15}. 
To avoid possible confusion, we use the symbol $*$ for the dual objects and the dual morphisms, and 
$\id_X$ for the identity morphism of an object $X$  only in  subsection \ref{GHC}, 
where general fusion categories are discussed. 
In the rest of the paper where only C$^*$-fusion categories are discussed, the symbol $*$ is reserved for 
the adjoint operators of the bounded operators acting on Hilbert spaces. 
Instead, we use $\overline{\sigma}$ for the dual object of $\sigma$, and we also use the term ``conjugate" instead of ``dual". 
Also the symbol $\id_M$ is reserved for the identity morphism of a von Neumann algebra $M$, 
playing the role of the unit object in $\End(M)$, and instead $1_\sigma$ is used for the identity morphism of 
an object $\sigma$. 

\subsection{Generalized Haagerup categories}\label{GHC}
The main purpose of this subsection is to set up an appropriate class of fusion categories for this work. 
We start with a little more general class than we need for the classification 
of $3^G$ subfactors, and it is a subclass of the so-called quadratic categories. 

A fusion category over the complex numbers $\C$ is a rigid semisimple $\C$-linear tensor category with 
finitely many simple objects and finite dimensional morphism spaces such that the unit object $\mathbf{1}$ is simple. 
Throughout the paper, we assume that fusion categories are strict. 
For a fusion category $\cC$, we denote by $\cO(\cC)$ the set of isomorphism classes of the simple objects in $\cC$. 
For an object $X\in \cC$, we denote by $[X]$ its isomorphism class. 

Let $\cC$ be a fusion category over the complex numbers $\C$. 
The isomorphism classes of the invertible objects of $\cC$ form a finite group, which we denote by $G$. 
We choose a representative from each class $g \in G$, and denote it by the same symbol $g$. 
We always assume $e=\mathbf{1}$. 
We say that $\cC$ is a quadratic category if there exists a non-invertible simple object $\rho$ such that 
every simple object of $\cC$ is isomorphic to an object in either $G$ or $G\otimes \rho \otimes G$. 

\begin{definition}
With the above notation, we say that $\cC$ is a quadratic category with $(G,\tau,m)$, where 
$\tau$ is a group automorphism of $G$ of period two and $m$ is a natural number, if $\rho$ is self-dual with 
$$\cO(\cC)=G\sqcup \{[g\otimes \rho]\}_{g\in G},$$
and they obey the following fusion rules: 
$$[g][h]=[gh],\quad g,h\in G,$$
$$[g] [\rho]=[\rho][g^\tau],\quad g\in G$$
$$[\rho]^2=[\mathbf{1}]+ \sum_{g\in G}m [g\otimes \rho].$$
\end{definition}

The Haagerup category is a quadratic category with $(\Z_3,-1,1)$. 
In fact, for odd groups there exists great restriction for the structure of the quadratic categories 
with $(G,\tau,m)$. 

\begin{theorem}\label{odd} Let $\cC$ be a spherical quadratic category with $(G,\tau,m)$.  
If $G$ is an odd group and $m$ is an odd number, then $G$ is abelian and $g^\tau=g^{-1}$ for any $g\in G$. 
\end{theorem}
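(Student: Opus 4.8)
The plan is to reduce the assertion to fixed-point-freeness of $\tau$ and then invoke a classical fact about finite groups. First I would normalize dimensions: since $\cC$ is spherical, $X\mapsto d_X$ is a ring homomorphism, and for an invertible object $g$ we have $d_gd_{g^{-1}}=1$ and $d_{g^{-1}}=d_{\bar g}=d_g$ (the second equality using sphericality), so $d_g=\pm1$ and $g\mapsto d_g$ is a homomorphism $G\to\{\pm1\}$, which is trivial because $|G|$ is odd; hence $d_g=1$ for all $g\in G$ and $d_\rho^2=|G|m\,d_\rho+1$. Next recall the elementary group-theoretic fact: if a finite group $H$ admits an automorphism $\phi$ with $\phi^2=\id$ whose only fixed point is $e$, then $h\mapsto h^{-1}\phi(h)$ is a bijection of $H$; writing an arbitrary element of $H$ as $h^{-1}\phi(h)$ and applying $\phi$ one finds that $\phi$ inverts every element, so $H$ is abelian and $\phi(h)=h^{-1}$. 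Applying this to $H=G$, $\phi=\tau$, it suffices to prove that $\tau$ has no nontrivial fixed point (if $\tau=\id$, fixed-point-freeness forces $G=\{e\}$ and the statement is vacuous).

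So suppose for contradiction that $g_0\in G$, $g_0\neq e$, with $g_0^\tau=g_0$; replacing $g_0$ by a suitable power we may assume it has odd prime order $p$. The fusion rule $[g_0][\rho]=[\rho][g_0^\tau]=[\rho][g_0]$ provides an isomorphism $g_0\otimes\rho\cong\rho\otimes g_0$, and by induction $g_0^{\,k}\otimes\rho\cong\rho\otimes g_0^{\,k}$ for all $k$, so the whole cyclic subgroup $\langle g_0\rangle$ commutes with $\rho$ up to isomorphism. I would then try to convert this commutation, together with the pivotal structure and the \emph{oddness of $m$}, into a numerical contradiction. The natural invariants to look at are the Frobenius--Schur indicators of the self-dual objects manufactured from $\rho$ — for instance $\rho^{\otimes2}=\mathbf{1}\oplus m\bigoplus_{g\in G}g\otimes\rho$ and $\Sigma:=\bigoplus_{g\in G}g\otimes\rho$ (which is self-dual because $\overline{g\otimes\rho}\cong(g^\tau)^{-1}\otimes\rho$) — evaluated by means of the associativity isomorphisms of triples involving $g_0$ and $\rho$. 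As a first step in this direction, the duality involution $g\otimes\rho\mapsto(g^\tau)^{-1}\otimes\rho$ on $\{g\otimes\rho\}_{g\in G}$ forces $\#\{g\in G:g^\tau=g^{-1}\}\equiv|G|\pmod 2$, hence this number is odd; but the real task is to upgrade this to the statement that \emph{every} $g$ satisfies $g^\tau=g^{-1}$, equivalently that no $g_0$ with $g_0^\tau=g_0\neq e$ can occur.

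I expect this last point to be the main obstacle, and to require input beyond the Grothendieck ring. At the fusion-ring level $\tau$ is invisible: the fusion matrix of $\rho$, in the ordered basis (invertibles, then the $g\otimes\rho$), has block form $\bigl(\begin{smallmatrix}0&P_\tau\\P_\tau&mJ\end{smallmatrix}\bigr)$ with $P_\tau$ the (involutory) permutation matrix of $g\mapsto g^\tau$ and $J$ the all-ones matrix; since $P_\tau^2=I$ and $P_\tau J=JP_\tau=J$, its spectrum — the two roots of $x^2-|G|mx-1$ and the values $\pm1$, the latter each with multiplicity $|G|-1$ — is independent of $\tau$, and the based ring is readily seen to be associative for every period-two $\tau$. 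So the contradiction has to be extracted from the associativity isomorphisms themselves, via a pentagon computation with $g_0$ and $\rho$ or a Frobenius--Schur-type indicator finer than the level-two parity count above, and carrying this out is exactly the content of the fixed-point-freeness lemma. Once that lemma is available, the first two paragraphs assemble into the proof.
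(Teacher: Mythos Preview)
Your reduction to fixed-point-freeness of $\tau$ is correct and is indeed the skeleton of the argument, but the proposal has a genuine gap: you never prove the key step. You explicitly say that establishing the nonexistence of a fixed point $g_0$ with $g_0^\tau=g_0\neq e$ ``is exactly the content of the fixed-point-freeness lemma'' and that ``once that lemma is available'' the proof assembles --- but you neither state nor prove such a lemma, and your suggested line of attack (Frobenius--Schur indicators of $\rho^{\otimes 2}$ or $\Sigma$, pentagon computations with $g_0$ and $\rho$) is left entirely unexecuted. As you yourself note, the fusion ring does not see $\tau$, so \emph{some} categorical input is needed; the question is which one, and you have not supplied it.

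The paper's route is rather different from what you sketch and is worth knowing. Instead of looking for a fixed point of $\tau$ on $G$, one looks on the \emph{dual}: suppose some nontrivial irreducible representation $\pi$ of $G$ satisfies $\pi\cong\pi\circ\tau$. Then $\pi$ extends to an irreducible representation $\pi'$ of the Grothendieck ring $K(\cC)$ by letting $[\rho]$ act by the intertwiner $W$ (with $W^2=1$). Ostrik's formal codegree theorem says $\dim\cC/f_{\pi'}$ must be a cyclotomic integer; a direct computation gives $f_{\pi'}=2|G|/\dim\pi$, whence
\[
\frac{\dim\cC}{f_{\pi'}}=\dim\pi+\frac{m|G|\dim\pi}{2}\,d_\rho,
\]
which fails to be an algebraic integer precisely because $m|G|\dim\pi$ is odd (here $|G|$ odd forces $\dim\pi$ odd). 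This is where the hypotheses ``$|G|$ odd'' and ``$m$ odd'' actually bite. Having shown $\tau$ is fixed-point-free on $\widehat G$, a separate purely group-theoretic lemma (two proofs are given, one elementary via counting conjugacy classes in $G\rtimes_\tau\Z_2$) upgrades this to $G$ abelian and $g^\tau=g^{-1}$. Your classical fact about fixed-point-free involutions on $G$ is the special case of this lemma once abelianness is known, but the paper's lemma does the harder work of getting abelianness from the dual condition.
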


To show the theorem, we first recall the notion of formal codegrees of a fusion category $\cC$ introduced by Ostrik \cite{O09}. 
The Grothendieck ring $K(\cC)$ of $\cC$ is the free module generated by $\cO(\cC)$ with a multiplication 
given by the monoidal product in $\cC$. 
Let $(\pi,V_\pi)$ be an irreducible representation of $K(\cC)$, where $V_\pi$ is a finite dimensional 
vector space over $\C$, and $\pi:K(\cC)\to \End(V_\pi)$ is a ring homomorphism. 
Then the formal codegree $f_\pi$ for $\pi$ is defined by 
$$f_\pi=\sum_{X\in \cO(\cC)}\mathrm{Tr}(\pi(X))\pi(X^*),$$
where $\mathrm{Tr}(\pi(X))$ is the trace of $\pi(X)$. 
Since $f_\pi$ commutes with $\pi(X)$ for every $X\in \cO(\cC)$, it is a scalar. 
Ostrik \cite[Theorem 2.13]{O09} showed that if $\cC$ is spherical, there exists a simple object 
in the Drinfeld center $\cZ(\cC)$ whose dimension is $\dim \cC/f_\pi$, where $\dim \cC$ is the global dimension 
of $\cC$. 
In particular, the number $\dim \cC/f_\pi$ is necessarily a cyclotomic integer.

\begin{lemma} Let the notation be as in Theorem \ref{odd} and assume that $G$ is an odd group and $m$ is an odd number. 
Then for any non-trivial irreducible representation $\pi$ of $G$, the two representations $\pi$ and $\pi\circ \tau$ are inequivalent.  
\end{lemma}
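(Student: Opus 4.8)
The strategy is to produce, out of a hypothetical nontrivial irreducible representation $\pi_G$ of $G$ with $\pi_G\circ\tau\cong\pi_G$, an irreducible representation of the Grothendieck ring $K(\cC)$ whose formal codegree violates the cyclotomic integrality of $\dim\cC/f_\pi$ furnished by \cite[Theorem 2.13]{O09}. So suppose, toward a contradiction, that such a $\pi_G$ exists, acting on a space $V$ of dimension $e$, and fix an invertible $T\in\End(V)$ realizing the isomorphism, i.e.\ $\pi_G(g)T=T\pi_G(g^\tau)$ for all $g\in G$. Since $\tau^2=\id$, the operator $T^2$ commutes with all $\pi_G(g)$, hence is a nonzero scalar by Schur's lemma, so after rescaling $T$ we may and do assume $T^2=1$. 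Note also that, $G$ having odd order, there is no nontrivial homomorphism $G\to\{\pm1\}$, so $\dim g=1$ for every $g\in G$; consequently $d:=\dim\rho$ is a root of $t^2-m|G|t-1$ and $\dim\cC=\sum_{X\in\cO(\cC)}(\dim X)^2=|G|(1+d^2)$.

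First I would set $\pi(g):=\pi_G(g)$ for $g\in G$ and $\pi(\rho):=T$, and verify that this extends to a representation of $K(\cC)$: the group relations and $[g][\rho]=[\rho][g^\tau]$ (together with its dual $[\rho][g]=[g^\tau][\rho]$) are immediate from the intertwining property, while $[\rho]^2=[\mathbf{1}]+m\sum_{g\in G}[g\otimes\rho]$ holds because $\sum_{g\in G}\pi_G(g)=0$ (as $\pi_G$ is nontrivial), so both sides evaluate to $1$. Since $\pi_G$ is irreducible, $\pi$ is an irreducible representation of $K(\cC)$, of dimension $e$.

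Next I would compute the scalar $f_\pi=\tfrac1e\sum_{X\in\cO(\cC)}\mathrm{Tr}(\pi(X))\,\mathrm{Tr}(\pi(X^*))$. The part of the sum over $X\in G$ is $\sum_{g\in G}|\chi_{\pi_G}(g)|^2=|G|$. For the part over the objects $X=g\otimes\rho$ one uses the identity $(g\otimes\rho)^*=(g^\tau)^{-1}\otimes\rho$ in $\cO(\cC)$ together with $\pi(g^\tau)=\pi(\rho)\pi(g)\pi(\rho)$ and $\pi(\rho)^2=1$ to rewrite that part as $\sum_{g\in G}\mathrm{Tr}(\pi_G(g)T)\,\mathrm{Tr}(\pi_G(g^{-1})T)$, and then the Schur orthogonality relation $\sum_{g\in G}\pi_G(g)_{ij}\,\pi_G(g^{-1})_{kl}=\tfrac{|G|}{e}\delta_{il}\delta_{jk}$ turns this into $\tfrac{|G|}{e}\mathrm{Tr}(T^2)=|G|$. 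Hence $f_\pi=\dfrac{2|G|}{e}$.

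Finally, $\dim\cC/f_\pi=\tfrac{e}{2}(1+d^2)=\tfrac{e}{2}(2+m|G|d)=e+\tfrac{em|G|}{2}\,d$, an element of $\Q(\sqrt{m^2|G|^2+4})$, which is a genuine quadratic field since $m^2|G|^2+4$ is never a perfect square. Its trace down to $\Q$ is $2e+\tfrac{em^2|G|^2}{2}$, because $d$ and its algebraic conjugate sum to $m|G|$. But $m$, $|G|$, and $e$ (a divisor of $|G|$) are all odd, so $em^2|G|^2$ is odd and this trace is not a rational integer; therefore $\dim\cC/f_\pi$ is not an algebraic integer, contradicting \cite[Theorem 2.13]{O09}. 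Hence no nontrivial irreducible representation of $G$ is isomorphic to its $\tau$-twist. The only step requiring genuine care is the bookkeeping of the duality $*$ on $K(\cC)$ — in particular the identity $(g\otimes\rho)^*=(g^\tau)^{-1}\otimes\rho$ — inside the formal-codegree computation; once $f_\pi$ has been pinned down the arithmetic obstruction is immediate.
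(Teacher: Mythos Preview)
Your proof is correct and follows essentially the same approach as the paper: both construct the representation $\pi$ of $K(\cC)$ by sending $\rho$ to the intertwiner $T$ (the paper calls it $W$), compute the formal codegree $f_\pi=2|G|/e$ via Schur orthogonality, and observe that $\dim\cC/f_\pi=e+\tfrac{em|G|}{2}d$ fails to be an algebraic integer because $em|G|$ is odd. The only cosmetic difference is that you phrase the non-integrality via the trace to $\Q$, whereas the paper asserts it directly.
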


\begin{proof} 
Assume on the contrary that there is an irreducible non-trivial representation $(\pi_\pi,V_\pi)$ of $G$ such that 
$\pi$ is equivalent to $\pi\circ \tau$.  
Then there exists an invertible element $W\in \End(V_\pi)$ satisfying $\pi(g^\tau)=W\pi(g)W^{-1}$ for any $g\in G$. 
Since $\tau$ is of order two, $W^2$ is a scalar, and we may assume that $W^2=1$ holds multiplying $W$ 
by a scalar if necessary. 
Note that since $\pi$ is non-trivial, we have 
$$\sum_{g\in G}\pi(g)=0.$$
This enables us to introduce an irreducible representation $\pi'$ of $K(\cC)$ on $V_\pi$ by setting 
$\pi'(g)=\pi(g)$ for $g\in G$ and $\pi'([\rho])=W$. 
To compute the formal codegree of $\pi'$, 
we may assume that $(\pi_\pi,V_\pi)$ is a unitary representation. 
We choose an orthonormal basis $\{e_i\}_{i=1}^{\dim \pi}$, and express $\pi(g)$ and $W$ by matrices $(\pi(g)_{ij})$ and $(W_{ij})$. 
Note that $W$ is a self-adjoint unitary now. 
The Peter-Weyl theorem implies 
\begin{align*}
\lefteqn{(f_{\pi'})_{ij}=\sum_{g\in G}\mathrm{Tr}(\pi(g))\pi(g^{-1})_{ij}+\sum_{g\in G}\mathrm{Tr}(\pi(g)W)
(W\pi(g)^{-1})_{ij} }\\
 &=\sum_{g,k}\pi(g)_{kk}\overline{\pi(g)_{ji}}+\sum_{g,k,l,r}\pi(g)_{kl}W_{lk}W_{ir}\overline{\pi(g)_{jr}} \\
 &= \frac{|G|}{\dim \pi}\sum_{k}\delta_{k,i}\delta_{k,j}+
 \frac{|G|}{\dim \pi}\sum_{k,l,r}\delta_{k,j}\delta_{l,r}W_{lk}W_{ir} =\frac{2|G|}{\dim \pi}\delta_{ij},
 \end{align*}
and $f_{\pi'}=2|G|/\dim \pi$. 

On the other hand, since we assume that $\cC$ is spherical and $G$ is odd, the only possibilities of 
the dimensions of the simple objects in $\cC$ are $\dim g=1$ for $g\in G$, and 
$\dim g\otimes \rho=\frac{m|G|\pm \sqrt{m^2|G|^2+4}}{2}$, and so 
$$\dim \cC=|G|+|G|(\dim \rho)^2=|G|(2+m|G|\dim \rho).$$
This implies 
$$\frac{\dim \cC}{f_{\pi'}}=\dim \pi+\frac{m|G|\dim \pi\dim \rho}{2}.$$
Since $m|G|\dim \pi$ is odd, this cannot be an algebraic integer, and we get contradiction.   
\end{proof}

Recall that an automorphism of a finite group $G$ is called fixed-point-free if it has no fixed point in 
$G\setminus \{e\}$.  
It is known that $G$ allows a fixed-point-free automorphism of period two $\tau$ if and only if 
$G$ is abelian and $g^\tau=g^{-1}$ for any $g\in G$  (see \cite[Exercises 10.5.1]{R93}). 
We say that $\tau\in \Aut(G)$ is fixed-point-free on the dual of $G$ 
if for any non-trivial irreducible representation $\pi$, the two representation $\pi$ and 
$\pi\circ \tau$ are inequivalent. 
When $G$ is abelian and $\tau$ is of period two, the two definitions are equivalent 
because the latter is equivalent to the condition that $\tau$ acts on the dual group $\hat{G}$ by $-1$, 
which in tern is equivalent to the condition that $\tau$ acts on $G$ by $-1$. 
The proof of Theorem \ref{odd} follows from the following lemma, which states that the two definitions 
for a period two $\tau$ are always equivalent. 

\begin{lemma}\label{fixed-point-free} Let $G$ be a finite group, and let $\tau$ be an automorphism of $G$ of period two.  
If $\tau$ is fixed-point-free on the dual of $G$, the group $G$ is abelian and $g^\tau=g^{-1}$ 
for any $g$. 
\end{lemma}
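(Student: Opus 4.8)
The plan is to reduce the statement to a counting identity over the group, exploiting that $\tau$ being fixed-point-free on $\hat G$ forces $\tau$ to act without nontrivial fixed points on the set of conjugacy classes, and then to push this to a fixed-point-free action on $G$ itself. First I would observe that for any class function $\chi$ on $G$, the value $\chi(g)$ depends only on the conjugacy class of $g$, and that $\tau$ permutes conjugacy classes; the hypothesis says $\pi \not\cong \pi\circ\tau$ for every nontrivial irreducible $\pi$. Translating through the character table, the permutation matrix of $\tau$ acting on irreducible characters (which is the identity only on the trivial character) and the permutation matrix of $\tau$ acting on conjugacy classes are transposes of each other up to the standard symmetry, so $\tau$ has exactly one fixed irreducible character if and only if it has exactly one fixed conjugacy class. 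Hence $\tau$ fixes only the class $\{e\}$ among conjugacy classes.

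The next step is the standard fixed-point-free counting argument. Since $\tau^2=\id$ and $\tau$ fixes no nontrivial conjugacy class, I would count the number of solutions of $g^\tau = g^{-1}$. Consider the map $\phi:G\to G$, $\phi(g)=g^{-1}g^\tau$. I claim $\phi$ is injective: if $g^{-1}g^\tau = h^{-1}h^\tau$, then $(hg^{-1})^\tau = hg^{-1}$ after rearranging using $\tau$-equivariance, so $hg^{-1}$ is $\tau$-fixed; one then needs that the only $\tau$-fixed \emph{elements} form the trivial subgroup. This last point is exactly where the conjugacy-class input is used: a $\tau$-fixed element lies in a $\tau$-fixed conjugacy class, hence in $\{e\}$, so $hg^{-1}=e$. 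Therefore $\phi$ is a bijection, so every element of $G$ is of the form $g^{-1}g^\tau$, i.e. for every $x\in G$ there is $g$ with $x^\tau = g^{-\tau}\cdot(\text{something})$ — more cleanly, surjectivity of $\phi$ gives that for each $x$ there is $g$ with $x = g^{-1}g^\tau$, whence $x^\tau = g^{-\tau}g = (g^{-1}g^\tau)^{-1} = x^{-1}$. Thus $g^\tau = g^{-1}$ for all $g\in G$.

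Finally, $g^\tau = g^{-1}$ being an automorphism forces $G$ abelian: $(gh)^{-1} = (gh)^\tau = g^\tau h^\tau = g^{-1}h^{-1}$, so $h^{-1}g^{-1} = g^{-1}h^{-1}$ for all $g,h$, i.e. $G$ is commutative. This completes the proof.

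I expect the main obstacle to be the first paragraph: making precise and correct the duality between the $\tau$-action on $\Irr(G)$ and on conjugacy classes. The cleanest route is probably to avoid character-table transpose juggling and instead argue directly that the number of $\tau$-fixed irreducibles equals the number of $\tau$-fixed conjugacy classes, via the trace of $\tau$ acting on the space of class functions computed in two bases (irreducible characters versus indicator functions of classes), both of which are $\tau$-invariant spanning sets; the trace is basis-independent, giving the equality of fixed-point counts. Once that bookkeeping is pinned down, the remainder is the routine fixed-point-free argument above.
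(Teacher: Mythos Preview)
Your argument is correct. The key step---that the number of $\tau$-fixed irreducible characters equals the number of $\tau$-fixed conjugacy classes---is exactly Brauer's permutation lemma, and your trace-of-$\tau$-on-class-functions justification is the standard proof of it. Once only the class $\{e\}$ is $\tau$-fixed, the implication ``$g^\tau=g \Rightarrow [g]$ is $\tau$-fixed $\Rightarrow g=e$'' is immediate, and the remaining bijectivity argument for $\phi(g)=g^{-1}g^\tau$ is the classical one.

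The paper gives two proofs. The first uses the Feit--Thompson theorem to get solvability and then works down the derived series; your argument is vastly more elementary than this. The second proof (due to Ostrik) has the same overall architecture as yours---show $\tau$ is fixed-point-free on $G$, then invoke the standard consequence---but reaches ``no nontrivial $\tau$-fixed conjugacy class'' by a different route: it forms the semidirect product $\tilde G = G\rtimes_\tau \Z_2$, counts the irreducibles of $\tilde G$ (there are $2+k$ where $2k$ is the number of nontrivial irreducibles of $G$), and compares this against a lower bound on the number of conjugacy classes of $\tilde G$ coming from the $\tau$-orbit structure on conjugacy classes of $G$. Your Brauer-permutation-lemma approach is more direct and arguably more transparent, since it avoids the auxiliary group entirely; Ostrik's trick has the mild advantage of not needing to articulate the trace argument, trading it for a concrete count. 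Both are short and elementary.
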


\begin{proof} 
We first claim that the restriction of $\tau$ to any characteristic subgroup $N$ of $G$ is again fixed-point-free 
on the dual of $N$. 
Indeed, assume that there exists an non-trivial irreducible representation $\sigma$ of $N$ such that $\sigma$ and 
$\sigma\circ \tau$ are equivalent. 
Then thanks to the Frobenius reciprocity, for any irreducible representation $\pi$ of $G$, the multiplicity of 
$\pi$ in the induced representation $\Ind_N^G \sigma$ is the same as that of $\pi\circ \tau$ in $\Ind_N^G \sigma$. 
Since $\Ind_N^G \sigma$ does not contain the trivial representation, this implies that 
$$\dim \Ind_N^G \sigma=|G/N|\dim\sigma$$ 
is even, which contradict the assumption that $G$ is odd. 
Thus the claim holds. 

Let 
$$\{1,\pi_1,\pi_1\circ \tau, \pi_2,\pi_2\circ \tau,\ldots, \pi_k,\pi_k\circ \tau\}$$ 
be the irreducible representations of $G$. 
Then 
$$|G|=1+2\sum_{i=1}^k\dim \pi_i^2$$
is odd, and $G$ is solvable thanks to the Feit-Thompson theorem 
(see \cite[p.148]{R93} and references therein). 
Let 
$$G=G^{(0)}\rhd G^{(1)}\rhd\cdots \rhd G^{(l)}=\{e\}$$
be the derived series of $G$. 
If $G$ is abelian, the statement holds, and so we assume that $l\geq 2$ and get contradiction. 
For this purpose, it suffices to assume $l=2$ by replacing $G$ with $G^{(l-2)}$ because $G^{(l-2)}$ 
is a characteristic subgroup of $G$. 
Thus we assume that $[G,G]\neq \{e\}$ is abelian. 
Since $[G,G]$ is an abelian characteristic subgroup, the restriction of $\tau$ acts on $[G,G]$ by $-1$. 
On the other hand, since any representation of $G/[G,G]$ is regarded as a representation of $G$, 
the automorphism of $G/[G,G]$ induced by $\tau$ is also a fixed-point-free on the dual of $G/[G,G]$, and 
hence it acts by $-1$ for $G/[G,G]$ is abelian. 
This implies that for any $g\in G$, we have $g^\tau\in g^{-1}[G,G]$ and $gg^{\tau}\in [G,G]$. 
Since $\tau$ acts on $[G,G]$ by $-1$, we have 
$(gg^\tau)^\tau=(gg^\tau)^{-1}$ and we get $(g^2)^\tau=(g^2)^{-1}$. 
Since $G$ is an odd group, this implies that we have $h^\tau=h^{-1}$ for any $h\in G$, 
which contradicts the assumption that $G$ is non-abelian. 
Thus $l<2$ and $G$ is abelian.  
\end{proof}

Victor Ostrik kindly informed the author of the following elementary proof of the above lemma without using the 
Feit-Thompson theorem. 
We would like to thank him for his courtesy. 

\begin{proof}[Second proof of Lemma \ref{fixed-point-free}] 
It suffices to show that $\tau$ is a fixed-point-free automorphism. 
Let $2a$ be the number of the conjugacy classes in $G$ that are not fixed by $\tau$, and let 
$b$ be the number of non-trivial conjugacy classes that are fixed by $\tau$. 
Our goal is to show that $b=0$. 
Let 
$$\{1,\pi_1,\pi_1\circ \tau, \pi_2,\pi_2\circ \tau,\ldots, \pi_k,\pi_k\circ \tau\}$$ 
be the irreducible representations of $G$. 
Then we have $2k=2a+b$. 

Let $\tilde{G}$ be the semidirect product group $G\rtimes_\tau \Z_2$. 
Since $\tau$ is fixed-point-free on the dual of $G$, the group $\tilde{G}$ has two 1-dimensional 
representations, and all the other irreducible representations are of the form 
$\Ind_G^{\tilde{G}} \pi_i\cong\Ind_G^{\tilde{G}} \pi_i\circ\tau$. 
Thus $\tilde{G}$ has exactly $2+k=2+a+\frac{b}{2}$ irreducible representations. 
On the other hand, the number of the conjugacy classes in $\tilde{G}$ is larger than or equal to 
$2+a+b$, and $\tilde{G}$ has at least $2+a+b$ irreducible representations. 
Therefore we get $b=0$, and $\tau$ is a fixed-point-free automorphism. 
\end{proof}

Next we introduce cohomological invariants of a quadratic category $\cC$ with $(G,\tau,m)$ 
pursuing a similarity  between $\cC$ and the semidirect product group $G\rtimes_\tau \Z_2$, 
which was first observed by Evans-Gannon \cite{EG11} in the case of $G=\Z_n$ with odd $n$, $\tau=-1$, and $m=1$. 
Note that the $E_2$-term of the Lyndon-Hochschild-Serre spectral spectral sequence for the group cohomology 
$H^*(G\rtimes_\tau \Z_2,\C^\times)$ is given by $E^{p,q}_2=H^p(\Z_2,H^q(G,\C^\times))$, where the group 
$\Z_2$ acts on $H^q(G,\C^\times)$ through $\tau$. 
We start with 
$$E_2^{0,3}=H^0(\Z_2,H^3(G,\C^\times))=H^3(G,\C^\times)^\tau,$$ 
where $H^3(G,\C^\times)^\tau$ is the set of cohomology classes in $H^3(G,\C^\times)$ fixed by $\tau$.

For $g,h\in G$, we choose an isomorphism $v_{g,h}:gh\to g\otimes h$ with $v_{g,e}=v_{e,g}=\id_g$. 
Since both $(v_{g,h}\otimes \id_k)\circ v_{gh,k}$ and $(\id_g\otimes v_{h,k})\circ v_{g,hk}$ are isomorphisms 
from $ghk$ to $g\otimes h\otimes k$,  there exists $\omega(g,h,k)\in \C^\times$ satisfying 
\begin{equation}\label{C1}
(\id_g\otimes v_{h,k})\circ v_{g,hk}=\omega(g,h,k)(v_{g,h}\otimes \id_k)\circ v_{gh,k}.
\end{equation}
Thanks to the pentagon equation, we see that $\omega=\{\omega(g,h,k)\}_{g,h,k\in G}$ form a 3-cocycle in $Z^3(G,\C^\times)$, and 
we denote by $\fc^{0,3}(\cC)$ its cohomology class $[\omega]\in H^3(G,\C^\times)$, 
which is a well-known invariant of the fusion category $\cC$, or rather the fusion subcategory generated by $G$. 

For each $g\in G$, we choose an isomorphism  $w_g:g\otimes \rho\to \rho\otimes g^\tau$. 
Then we have two isomorphisms $\id_\rho\otimes v_{g^\tau,h^\tau}$ and 
$$(w_g\otimes \id_{h^{\tau}})\circ (\id_g\otimes w_h)\circ (v_{g,h}\otimes \id_\rho)\circ w_{gh}^{-1},$$
from $\rho\otimes g^\tau h^\tau$ to $\rho\otimes g^\tau\otimes h^\tau$, 
and there exists $\xi(g,h)\in \C^\times$ satisfying 
\begin{equation}\label{C2}
(w_g\otimes \id_{h^{\tau}})\circ (\id_g\otimes w_h)\circ (v_{g,h}\otimes \id_\rho)\circ w_{gh}^{-1}
=\xi(g,h) \id_\rho\otimes v_{g^\tau,h^\tau}.
\end{equation}

\begin{lemma} With the above notation, we have
\begin{equation}\label{C3}
\omega(g,h,k)=\omega(g^\tau,h^\tau,k^\tau)\xi(h,k)\xi(gh,k)^{-1}\xi(g,hk)\xi(g,h)^{-1}.
\end{equation}
In consequence $\fc^{0,3}(\cC)\in H^3(G,\C^\times)^\tau$. 
\end{lemma}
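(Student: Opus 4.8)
The plan is to prove (C3) by a coherence (diagram-chase) argument: I will compute one and the same isomorphism $ghk\otimes\rho\to\rho\otimes g^\tau\otimes h^\tau\otimes k^\tau$ along two routes, one of which produces the scalar $\omega(g,h,k)$ and the other of which produces $\omega(g^\tau,h^\tau,k^\tau)$ together with the four $\xi$-factors; comparing the two gives (C3). Concretely, fix the composite $W=(w_g\otimes\id_{h^\tau}\otimes\id_{k^\tau})\circ(\id_g\otimes w_h\otimes\id_{k^\tau})\circ(\id_{g\otimes h}\otimes w_k)$ from $g\otimes h\otimes k\otimes\rho$ to $\rho\otimes g^\tau\otimes h^\tau\otimes k^\tau$, obtained by moving $\rho$ past the three factors one at a time, and precompose it with the two standard splittings $ghk\otimes\rho\to g\otimes h\otimes k\otimes\rho$ of $ghk$, namely $(v_{g,h}\otimes\id_k\otimes\id_\rho)\circ(v_{gh,k}\otimes\id_\rho)$ and $(\id_g\otimes v_{h,k}\otimes\id_\rho)\circ(v_{g,hk}\otimes\id_\rho)$. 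By (C1) these two differ by the scalar $\omega(g,h,k)$, hence so do the two resulting morphisms $ghk\otimes\rho\to\rho\otimes g^\tau\otimes h^\tau\otimes k^\tau$.

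Next I would evaluate each of these two composites a second way, peeling off the $w$-moves by repeated use of (C2). For the ``$v_{gh,k}$ then $v_{g,h}$'' route: apply (C2) for the pair $(g,h)$, tensored on the right by $\id_{k^\tau}$, to extract a factor $\xi(g,h)$ and replace the two leftmost moves by $w_{gh}$; then apply (C2) for the pair $(gh,k)$ to extract $\xi(gh,k)$ and replace the remaining moves by $w_{ghk}$. What survives is $w_{ghk}$ followed by the splitting $(\id_\rho\otimes v_{g^\tau,h^\tau}\otimes\id_{k^\tau})\circ(\id_\rho\otimes v_{g^\tau h^\tau,k^\tau})$ of $(ghk)^\tau=g^\tau h^\tau k^\tau$, where I use that $\tau$ is a group automorphism, so that $(gh)^\tau=g^\tau h^\tau$ holds for the chosen representatives. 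Symmetrically, the ``$v_{g,hk}$ then $v_{h,k}$'' route equals $\xi(h,k)\,\xi(g,hk)$ times $w_{ghk}$ followed by the other splitting $(\id_\rho\otimes\id_{g^\tau}\otimes v_{h^\tau,k^\tau})\circ(\id_\rho\otimes v_{g^\tau,h^\tau k^\tau})$ of $g^\tau h^\tau k^\tau$, using (C2) for $(h,k)$ and for $(g,hk)$. Aside from (C2), the only manipulation is the interchange law, moving $v$'s and $w$'s past one another on disjoint tensor factors.

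By (C1) applied to $g^\tau,h^\tau,k^\tau$, the two splittings of $g^\tau h^\tau k^\tau$ differ by $\omega(g^\tau,h^\tau,k^\tau)$. Combining the three comparisons and cancelling the common invertible morphism then gives $\omega(g,h,k)\,\xi(g,h)\,\xi(gh,k)=\omega(g^\tau,h^\tau,k^\tau)\,\xi(h,k)\,\xi(g,hk)$, which, since $\C^\times$ is abelian, is exactly (C3). For the final assertion, I would observe that the combination $\xi(h,k)\,\xi(gh,k)^{-1}\,\xi(g,hk)\,\xi(g,h)^{-1}$ is the value at $(g,h,k)$ of the coboundary $\delta\xi$ of the $2$-cochain $\xi\in C^2(G,\C^\times)$ (trivial coefficients); since $\tau$ is an automorphism, $\omega^\tau(g,h,k):=\omega(g^\tau,h^\tau,k^\tau)$ is again a $3$-cocycle, with class $\tau^*[\omega]$ (the action of $\tau$ on $H^3(G,\C^\times)$ appearing in the $E_2$-term), so (C3) reads $\omega\cdot(\omega^\tau)^{-1}=\delta\xi\in B^3(G,\C^\times)$. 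Hence $[\omega]=\tau^*[\omega]$, i.e. $\fc^{0,3}(\cC)=[\omega]\in H^3(G,\C^\times)^\tau$.

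I expect the main obstacle to be the tensor-factor bookkeeping in the middle step: conceptually there is nothing beyond the pentagon (already packaged in (C1)) and (C2), but one has to be scrupulous that each invocation of (C2) is tensored on the correct side and that every interchange-law rearrangement is valid, since a single misplaced identity factor would corrupt the final identity.
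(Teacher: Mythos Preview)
Your proof is correct and follows essentially the same approach as the paper. The paper's argument is terse---it starts from the pentagon relation (C1) for $g^\tau,h^\tau,k^\tau$ tensored on the left by $\id_\rho$ and then says ``computing the both sides using Eq.(C2), we get the statement''---whereas you run the same diagram chase from the other end, starting at $ghk\otimes\rho$ and peeling off $w$-moves with (C2) until you hit the two splittings of $g^\tau h^\tau k^\tau$; the content is identical, and your observation that the right-hand side of (C3) is $\delta\xi(g,h,k)$ makes the cohomological conclusion explicit.
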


\begin{proof} 
From Eq.(\ref{C1}), we get 
$$\id_\rho\otimes ((\id_{g^\tau}\otimes v_{h^\tau,k^\tau})\circ v_{g^\tau,h^\tau k^\tau})
=\omega(g^\tau,h^\tau,k^\tau)
\id_\rho\otimes ((v_{g^\tau,h^\tau}\otimes \id_{k^\tau})\circ v_{g^\tau h^\tau,k^\tau}).$$
Computing the both sides using Eq.(\ref{C2}), we get the statement. 
\end{proof}

Now we assume $\fc^{0,3}(\cC)=0$, and we introduce an invariant 
$$\fc^{1,2}(\cC)\in H^1(\Z_2,H^2(G,\C^\times)).$$  
Since $\fc^{0,3}(\cC)=0$, we can choose $v_{g,h}$ so that the relation 
\begin{equation}\label{C4}
(\id_g\otimes v_{h,k})\circ v_{g,hk}=(v_{g,h}\otimes \id_k)\circ v_{gh,k}.
\end{equation}
holds. 
Then Eq.(\ref{C3}) shows that $\xi=\{\xi(g,h)\}_{g,h\in G}$ form 
a 2-cocycle in $Z^2(G,\C^\times)$. 
Choosing appropriate $w_g$, we may further assume that $\xi$ is normalized, that is,  
$\xi(g,g^{-1})=1$ for any $g\in G$, and in consequence $\xi(g,h)=\xi(h^{-1},g^{-1})^{-1}$. 
Since $\rho$ is self-dual, we can produce an isomorphism $w_g^*:(g^{\tau})^{-1}\otimes \rho\to \rho\otimes g^{-1}$ 
from $w_g$ by rigidity (see \cite[2.10]{EGNO15} for the definition), where we choose $g^*=g^{-1}$ with the evaluation and coevaluation 
maps given by $v_{g^{-1},g}^{-1}$ and $v_{g,g^{-1}}$ respectively. 
More concretely, we set 
\begin{align*}
\lefteqn{w_{g}^*=(({v_{{g^\tau}^{-1},g^\tau}}^{-1}\circ (\id_g\otimes \mathrm{ev}_\rho\otimes \id_{g^\tau}))
\otimes \id_\rho\otimes \id_{g^{-1}})} \\
&\circ (\id_{{g^{\tau}}^{-1}}\otimes \id_\rho\otimes((w_g\otimes \id_\rho\otimes \id_{g^{-1}})\circ  
(\id_g\otimes \mathrm{coev}_\rho\otimes \id_{g^{-1}})\circ v_{g,g^{-1}})),
\end{align*}
Thus there exists $\eta(g)\in \C^\times$ satisfying 
\begin{equation}\label{c12}w_{(g^\tau)^{-1}}^*=\eta(g)w_g. 
\end{equation}

\begin{lemma} With the above notation, we have 
\begin{equation}\label{C5}
\xi(g^\tau,h^\tau)\xi(g,h)=\eta(gh)\eta(g)^{-1}\eta(h)^{-1}.
\end{equation}
In consequence, the 2-cocycle $\xi\in Z^2(G,\C^\times)$ gives a class in $H^1(\Z_2,H^2(G,\C^\times))$. 
\end{lemma}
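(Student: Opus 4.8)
The ``in consequence'' clause is purely formal once Eq.(\ref{C5}) is established, so the plan below is essentially a plan for Eq.(\ref{C5}). Recall that under the standing assumption $\fc^{0,3}(\cC)=0$ --- with $v_{g,h}$ normalized so that Eq.(\ref{C4}) holds and $w_g$ normalized so that $\xi(g,g^{-1})=1$ --- it has already been observed that $\xi\in Z^2(G,\C^\times)$ and that $\xi(g,h)=\xi(h^{-1},g^{-1})^{-1}$. The group $\Z_2$ acts on $H^2(G,\C^\times)$ through $\tau$, carrying $[\xi]$ to $[\xi^\tau]$, where $\xi^\tau(g,h):=\xi(g^\tau,h^\tau)$. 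The right-hand side of Eq.(\ref{C5}) is manifestly a $2$-coboundary (the coboundary of a $1$-cochain on $G$ determined by $\eta$), so Eq.(\ref{C5}) asserts precisely that $[\xi^\tau]\cdot[\xi]=1$ in $H^2(G,\C^\times)$. This is exactly the cocycle condition for the $1$-cochain on $\Z_2$ with values in $H^2(G,\C^\times)$ sending the generator to $[\xi]$; hence $\xi$ determines a class in $H^1(\Z_2,H^2(G,\C^\times))$, as claimed.

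To prove Eq.(\ref{C5}) itself, the plan is to take the defining relation Eq.(\ref{C2}) for the pair $(g^\tau,h^\tau)$, substitute for $w_{g^\tau}$, $w_{h^\tau}$, $w_{g^\tau h^\tau}$ using Eq.(\ref{c12}) together with the explicit description of $w_g^*$ displayed just before Eq.(\ref{c12}), and then reduce what is left to the defining relation Eq.(\ref{C2}) for the pair $(h^{-1},g^{-1})$ (note $(gh)^{-1}=h^{-1}g^{-1}$). Each use of Eq.(\ref{c12}) contributes one $\eta$-factor, so the three substitutions produce, alongside the morphism data, a scalar of the form $\eta(g^\tau)^{-1}\eta(h^\tau)^{-1}\eta(g^\tau h^\tau)$; the reduction to Eq.(\ref{C2}) for $(h^{-1},g^{-1})$ is carried out by sliding the evaluation and coevaluation maps for $\rho$ (which enter through the formula for $w_g^*$) past one another by the zigzag identities and using the normalization Eq.(\ref{C4}), and it contributes a factor $\xi(h^{-1},g^{-1})=\xi(g,h)^{-1}$. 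Reading off scalars from the resulting identity of morphisms $\rho\otimes gh\to\rho\otimes g\otimes h$ gives $\xi(g^\tau,h^\tau)\,\xi(g,h)$ equal to a product of $\eta$-factors, and the last step is to simplify that product, using the consistency relations for $\eta$ obtained by iterating Eq.(\ref{c12}) (together with the fact that the double dual $(-)^{**}$ acts as the identity on the morphisms involved), into $\eta(gh)\eta(g)^{-1}\eta(h)^{-1}$.

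I expect the main obstacle to be exactly this bookkeeping: tracking the order reversals and zigzags that the duality $(-)^*$ and the self-duality of $\rho$ introduce across the nested composites and tensor products in Eq.(\ref{C2}), verifying that the particular evaluation and coevaluation maps chosen for the group elements and for $\rho$ contribute no stray scalar beyond the $\eta$-factors --- this is the only place where $\fc^{0,3}(\cC)=0$, equivalently Eq.(\ref{C4}), is used --- and keeping the exponents of the various $\eta$'s under control so that they collapse to $\eta(gh)\eta(g)^{-1}\eta(h)^{-1}$. Once the morphisms are matched correctly, everything else is a routine scalar computation.
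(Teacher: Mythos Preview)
Your overall strategy --- use Eq.(\ref{c12}) together with rigidity to transform one instance of Eq.(\ref{C2}) into another and read off the scalars --- is exactly the paper's. The difference is in the setup, and it causes the wrinkle you flag at the end.

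Starting from Eq.(\ref{C2}) for $(g^\tau,h^\tau)$ and substituting $w_{g^\tau}=\eta(g^\tau)^{-1}w_{g^{-1}}^*$, $w_{h^\tau}=\eta(h^\tau)^{-1}w_{h^{-1}}^*$, $w_{(gh)^\tau}=\eta((gh)^\tau)^{-1}w_{(gh)^{-1}}^*$, your reduction to Eq.(\ref{C2}) for $(h^{-1},g^{-1})$ yields
\[
\xi(g^\tau,h^\tau)\,\xi(g,h)=\eta((gh)^\tau)\,\eta(g^\tau)^{-1}\eta(h^\tau)^{-1},
\]
with the $\eta$'s evaluated at $g^\tau,h^\tau,(gh)^\tau$ rather than at $g,h,gh$. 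You propose to repair this via the identity $w^{**}=w$. That is a pivotality assumption, and it is \emph{not} in force here --- the paper only introduces pivotality later, for $\fc^{2,1}(\cC)$. Moreover, even granting $w_g^{**}=w_g$ one gets only $\eta(g^+)\eta(g)=1$, i.e.\ $\eta(g^\tau)=\eta(g^{-1})^{-1}$, and turning $\eta(g^{-1})\eta(h^{-1})\eta((gh)^{-1})^{-1}$ into $\eta(g)^{-1}\eta(h)^{-1}\eta(gh)$ would still require that $g\mapsto\eta(g)\eta(g^{-1})$ be multiplicative, which you have no independent reason to know.

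The good news is that no such repair is needed. Since your displayed identity holds for all $g,h$ and $\tau^2=\id$, substituting $g\to g^\tau$, $h\to h^\tau$ immediately gives Eq.(\ref{C5}). The paper avoids this detour by starting from Eq.(\ref{C2}) for $(g,h)$ instead: multiplying the left-hand side by $\eta(g)\eta(h)$ and using Eq.(\ref{c12}) in the form $\eta(g)w_g=w_{g^+}^*$ turns it directly into the dual of the left-hand side of Eq.(\ref{C2}) for $(h^+,g^+)$; applying that instance inside the dual and then using $w_{(gh)^+}^*=\eta(gh)w_{gh}$ produces $\eta(g),\eta(h),\eta(gh)$ on the nose, and the $\xi$-factor is $\xi(h^+,g^+)=\xi((h^\tau)^{-1},(g^\tau)^{-1})=\xi(g^\tau,h^\tau)^{-1}$ by normalization. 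Either way the argument closes without pivotality; just drop the appeal to $w^{**}=w$.
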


\begin{proof}
Since $\xi$ is normalized, it suffices to show
$$\xi((h^\tau)^{-1},(g^\tau)^{-1})\eta(gh)=\eta(g)\eta(h)\xi(g,h).$$
Indeed, with the notation $g^+=(g^\tau)^{-1}$, we have 
\begin{align*}
\lefteqn{\eta(g)\eta(h)(w_g\otimes\id_{h\tau})\circ (\id_g\otimes w_h)
=(w_{g+}^*\otimes \id_{h^\tau})\circ (\id_g\otimes w_{h^+}^*)} \\
 &=((w_{h+}\otimes \id_{g^{-1}})\circ (\id_{h^+}\otimes w_{g^+}))^* \\
 &=\xi(h^+,g^+)((\id_\rho\otimes v_{h^{-1},g^{-1}})\circ w_{(gh)^+}\circ (v_{h^{-1},g^{-1}}^{-1}\otimes \id_\rho))^*\\
 &=\xi(h^+,g^+)(\id_\rho\otimes v_{g,h})\circ w_{(gh)^{+}}^*\circ (v_{g,h}^{-1}\otimes \id_\rho)\\
 &=\xi(h^+,g^+)\eta(gh)(\id_\rho\otimes v_{g,h})\circ w_{gh}\circ (v_{g,h}^{-1}\otimes \id_\rho)\\
 &=\xi(h^+,g^+)\eta(gh)\xi(g,h)^{-1}(w_g\otimes\id_{h\tau})\circ (\id_g\otimes w_h). 
\end{align*}
\end{proof}

We denote by $\fc^{1,2}(\cC)$ the cohomology class in $H^1(\Z_2,H^2(G,\C^\times))$ given by $\xi$,  
which does not depend on either the choice of $v_{g,h}$ or that of $w_g$. 

Although we need only $\fc^{0,3}(\cC)$ and $\fc^{1,2}(\cC)$ for our purpose, 
we can proceed further under the additional assumption that $\cC$ is pivotal. 
Assume $\fc^{1,2}(\cC)=0$. 
Then we can choose $v_{g,h}$ and $w_g$ satisfying  
\begin{equation}\label{C6}
(w_g\otimes \id_{h^{\tau}})\circ (\id_g\otimes w_h)\circ (v_{g,h}\otimes \id_\rho)\circ w_{gh}^{-1}
=\id_\rho\otimes v_{g^\tau,h^\tau}.
\end{equation}
and in consequence Eq.(\ref{C5}) implies $\eta\in \Hom(G,\C^\times)$. 
Replacing $g$ with $g^+$ in $w_{g^+}^*=\eta(g)w_g$, we get 
$w_g^*=\eta(g^+)w_{g^+}$, and $w_g^{**}=\eta(g^+)\eta(g)w_g$. which shows $\eta\in \Hom(G,\C^\times)^\tau$. 
We still have freedom to replace $v_{g,h}$ with $\zeta(g,h)v_{g,h}$ and $w_g$ with $\mu(g)w_g$ 
where $\zeta\in Z^2(G,\C^\times)$ and $\mu(g)\in \C^\times$ satisfy $\zeta(g,e)=\zeta(e,g)=\mu(e)=1$ and 
$\mu(g)\mu(h)\mu(gh)^{-1}=\zeta(g,h)^{-1}\zeta(g^\tau,h^\tau)$. 
Since 
\begin{align*}
\lefteqn{w_{g^+}^*=((v_{g,g^{-1}}^{-1}\circ (\id_\rho\otimes \mathrm{ev}_\rho\otimes \id_{g^{-1}}))
\otimes \id_\rho\otimes \id_{g^\tau})}\\
&\circ (\id_\rho\otimes \id_\rho\otimes((w_{g^+}\otimes \id_\rho\otimes \id_g)\circ  
(\id_{g^+}\otimes \mathrm{coev}_\rho\otimes \id_{g^\tau})\circ v_{(g^\tau)^{-1},g^\tau})),
\end{align*}
this amounts to replacing $\eta(g)$ with 
$$\eta(g)\zeta(g,g^{-1})^{-1}\zeta((g^\tau)^{-1},g^\tau)\mu((g^\tau)^{-1})\mu(g)^{-1}.$$
Since the cocycle relation of $\zeta$ implies $\zeta(g,g^{-1})=\zeta(g^{-1},g)$, this is equal to 
$$\eta(g)\mu((g^\tau)^{-1})\mu(g^{-1}).$$ 
Note that we can identify 
$$H^2(\Z_2,H^1(G,\C^\times))=H^2(\Z_2,\Hom(G,\C^\times))$$ 
with
$$\Hom(G,\C^\times)^\tau/\{\chi \chi^\tau\in \Hom (G,\C^\times);\; \chi\in \Hom(G,\C^\times)\}.$$
On the other hand, we have 
$H^0(\Z_2,H^2(G,\C^\times))=H^2(G,\C^\times)^\tau$.  
Thus $\eta$ determines an element in 
$$\mathrm{coker}(H^0(\Z,H^2(G,\C^\times))\to H^2(\Z_2,H^1(G,\C^\times)),$$ 
which we denote by $\fc^{2,1}(\cC)$. 

Finally, we just mention that Longo \cite{L90} already pointed out that a right analogue of an element in 
$$H^3(\Z_2,H^0(G,\C^\times ))= H^3(\Z_2,\C^\times)\cong \Z_2$$ associated with  
the object $\rho$ should be, in modern term, the Frobenius-Schur indicators $\nu_{2,1}(\rho)\in \{1,-1\}$ 
(see \cite{NS07} for the definition). 
We set $\fc^{3,0}(\cC)=\nu_{2,1}(\rho)$.

In Lemma \ref{vanishing}, we show that a quadratic category $\cC$ with $(G,\tau,m=1)$ coming from 
a $3^G$ subfactor has trivial $\fc^{0,3}(\cC)$ and $\fc^{1,2}(\cC)$. 
To simplify the statements of our main results, we introduce the following class of quadratic categories. 
Our main goal in this paper is to classify them under the C$^*$-condition, which is probably not too modest 
a goal in view of Theorem \ref{odd} and Lemma \ref{vanishing}. 

\begin{definition} A generalized Haagerup category with a finite abelian group $G$ is 
a quadratic category $\cC$ with $(G,-1,1)$ satisfying $\fc^{0,3}(\cC)=0$ and $\fc^{1,2}(\cC)=0$. 
\end{definition}

When we need a quadratic category $\cC$ satisfying all the above conditions except for $m=1$ 
(as in the case of our new construction of the Asaeda-Haagerup subfactor in \cite{GIS15}), 
we could say that a fusion category $\cC$ is a generalized Haagerup category with higher multiplicity $m$. 
We could use the adjective ``twisted" to describe $\cC$ with non-trivial 
$\fc^{0,3}(\cC)$ or $\fc^{1,2}(\cC)$ (see \cite{MPS15}), though we do not need them in this paper.  
It is known that there exist a quadratic categories with $(\Z_3,-1,1)$ having non-trivial 
$\fc^{0,3}(\cC)\in H^3(G,\C^\times)$ (see \cite{EG11}, \cite[Example 12.14]{I15}).

\subsection{The category $\End(M)$} 
In this subsection, we partly follow \cite[Section 2]{I15} for presentation. 
For a Hilbert space $\cH$, we denote by $\B(\cH)$ the set of bounded operators on $\cH$, and by $\cU(\cH)$ the set of unitaries on $\cH$. 
The identity operator of $\cH$ is denoted by $1_{\cH}$ or simply by $1$. 
For a unital C$^*$-algebra $A$, we denote by $\cU(A)$ the set of unitaries in $A$. 
The unit of $A$ is denoted by $1_A$ or simply by $1$. 

Let $M$ be a properly infinite factor. 
Then the set of unital endomorphisms $\End(M)$ forms a tensor category with the monoidal product $\rho\otimes \sigma$ of two objects 
$\rho, \sigma\in \End(M)$ given by the composition $\rho\circ \sigma$, and the morphism space from $\rho$ to $\sigma$ 
given by 
$$\Hom_{\End(M)}(\rho,\sigma)=\{T\in M;\; T\rho(x)=\sigma(x)T,\;\forall x\in M\}.$$
For simplicity, we denote $(\rho,\sigma)=\Hom_{\End(M)}(\rho,\sigma)$. 
In this tensor category, the monoidal product $T_1\otimes T_2$ of two morphisms $T_i\in (\rho_i,\sigma_i)$, $i=1,2$, are given by 
$$T_1\rho_1(T_2)=\sigma_1(T_2)T_1\in (\rho_1\circ \rho_2,\sigma_1\circ \sigma_2).$$
This is graphically expressed as
$$
\begin{xy}(0,0)*+[F]{T_1},(10,7)*+[F]{T_2} 
\ar(0,15);(0,3)^<{\rho_1}
\ar(10,15);(10,10)^<{\rho_2}
\ar(0,-3);(0,-8)^>{\sigma_1}
\ar(10,4);(10,-8)^>{\sigma_2}
\end{xy}
=
\begin{xy}(0,7)*+[F]{T_1},(10,0)*+[F]{T_2} 
\ar(0,15);(0,10)^<{\rho_1}
\ar(10,15);(10,3)^<{\rho_2}
\ar(0,4);(0,-8)^>{\sigma_1}
\ar(10,-3);(10,-8)^>{\sigma_2}
\end{xy}
.$$
By definition, two objects $\rho,\sigma$ are equivalent if and only if there exists a unitary $U\in \cU(M)$ 
satisfying $\rho=\Ad U\circ \sigma$, where $\Ad U$ is the inner automorphism of $M$ given by $\Ad U (x)=UxU^{-1}$. 
The self-morphism space $(\rho,\rho)$ is nothing but the relative commutant $M\cap \rho(M)'$, 
and when this space consists of only scalars, we say that $\rho$ is irreducible (or simple). 

The morphism space $(\rho,\sigma)$ inherits the Banach space structure from $M$, and the $*$-operation 
of $M$ sends $(\rho,\sigma)$ to $(\sigma,\rho)$, which makes $\End(M)$ a C$^*$-tensor category 
(see \cite[Section 1]{BKLR15}). 
Moreover, if $\rho$ is irreducible, the space $(\rho,\sigma)$ is a Hilbert space with an inner product 
given by $T_1^*T_2=\inpr{T_1}{T_2}1_M$ for $T_1,T_2\in (\rho,\sigma)$. 
Throughout the paper, we assume that any functor between C$^*$-fusion categories preserves  
the $*$-structure. 

For $\rho\in \End(M)$, its dimension $d(\rho)$ is defined by $[M:\rho(M)]_0^{1/2}$, 
where $[M:\rho(M)]_0$ is the minimal index of $\rho(M)$ in $M$. 
We denote by $\End_0(M)$ the set of $\rho\in \End(M)$ with finite $d(\rho)$. 
The dimension function $\End_0(M)\ni \rho\mapsto d(\rho)$ is additive with respect to 
the direct sum operation and multiplicative with respect to the monoidal product operation. 
The tensor category $\End_0(M)$ is rigid in the following sense: for any $\rho\in \End_0(M)$, 
there exist $\brho\in \End_0(M)$, called the conjugate endomorphism of $\rho$, and two isometries 
$R_\rho\in (\id,\brho\circ \rho)$, $\overline{R}_\rho\in (\id,\rho\circ \brho)$ satisfying 
$$\overline{R}_\rho^*\rho(R_\rho)=R_\rho^*\brho(\overline{R}_\rho)=\frac{1}{d(\rho)}.$$
The evaluation morphism $\mathrm{ev}_\rho$ is identified with $\sqrt{d(\rho)}\overline{R}$, 
and the coevaluation morphism $\mathrm{coev}_\rho$ is identified with $\sqrt{d(\rho)} R^*$.

If we replace $\End(M)$ with the set of unital homomorphisms between two type III factors, 
the dimension function and conjugate morphisms still make sense, 
and we use the same notation as above (see \cite{I98}, \cite{BKLR15}).  

Every C$^*$-fusion category is realized as a category of bimodules of the hyperfinite II$_1$ factor 
(see \cite{HY00}), which implies by a tensor product trick, that every C$^*$-fusion category is realized 
as a subcategory of $\End_0(M)$ for any hyperfinite type III factor $M$. 
For uniqueness, we have the following statement, which is a consequence of Popa's classification 
theorem for amenable subfactors \cite{P95}. 
Recall that a monoidal functor from a strict fusion category $\cC$ to another strict fusion category $\cD$ is a pair $(F,L)$ 
consisting of a functor $F:\cC\to \cD$ and natural isomorphisms 
$$L_{\rho,\sigma}\in \Hom_\cD(F(\rho)\otimes F(\sigma), F(\rho\otimes \sigma))$$ 
satisfying 
$$L_{\rho\otimes \sigma,\tau}\circ (L_{\rho,\sigma}\otimes I_{F(\tau)})
=L_{\rho,\sigma\otimes \tau}\circ (I_{F(\rho)}\otimes L_{\sigma,\tau})$$
for any $\rho,\sigma,\tau\in \cC$ (see \cite[Definition 2.4.1]{EGNO15}).  
We may and do assume $F(\mathbf{1}_{\cC})=\mathbf{1}_{\cD}$ and $L_{\mathbf{1}_\cC,\rho}=L_{\rho,\mathbf{1}_\cC}=I_{F(\rho)}$.   
When $\cC$ and $\cD$ are C$^*$-categories, we further assume that $L_{\rho,\sigma}$ is a unitary.

\begin{theorem}[{\cite[Theorem 2.2]{I15}}]\label{uniqueness} Let $M$ and $P$ be hyperfinite type III$_1$ factors, 
and let $\cC$ and $\cD$ be C$^*$-fusion categories embedded in $\End(M)$ and $\End(P)$ respectively. 
Let $(F,L)$ be a monoidal functor from $\cC$ to $\cD$ that is an equivalence of the two C$^*$-fusion categories 
$\cC$ and $\cD$. 
Then there exists a surjective isomorphism $\Phi:M\to P$ and unitaries $U_\rho\in P$ for each object $\rho \in \cC$ 
satisfying  
$$F(\rho)=\Ad U_\rho \circ \Phi \circ\rho\circ\Phi^{-1},$$
$$F(X)=U_\sigma\Phi(X)U_\rho^*,\quad X\in (\rho,\sigma),$$
$$L_{\rho,\sigma}=U_{\rho\circ\sigma}\Phi\circ\rho\circ\Phi^{-1}(U_\sigma^*)U_\rho^*=
U_{\rho\circ\sigma}U_\rho^*F(\rho)(U_\sigma^*).$$
\end{theorem}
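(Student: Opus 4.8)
The plan is to reduce the statement to Popa's classification theorem for strongly amenable subfactors by encoding the functor $(F,L)$ as an isomorphism of standard invariants. First I would pick an object $\rho\in\cC$ whose powers generate $\cC$ (for instance a direct sum of representatives of $\cO(\cC)$), form the corresponding subfactor $\rho(M)\subset M$, and recall that the associated standard invariant (equivalently, the $\lambda$-lattice or the tensor category $\langle\rho,\brho\rangle$ together with its C$^*$-structure) is a complete invariant for the hyperfinite type III$_1$ realization, since hyperfinite factors are amenable and the subfactors built from finite-depth C$^*$-fusion categories are strongly amenable. The functor $(F,L)$ being a unitary monoidal equivalence then produces an isomorphism of these standard invariants between the one coming from $\rho\subset\End(M)$ and the one coming from $F(\rho)\subset\End(P)$.

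Next I would invoke Popa's theorem to obtain a surjective isomorphism $\Phi:M\to P$ carrying the first subfactor onto the second and intertwining the standard invariants; concretely, after composing with $\Phi$ one may assume $M=P$ and that $\Phi\circ\rho\circ\Phi^{-1}$ and $F(\rho)$ have the same associated system of higher relative commutants, which by Popa's reconstruction means they are related by an inner perturbation. This is where the unitaries $U_\rho$ come from: for each simple object one chooses a unitary implementing the identification of $F(\rho)$ with $\Phi\circ\rho\circ\Phi^{-1}$, i.e. $F(\rho)=\Ad U_\rho\circ\Phi\circ\rho\circ\Phi^{-1}$, and extends the choice to all objects by direct sums and by using the monoidal structure. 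The compatibility $F(X)=U_\sigma\Phi(X)U_\rho^*$ for $X\in(\rho,\sigma)$ is then forced by the requirement that $F$ be a functor, and the stated formula for $L_{\rho,\sigma}$ follows by writing out what it means for $(F,L)$ to be monoidal: $L_{\rho,\sigma}\in(F(\rho)\circ F(\sigma),F(\rho\circ\sigma))$, and unwinding the definitions of $F(\rho)$, $F(\sigma)$, $F(\rho\circ\sigma)$ in terms of the $U$'s and $\Phi$ gives exactly $L_{\rho,\sigma}=U_{\rho\circ\sigma}\,\Phi\circ\rho\circ\Phi^{-1}(U_\sigma^*)\,U_\rho^*$, and then rewriting $\Phi\circ\rho\circ\Phi^{-1}=\Ad U_\rho^*\circ F(\rho)$ turns this into the second displayed form $U_{\rho\circ\sigma}U_\rho^* F(\rho)(U_\sigma^*)$.

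The main obstacle is the bookkeeping of the coherence data: one must verify that the $U_\rho$ can be chosen coherently (i.e. compatibly with all the fusion morphisms of $\cC$ and with the tensor structure $L$ on the target), so that the three displayed equations hold simultaneously for \emph{all} objects and morphisms, not just on generators. The essential point is that $\cC$ is semisimple with simple unit, so once $U_\rho$ is fixed on a generating object and on the unit (where $U_{\mathbf 1}=1$), the value on every other simple object is pinned down up to a scalar by the action of $F$ on the morphism spaces $(\rho^{\otimes k},\cdot)$, and the scalars can be absorbed; the monoidal naturality of $L$ together with the pentagon identity guarantees there is no further obstruction. Everything else — that $\Phi$ exists and is a genuine $*$-isomorphism of the ambient factors, and that it can be taken surjective — is exactly the content of \cite{P95} applied in the strongly amenable hyperfinite type III$_1$ setting, so I would cite that rather than reprove it.
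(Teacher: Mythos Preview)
The paper does not give its own proof of this theorem: it is quoted verbatim from \cite[Theorem 2.2]{I15}, with only the remark preceding the statement that it ``is a consequence of Popa's classification theorem for amenable subfactors \cite{P95}.'' So there is nothing in this paper to compare your sketch against. Your proposal correctly identifies Popa's theorem as the engine and follows the route the paper gestures at.

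One remark on your sketch itself: the coherence discussion at the end is slightly off-target. The theorem does not assert that the $U_\rho$ satisfy any multiplicative cocycle condition among themselves; the third displayed equation is not a constraint on the $U$'s but rather the \emph{definition} of what $L_{\rho,\sigma}$ turns out to be once the $U_\rho$ are chosen. The genuine content is that a single $\Phi$ can be found so that equations (1) and (2) hold simultaneously for all objects and morphisms, and this is exactly what Popa's theorem delivers when applied not just to a single subfactor $\rho(M)\subset M$ but to the full tunnel/tower (equivalently, to the Longo $Q$-system or to the standard invariant viewed as a 2-category). Once $\Phi$ intertwines the higher relative commutants with the corresponding ones on the $F$ side, the $U_\rho$ are read off object-by-object and equation (2) is automatic because both sides are isometries between the same one-dimensional intertwiner spaces for irreducibles (and extend by linearity); equation (3) then follows as you computed. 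Your worry about ``absorbing scalars on simples'' is unnecessary: no global coherence among the $U_\rho$ beyond (1)--(3) is claimed.
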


If $\rho$ is self-conjugate, we have $\overline{R}_\rho=\epsilon R_\rho$ 
with $\epsilon\in \{1,-1\}$. 
This sign $\epsilon$ can be identified with the Frobenius-Schur indicators $\nu_{2,1}(\rho)$ 
(see \cite{NS07} for the definition). 
We say that $\rho$ is real (or symmetrically self-dual) if $\epsilon=1$, and $\rho$ is pseudo-real if $\epsilon=-1$.  

\begin{lemma}\label{SC} Let $\rho\in \End(M)$ be a self-conjugate irreducible 
endomorphism of finite $d(\rho)$. 
If $\dim (\rho,\rho^2)=1$, then $\rho$ is real. 
\end{lemma}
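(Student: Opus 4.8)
The plan is to use the rigidity structure of $\End_0(M)$ together with the standard relationship between the Frobenius-Schur indicator and the behavior of $R_\rho$ under the tensor product. Since $\rho$ is self-conjugate, we may fix $R_\rho\in(\id,\rho^2)$ with $\overline R_\rho=\epsilon R_\rho$, $\epsilon\in\{1,-1\}$, and the conjugate equations $\overline R_\rho^*\rho(R_\rho)=R_\rho^*\rho(\overline R_\rho)=d(\rho)^{-1}$. The hypothesis $\dim(\rho,\rho^2)=1$ means that $R_\rho$ spans $(\id,\rho^2)$, so any other element of $(\id,\rho^2)$ is a scalar multiple of $R_\rho$.

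The key step is to consider the morphism $\rho(R_\rho)\in(\rho,\rho^3)$ and compose it with $R_\rho$ regarded as an element of $(\rho,\rho^3)$ via $R_\rho\cdot(-)$: more precisely, look at the scalar obtained by pairing suitable morphisms built from $R_\rho$ and $\overline R_\rho=\epsilon R_\rho$. The cleanest route is: consider $T:=\overline R_\rho^*\rho(R_\rho)\cdot 1$ and its "mirror" $S:=R_\rho^*\rho(\overline R_\rho)$, which by the conjugate equations both equal $d(\rho)^{-1}$; but one can also compute one of these in terms of the other by inserting the relation $\overline R_\rho=\epsilon R_\rho$ and using that $\rho$-covariance forces things into $(\id,\rho^2)=\C R_\rho$. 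Alternatively — and I think this is the slickest — I would show directly that the element $R_\rho^*\otimes 1_\rho$ composed with $1_\rho\otimes R_\rho$ (an element of $(\rho,\rho)=\C$, hence a scalar) comes out to a positive multiple of the identity, while the analogous computation using $\overline R_\rho$ produces $\epsilon$ times the same positive scalar; positivity of the first then forces $\epsilon=1$.

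Concretely, I would argue as follows. The morphism $\rho(R_\rho)$ lies in $(\rho,\rho^3)$ and $R_\rho$ (read as $R_\rho\,\rho(\cdot)$, i.e.\ $1_\rho\otimes$ nothing) — rather, $1_\rho\otimes R_\rho=\rho(R_\rho)\in(\rho,\rho^3)$ and $R_\rho\otimes 1_\rho=R_\rho\in(\rho,\rho^3)$ as well (since $R_\rho\in(\id,\rho^2)$ gives $R_\rho\in(\rho,\rho^2\rho)$). Then $(R_\rho\otimes 1_\rho)^*(1_\rho\otimes R_\rho)=R_\rho^*\rho(R_\rho)\in(\rho,\rho)=\C$, and by the conjugate equation with $\overline R_\rho=\epsilon R_\rho$ this scalar equals $\epsilon/d(\rho)$. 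On the other hand, both $R_\rho\otimes 1_\rho$ and $1_\rho\otimes R_\rho$ are isometries from $\rho$ into $\rho^3$ (up to the normalization constants $1/\sqrt{d(\rho)}$, or exactly isometries after rescaling), and I claim $1_\rho\otimes R_\rho$ is, up to a \emph{positive} scalar, the same as $R_\rho\otimes 1_\rho$. This is where $\dim(\rho,\rho^2)=1$ enters: the subspace $(\rho,\rho^2\rho)\cong(\id,\rho^2)=\C R_\rho$ is one-dimensional when we decompose $\rho^3=\rho\cdot\rho^2$ and isolate the copy of $\rho$ coming through the unit summand of $\rho^2$; since $\rho$ appears in $\rho^3$ with the multiplicity it inherits from $\rho$ sitting inside $\rho\cdot\rho^2$ via $\mathbf 1\subset\rho^2$, both $R_\rho\otimes 1_\rho$ and $1_\rho\otimes R_\rho$ land in that \emph{same} one-dimensional space, so $1_\rho\otimes R_\rho=\lambda\,(R_\rho\otimes 1_\rho)$ for some $\lambda\in\C^\times$. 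Computing $\|1_\rho\otimes R_\rho\|=\|R_\rho\otimes 1_\rho\|$ forces $|\lambda|=1$, and then $(R_\rho\otimes 1_\rho)^*(1_\rho\otimes R_\rho)=\lambda\,(R_\rho\otimes 1_\rho)^*(R_\rho\otimes 1_\rho)=\lambda\cdot d(\rho)^{-1}$, a \emph{positive} multiple of $d(\rho)^{-1}$ only if $\lambda>0$. Comparing with $\epsilon/d(\rho)$ gives $\lambda=\epsilon$, and since $\lambda>0$ and $\epsilon\in\{1,-1\}$ we conclude $\epsilon=1$, i.e.\ $\rho$ is real.

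The main obstacle I anticipate is justifying rigorously that $1_\rho\otimes R_\rho$ and $R_\rho\otimes 1_\rho$ lie in the \emph{same} one-dimensional subspace of $(\rho,\rho^3)$ — equivalently, that the multiplicity of $\rho$ in $\rho^3$ relevant here is exactly $\dim(\rho,\rho^2)=1$ and is "seen" by both these morphisms. One must be careful that $(\rho,\rho^3)$ itself may be bigger than one-dimensional ($\rho$ can occur in $\rho^3$ through summands of $\rho^2$ other than $\mathbf 1$); the point is that the specific morphisms $1_\rho\otimes R_\rho$ and $R_\rho\otimes 1_\rho$ both factor through the $\mathbf 1$-summand of the \emph{same} tensor slot, and Frobenius reciprocity identifies the space they span with $(\id,\rho^2)$. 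Making this identification precise — e.g.\ by applying the conjugate equations to show $\rho(\overline R_\rho)^* (R_\rho\otimes 1_\rho)$ and similar pairings are nonzero, pinning down that the copy of $\mathbf 1$ in $\rho\brho$ matched by $R_\rho$ is the one involved — is the step requiring genuine care; everything else is bookkeeping with the conjugate equations and the C$^*$-norm.
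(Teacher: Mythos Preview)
Your argument has a genuine gap at exactly the point you flagged as the ``main obstacle,'' and unfortunately it cannot be repaired along the lines you suggest. You assert that $R_\rho\otimes 1_\rho=R_\rho$ and $1_\rho\otimes R_\rho=\rho(R_\rho)$ span the same one-dimensional subspace of $(\rho,\rho^3)$, but this is false whenever $d(\rho)>1$. Indeed, if $\rho(R_\rho)=\lambda R_\rho$ for some scalar $\lambda$, then on the one hand $R_\rho^*\rho(R_\rho)=\lambda R_\rho^*R_\rho=\lambda$, while on the other hand the conjugate equation together with $\overline R_\rho=\epsilon R_\rho$ gives $R_\rho^*\rho(R_\rho)=\epsilon/d(\rho)$; equating norms forces $|\lambda|=1$, so $d(\rho)=1$. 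The geometric explanation is that $R_\rho\otimes 1_\rho$ factors through the copy of $\mathbf 1$ inside the \emph{left} two tensor factors of $\rho^3$, while $1_\rho\otimes R_\rho$ factors through the copy of $\mathbf 1$ inside the \emph{right} two tensor factors; these are different tensor slots, not the same one, and the two resulting copies of $\rho$ inside $\rho^3$ are genuinely distinct. Your claimed positivity of $(R_\rho\otimes 1_\rho)^*(1_\rho\otimes R_\rho)$ is also unjustified and in fact circular: that scalar equals $\epsilon/d(\rho)$, so asserting it is positive is exactly what you are trying to prove.

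The paper's argument avoids $(\rho,\rho^3)$ entirely and works directly on the one-dimensional Hilbert space $(\rho,\rho^2)$. One defines the anti-linear map $j(T)=\sqrt{d(\rho)}\,T^*\rho(R_\rho)$ on $(\rho,\rho^2)$, checks that $j$ is anti-unitary, and computes $j^2(T)=d(\rho)\,\rho(R_\rho^*)\rho^2(R_\rho)T=d(\rho)\,\rho(R_\rho^*\rho(R_\rho))T=\epsilon T$. An anti-unitary on a one-dimensional complex Hilbert space must satisfy $j^2=+1$ (there is no quaternionic structure in complex dimension one), so $\epsilon=1$ and $\rho$ is real. The hypothesis $\dim(\rho,\rho^2)=1$ is used precisely to rule out $j^2=-1$, not to force two morphisms in a larger space to be parallel.
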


\begin{proof} For $T\in (\rho,\rho^2)$, set $j(T)=\sqrt{d(\rho)}T^*\rho(R_\rho)$. 
Then $j:(\rho,\rho^2)\rightarrow (\rho,\rho^2)$ is an anti-unitary satisfying 
$$j^2(T)=d(\rho)\rho(R_\rho^*)T\rho(R_\rho)=d(\rho)\rho(R_\rho^*)\rho^2(R_\rho)T=\pm T.$$
Since $\dim (\rho,\rho^2)=1$, the case $j^2=-1$ never occurs, and 
$\rho$ is real.  
\end{proof}

\subsection{$G$-kernels and group actions}
Cohomological aspects of finite group actions on factors are well developed in \cite{C77}, \cite{J80}, \cite{S80}, 
and we summarize necessary facts for our purposes here. 

In the category $\End(M)$, an invertible object is nothing but an automorphism of $M$. 
Thus a finite group $G$ consisting of isomorphism classes of invertible objects is nothing but 
a finite subgroup of the outer automorphism group $\Out(M)$. 
Let $\T=\{z\in \C;\; |z|=1\}.$
Since we can always choose a unitary for an isomorphism between two invertible objects, it is natural 
for us to consider group cohomology with coefficient module $\T$ rather than $\C^\times$. 
Note that since $\C^\times=\T\times \R^+$ as trivial $G$-modules, we have 
$H^i(G,\C^\times)\cong H^i(G,\T)$ for $i\geq 1$. 

A $G$-kernel in $M$ is an injective homomorphism from $G$ into the outer automorphism group $\Out(M)$. 
For a $G$-kernel, we choose a lifting $\alpha:G\to \Aut(M)$, which is also called a $G$-kernel. 
Then there exists a unitary $V_{g,h}\in \cU(M)$ for each pair $g,h\in G$ satisfying 
$\alpha_g\circ \alpha_h=\Ad V_{g,h}\circ \alpha_{gh}$. 
By associativity $(\alpha_g\circ \alpha_h)\circ \alpha_k=\alpha_g\circ (\alpha_h\circ \alpha_k)$, 
we have $\Ad (V_{g,h}V_{gh,k})\circ \alpha_{ghk}=\Ad (\alpha_g(V_{h,k})V_{g,hk})\circ \alpha_{ghk},$
and there exists $\omega\in Z^3(G,\T)$ satisfying 
$$\alpha_g(V_{h,k})V_{g,hk}=\omega(g,h,k)V_{g,h}V_{gh,k}.$$ 
The cohomology class $[\omega]\in H^3(G,\T)$ is the exact obstruction for a $G$-kernel to lift 
to an genuine $G$-action, and it is also identified with the cohomology class in $H^3(G,\C^\times)$ 
defined by Eq.(\ref{C1}). 

When $[\omega]$ is trivia, we can choose $V_{g,h}$ so that the equality 
\begin{equation}
\alpha_g(V_{h,k})V_{g,hk}=V_{g,h}V_{gh,k}
\end{equation}
holds. 
The pair $(\alpha, V=\{V_{g,h}\})$ satisfying this relation is called a 2-cocycle action of $G$ on $M$. 
It is known that every 2-cocycle action of a finite group $G$ (with the assumption that $G\ni g\mapsto 
[\alpha_g]\in \Out(M)$ is injective) 
is equivalent to an action, that is, there exists a unitary $U_g\in \cU(M)$ for each $g\in G$ so that 
$\{\Ad U_g\circ \alpha_g\}_{g\in G}$ gives a $G$-action, and $U_g\alpha_g(U_h)V_{g,h}U_{gh}^*=1$.  

In the case of abstract fusion categories discussed in subsection \ref{GHC}, when Eq. (\ref{C4}) holds, 
the other isomorphisms satisfying the same relation are of the form $\zeta(g,h)v_{g,h}$ with 
$\zeta\in Z^2(G,\C^\times)$, and $H^2(G,\C^\times)$ naturally appears in the picture. 
The same mathematical fact takes a different (but of course equivalent) form in our case because 
of the following reason. 
In the case of $\End(M)$, since a $G$-action is a privileged lifting of a given $G$-kernel, 
we change the lifting $\alpha$ to be a $G$-action. 
For such $\alpha$, it is natural to consider only $1$ as an isomorphism from $\alpha_{gh}$ to $\alpha_{g}\circ \alpha_h$. 
Thus instead of considering different isomorphisms between fixed objects, 
we consider different $G$-actions that are lifting of the same $G$-kernel. 

Let $\beta$ be another $G$-action that is an inner perturbation of $\alpha$. 
Then there exists a unitary $U_g\in \cU(M)$ for each $g\in G$ satisfying 
$\beta_g=\Ad U_g\circ \alpha_g$. 
Since $\alpha$ and $\beta$ are $G$-actions, we have 
$$\beta_{gh}=\beta_g\circ \beta_h=\Ad (U_g\alpha_g(U_h))\circ \alpha_{gh}
=\Ad (U_g\alpha_g(U_h)U_{gh}^*)\circ \beta_{gh},$$
and there exists $\zeta\in Z^2(G,\T)$ satisfying $U_g\alpha_g(U_h)=\zeta(g,h)U_{gh}$. 
When $\zeta$ is a coboundary, we can choose $\{U_g\}_{g\in G}$ to satisfy the 1-cocycle relation 
$U_g\alpha_g(U_h)=U_{gh}$. 
In this case, it is known that $U=\{U_g\}_{g\in G}$ is a coboundary, that is, 
there exists a unitary $X\in \cU(M)$ satisfying $U_g=X^{-1}\alpha_g(X)$, and in consequence  
$\beta$ and $\alpha$ are inner conjugate, that is $\beta_g=\Ad X^{-1}\circ \alpha_g\circ \Ad X$. 
In summary, the inner conjugacy classes of the liftings of the same $G$-kernel to actions are 
in one-to-one correspondence with $H^2(G,\T)$, and the correspondence makes sense once a reference 
lifting $\alpha$ is chosen. 

\subsection{The Cuntz algebras}

One of the main tools in this note is the Cuntz algebra $\cO_n$, and we summarize the main feature of it here. 
Let $n$ be an integer larger than 1. 
The Cuntz algebra $\cO_n$ is the universal C$^*$-algebra with generators $\{S_i\}_{i=1}^n$ and relations 
$$S_i^*S_j=\delta_{i,j}1,$$
$$\sum_{i=1}^nS_iS_i^*=1.$$
The most peculiar property of the Cuntz algebra is that it is at the same time universal and simple (see \cite{Cu77}). 
Therefore if $\{T_i\}_{i=1}^n$ are noncommutative polynomials of the generators obeying the same relation 
as the defining relation, then there exists a unique endomorphism $\sigma\in \End(\cO_n)$ satisfying $\sigma(S_i)=T_i$.

\begin{lemma}[{\cite[Lemma 2.6]{I93}}]\label{CE} Let $\rho$ be a unital endomorphism of
the Cuntz algebra $\cO_n$ with the canonical generators
$\{S_1,S_2, \ldots S_n\}$.
We fix $1\leq i\leq n$ and set $T_j:=S_i^*\rho(S_j)S_i$.
If $\{T_1,T_2,\ldots, T_n\}$ satisfy the Cuntz algebra relation, then
$S_k^*\rho(x)S_i=0$ for $k\neq i$ and all $x\in \cO_n$.
In consequence, $\sigma(\cdot):=S_i^*\rho(\cdot)S_i$ is a unital
endomorphism and $S_i\in (\sigma,\rho)$.
\end{lemma}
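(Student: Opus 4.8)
The plan is to extract from the two halves of the Cuntz relation for $\{T_1,\dots,T_n\}$ the single intertwining identity $\rho(x)S_i=S_i\sigma(x)$, valid for all $x\in\cO_n$, where $\sigma\in\End(\cO_n)$ denotes the unital endomorphism with $\sigma(S_j)=T_j$ furnished by the universal property of $\cO_n$; once this identity is in hand the whole statement follows. Throughout, write $P=S_iS_i^*$. The ``completeness'' relation $\sum_j T_jT_j^*=1$ will produce $\rho(S_k)^*S_i=S_iT_k^*$, and the ``isometry'' relation $T_j^*T_k=\delta_{jk}1$ will produce $\rho(S_k)S_i=S_iT_k$, each by essentially the same positivity argument.

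For the first relation, expanding $1=\sum_j T_jT_j^*=S_i^*\bigl(\sum_j\rho(S_j)P\rho(S_j)^*\bigr)S_i$ and using $\sum_j\rho(S_j)\rho(S_j)^*=\rho(1)=1$ gives $1=S_i^*(1-R)S_i$, where $R:=\sum_j\rho(S_j)(1-P)\rho(S_j)^*\ge 0$; since $S_i^*S_i=1$, this forces $S_i^*RS_i=0$, hence $R^{1/2}S_i=0$ and $RS_i=0$. Multiplying $RS_i=0$ on the left by $\rho(S_k)^*$ and using $\rho(S_k)^*\rho(S_j)=\delta_{kj}1$ yields $(1-P)\rho(S_k)^*S_i=0$, i.e. $\rho(S_k)^*S_i=S_iS_i^*\rho(S_k)^*S_i=S_iT_k^*$. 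For the second relation, note $T_j^*T_k=S_i^*\rho(S_j)^*P\rho(S_k)S_i$ while $\delta_{jk}1=S_i^*\rho(S_j)^*\rho(S_k)S_i$, so subtracting gives $S_i^*\rho(S_j)^*(1-P)\rho(S_k)S_i=0$ for all $j,k$. Setting $A_k:=(1-P)\rho(S_k)S_i$, this reads $A_j^*A_k=0$; taking $j=k$ gives $A_k^*A_k=0$, hence $A_k=0$, i.e. $\rho(S_k)S_i=S_iS_i^*\rho(S_k)S_i=S_iT_k$.

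These two relations say precisely that $\rho(a)S_i=S_i\sigma(a)$ for every $a$ in the generating set $\{S_1,\dots,S_n,S_1^*,\dots,S_n^*\}$. Since $\rho$ and $\sigma$ are homomorphisms, the set $\{x\in\cO_n:\rho(x)S_i=S_i\sigma(x)\}$ is a norm-closed subalgebra of $\cO_n$, and it contains every $S_k$ and every $S_k^*$, hence equals $\cO_n$. Thus $S_i\in(\sigma,\rho)$. Left-multiplying $\rho(x)S_i=S_i\sigma(x)$ by $S_k^*$ gives $S_k^*\rho(x)S_i=\delta_{ki}\sigma(x)$, which vanishes for $k\ne i$ as claimed; and for $k=i$ it reads $S_i^*\rho(x)S_i=\sigma(x)$, so the map $x\mapsto S_i^*\rho(x)S_i$ is the unital endomorphism $\sigma$.

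I expect the only genuine content to be the two positivity arguments, promoting a compressed identity to an honest operator identity against the isometry $S_i$; everything else is formal. The one point needing a little care is that $x\mapsto S_i^*\rho(x)S_i$ is a priori only unital completely positive, so its multiplicativity has to be deduced from the intertwining relation (via $S_i^*\rho(xy)S_i=S_i^*\rho(x)S_i\,\sigma(y)=(S_i^*\rho(x)S_i)(S_i^*\rho(y)S_i)$), which is the reason for introducing $\sigma$ first through the universal property rather than defining it as the compression outright.
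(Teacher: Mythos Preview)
Your proof is correct. The paper does not give its own proof of this lemma; it merely cites \cite[Lemma 2.6]{I93}, so there is nothing in the present paper to compare against. For what it is worth, your argument is essentially the standard one: the two positivity manipulations (extracting $(1-P)\rho(S_k)^*S_i=0$ from $\sum_jT_jT_j^*=1$ and $(1-P)\rho(S_k)S_i=0$ from $T_j^*T_k=\delta_{jk}$) are exactly the content of the original proof in \cite{I93}, and your clean packaging via the universal-property endomorphism $\sigma$ and the closed-subalgebra argument is a nice way to pass from generators to all of $\cO_n$.
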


\section{Polynomial equations for generalized Haagerup categories}\label{PEGHC}
In this section, we deduce polynomial equations for a C$^*$-generalized Haagerup category $\cC$ 
with a finite abelian group $G$. 
For $G$, we use additive notation. 
We set $G_2=\{g\in G;\; 2g=0\}$. 
We denote $n=\#G$ and 
$$d=d(\rho)=\frac{n+\sqrt{n^2+4}}{2}.$$ 

Let $M$ be the hyperfinite type III$_1$ factor. 
Then we may and do assume $\cC\subset \End(M)$.

\begin{definition} For a C$^*$-generalized Haagerup category $\cC\subset \End(M)$ with a finite abelian group $G$, 
we say that a pair $[\rho, \alpha]$ of $\rho\in \End (M)$ and an action $\alpha:G\to\Aut(M)$ satisfying  
$$\cO(\cC)=\{[\alpha_g]\}_{g\in G}\sqcup \{[\alpha_g][\rho]\}_{g\in G}$$ 
is a standard lifting of $\cC$ if $\rho$ and $\alpha$ satisfy the relation 
$\alpha_g\circ \rho=\rho\circ \alpha_{-g}$, 
and $\alpha$ restricted to $G_2$ acts on $(\rho,\rho^2)$ trivially. 
(We do not use the notation $(\rho,\alpha)$ for the pair of $\rho$ and $\alpha$ in order to avoid possible 
confusion with the intertwiner space $(\rho,\alpha_g)$.)

We say that two standard liftings $[\rho,\alpha]$ and $[\rho',\alpha']$ are conjugate (resp. inner conjugate) 
if there exist $\theta\in \Aut(M)$ (resp. an inner automorphism $\theta$) satisfying 
$\rho'=\theta\circ \rho\circ \theta^{-1}$ and $\alpha'_g=\theta\circ \alpha_g\circ \theta^{-1}$. 
\end{definition}

\begin{lemma} \label{standard} 
For a C$^*$-generalized Haagerup category $\cC\subset \End(M)$, there always exists a standard lifting. 
\end{lemma}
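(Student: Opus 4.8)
The plan is to produce the pair $[\rho,\alpha]$ in two stages: first arrange the group part as a genuine $G$-action with the correct behavior relative to $\rho$, then correct the action on $(\rho,\rho^2)$. Since $\cC$ is a C$^*$-fusion category embedded in $\End(M)$ with $M$ hyperfinite type III$_1$, the invertible objects of $\cC$ give a finite subgroup of $\Out(M)$ isomorphic to $G$; its obstruction class in $H^3(G,\C^\times)$ coincides with $\fc^{0,3}(\cC)$, which vanishes by the definition of a generalized Haagerup category. Hence, by the discussion of $2$-cocycle actions recalled in subsection on $G$-kernels (every $2$-cocycle action of a finite group lifts to a genuine action), we may choose a lifting $\alpha:G\to\Aut(M)$ that is an honest $G$-action with $[\alpha_g]$ equal to the given isomorphism class of $g$. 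Pick any representative $\rho\in\End(M)$ of the non-invertible simple object, so that $\cO(\cC)=\{[\alpha_g]\}_{g\in G}\sqcup\{[\alpha_g][\rho]\}_{g\in G}$.

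Next I would establish $\alpha_g\circ\rho=\rho\circ\alpha_{-g}$ on the nose. From the fusion rule $[\alpha_g][\rho]=[\rho][\alpha_{-g}]$ (here $\tau=-1$) we get unitaries $W_g\in\cU(M)$ with $\alpha_g\circ\rho=\Ad W_g\circ\rho\circ\alpha_{-g}$. One checks that $g\mapsto W_g$ is a cocycle-type datum for the $G$-action $g\mapsto\alpha_g\circ\rho\circ\alpha_{-g}\circ\rho^{-1}$ twisting $\rho$; more precisely, replacing $\rho$ by $\Ad X\circ\rho$ changes $W_g$ by $X\alpha_g(\rho(\alpha_{-g}(X^*)))$-type coboundaries, and one wants to solve $W_g=1$ for all $g$. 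The relevant obstruction lives in $H^1$ or $H^2$ of $G$ with appropriate coefficients; I expect it to vanish using $\fc^{1,2}(\cC)=0$ together with the Pontryagin self-duality $\tau=-1$ (so that $\chi\mapsto\chi\chi^\tau$ is squaring on $\widehat G$ and the relevant cohomology with $\tau$-twisted coefficients degenerates). Concretely I would use that $H^2(G,\C^\times)^\tau$ and the $\tau$-twisted pieces are killed by the conditions defining a generalized Haagerup category, then redefine $\rho$ by an inner perturbation to achieve $\alpha_g\circ\rho=\rho\circ\alpha_{-g}$ exactly. One must also recheck that this inner perturbation does not destroy $\alpha$ being an action — it does not, since we only change $\rho$.

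Finally I would handle the condition that $\alpha|_{G_2}$ act trivially on $(\rho,\rho^2)$. Since $[\rho^2]=[\mathbf 1]+\sum_{g}[\alpha_g\rho]$ with multiplicity one, $\dim(\rho,\rho^2)=1$, so $(\rho,\rho^2)=\C T$ for a single isometry (up to scalar) $T$. For $g\in G_2$, $\alpha_g$ maps $(\rho,\rho^2)$ to $(\alpha_g\rho,\alpha_g\rho^2)=(\rho\alpha_{-g},\,\rho^2\alpha_{-g})$ — using $\alpha_g\rho=\rho\alpha_{-g}=\rho\alpha_g$ since $2g=0$ — hence back to $(\rho,\rho^2)$, so $\alpha_g(T)=\lambda(g)T$ for some $\lambda(g)\in\T$. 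The map $g\mapsto\lambda(g)$ is a homomorphism $G_2\to\T$, i.e. $\lambda\in\widehat{G_2}$. Because $G_2$ is an elementary abelian $2$-group, the squaring map on $\widehat{G}$ is trivial on the $2$-torsion dual but every character of $G_2$ extends to a character $\chi$ of $G$ with $\chi|_{G_2}=\lambda$ and such $\chi$ can be arranged with $\chi\circ\tau=\chi$ on $G_2$; then replacing $\rho$ by... — rather, I would absorb $\lambda$ by perturbing the chosen intertwiners or by composing $\alpha_g$ with an inner automorphism that is trivial on $(\rho,\rho)$ but rescales $T$. The cleanest route: choose a unitary $Y$ with $\alpha_g(Y)/Y=\lambda(g)$-compatible data for $g\in G_2$ (possible since $\widehat{G_2}$ is in the image of the $G$-action cohomology computation), and replace the reference action within its inner conjugacy class. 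The main obstacle I anticipate is precisely the bookkeeping in this last step together with the previous one: verifying that the two successive inner perturbations (one of $\rho$, one within the $G$-action) are mutually compatible and that the relevant obstruction classes are exactly $\fc^{0,3}(\cC)$ and $\fc^{1,2}(\cC)$ (plus the harmless $2$-torsion piece), so that the hypotheses of the definition of a generalized Haagerup category are what make everything vanish. The Frobenius–Schur sign of $\rho$ is automatically $+1$ by Lemma~\ref{SC} since $\dim(\rho,\rho^2)=1$, which is consistent but not needed for the construction itself.
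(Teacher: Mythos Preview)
Your three-stage outline matches the paper's proof exactly, but two of the stages are left as intentions rather than arguments, and in one place your proposed fix points in the wrong direction.

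\textbf{Stage 2.} Your phrasing ``a cocycle-type datum for the $G$-action $g\mapsto\alpha_g\circ\rho\circ\alpha_{-g}\circ\rho^{-1}$'' does not parse, since $\rho$ is not invertible, and ``the relevant obstruction lives in $H^1$ or $H^2$'' is not an argument. What the paper actually does is sharper and simpler than you suggest. The hypothesis $\fc^{1,2}(\cC)=0$ means (Eq.~(\ref{C6})) that one may choose the unitaries $W_g\in(\alpha_g\rho,\rho\alpha_{-g})$ so that $W_g\alpha_g(W_h)V_{g,h}W_{g+h}^{-1}=\rho(V_{-g,-h})$. After straightening the $2$-cocycle action to an action via $\alpha'_g=\Ad U_g\circ\alpha_g$ (using $\fc^{0,3}=0$), the transported intertwiners $W'_g=\rho(U_{-g})W_gU_g^{-1}$ satisfy the honest $1$-cocycle identity $W'_g\alpha'_g(W'_h)=W'_{g+h}$. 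Now one invokes the standard fact (recalled in the subsection on $G$-kernels) that every $\alpha'$-$1$-cocycle on a factor is a coboundary: $W'_g=X^{-1}\alpha'_g(X)$, and $\rho':=\Ad X\circ\rho$ satisfies $\alpha'_g\circ\rho'=\rho'\circ\alpha'_{-g}$ exactly. No further $H^2$ computation or ``$\tau$-twisted cohomology'' is needed.

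\textbf{Stage 3.} You correctly compute that $\alpha|_{G_2}$ acts on the line $(\rho,\rho^2)$ by a character $\chi\in\widehat{G_2}$, and you correctly observe that $\chi$ extends to $\chi'\in\widehat G$. But your proposed remedy---``composing $\alpha_g$ with an inner automorphism'' or ``replace the reference action within its inner conjugacy class''---would either destroy the relation $\alpha_g\rho=\rho\alpha_{-g}$ you just arranged, or (if you conjugate both $\alpha$ and $\rho$ simultaneously) leave the character $\chi$ unchanged. The paper instead perturbs $\rho$, exactly the move you began to write and then abandoned: pick a unitary $Y\in\cU(M)$ with $\alpha_g(Y)=\chi'(g)Y$ (such eigenvectors exist because $\alpha$ is outer on a type III factor), and set $\rho'=\Ad Y\circ\rho$. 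One checks directly that $\alpha_g\circ\rho'=\rho'\circ\alpha_{-g}$ still holds, and that the new generator $T'=Y\rho(Y)TY^{-1}\in(\rho',\rho'^2)$ satisfies $\alpha_z(T')=\chi(-z)\chi(z)T'=T'$ for $z\in G_2$. That is the entire fix.
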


\begin{proof} We can choose $\rho\in \End(M)$ and $\alpha_g\in \Aut(M)$ with 
$$\cO(\cC)=\{[\alpha_g]\}_{g\in G}\sqcup \{[\alpha_g][\rho]\}_{g\in G}$$ satisfying the fusion rules 
$$[\alpha_g][\alpha_h]=[\alpha_{g+h}],$$
$$[\alpha_g][\rho]=[\rho][\alpha_{-g}],$$
$$[\rho^2]=[\id]+ \sum_{g\in G}[\alpha_g\circ\rho].$$
Since $c^{0,3}(\cC)$ and $c^{1,2}(\cC)$ are trivial, thanks to Eq.(\ref{C4}) and Eq.(\ref{C6}), 
there exist unitaries $V_{g,h}\in (\alpha_g\circ\alpha_h,\alpha_{g+h})$ 
and $W_g\in (\alpha_g\circ \rho,\rho\circ \alpha_{-g})$ satisfying 
$$\alpha_g(V_{h,k})V_{g,h+k}=V_{g,h}V_{g+h,k},$$
$$W_g \alpha_g(W_h)V_{g,h}W_{g+h}^{-1}
=\rho(V_{-g,-h}).$$
The first equation shows that the pair $(\alpha,\{V_{g,h}\}_{g,h\in G})$ is a cocycle action of $G$, and 
there exists a unitary $U_g\in \cU(M)$ for each $g\in G$ satisfying 
$V_{g,h}=\alpha_g(U_h^{-1})U_g^{-1}U_{g+h}$ and $\alpha'$ defined by 
$\alpha'_g=\Ad U_g\circ \alpha_g$ is a $G$-action. 
The second equation implies that if we set $W'_g=\rho(U_{-g})W_gU_g^{-1}$, then $W'_g\in 
(\alpha'_g\circ \rho,\rho\circ \alpha'_{-g})$ and $W'=\{W'_g\}_{g\in G}$ is an $\alpha'$ cocycle. 
Thus there exists a unitary $X\in \cU(M)$ satisfying $W_g=X^{-1}\alpha_g(X)$,
and we get
$$\alpha'_g\circ \Ad X\circ \rho=\Ad X\circ \rho \circ \alpha'_{-g}.$$
Setting $\rho'=\Ad X\circ \rho$, we get $\alpha'_g\circ \rho'=\rho'\circ \alpha'_{-g}$. 
To simplify the notation, we may and do assume that $\alpha$ is an action and 
$\rho$ and $\alpha$ satisfy the relation $\alpha_g\circ \rho=\rho\circ \alpha_{-g}$ 
from the beginning by replacing $\rho$ and $\alpha$ with $\rho'$ and $\alpha'$ respectively. 
 
Next we show that $\alpha$ restricted to $G_2$ globally fix $(\rho,\rho^2)$. 
Since $\dim (\rho,\rho^2)=1$. we can choose an isometry $T\in (\rho,\rho^2)$ with 
$(\rho,\rho^2)=\C T$. 
Then for any $g\in G$, we have 
$$\alpha_g(T)\rho(x)=\alpha_g(T\rho(\alpha_g(x)))=\alpha_g(\rho^2(\alpha_g(x))T)=\rho^2(\alpha_{2g}(x))\alpha_g(T),$$
and if $z\in G_2$, we get $\alpha_z(T)\in \C T$. 
Thus there exists a character $\chi\in \widehat{G_2}$ satisfying $\alpha_z(T)=\chi(z)T$. 
Note that $\chi(z)\in \{1,-1\}$. 
Since every character of $G_2$ extends to a character of $G$, we choose such an extension $\chi'\in \widehat{G}$. 

Let $Y\in \cU(M)$ be a unitary satisfying $\alpha_g(Y)=\chi'(g)Y$ for any $g\in G$. 
Let $\rho'=\Ad Y\circ \rho$, and let $T'=Y\rho(Y)TY^{-1}$. 
Then $Y'\in (\rho',\rho'^2)$ and $\alpha_g\circ\rho'=\rho'\circ \alpha_{-g}$. 
For $z\in G_2$, we have 
\begin{align*}
 \alpha_z(T')&=\alpha_z(Y\rho(Y)TY^{-1}))=\alpha_z(Y)\rho(\alpha_{-z}(Y))\alpha_z(T)\alpha_z(Y^{-1})) \\
 &=\chi(-z)\chi(z)T'=T'.
\end{align*}
Thus $[\alpha,\rho']$ is a standard lifting. 
\end{proof}

In what follows, we fix one standard lifting $[\rho,\alpha]$ for $\cC$ and obtain polynomial equations for it. 

We first choose an isometry $S\in (\id,\rho^2)$. 
Then we have $\alpha_g(S)\in (\id,\rho^2)$ because  
$$\alpha_g(S)x=\alpha_g(S\alpha_{-g}(x))=\alpha_g(\rho^2\alpha_{-g}(x)S)=\rho^2(x)\alpha_g(S).$$

\begin{lemma}\label{fixed} With the above notation, we have 
\begin{equation}\alpha_g(S)=S.
\end{equation}
\end{lemma}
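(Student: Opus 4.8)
The plan is to show that the one-dimensional space $(\id,\rho^2)$ is globally fixed by every $\alpha_g$, and that the eigenvalue is forced to be $1$. First I would note, exactly as in the displayed computation preceding the lemma, that $\alpha_g(S)\in (\id,\rho^2)$, so since $\dim(\id,\rho^2)=1$ (this follows from the fusion rule $[\rho^2]=[\id]+\sum_{h}[\alpha_h\rho]$, as $\id$ appears with multiplicity one) there is a scalar $\lambda(g)$ of modulus one with $\alpha_g(S)=\lambda(g)S$. Applying $\alpha_g$ to $\alpha_h(S)=\lambda(h)S$ and using $\lambda(g)\in\T$ shows $\lambda:G\to\T$ is a character, so $\lambda\in\widehat{G}$.

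Next I would pin down $\lambda$ using the relation between $S$ and the other intertwiners. Choose an isometry $T\in(\rho,\rho^2)$ spanning that space, normalized so that $T$ and $S$ are part of a standard orthonormal basis of isometries realizing the decomposition $\rho^2\cong \id\oplus\bigoplus_g \alpha_g\rho$; concretely the range projections $SS^*$, together with the $W_g$-twisted images of $TT^*$, sum to $1$. The key structural input is the standard-lifting condition: $\alpha$ restricted to $G_2$ fixes $(\rho,\rho^2)$, i.e. $\alpha_z(T)=T$ for $z\in G_2$, combined with $\alpha_g\circ\rho=\rho\circ\alpha_{-g}$. I would exploit the fact that $S\in(\id,\rho^2)$ and $T\in(\rho,\rho^2)$ are linked by a single equation coming from the associativity/Frobenius relations of the $Q$-system-like data — for instance the relation expressing $\rho(S)$ or $T^*\rho(T)$ in terms of $S^*T$ — and track how $\alpha_g$ transforms both sides. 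Since $\alpha_g$ fixes $\id$ and permutes the summands $\alpha_h\rho$ by $h\mapsto$ (something involving $g$), the scalar $\lambda(g)$ gets related to the scalars by which $\alpha_g$ acts on the $W_h$ and on $T$; because $\alpha_g(T)$ differs from $T$ only by a phase controlled by $G_2$-triviality and $\alpha$ being an honest action, one deduces $\lambda(g)=1$ for all $g$.

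Concretely I expect the cleanest route is: apply $\alpha_g$ to the Frobenius identity that makes $\rho$ self-dual with $R_\rho$ expressible via $S$ (e.g. $S=R_\rho$ up to normalization, so $S$ is essentially the conjugation isometry $\mathrm{coev}_\rho$ up to a scalar $\sqrt{d}$), and use that $\alpha_g$ preserves the conjugation data up to the phase by which it acts on $W_g$; then the self-duality equation $\overline{R}_\rho^*\rho(R_\rho)=1/d$, being $\alpha_g$-equivariant with a genuine action, forces the phase on $R_\rho=S$ to be trivial, since $\overline{R}_\rho$ and $R_\rho$ are related by the real structure (Lemma \ref{SC} gives $\rho$ real, so $\overline{R}_\rho=R_\rho$, and $\alpha_g$ of a positive scalar is itself). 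That is, $\lambda(g)^2\cdot(\text{phase on }\overline{R}_\rho)=1$ collapses to $\lambda(g)=1$.

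The main obstacle will be bookkeeping the phases: making precise which equation rigidly couples $\alpha_g(S)$ to data that $\alpha$ acts on trivially (rather than by an a priori unknown character). The danger is a circular argument where the phase on $S$ is only shown equal to the phase on some $W_g$ without independently knowing the latter is trivial. I would resolve this by using the $G_2$-triviality hypothesis in the definition of standard lifting together with $\alpha$ being a strict action (so that $\alpha_g(1)=1$ and there is no room for a $2$-cocycle), pushing the phase through the relation $\alpha_{2g}=\alpha_g\circ\alpha_g$ to conclude $\lambda(g)^2$ equals a $G_2$-trivial phase, hence $\lambda$ takes values in $\{\pm1\}$ and in fact $\lambda=1$; the parity/squaring trick is exactly the mechanism already used in the proof of Lemma \ref{standard} to absorb the character $\chi$, so the same idea applies here with no residual freedom left.
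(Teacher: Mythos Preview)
There is a genuine gap. Your route of applying $\alpha_g$ to the conjugate equation $S^*\rho(S)=1/d$ for $\rho$ only yields $\lambda(g)^2=1$, not $\lambda(g)=1$: writing $\alpha_g(S)=\lambda(g)S$, one has $\alpha_g(S^*)=\overline{\lambda(g)}S^*$ and $\alpha_g(\rho(S))=\rho(\alpha_{-g}(S))=\lambda(-g)\rho(S)=\overline{\lambda(g)}\rho(S)$, so the equation becomes $\overline{\lambda(g)}^2\cdot(1/d)=1/d$. Your appeal to the ``parity/squaring trick'' from Lemma~\ref{standard} does not close this: that lemma \emph{modifies} $\rho$ (by conjugating with a suitable $Y$) to absorb an unwanted character, but here the standard lifting $[\rho,\alpha]$ is already fixed and you must prove a property of it, not change it. Even if you add the $G_2$-triviality of $\alpha$ on $(\rho,\rho^2)$, you only get $\lambda|_{G_2}=1$; for $G=\Z_4$ the sign character $\lambda(k)=(-1)^k$ satisfies both $\lambda^2=1$ and $\lambda|_{G_2}=1$, so your argument cannot rule it out.

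The paper's proof supplies exactly the missing idea: rather than transforming the conjugate equation for $\rho$, it uses $S$ directly as a solution of the conjugate equations for $\alpha_g\rho$. Since $(\alpha_g\rho)^2=\rho^2$, the same $S$ lies in $(\id,(\alpha_g\rho)^2)$; and since $\dim(\alpha_g\rho,(\alpha_g\rho)^2)=1$, Lemma~\ref{SC} says $\alpha_g\rho$ is real, so one may take $R_{\alpha_g\rho}=\overline{R}_{\alpha_g\rho}=S$. The conjugate equation then reads $S^*\alpha_g\rho(S)=1/d$, i.e.\ $S^*\rho(\alpha_{-g}(S))=1/d$. Here only the \emph{inner} $S$ is hit by $\alpha_{-g}$, giving a single factor $\lambda(-g)$; comparing with $S^*\rho(S)=1/d$ forces $\lambda(-g)=1$ for every $g$. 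The crucial point you missed is applying realness to $\alpha_g\rho$ rather than only to $\rho$.
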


\begin{proof} 
Since 
$$\dim(\id,(\alpha_g\rho)^2)=\dim(\id,\rho^2)=1,$$ 
$$\dim(\alpha_g\rho,(\alpha_g\rho)^2)=\dim(\alpha_g\rho,\rho^2)=1,$$ 
Lemma \ref{SC} implies that the endomorphism $\alpha_g\rho$ is real. 
Since $\alpha_{-g}(S)$ is a scalar multiple of $S$ and 
$$S^*\rho(\alpha_{-g}(S))=S^*\alpha_g\rho(S)=\frac{1}{d},$$
we get $\alpha_{-g}(S)=S$. 
\end{proof}

\begin{remark} The above lemma shows that $\fc^{2,1}(\cC)$ and $\fc^{3,0}(\cC)$ are trivial too.  
Indeed, since $[\rho,\alpha]$ is a standard lifting, we can choose $v_{g,h}$ and $w_g$ in 
subsection \ref{GHC} to be 1. 
Then $\eta(g)$ in Eq.(\ref{c12}) is given by $d\alpha_g(S^*\rho(\alpha_g(S)))=1$, 
and $\fc^{2,1}(\cC)$ is trivial. Since $\rho$ is real, $\fc^{3,0}(\cC)$ is trivial too.
\end{remark}

Now we examine the anti-unitaries on $(\alpha_g\rho,(\alpha_g\rho)^2)=(\alpha_g\rho,\rho^2)$ 
coming from the Frobenius reciprocity. 
For $T\in (\alpha_g\rho,(\alpha_g\rho)^2)=(\alpha_g\rho,\rho^2)$ we set 
\begin{equation}j_{1,g}(T)=\sqrt{d}T^*\alpha_g\rho(S)=\sqrt{d}T^*\rho(S),\end{equation}
\begin{equation}j_{2,g}(T)=\sqrt{d}\alpha_g\rho(T_g)^*S.\end{equation}
Then $j_{1,g}$ and $j_{2,g}$ are anti-unitaries of $(\alpha_g\rho,\rho^2)$ with 
$j_{1,g}^2=j_{2,g}^2=1$. 
We choose an isometry $T_g\in(\alpha_g\rho,\rho^2)$ satisfying $j_{1,g}(T_g)=T_g$, 
which is uniquely determined up to sign. 
Then $\{S\}\cup\{T_g\}_{g\in G}$ satisfy the Cuntz algebra relation. 
We denote by $\cO_{n+1}$ the C$^*$-algebra generated by these isometries. 
We have $\alpha_h(T_g)\in (\alpha_{g+2h}\rho,\rho^2)$ 
because  
$$\alpha_h(T_g)\alpha_{g+2h}\rho(x)=\alpha_h(T_g\alpha_g\rho\alpha_{-h}(x))=\alpha_h(\rho^2\alpha_{-h}(x)T_g)
=\rho^2(x)\alpha_h(T_g).$$
Moreover, since 
$$\alpha_h(T_g)=\alpha_h(j_{1,g}(T_g))=\sqrt{d}\alpha_h(T_g^*\rho(S))
=\sqrt{d}\alpha_h(T_g)^*\rho(\alpha_{-g}(S))=j_{1,g+2h}\alpha_h(T_g),$$  
we have 
\begin{equation}\alpha_h(T_g)=\epsilon_h(g)T_{g+2h},\end{equation}
with $\epsilon_h(g)\in \{1,-1\}$ satisfying the cocycle identity: 
\begin{equation}\epsilon_{h+k}(g)=\epsilon_h(g)\epsilon_k(g+2h).\end{equation}
Since $T_g$ is uniquely determined only up to sign, we have freedom to replace 
$T_g$ with $\delta_gT_g$ satisfying $\delta_g\in \{1,-1\}$.  
This amounts to replacing $\epsilon_h(g)$ with $\epsilon_{h}(g)\delta_g\delta_{g+2h}$, 
that is, multiplying by a coboundary term.   

There exists a character $\chi_g$ of $G_2$ for each $g\in G$ satisfying 
$\alpha_z(T_g)=\chi_g(z)T_g$ for all $z\in G_2$, or equivalently, $\epsilon_z(g)=\chi_g(z)$. 
Note that we have $\chi_0(z)=1$. 
Since $\alpha_z(\alpha_h(T_g))=\alpha_h(\alpha_z(T_g))$, we have $\chi_{g+2h}=\chi_g$ for any $g,h\in G$.

\begin{remark} Since $\alpha_z(T_0)=T_0$ for any $z\in G_2$, we may assume $\alpha_h(T_0)=T_{2h}$ and 
$\epsilon_h(0)=1$ for any $h\in G$. 
In a similar way, we can see that the cohomology class of 
the cocycle $\{\epsilon_h(g) \}_{g,h}$ is determined by $\{\chi_g\}_{g\in G}$. 
\end{remark}

Since $\dim(\alpha_g\rho,\rho^2)=1$, there exists $\eta_g\in \T$ satisfying $j_{2,g}(T_g)=\eta_gT_g$. 
Since 
$$j_{2,g+2h}(\alpha_h(T_g))=\sqrt{d}\alpha_{g+2h}\rho(\alpha_h(T_g)^*)S=\sqrt{d}\alpha_{g+h}\rho(T_g)^*S=\alpha_hj_{2,g}(T_g),$$ 
we have 
\begin{equation}\eta_{g+2h}=\eta_g.
\end{equation}
The unitary $j_{2,g}j_{1,g}$ is called the rotation map, and it does not depend on the choice of $S$. 
In our case, it reduces to the scalar $\eta_g$. 

Now we determine the form of $\rho$ on $S$ and $T_g$. 
We set $P=SS^*$ and $Q=\sum_{g\in G}T_gT_g^*$. 
In order to determine $\rho(T_g)$, it suffices to determine $\alpha_g\rho(T_g)$ as 
we have $\alpha_g\rho(T_g)=\rho\alpha_{-g}(T_g)=\epsilon_{-g}(g)\rho(T_{-g})$.  

\begin{lemma}\label{ST}
\begin{equation}
\rho(S)=\frac{1}{d}S+\frac{1}{\sqrt{d}}\sum_{g\in G}T_gT_g.
\end{equation} 
There exists $A_g(h,k)\in \C$ satisfying 
\begin{equation}
\alpha_g\rho(T_g)=\eta_gT_gSS^*+\frac{\overline{\eta_g}}{\sqrt{d}}ST_g^*+
\sum_{h,k\in G}A_g(h,k)T_{g+h}T_{g+h+k}T_{g+k}^*.
\end{equation}
\end{lemma}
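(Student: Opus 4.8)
The strategy is to expand $\rho(S)$ and $\alpha_g\rho(T_g)$ in the orthonormal basis of $(\rho,\rho^3)$ obtained from the generating isometries $S$ and $\{T_g\}$, and then to kill most of the coefficients using the intertwining relations already established, together with the Cuntz relation $SS^*+\sum_g T_gT_g^*=1$. Concretely, since $S\in(\id,\rho^2)$, applying $\rho$ gives $\rho(S)\in(\rho,\rho^3)$; a basis for this space is furnished by the monomials $\rho(S)$-free words of the form $SS$, $T_hT_h$, $T_hT_{h+k}T_{g+\text{(stuff)}}^*\cdots$ — more precisely, one uses the decomposition of $\rho^3$ into irreducibles coming from $[\rho^2]=[\id]+\sum_g[\alpha_g\rho]$. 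I would first write
$$\rho(S)=aS+\sum_{g\in G}b_g T_gT_g+(\text{terms in }(\alpha_g\rho,\rho^2)\text{-components with three legs}),$$
and show the three-legged terms vanish by pairing against $S^*$ and the $T_g^*$ and using $S^*\rho(S)=\tfrac1d$ (from the standardness of the lifting, Lemma \ref{fixed}) and $T_g^*\rho(S)=\tfrac1{\sqrt d}\,\overline{j_{1,g}(T_g)^*}=\tfrac1{\sqrt d}$ after recalling $j_{1,g}(T_g)=T_g$, i.e. $\sqrt d\,T_g^*\rho(S)=T_g$ so $T_g^*\rho(S)T_g^*=\tfrac1{\sqrt d}$; combined with the $\alpha$-equivariance $\alpha_g(S)=S$ forcing all $b_g$ equal, this pins down $a=\tfrac1d$ and $b_g=\tfrac1{\sqrt d}$, giving the first displayed equation.

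For the second equation I would exploit three facts in turn. First, the target: $\alpha_g\rho(T_g)\in(\alpha_g\rho\cdot\alpha_g\rho,\alpha_g\rho\cdot\rho^2)=((\alpha_g\rho)^2,\alpha_g\rho\cdot\rho^2)$, and decomposing both sides via the fusion rules shows the only surviving basis vectors are $T_g SS^*$, $S T_g^*$, and the three-legged words $T_{g+h}T_{g+h+k}T_{g+k}^*$ — the index pattern $g{+}h,\ g{+}h{+}k,\ g{+}k$ is exactly what is compatible with the grading by $G$ under the relations $\alpha_a\rho=\rho\alpha_{-a}$ and $[\alpha_a][\rho]=[\rho][\alpha_{-a}]$. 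Second, the coefficient of $T_gSS^*$ and of $ST_g^*$ are computed by pairing with $T_g^*(\ \cdot\ )S$ and $S^*(\ \cdot\ )T_g$ respectively and invoking the rotation relation $j_{2,g}(T_g)=\eta_g T_g$, i.e. $\sqrt d\,\alpha_g\rho(T_g)^*S=\eta_g T_g$; this yields the coefficients $\eta_g$ and $\overline{\eta_g}/\sqrt d$. Third, the remaining coefficients are simply \emph{named} $A_g(h,k)$ — no further identity is claimed for them at this stage — so the content of the lemma is precisely that nothing \emph{other} than these three families of terms appears.

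\textbf{Main obstacle.} The routine part is the coefficient extraction via inner products; the genuinely delicate point is the \emph{a priori} restriction of the support of $\alpha_g\rho(T_g)$ to the three families of monomials listed. This requires a careful bookkeeping of the $G$-grading: $(\rho,\rho^3)$ and its $\alpha_g\rho$-twisted analogue must be decomposed using $[\rho^3]=[\rho]+\sum_h[\alpha_h\rho^2]=[\rho]+\sum_h([\alpha_h]+\sum_k[\alpha_{h+k}\rho])$, and one must check that, after multiplying isometries $T_aT_bT_c^*$ with the correct adjoint patterns and reducing modulo the intertwining relations $\alpha_a(T_b)=\epsilon_a(b)T_{b+2a}$, the only words landing in the correct homogeneous component are those with index triple $(g{+}h,\,g{+}h{+}k,\,g{+}k)$. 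I would organize this by fixing the leftmost leg and using that $S^*\rho(T_g^*\cdots)$, $T_a^*\rho(\cdots)$ project onto the $\id$- and $\alpha_a\rho$-isotypic pieces of $\rho^2$, iterating once more to reach $\rho^3$. Establishing that no "off-diagonal" three-legged terms (e.g. with a spurious $S$ in the middle, or with index triples not of the stated shape) can occur is where essentially all the care goes; everything after that is linear algebra with the Cuntz relation and the anti-unitaries $j_{1,g},j_{2,g}$.
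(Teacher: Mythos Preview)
Your approach is correct in outline and uses the same ingredients as the paper, but your treatment of the second equation is substantially more roundabout than necessary. For $\rho(S)$ you are doing exactly what the paper does, just phrased less crisply: the paper simply multiplies on the left by $1=SS^*+\sum_g T_gT_g^*$ and reads off $S^*\rho(S)=\tfrac1d$ and $T_g^*\rho(S)=\tfrac1{\sqrt d}\,j_{1,g}(T_g)=\tfrac1{\sqrt d}T_g$ in one line; there are no ``three-legged terms'' to kill because the Cuntz relation already exhausts the element.

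For $\alpha_g\rho(T_g)$, the paper avoids your proposed fusion-rules bookkeeping on $[\rho^3]$ entirely. It computes $P\alpha_g\rho(T_g)$ and $\alpha_g\rho(T_g)P$ directly (using $j_{2,g}(T_g)=\eta_gT_g$ and $j_{1,g}(T_g)=T_g$, as you noted), and then handles the remaining piece $Q\alpha_g\rho(T_g)Q$ by a single intertwiner check: one verifies by straightforward commutation that
\[
T_{g+h}^*\,\alpha_g\rho(T_g)\,T_{g+k}\in(\alpha_{g+h+k}\rho,\rho^2)=\C T_{g+h+k},
\]
which immediately forces the index pattern $(g{+}h,\,g{+}h{+}k,\,g{+}k)$ and defines $A_g(h,k)$ as the resulting scalar. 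So the ``main obstacle'' you worry about --- ruling out off-diagonal three-legged words --- dissolves into a five-line calculation that the sandwich lies in a one-dimensional hom space; there is no need to decompose $\rho^3$ globally or track $\epsilon$-signs at this stage. Your route would work, but it replaces a short local computation with a global structural argument that carries more overhead than the lemma warrants.
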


\begin{proof}
Since $P+Q=1$, we get 
$$\rho(S)=(SS^*+\sum_{g\in G}T_gT_g^*)\rho(S)=\frac{1}{d}S+\frac{1}{\sqrt{d}}\sum_{g\in G}T_gj_{1,g}(T_g)
=\frac{1}{d}S+\frac{1}{\sqrt{d}}\sum_{g\in G}T_gT_g.$$ 
We compute each of $P\alpha_g\rho(T_g)$, $\alpha_g\rho(T_g)P$ and $Q\alpha_g\rho(T_g)Q$ now.
$$P\alpha_g\rho(T_g)=SS^*\alpha_g\rho(T_g)=\frac{1}{\sqrt{d}}Sj_{2,g}(T_g)^*=\frac{\overline{\eta_g}}{\sqrt{d}}ST_g^*,$$
\begin{align*}
\alpha_g\rho(T_g)P &=\alpha_g\rho(j_{1,g}(T_g))P=\sqrt{d}\alpha_g\rho(T_g^*\rho(S))P
=\sqrt{d}\alpha_g\rho(T_g^*)SSS^* \\
 &=j_{2,g}(T_g)SS^*=\eta_gT_gSS^*.
\end{align*}
For $h,k\in G$, we claim $T_{g+h}^*\alpha_g\rho(T_g)T_{g+k}\in (\alpha_{g+h+k},\rho^2)$. 
Indeed,  
\begin{align*}\lefteqn{
T_{g+h}^*\alpha_g\rho(T_g)T_{g+k}\alpha_{g+h+k}\rho(x) =T_{g+h}^*\alpha_g\rho(T_g)T_{g+k}\alpha_{g+k}\rho(\alpha_{-h}(x))} \\
 &=T_{g+h}^*\alpha_g\rho(T_g)\rho^2(\alpha_{-h}(x))T_{g+k}
 =T_{g+h}^*\alpha_g\rho(T_g\alpha_g\rho(\alpha_{-h}(x)))T_{g+k}\\
 &=T_{g+h}^*\alpha_g\rho(\rho^2(\alpha_{-h}(x))T_g)T_{g+k}
 =T_{g+h}^*\rho^3(\alpha_{-g-h}(x))\alpha_g\rho(T_g)T_{g+k}\\
 &=\alpha_{g+h}\rho^2\alpha_{-g-h}(x)T_{g+h}^*\alpha_g\rho(T_g)T_{g+k}
 =\rho^2(x)T_{g+h}^*\alpha_g\rho(T_g)T_{g+k}.
\end{align*}
Therefore there exists a scaler $A_g(h,k)\in \C$ satisfying 
$$Q\alpha_g\rho(T_g)Q=\sum_{h,k\in G}A_g(h,k)T_{g+h}T_{g+h+k}T_{g+k}^*.$$
This finishes the proof. 
\end{proof}

Now we examine how the choices of $S$ and $T_g$ effect on $(\epsilon_{h}(g),\eta_g,A_g(h,k))$. 
Let $c\in \T$, and let $c^{1/2}$ be one of its square root. 
If we replace $S$ with $cS$, then $j_{1,g}$ is replace by $cj_{1,g}$, and it fixes $c^{1/2}T_g$. 
The choices of $cS$ and $c^{1/2}T_g$ instead of $S$ and $T_g$ do not change 
$(\epsilon_{h}(g),\eta_g,A_g(h,k))$ at all. 
Therefore we don't need to think of a different choice of $S$ from $(\id_M,\rho^2)$, 
and we fix $S$.  
If we replace $T_g$ with $\delta_gT_g$ satisfying $\delta_g\in \{1,-1\}$, 
then $(\epsilon_{h}(g),\eta_g,A_g(h,k))$ is replaced with $(\epsilon'_{h}(g),\eta_g,A'_g(h,k))$
where 
\begin{equation}
\epsilon'_h(g)=\delta_g\delta_{g+2h}\epsilon_h(g),
\end{equation}
\begin{equation}
A'_{g}(h,k)=\delta_g\delta_{g+h}\delta_{g+k}\delta_{g+h+k}A_g(h,k). 
\end{equation}

\begin{definition}
We call the above transformation from a triplet $(\epsilon_{h}(g),\eta_g,A_g(h,k))$ to 
another triplet $(\epsilon'_{h}(g),\eta_g,A'_g(h,k))$ 
a gauge transformation by $\{\delta_g\}_{g\in G}$. 
We say that two triplets are gauge equivalent if they are transformed to each other by a gauge transformation. 
\end{definition}

For a gauge transformation by $\{\delta_g\}_{g\in G}$, we may always assume $\delta_0=1$. 

Note that thanks to Lemma \ref{ST}, any intertwiner between two endomorphisms obtained by composing endomorphisms in $\{\rho,\alpha_g\}$ 
in arbitrary times is a polynomial of $\{S,S^*,T_g,T_g^*\}$, and we can show as in \cite{SW02} that the 6j-symbols of the fusion category 
$\cC$ is completely determined by the numerical data $(\epsilon_{h}(g),\eta_g,A_g(h,k))$. 
Thus we obtain the following theorem.  

\begin{theorem} \label{mf} Let $\cC,\cC'\subset \End_0(M)$ be a generalized Haagerup categories with a finite abelian group $G$, and let 
$[\rho,\alpha]$ and $[\rho',\alpha']$ be standard lifting of $\cC$ and $\cC'$. 
If $[\rho,\alpha]$ and $[\rho',\alpha']$ have gauge equivalent numerical data, there exists a monoidal functor $(F,L)$ from 
$\cC$ to $\cC'$ with trivial $L$, which is an equivalence of the two fusion categories,  
satisfying $F(\rho)=\rho'$, $F(\alpha_g)=\alpha'_g$. 
\end{theorem}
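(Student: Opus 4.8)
The plan is to build the equivalence functor $(F,L)$ by hand from the numerical data, using the concrete description of $\cC$ and $\cC'$ inside $\End(M)$ afforded by Lemma \ref{ST}. First I would fix standard liftings $[\rho,\alpha]$ of $\cC$ and $[\rho',\alpha']$ of $\cC'$, with associated generators $\{S,T_g\}$ and $\{S',T'_g\}$ of copies of $\cO_{n+1}$, and — after applying the gauge transformation that is assumed to exist — arrange that the two triplets $(\epsilon_h(g),\eta_g,A_g(h,k))$ and $(\epsilon'_h(g),\eta'_g,A'_g(h,k))$ coincide on the nose. By the universal and simple nature of $\cO_{n+1}$ (the remark after its definition), the map $S\mapsto S'$, $T_g\mapsto T'_g$ extends to an isomorphism $\Phi_0$ of these Cuntz subalgebras, and Lemma \ref{ST} shows that under $\Phi_0$ the formulas for $\rho(S),\rho(T_g)$ go to those for $\rho'(S'),\rho'(T'_g)$, and likewise $\alpha_g(S),\alpha_g(T_h)$ go to $\alpha'_g(S'),\alpha'_g(T'_h)$ because the signs $\epsilon_h(g)$ and the action on $S$ (Lemma \ref{fixed}) agree. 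Hence on the Cuntz subalgebra the dynamics match; the task is to promote this to an honest equivalence of the fusion categories realized on $M$.

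Next I would invoke the principle, already used in the paper (cf. the discussion before Theorem \ref{uniqueness} and the technique of \cite{SW02}), that \emph{all} intertwiner spaces between words in $\{\rho,\alpha_g\}$ are spanned by polynomials in $\{S,S^*,T_g,T_g^*\}$ with coefficients determined by $(\epsilon_h(g),\eta_g,A_g(h,k))$ via the 6j-symbols. Concretely I would define $F$ on objects by $F(\alpha_g)=\alpha'_g$, $F(\rho)=\rho'$, and on a general simple object $\alpha_g\rho$ by $F(\alpha_g\rho)=\alpha'_g\rho'$; on morphisms, $F$ sends a polynomial $p(S,S^*,T_h,T_h^*)\in(\mu,\nu)$ to the same polynomial $p(S',S'^*,T'_h,T'_h^*)$. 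The content is that this is well defined (the relations among the generators that hold in $\cC$ also hold in $\cC'$ because the numerical data agree) and that it is a $*$-functor which is fully faithful (the polynomial description gives a linear bijection of each $\Hom$-space, compatible with $*$ and composition) and essentially surjective (every simple object of $\cC'$ is of the listed form). Functoriality with respect to tensor product, with $L$ trivial, is then immediate: since $F$ is literally ``apply $\Phi_0$ to the defining polynomials,'' the tensor structure constants — which are exactly the polynomial coefficients — are preserved, so $F(X\otimes Y)=F(X)\otimes F(Y)$ and $F(\mu\otimes\nu)=F(\mu)\otimes F(\nu)$ on objects and morphisms with the identity as coherence isomorphism.

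The main obstacle I anticipate is the \emph{well-definedness} of $F$ on morphisms, i.e.\ checking that every polynomial identity among $\{S,S^*,T_g,T_g^*\}$ that holds in $M$ is a formal consequence of (a) the Cuntz relations for $\{S\}\cup\{T_g\}$, (b) the formulas of Lemma \ref{ST} for $\rho(S)$ and $\alpha_g\rho(T_g)$, and (c) the sign/scalar relations $\alpha_h(T_g)=\epsilon_h(g)T_{g+2h}$, $\alpha_g(S)=S$, $j_{2,g}(T_g)=\eta_g T_g$. This is the assertion ``the 6j-symbols are completely determined by the numerical data,'' which the paper attributes to the method of \cite{SW02}; spelling it out amounts to showing that any word in $\rho,\alpha_g$ can be rewritten, using (a)--(c), into a normal form whose coefficients are explicit functions of $(\epsilon_h(g),\eta_g,A_g(h,k))$, and that intertwiners are read off from this normal form. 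I would carry this out by induction on the length of the word, at each step pushing a $\rho$ or $\alpha_g$ past the generators using Lemma \ref{ST}, exactly as in \cite{SW02}; once that bookkeeping is in place, fully faithfulness, essential surjectivity, $*$-preservation, and monoidality with trivial $L$ all follow formally, giving the desired equivalence with $F(\rho)=\rho'$ and $F(\alpha_g)=\alpha'_g$.
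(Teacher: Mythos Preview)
Your proposal is correct and follows essentially the same approach as the paper: the paper's entire argument is the paragraph immediately preceding the theorem, which observes (via Lemma \ref{ST}) that all intertwiners among compositions of $\rho$ and $\alpha_g$ are polynomials in $\{S,S^*,T_g,T_g^*\}$ and that the 6j-symbols are determined by $(\epsilon_h(g),\eta_g,A_g(h,k))$, citing \cite{SW02} for the bookkeeping. Your proposal simply spells this out in more detail---building $F$ by sending polynomials in $S,T_g$ to the corresponding polynomials in $S',T'_g$---and correctly identifies the well-definedness check as the substantive step handled by the \cite{SW02} normal-form argument.
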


\begin{remark}\label{choices} Our primary goal in this paper is to classify the $3^G$ subfactors, 
and for this goal we classify the generalized Haagerup categories $\cC$ with a finite abelian group $G$ and 
a distinguished simple object $\rho$ by the triplet $(\epsilon_{h}(g),\eta_g,A_g(h,k))$. 
To obtain the triplet from $\cC\subset \End(M)$, we made the following choices: 
\begin{itemize}
\item[(1)] identification of $G$ with the group of the invertible objects of $\cC$, 
\item[(2)] the standard lifting $[\rho,\alpha]$, 
\item[(3)] the isometry $S\in (\id,\rho^2)$, and 
\item[(4)] the isometry $T_g\in (\alpha_g\rho,\rho^2)$ satisfying $j_{1,g}(T_g)=T_g$. 
\end{itemize}
In view of Theorem \ref{uniqueness}, to classify $\cC$ with $G$ and a distinguished simple object $\rho$ 
by the triplet $(\epsilon_{h}(g),\eta_g,A_g(h,k))$, 
it is necessary and sufficient to describe how different choices in (1),(2),(3), and (4) transform the triplets. 
Different choices in (1) can be describe by the action of $\Aut(G)$, and those in (3) and (4) altogether 
can be described by the gauge transformations. 
Thus it is essential to describe how different choices of standard liftings transform the triplets, 
which involves $H^2(G,\T)$. 
We will show in Section \ref{classifiction} that the equivalence classes of $\cC$ with a distinguished object $\rho$ 
are in one-to-one correspondence with the $H^2(G,\T)\rtimes \Aut(G)$-orbits of the gauge equivalence classes of the triplets. 
To classify $\cC$ without specifying $\rho$, we still have freedom to replace $\rho$ with $\alpha_g\rho$, 
which makes an action of $G$ (in fact $G/2G$) on the gauge equivalence classes of the triplets.  
\end{remark}

We now deduce polynomial equations among $(\epsilon_{h}(g),\eta_g,A_g(h,k))$. 

\begin{lemma}[Orthogonality]\label{orthogonality}  
\begin{equation}
\sum_{h\in G}A_g(h,0)=-\frac{\overline{\eta_g}}{d},
\end{equation}
\begin{equation}
\sum_{h\in G}A_g(h-g,k)\overline{A_{g'}(h-g',k)}=\delta_{g,g'}-\frac{\overline{\eta_g}\eta_{g'}}{d}\delta_{k,0}.
\end{equation}
\end{lemma}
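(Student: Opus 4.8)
The plan is to extract the two identities from the fact that $\{S\}\cup\{T_g\}_{g\in G}$ satisfies the Cuntz algebra relations together with the explicit form of $\alpha_g\rho(T_g)$ from Lemma \ref{ST}. The basic principle is: since $\alpha_g\rho$ is an endomorphism, it sends the Cuntz relations to Cuntz relations, so in particular $\alpha_g\rho(T_g)^*\alpha_g\rho(T_g)=1$ and more generally $\alpha_g\rho(T_g)^*\alpha_{g'}\rho(T_{g'})=\delta_{g,g'}1$. Plugging in the formula
$$\alpha_g\rho(T_g)=\eta_gT_gSS^*+\frac{\overline{\eta_g}}{\sqrt{d}}ST_g^*+\sum_{h,k\in G}A_g(h,k)T_{g+h}T_{g+h+k}T_{g+k}^*$$
and expanding the product, one uses $S^*S=1$, $T_g^*T_h=\delta_{g,h}1$, $S^*T_g=0$ and the partition of unity to collapse the cross terms. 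The three pieces of $\alpha_g\rho(T_g)$ have ``shapes'' $T\cdot SS^*$, $S\cdot T^*$, and $T\cdot TT^*$ respectively, and matching the range/source projections ($P=SS^*$, $Q=\sum T_hT_h^*$) kills most cross products; what survives gives a quadratic relation in the coefficients.

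Concretely, for the first identity I would compute $\alpha_g\rho(T_g)^*\alpha_g\rho(T_g)=1$ and then hit it with $S^*(\cdot)S$, or equivalently compare the $S\cdots S^*$-component. The middle term contributes $\frac{1}{d}ST_g^*T_gS^*=\frac1d SS^*$ from $\left(\frac{\overline{\eta_g}}{\sqrt d}ST_g^*\right)^*\left(\frac{\overline{\eta_g}}{\sqrt d}ST_g^*\right)$... more precisely I would instead look at the component of $\alpha_g\rho(T_g)^*\alpha_g\rho(T_g)$ of the form $(\text{something})SS^*(\text{something})$: the first term squared gives $\overline{\eta_g}\eta_g SS^*T_g^*T_gSS^* = SS^*$; the cross term between the first term $\eta_g T_gSS^*$ and the third term $\sum A_g(h,k)T_{g+h}T_{g+h+k}T_{g+k}^*$ gives, after applying $T_{g+h}^*$ on the left via $SS^*T_{g+h}T_{g+h+k}T_{g+k}^*$ — this vanishes since $S^*T_{g+h}=0$. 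The nonvanishing cross term is between the middle term and the third term: $\left(\frac{\overline{\eta_g}}{\sqrt d}ST_g^*\right)^*\sum_{h,k}A_g(h,k)T_{g+h}T_{g+h+k}T_{g+k}^* = \frac{\eta_g}{\sqrt d}\sum_{k}A_g(0,k) T_gT_{g+k}^*$, and its adjoint partner, plus the middle term squared $\frac1d SS^*$. Collecting the coefficient of $T_gT_g^*$ (resp.\ $SS^*$) and comparing with the identity $1=SS^*+\sum_h T_hT_h^*$ should force $\sum_{h}A_g(h,0)=-\frac{\overline{\eta_g}}{d}$ after re-indexing $h\mapsto h-g$; I would double-check the re-indexing, since the stated identity has $A_g(h,0)$ summed over all $h$, matching the ``$k=0$'' slice.

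For the second identity, I would compute $\alpha_g\rho(T_g)^*\alpha_{g'}\rho(T_{g'})=\delta_{g,g'}1$ and extract the $Q(\cdot)Q$-component — that is, sandwich between $\sum T_aT_a^*$ on both sides, or equivalently look at terms of the form $T_\bullet\cdots T_\bullet^*$. The only contribution to such terms comes from the third pieces of the two expansions: $\sum_{h,k}\overline{A_g(h,k)}T_{g+k}T_{g+h+k}^*T_{g+h}^*\cdot\sum_{h',k'}A_{g'}(h',k')T_{g'+h'}T_{g'+h'+k'}T_{g'+k'}^*$. Using $T_{g+h}^*T_{g'+h'}=\delta_{g+h,g'+h'}$ forces $h'=g+h-g'$, and $T_{g+h+k}^*T_{g'+h'+k'}=\delta_{g+h+k,g'+h'+k'}$ then forces $k'=k$. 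This yields $\sum_{h,k}\overline{A_g(h,k)}A_{g'}(g+h-g',k)\,T_{g+k}T_{g'+k}^*$. Meanwhile the contributions to $T_\bullet T_\bullet^*$-type terms coming from products of (third piece)$\times$(middle piece) and (first piece)$\times$(anything) vanish because they produce an $S$ or $SS^*$ factor that annihilates against a $T$. On the right-hand side, $\delta_{g,g'}1 = \delta_{g,g'}(SS^*+\sum_a T_aT_a^*)$, whose $Q$-part is $\delta_{g,g'}\sum_a T_aT_a^*$. However there is one more source of $T_{g+k}T_{g'+k}^*$ terms: the middle-piece self-product $\left(\frac{\overline{\eta_g}}{\sqrt d}ST_g^*\right)^*\left(\frac{\overline{\eta_{g'}}}{\sqrt d}ST_{g'}^*\right) = \frac{\eta_g\overline{\eta_{g'}}}{d}T_gT_{g'}^*$, which is the ``$k=0$'' correction term. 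Equating the coefficient of $T_{g+k}T_{g'+k}^*$ on both sides (taking $a=g+k$ on the right, which requires $g'+k$ to also index, i.e.\ forces the Kronecker $\delta_{g,g'}$ in the main term of the RHS), and after substituting $h\mapsto h-g$ to match the stated form $A_g(h-g,k)$, gives exactly
$$\sum_{h\in G}A_g(h-g,k)\overline{A_{g'}(h-g',k)}=\delta_{g,g'}-\frac{\overline{\eta_g}\eta_{g'}}{d}\delta_{k,0}.$$
The main obstacle I anticipate is purely bookkeeping: keeping the index shifts ($h\mapsto h\pm g$, $h\mapsto h-g'$) and the conjugation conventions straight so that the surviving delta ends up as $\delta_{g,g'}$ and the correction term lands on $\delta_{k,0}$ with the right phase $\overline{\eta_g}\eta_{g'}$; there is no conceptual difficulty beyond carefully expanding a product of two three-term sums and discarding the (many) terms containing an unmatched $S$ or $S^*$.
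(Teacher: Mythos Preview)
Your approach to the second identity is correct and is exactly what the paper does: expand
$\alpha_{g'}\rho(T_{g'})^*\alpha_g\rho(T_g)=\epsilon_{-g'}(g')\epsilon_{-g}(g)\rho(T_{-g'}^*T_{-g})=\delta_{g,g'}$
using the formula from Lemma~\ref{ST}, match coefficients of $T_{g+k}T_{g'+k}^*$, and re-index. Your bookkeeping sketch is fine.

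For the first identity, however, there is a genuine gap. You propose to extract it from $\alpha_g\rho(T_g)^*\alpha_g\rho(T_g)=1$, and you claim a surviving cross term
\[
\Big(\tfrac{\overline{\eta_g}}{\sqrt d}ST_g^*\Big)^*\sum_{h,k}A_g(h,k)T_{g+h}T_{g+h+k}T_{g+k}^*
=\tfrac{\eta_g}{\sqrt d}\sum_{k}A_g(0,k)\,T_gT_{g+k}^*.
\]
But $(ST_g^*)^*=T_gS^*$, and $S^*T_{g+h}=0$, so this term is in fact zero. The same happens for \emph{every} cross term between the three pieces of $\alpha_g\rho(T_g)$: each one contains a factor $S^*T_\bullet$ or $T_\bullet^*S$. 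What remains is
\[
SS^*+\tfrac{1}{d}T_gT_g^*+\sum_{h,k}|A_g(h,k)|^2T_{g+k}T_{g+k}^*=1,
\]
which is only the $g=g'$ case of the second equation. No linear relation in the $A_g(h,0)$ can be extracted from this computation.

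The fix, which is what the paper does, is to use the \emph{other} Cuntz relation $S^*T_g=0$. Since $\alpha_g\rho(S)=\rho(\alpha_{-g}(S))=\rho(S)$, one has $\rho(S)^*\alpha_g\rho(T_g)=\alpha_g\rho(S^*T_g)=0$. Expanding $\rho(S)^*=\tfrac{1}{d}S^*+\tfrac{1}{\sqrt d}\sum_l T_l^*T_l^*$ against the three-term formula for $\alpha_g\rho(T_g)$ and then right-multiplying by $T_g$ gives
\[
\tfrac{\overline{\eta_g}}{d\sqrt d}+\tfrac{1}{\sqrt d}\sum_{h}A_g(h,0)=0,
\]
which is exactly the first identity. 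So the missing idea is simply to pair $\alpha_g\rho(T_g)$ with $\rho(S)$ rather than with itself.
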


\begin{proof} $\rho(S)^*\alpha_g\rho(T_g)=\alpha_g\rho(S^*T_g)=0$ implies the first equation. 

$\alpha_{g'}\rho(T_{g'})^*\alpha_g\rho(T_g)=\epsilon_{-g'}(g')\epsilon_{-g}(g)\rho(T_{-g'}^*T_{-g})=\delta_{g,g'}$ 
implies the second. 
\end{proof}

$\alpha_g\rho=\rho\alpha_{-g}$ and $\alpha_h(T_g)=\epsilon_h(g)T_{g+2h}$ imply the following:

\begin{lemma}\label{alpha}
\begin{equation}
A_{g+2h}(p,q)=\epsilon_h(g)\epsilon_h(g+p)\epsilon_h(g+q)\epsilon_h(g+p+q)A_g(p,q),
\end{equation}
\end{lemma}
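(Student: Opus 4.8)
The plan is to mimic the derivation of Lemma \ref{ST} for the shifted endomorphism $\alpha_{g+2h}\rho$, exploiting that $\alpha_h$ is an automorphism and hence intertwines the whole structure. First I would apply $\alpha_h$ to the defining relation of $A_g(h,k)$, namely the identity
$$
\alpha_g\rho(T_g)=\eta_gT_gSS^*+\frac{\overline{\eta_g}}{\sqrt d}ST_g^*+\sum_{p,q\in G}A_g(p,q)T_{g+p}T_{g+p+q}T_{g+q}^*.
$$
On the left-hand side, since $\alpha_h$ commutes with nothing in general but here $\alpha_h\circ\alpha_g\rho=\alpha_{g+2h}\rho\circ\alpha_h$ (using $\alpha_g\rho=\rho\alpha_{-g}$ and $\alpha_h\rho=\rho\alpha_{-h}$, so $\alpha_h\alpha_g\rho\alpha_{-h}=\alpha_h\rho\alpha_{g-h}=\rho\alpha_{-h}\alpha_{g-h}\cdot$, let me just say: $\alpha_h\circ(\alpha_g\rho)=\alpha_{g+2h}\rho\circ\alpha_h$ holds and should be verified by the same one-line intertwiner computation used just before Eq.(\ref{c12})-style arguments), we get $\alpha_h(\alpha_g\rho(T_g))=\alpha_{g+2h}\rho(\alpha_h(T_g))=\epsilon_h(g)\,\alpha_{g+2h}\rho(T_{g+2h})$.

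Next I would push $\alpha_h$ through the right-hand side termwise. Using $\alpha_h(S)=S$ from Lemma \ref{fixed}, $\alpha_h(P)=P$, $\alpha_h(\eta_g)=\eta_g$ (scalars are fixed), and $\alpha_h(T_g)=\epsilon_h(g)T_{g+2h}$ from Eq.(\ref{eq:...}) (the equation $\alpha_h(T_g)=\epsilon_h(g)T_{g+2h}$ stated in the excerpt), the first two terms become $\eta_g\epsilon_h(g)T_{g+2h}SS^*$ and $\frac{\overline{\eta_g}}{\sqrt d}S\epsilon_h(g)T_{g+2h}^*$, which are exactly the first two terms for index $g+2h$ after multiplying the whole equation by $\epsilon_h(g)$ and using $\eta_{g+2h}=\eta_g$. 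For the cubic sum, $\alpha_h(T_{g+p}T_{g+p+q}T_{g+q}^*)=\epsilon_h(g+p)\epsilon_h(g+p+q)\epsilon_h(g+q)\,T_{g+2h+p}T_{g+2h+p+q}T_{g+2h+q}^*$. Comparing with the expansion of $\epsilon_h(g)\alpha_{g+2h}\rho(T_{g+2h})=\sum_{p,q}\epsilon_h(g)A_{g+2h}(p,q)T_{g+2h+p}T_{g+2h+p+q}T_{g+2h+q}^*$ and using that $\{S,T_g\}$ generate the Cuntz algebra so the coefficients in such a reduced expansion are unique, I read off
$$
\epsilon_h(g)A_{g+2h}(p,q)=\epsilon_h(g+p)\epsilon_h(g+p+q)\epsilon_h(g+q)A_g(p,q),
$$
and multiplying by $\epsilon_h(g)$ (which is $\pm1$) gives the stated formula.

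The only real subtlety I anticipate is making the "unique coefficients" step rigorous: one must know that the three types of monomials ($T_aSS^*$, $ST_a^*$, $T_aT_bT_c^*$ with the specific index pattern) appearing in these expansions are linearly independent in $\cO_{n+1}$ in the sense that if such a sum equals zero then all coefficients vanish. This is the standard argument that cutting by the projections $P=SS^*$, $Q=\sum T_gT_g^*$ and then by the range projections $T_aT_a^*$ isolates each coefficient — exactly as was done inside the proof of Lemma \ref{ST} to compute $P\alpha_g\rho(T_g)$, $\alpha_g\rho(T_g)P$, and $Q\alpha_g\rho(T_g)Q$ separately. So in practice I would not re-prove uniqueness from scratch but rather apply the cutdowns $T_{g+2h+p}^*(\,\cdot\,)T_{g+2h+q}$ to both sides of the $\alpha_h$-transformed identity; on the left this gives $\epsilon_h(g)A_{g+2h}(p,q)$ (by the same computation as in Lemma \ref{ST}), on the right it gives $\epsilon_h(g+p)\epsilon_h(g+q)\cdot T_{g+2h+p}^*\alpha_h(\cdots)$, and matching yields the claim directly without invoking abstract linear independence. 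This is essentially bookkeeping with the cocycle $\epsilon$, so I expect no genuine obstacle — the main care is just tracking the four index shifts $g,g+p,g+q,g+p+q$ correctly and remembering $\eta_{g+2h}=\eta_g$, $\epsilon_h(g)^2=1$.
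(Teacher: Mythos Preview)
Your proposal is correct and follows essentially the same approach as the paper. The paper's proof is the one-line identity
\[
\alpha_{g+2h}\rho(T_{g+2h})=\epsilon_h(g)\,\alpha_{g+2h}\rho(\alpha_h(T_g))=\epsilon_h(g)\,\alpha_{g+h}\rho(T_g)=\epsilon_h(g)\,\alpha_h(\alpha_g\rho(T_g)),
\]
after which one substitutes the expansion from Lemma~\ref{ST} and reads off coefficients; this is exactly your argument, and your discussion of isolating coefficients via cutdowns is the implicit justification the paper omits.
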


\begin{proof} Since 
$$\alpha_{g+2h}\rho(T_{g+2h})=\epsilon_h(g)\alpha_{g+2h}\rho\alpha_h(T_g)=\epsilon_h(g)\alpha_{g+h}\rho(T_g)
=\epsilon_h(g)\alpha_h(\alpha_g\rho(T_g)),$$
we get the statement. 
\end{proof}

$j_{1,g}(T_g)=T_g$ and $j_{2,g}(T_g)=\eta_gT_g$ imply the following:
\begin{lemma}\label{Frobenius} 
\begin{equation}
A_g(k,h)=\overline{A_g(h,k)},
\end{equation}
\begin{equation}
\eta_g^3=1,
\end{equation}
\begin{align}
A_g(h,k)&=\eta_g\epsilon_{-k}(g+h)\epsilon_{-k}(g+k)\epsilon_{-k}(g+h+k)A_g(-k,h-k)\\
&=\eta_g^2\epsilon_{-h}(g+h)\epsilon_{-h}(g+k)\epsilon_{-h}(g+h+k)A_g(k-h,-h).\nonumber
\end{align}
\end{lemma}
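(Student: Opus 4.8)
The plan is to derive the three identities of Lemma \ref{Frobenius} by exploiting the two anti-unitary involutions $j_{1,g}$ and $j_{2,g}$ on $(\alpha_g\rho,\rho^2)$ together with the explicit form of $\alpha_g\rho(T_g)$ given in Lemma \ref{ST}. The strategy is uniform: each relation among the coefficients $A_g(h,k)$ comes from applying $j_{1,g}$ or $j_{2,g}$ to the intertwiner $T_{g+h}^*\alpha_g\rho(T_g)T_{g+k}\in(\alpha_{g+h+k},\rho^2)$, using that this one-dimensional space is spanned by $T_0$ when shifted appropriately, and reading off the scalar.

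First I would establish $A_g(k,h)=\overline{A_g(h,k)}$. Since $j_{1,g}(T_g)=T_g$ and $j_{1,g}$ is anti-unitary, the relation $T_g=\sqrt d\,T_g^*\rho(S)$ holds, equivalently $\rho(S^*)T_g=\frac{1}{\sqrt d}T_g$; taking adjoints and conjugating the expansion $\alpha_g\rho(T_g)=\eta_gT_gSS^*+\frac{\overline{\eta_g}}{\sqrt d}ST_g^*+\sum_{h,k}A_g(h,k)T_{g+h}T_{g+h+k}T_{g+k}^*$ against itself, the $Q$-part satisfies $(\alpha_g\rho(T_g))^*=\alpha_g\rho(T_g^*)$, and comparing the coefficient of $T_{g+k}T_{g+h+k}T_{g+h}^*$ on both sides yields $\overline{A_g(h,k)}=A_g(k,h)$. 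Next, $\eta_g^3=1$ follows from iterating the rotation: $\eta_g=$ the scalar by which the rotation map $j_{2,g}j_{1,g}$ acts, and since applying the rotation three times on $(\alpha_g\rho,\rho^2)$ returns to the identity (the standard fact that the rotation has order $3$ on a self-dual object with $\dim(\rho,\rho^2)=1$, cf. the Frobenius–Schur / planar-algebra rotation), we get $\eta_g^3=1$; concretely one computes $j_{2,g}j_{1,g}(T_g)=\eta_gT_g$ and shows $(j_{2,g}j_{1,g})^3=1$ from $R_\rho$, $\overline R_\rho$ relations and $\alpha_g(S)=S$ (Lemma \ref{fixed}).

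The main work, and the step I expect to be the principal obstacle, is the last pair of identities relating $A_g(h,k)$ to $A_g(-k,h-k)$ and $A_g(k-h,-h)$. Here I would compute $j_{2,g}$ applied to the basis vector realizing the coefficient $A_g(h,k)$: the point is that cyclically rotating the three-legged intertwiner $T_{g+h}T_{g+h+k}T_{g+k}^*$ under $j_{2,g}$ permutes the roles of the three legs, which after re-expressing everything in terms of the $T$'s with the correct index shifts (using $\alpha_h(T_g)=\epsilon_h(g)T_{g+2h}$ to move $\alpha$'s past $T$'s) introduces exactly the sign factors $\epsilon_{-k}(g+h)\epsilon_{-k}(g+k)\epsilon_{-k}(g+h+k)$ and the scalar $\eta_g$. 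Applying $j_{2,g}$ once more gives the $\eta_g^2$ version with the other cyclic shift. The bookkeeping of which $\epsilon$-cocycle values appear — and checking they match the claimed indices — is the delicate part; I would verify it by carefully tracking how $j_{2,g}(T)=\sqrt d\,\alpha_g\rho(T)^*S$ acts on $T=T_{g+h}T_{g+h+k}T_{g+k}^*$, substituting $\alpha_g\rho(T_{g+h})$ via Lemma \ref{ST} and Lemma \ref{alpha}, and isolating the $Q$-component, then invoking $\dim(\alpha_g\rho,\rho^2)=1$ and $j_{1,g}(T_g)=T_g$ to pin down the scalar. Combining with $A_g(k,h)=\overline{A_g(h,k)}$ and $\eta_g^3=1$ then closes the computation.
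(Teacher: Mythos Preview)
Your overall plan—exploit the involutions $j_{1,g}$ and $j_{2,g}$—is the paper's plan, but the execution you sketch has a genuine gap. The anti-unitary $j_{2,g}$ is defined only on the one-dimensional space $(\alpha_g\rho,\rho^2)$; the element $T_{g+h}T_{g+h+k}T_{g+k}^*$ you propose to feed it is a partial isometry in $M$ that lies in no such intertwiner space, so ``$j_{2,g}(T_{g+h}T_{g+h+k}T_{g+k}^*)$'' does not typecheck. Likewise, for the first identity your argument ``$(\alpha_g\rho(T_g))^*=\alpha_g\rho(T_g^*)$ and compare coefficients'' is a tautology (any $*$-homomorphism satisfies this) that does not invoke $j_{1,g}$ and only produces an expansion of $\alpha_g\rho(T_g^*)$, not a second expansion of $\alpha_g\rho(T_g)$ to compare against Lemma~\ref{ST}.

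What the paper does instead is apply $\alpha_g\rho$ to the \emph{scalar} identities $T_g=j_{1,g}(T_g)=\sqrt{d}\,T_g^*\rho(S)$ and $T_g=\overline{\eta_g}\,j_{2,g}(T_g)=\overline{\eta_g}\sqrt{d}\,\alpha_g\rho(T_g)^*S$. The first gives
\[
\alpha_g\rho(T_g)=\sqrt{d}\,\alpha_g\rho(T_g)^*\,\rho^2(S),
\]
and after expanding $\rho^2(S)=SSS^*+\sum_hT_h\rho(S)T_h^*$ and substituting the adjoint of the formula in Lemma~\ref{ST}, one obtains a second Cuntz-algebra expansion of $\alpha_g\rho(T_g)$ whose $Q$-coefficients are $\overline{A_g(h,k)}$ with $h$ and $k$ swapped—hence the first identity. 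The second gives
\[
\alpha_g\rho(T_g)=\overline{\eta_g}\sqrt{d}\,\rho^2(T_g)^*\rho(S)
\]
(using $(\alpha_g\rho)^2=\rho^2$ and $\alpha_g(S)=S$); expanding $\rho(S)$ and inserting $\alpha_g\rho(T_g)^*$ inside $T_{g-k}(\cdot)T_{g-k}$ after an $\alpha_{-k}$-conjugation (this is where the cocycle factors $\epsilon_{-k}(\cdot)$ enter) yields the third identity. Comparing the $T_gSS^*$ coefficient in this same expansion forces $\overline{\eta_g}^2=\eta_g$, i.e.\ $\eta_g^3=1$. So the key manoeuvre is to push $\alpha_g\rho$ through the fixed-point equations for $T_g$ and read off coefficients of the resulting alternative expansions of $\alpha_g\rho(T_g)$, rather than trying to apply the $j$-maps to individual monomials.
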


\begin{proof} 
Since 
\begin{align*}\lefteqn{
\alpha_g\rho(T_g) =\alpha_g\rho(j_{1,g}(T_g))=\sqrt{d}\alpha_g\rho(T_g^*\rho(S))}\\
&=\sqrt{d}\alpha_g\rho(T_g^*)(SSS^*+\sum_{h\in G}T_h\rho(S)T_h^*) \\
 &=j_{2,g}(T_g)SS^*+\sum_{h\in G}\frac{1}{d}\alpha_g\rho(T_g^*)T_hST_h^*+\frac{1}{\sqrt{d}}\sum_{h,k\in G}\alpha_g\rho(T_g^*)T_hT_kT_kT_h^*\\
 &=\eta_gT_gSS^*+\frac{\overline{\eta_g}}{\sqrt{d}}ST_g^*
 +\frac{1}{\sqrt{d}}\sum_{h,k\in G}\alpha_g\rho(T_g^*)T_{g+h}T_{g+h+k}T_{g+h+k}T_{g+h}^*\\
 &=\eta_gT_gSS^*+\frac{\overline{\eta_g}}{\sqrt{d}}ST_g^*
 +\frac{1}{\sqrt{d}}\sum_{h,k\in G}\overline{A_g(h,k)}T_{g+k}T_{g+h+k}T_{g+h}^*,
\end{align*}
the first equation holds. 

Since 
\begin{align*}\lefteqn{
\alpha_g\rho(T_g) =\overline{\eta_g}\alpha_g\rho(j_{2,g}(T_g))=\overline{\eta_g}\sqrt{d}\alpha_g\rho(\alpha_g\rho(T_g)^*S)
=\overline{\eta_g}\sqrt{d}\rho^2(T_g)^*\rho(S)}\\
 &= \overline{\eta_g}\sqrt{d}\rho^2(T_g)^*(\frac{1}{d}S+\frac{1}{\sqrt{d}}\sum_{k\in G}T_{g-k}T_{g-k})\\
 &=\frac{\overline{\eta_g}}{\sqrt{d}}ST_g^*+\overline{\eta_g}\sum_{k\in G}T_{g-k}\alpha_{g-k}\rho(T_g)^*T_{g-k}\\
 &=\frac{\overline{\eta_g}}{\sqrt{d}}ST_g^*+\overline{\eta_g}\sum_{k\in G}T_{g-k}\alpha_{-k}(\alpha_g\rho(T_g)^*\alpha_k(T_{g-k}))\\
 &=\frac{\overline{\eta_g}}{\sqrt{d}}ST_g^*
 +\overline{\eta_g}\sum_{k\in G}\epsilon_k(g-k)T_{g-k}\alpha_{-k}(\alpha_g\rho(T_g)^*T_{g+k})\\
 &=\frac{\overline{\eta_g}}{\sqrt{d}}ST_g^*+\overline{\eta_g}^2T_gSS^*
 +\overline{\eta_g}\sum_{h,k\in G}\overline{A_g(k,h)}\epsilon_k(g-k)T_{g-k}\alpha_{-k}(T_{g+h}T_{g+h+k}^*)\\
 &=\frac{\overline{\eta_g}}{\sqrt{d}}ST_g^*+\overline{\eta_g}^2T_gSS^*\\
 &+\overline{\eta_g}\sum_{h,k\in G}\overline{A_g(k,h)}\epsilon_k(g-k)\epsilon_{-k}(g+h)\epsilon_{-k}(g+h+k)
 T_{g-k}T_{g+h-2k}T_{g+h-k}^*, 
\end{align*}
and $\epsilon_k(g-k)=\epsilon_{-k}(g+k)$, we get 
$$A_g(-k,h-k)=\overline{A_g(k,h)}\overline{\eta_g}\epsilon_{-k}(g+k)\epsilon_{-k}(g+h)\epsilon_{-k}(g+h+k),$$
which finishes the statement.  
\end{proof}

\begin{remark} Assume that $h\in G$ satisfies $3h=0$ (e.g. $h=0$). 
Then we have 
$$A_g(h,2h)=\eta_g\epsilon_h(g)\epsilon_h(g+h)\epsilon_h(g+2h)A_g(h,2h).$$
This shows that $A_g(h,2h)\neq 0$ implies $\eta_g=1$.  
\end{remark}

From $S^*\rho^2(T_g)S=T_g$, we get 

\begin{lemma}\label{id}
\begin{equation}\label{I}
A_g(h,k)=A_{g+h+k}(h,k)\epsilon_h(g+k)\epsilon_k(g+h)\epsilon_{h+k}(g)\eta_{g+h}\overline{\eta_{g+k}}.
\end{equation}
\end{lemma}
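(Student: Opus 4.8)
The plan is to imitate the computations already carried out in the proofs of Lemmas \ref{orthogonality}--\ref{Frobenius}, where the strategy is always: take a known intertwiner relation, apply $\rho$ (and possibly $\alpha_g$) to it, expand everything using the explicit formulas of Lemma \ref{ST}, and compare coefficients of the basis monomials in $\{S,S^*,T_g,T_g^*\}$. For the present statement, the starting identity is $S^*\rho^2(T_g)S = T_g$, which holds because $S\in(\id,\rho^2)$ gives $\rho^2(x)S=Sx$, hence $\rho^2(T_g)S = S T_g$ and $S^*\rho^2(T_g)S = S^*S T_g = T_g$.

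First I would rewrite $\rho^2(T_g)$ so that the relevant ``diagonal'' term $T_{g+h+k}T_{g+h+k}^*\cdots$ can be extracted. Concretely, starting from $\rho(T_g)$ one has $\rho^2(T_g)=\rho(\rho(T_g))$, and $\rho(T_g)=\alpha_{-g}(\alpha_g\rho(T_g))$ can be expanded by Lemma \ref{ST} together with $\alpha_h(T_\bullet)=\epsilon_h(\bullet)T_{\bullet+2h}$ and $\alpha_h(S)=S$ (Lemma \ref{fixed}). Then apply $\rho$ once more, again expanding $\rho(S)$ and $\rho(T_\bullet)$ by Lemma \ref{ST}. After that, sandwich between $S^*$ on the left and $S$ on the right; the relations $S^*S=1$, $S^*T_g=0$, $T_g^*T_h=\delta_{g,h}$ kill most terms. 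The term on the right-hand side $T_g$ must match, so the key is to isolate which products of three $A$'s (coming from two applications of $\rho$ to the cubic term, plus the $\rho(S)$ cross terms) survive the sandwich and land on $T_g=T_{g+(h+k)-(h+k)}$-type monomials after relabelling. The $\epsilon$-cocycle factors $\epsilon_h(g+k)$, $\epsilon_k(g+h)$, $\epsilon_{h+k}(g)$ will appear precisely from commuting the various $\alpha_h,\alpha_k,\alpha_{h+k}$ past the $T$'s, and the $\eta_{g+h}\overline{\eta_{g+k}}$ factors will appear from the $P\alpha_g\rho(T_g)$ and $\alpha_g\rho(T_g)P$ pieces (the $ST_g^*$ and $T_gSS^*$ terms carrying $\overline{\eta}$ and $\eta$) being hit by a second $\rho$.

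The main obstacle I expect is purely bookkeeping: keeping track of the index shifts through two nested applications of $\rho$, each of which relabels the summation variables by $\pm g$, $\pm h$, $\pm k$, while simultaneously tracking the $\epsilon_h(\cdot)$ factors produced by moving the $\alpha$'s, and correctly using the cocycle identity $\epsilon_{h+k}(g)=\epsilon_h(g)\epsilon_k(g+2h)$ and the symmetry relations already derived (e.g. $\epsilon_k(g-k)=\epsilon_{-k}(g+k)$, and Lemma \ref{alpha}) to collapse the accumulated $\epsilon$-product down to the stated three factors $\epsilon_h(g+k)\epsilon_k(g+h)\epsilon_{h+k}(g)$. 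A useful sanity check along the way will be the special case $g=g+h+k$ (i.e. $h+k=0$), where the identity should degenerate consistently with Lemma \ref{Frobenius}, and the case $h=k=0$, where it should become trivial since $\epsilon_0(\cdot)=1$ and $\eta_g\overline{\eta_g}=1$. Once the surviving monomial is identified and its coefficient computed, matching it against the coefficient $1$ of $T_g$ on the right-hand side yields Eq.(\ref{I}) directly.
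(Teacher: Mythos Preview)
Your starting point $S^*\rho^2(T_g)S=T_g$ is correct, and the expansion you sketch is exactly what the paper does. But there is a genuine gap in your final step. After sandwiching by $S^*$ and $S$, \emph{every} surviving term is a scalar multiple of the single isometry $T_g$; the double sum over $h,k$ does not separate into distinct basis monomials. Concretely, the computation yields
\[
\frac{1}{d}\sum_{h,k\in G}A_g(h,k)\,\overline{A_{g+h+k}(h,k)}\,\overline{\eta_{g+h}}\eta_{g+k}\,
\epsilon_h(g+k)\epsilon_k(g+h)\epsilon_{h+k}(g)\;=\;1-\frac{2}{d^2},
\]
a single scalar identity for each $g$, not one equation per pair $(h,k)$. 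So ``matching coefficients'' cannot give Eq.~(\ref{I}) directly.

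The missing idea is a Cauchy--Schwarz equality argument. Summing the identity above over $g$ gives
\[
\sum_{g,h,k}A_g(h,k)\,\overline{B_g(h,k)}=(n-\tfrac{1}{d})n,
\qquad
B_g(h,k):=A_{g+h+k}(h,k)\epsilon_h(g+k)\epsilon_k(g+h)\epsilon_{h+k}(g)\eta_{g+h}\overline{\eta_{g+k}}.
\]
On the other hand, Lemma~\ref{orthogonality} (with $g=g'$) gives $\sum_{g,h,k}|A_g(h,k)|^2=(n-\tfrac{1}{d})n$, and since $(g,h,k)\mapsto(g+h+k,h,k)$ is a bijection and the $\epsilon,\eta$ factors are unimodular, also $\sum_{g,h,k}|B_g(h,k)|^2=(n-\tfrac{1}{d})n$. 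Hence $\langle A,B\rangle=\|A\|^2=\|B\|^2$, which forces $A_g(h,k)=B_g(h,k)$ termwise. This is the step that converts the single summed identity into the pointwise relation (\ref{I}), and it is absent from your plan.
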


\begin{proof} 
\begin{align*}
\lefteqn{S^*\rho^2(T_g)S=S^*\alpha_g\rho(\alpha_g\rho(T_g))S} \\
 &=\eta_gS^*\alpha_g\rho(T_gSS^*)S+\frac{\overline{\eta_g}}{\sqrt{d}}S^*\alpha_g\rho(ST_g^*)S+
 \sum_{h,k\in G}A_g(h,k)S^*\alpha_g\rho(T_{g+h}T_{g+h+k}T_{g+k}^*)S.
\end{align*}
The first term is 
$$\eta_gS^*\alpha_g\rho(T_gSS^*)S=\frac{1}{d\sqrt{d}}T_g^*\alpha_g\rho(S)=\frac{1}{d^2}T_g.$$
The second term is 
$$\frac{\overline{\eta_g}}{\sqrt{d}}S^*\alpha_g\rho(ST_g^*)S=\frac{1}{d^2}T_g.$$
The third term is 
\begin{align*}
\lefteqn{\sum_{h,k\in G}A_g(h,k)S^*\alpha_g\rho(T_{g+h}T_{g+h+k}T_{g+k}^*)S}\\
&=\sum_{h,k\in G}A_g(h,k)\alpha_{-h}(S^*\alpha_{g+h}\rho(T_{g+h}))\alpha_g\rho(T_{g+h+k})\alpha_{-k}(\alpha_{g+k}(T_{g+k}^*)S) \\
&=\frac{1}{d}\sum_{h,k\in G}A_g(h,k)\overline{\eta_{g+h}}\eta_{g+k}\alpha_{-h}(T_{g+h}^*)\alpha_g\rho(T_{g+h+k})\alpha_{-k}(T_{g+k}) \\
&=\frac{1}{d}\sum_{h,k\in G}A_g(h,k)\overline{\eta_{g+h}}\eta_{g+k}\epsilon_k(g+h)\epsilon_h(g+k)
\alpha_{-h-k}(T_{g+h+2k}^*\alpha_{g+h+k}\rho(T_{g+h+k})T_{g+2h+k}) \\
&=\frac{1}{d}\sum_{h,k\in G}A_g(h,k)A_{g+h+k}(k,h)\overline{\eta_{g+h}}\eta_{g+k}\epsilon_k(g+h)\epsilon_h(g+k)\alpha_{-h-k}(T_{g+2h+2k}) \\
&=\frac{1}{d}\sum_{h,k\in G}A_g(h,k)\overline{A_{g+h+k}(h,k)}\overline{\eta_{g+h}}\eta_{g+k}
\epsilon_k(g+h)\epsilon_h(g+k)\epsilon_{-h-k}(g+2h+2k)T_g \\
&=\frac{1}{d}\sum_{h,k\in G}A_g(h,k)\overline{A_{g+h+k}(h,k)}\overline{\eta_{g+h}}\eta_{g+k}
\epsilon_k(g+h)\epsilon_h(g+k)\epsilon_{h+k}(g)T_g.
\end{align*}
Since $d(1-\frac{2}{d^2})=n-\frac{1}{d}$, we get 
$$\sum_{h,k\in G}A_g(h,k)\overline{A_{g+h+k}(h,k)}\overline{\eta_{g+h}}\eta_{g+k}
\epsilon_k(g+h)\epsilon_h(g+k)\epsilon_{h+k}(g)=n-\frac{1}{d},$$
which implies
$$\sum_{g,h,k\in G}A_g(h,k)\overline{A_{g+h+k}(h,k)}\overline{\eta_{g+h}}\eta_{g+k}
\epsilon_k(g+h)\epsilon_h(g+k)\epsilon_{h+k}(g)=(n-\frac{1}{d})n.$$
On the other hand, we have   
$$\sum_{g,h,k\in G}|A_g(h,k)|^2=(n-\frac{1}{d})n,$$
from Lemma \ref{orthogonality}. 
This proves the statement. 
\end{proof}

We compute the both sides of the equation
\begin{equation}\label{ghk}
\alpha_g\rho(T_{g+h}^*\alpha_g\rho(T_g))T_{g+k}=\alpha_g\rho(T_{g+h}^*)T_{g+k}\alpha_{g+k}\rho(T_g),
\end{equation}
and obtain the following statement. 

\begin{lemma}\label{AAA=AA}
\begin{align}\label{A1}
A_g(h,k)&=A_{g+h}(h,k)\eta_g\eta_{g+k}\overline{\eta_{g+h}}\overline{\eta_{g+h+k}}\epsilon_h(g)\epsilon_h(g+k)\\
&=A_{g+k}(h,k)\overline{\eta_g\eta_{g+h}}\eta_{g+k}\eta_{g+h+k}\epsilon_k(g)\epsilon_k(g+h),\nonumber
\end{align}

\begin{align}\label{A2}
\lefteqn{
\sum_{l\in G}A_g(x+y,l)A_{g-p+x}(-x,l+p)A_{g-q+x+y}(-y,l+q)} \\
&=\eta_g\eta_{g+q+x}\eta_{g+p+q+y}\overline{\eta_{g+p}\eta_{g+x+y}\eta_{g+q+x+y}}\nonumber\\
&\times A_{g-p}(q+y,p+x+y)A_g(p+x,q+x+y)\nonumber \\ 
&\times \epsilon_p(g-p+x)\epsilon_{p+x}(g-p+q+y)\epsilon_q(g-q+x+y)\epsilon_{q+y}(g-q+x)\nonumber\\
&-\frac{\delta_{x,0}\delta_{y,0}}{d}\eta_g\eta_{g+p}\eta_{g+q}.\nonumber
\end{align}
\end{lemma}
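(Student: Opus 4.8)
The plan is to derive both displays \eqref{A1} and \eqref{A2} from the associativity identity \eqref{ghk}, namely
\[
\alpha_g\rho(T_{g+h}^*\alpha_g\rho(T_g))T_{g+k}=\alpha_g\rho(T_{g+h}^*)T_{g+k}\alpha_{g+k}\rho(T_g),
\]
by expanding each side using the explicit formula for $\alpha_g\rho(T_g)$ from Lemma \ref{ST}, together with the relations $\alpha_g\rho=\rho\alpha_{-g}$, $\alpha_h(T_g)=\epsilon_h(g)T_{g+2h}$, and $\alpha_z(S)=S$, $\alpha_z(T_g)=\chi_g(z)T_g$ for $z\in G_2$ (and more generally $\alpha_g(S)=S$). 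The strategy is exactly the one used in the proofs of Lemmas \ref{orthogonality}--\ref{id}: substitute the three-term expression for $\alpha_g\rho(T_g)$, multiply out, and then extract coefficients of the distinct ``monomials'' in $\{S,S^*,T_g,T_g^*\}$ of the form $S S^*$, $S T_\bullet^*$, $T_\bullet S^*$, and $T_\bullet T_\bullet T_\bullet^*$, which are linearly independent in $\cO_{n+1}$ because of the Cuntz relations.

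Concretely, first I would treat a degenerate specialization of \eqref{ghk} to obtain \eqref{A1}. Taking the scalar component — i.e.\ computing $T_{g+2h+\ast}^*(\cdots)T_{g+2k+\ast}$ or, more cleanly, isolating the $QQQ$-part of both sides after one application of the formula for $\alpha_g\rho(T_g)$ to the innermost intertwiner — one side produces $A_{g+h}(h,k)$ (resp.\ $A_{g+k}(h,k)$) with $\epsilon$- and $\eta$-factors coming from moving $\alpha$'s and $\rho$'s past the $T$'s, while the other side produces $A_g(h,k)$. Matching gives \eqref{A1}. The two lines of \eqref{A1} are the two ways of grouping the triple composition, and the normalizations $j_{1,g}(T_g)=T_g$, $j_{2,g}(T_g)=\eta_g T_g$ supply the $\eta$-prefactors; the $\epsilon_h(g)\epsilon_h(g+k)$ factors come from Lemma \ref{alpha} applied with the relevant shifts. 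This step is essentially bookkeeping.

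For \eqref{A2} I would apply the full expansion of Lemma \ref{ST} to \emph{every} occurrence of $\alpha_\bullet\rho(T_\bullet)$ in \eqref{ghk} (there are three on the right, two nested on the left), multiply everything out, and then read off the coefficient of a generic triple $T_{g+x}T_{g+x+y}T_{g+x+y+z}^*$ — or rather, the appropriate relabeling that produces the free index $l$ summed over $G$ on the left-hand side. The left side of \eqref{A2} is the genuinely ``quadratic-in-$A$'' contribution coming from composing two $QQQ$-terms, which forces a contraction over an internal index $l$; the right side collects the remaining contributions: one more $A\cdot A$ term (with a different index pattern, reflecting the other association) carrying the $\epsilon$-cocycle factors $\epsilon_p(g-p+x)\epsilon_{p+x}(g-p+q+y)\epsilon_q(g-q+x+y)\epsilon_{q+y}(g-q+x)$ and a product of six $\eta$'s, plus the ``boundary'' term $-\tfrac{\delta_{x,0}\delta_{y,0}}{d}\eta_g\eta_{g+p}\eta_{g+q}$ that arises when the $SS^*$ or $ST^*/TS^*$ pieces of Lemma \ref{ST} interact (this is why it is supported on $x=y=0$, in parallel with the $-\overline{\eta_g}/d$ correction in Lemma \ref{orthogonality}). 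To keep the indices honest I would introduce the substitutions $h=x+y$, track the shifts $g\mapsto g-p+x$, $g\mapsto g-q+x+y$ etc.\ forced by $\alpha_h(T_g)=\epsilon_h(g)T_{g+2h}$, and repeatedly use the $\epsilon$-cocycle identity $\epsilon_{h+k}(g)=\epsilon_h(g)\epsilon_k(g+2h)$ and $\eta_{g+2h}=\eta_g$ to consolidate factors.

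The main obstacle is purely combinatorial: keeping track of the shifts of the group-element subscripts and the resulting web of $\epsilon$- and $\eta$-factors as one moves $\alpha_g$ and $\rho$ past the isometries, and verifying that after all cancellations the $\epsilon$-factors collapse to exactly the four listed in \eqref{A2} and the $\eta$-factors to exactly the six listed — including getting all the complex conjugates in the right places via $j_{1,g}(T_g)=T_g$ and $j_{2,g}(T_g)=\eta_g T_g$, together with Lemma \ref{Frobenius}'s relation $A_g(k,h)=\overline{A_g(h,k)}$. There is no conceptual difficulty once Lemma \ref{ST} is in hand — the identity \eqref{ghk} is just an instance of the interchange law $(1\otimes T)(S\otimes 1)=(S'\otimes 1)(1\otimes T')$ in $\End(M)$ — but the calculation is long enough that the real work is organizing it so the cancellations are visible; I would structure it as: (i) expand the left side of \eqref{ghk}, (ii) expand the right side, (iii) equate coefficients of $SS^*$, of $S T_\bullet^*$ and $T_\bullet S^*$, and of $T_\bullet T_\bullet T_\bullet^*$ separately, noting that the first three coefficient equations are either trivial or already subsumed by Lemmas \ref{orthogonality} and \ref{Frobenius}, so that the new content is exactly \eqref{A1} (from a partial expansion) and \eqref{A2} (from the full $T_\bullet T_\bullet T_\bullet^*$ matching).
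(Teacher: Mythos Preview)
Your approach is essentially the paper's: expand both sides of (\ref{ghk}) using Lemma \ref{ST} and match coefficients of the independent Cuntz monomials. One correction to your bookkeeping: (\ref{A1}) does \emph{not} arise from a ``partial expansion'' or from the $T_\bullet T_\bullet T_\bullet^*$ block as you suggest at the end --- it comes precisely from the $T_\bullet SS^*$ coefficient (one of your ``boundary'' terms), which yields an intermediate relation that is then rewritten via Lemma \ref{Frobenius} into (\ref{A1}); it is the $ST_\bullet^*$ coefficient that is redundant (it follows from Lemma \ref{orthogonality} once (\ref{A1}) is in hand). The $T_\bullet T_\bullet T_\bullet^*$ matching then gives a raw cubic relation in variables $(g,h,k,r,s)$; the paper applies the change of variables $r=-x$, $h=x+y$, $k=p$, $s=q-x$ and simplifies the result using (\ref{A1}), Lemma \ref{id}, and Lemma \ref{Frobenius} repeatedly to reach (\ref{A2}). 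So your plan is correct, but note that (\ref{A1}) is needed as an \emph{input} to the simplification yielding (\ref{A2}), and it lives in the $T_\bullet SS^*$ coefficient, not among the ones you dismissed as trivial.
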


\begin{proof}
The left-hand side of (\ref{ghk}) is 
\begin{align*}
\lefteqn{\delta_{h,0}\eta_g\alpha_g\rho(SS^*)T_{g+k}+\sum_{l\in G}A_g(h,l)\alpha_g\rho(T_{g+h+l}T_{g+l}^*)T_{g+k} } \\
 &=\frac{\delta_{h,0}\eta_g}{\sqrt{d}}\rho(S)T_{g+k}^*
 +\sum_{l\in G}A_g(h,l)\epsilon_l(g+k)\alpha_g\rho(T_{g+h+l})\alpha_{-l}(\alpha_{g+l}\rho(T_{g+l}^*)T_{g+k+2l}) \\
 &=\frac{\delta_{h,0}\eta_g}{\sqrt{d}}\rho(S)T_{g+k}^*
 +A_g(h,-k)\epsilon_{-k}(g+k)\overline{\eta_{g-k}}\alpha_g\rho(T_{g+h-k})SS^* \\
 &+\sum_{l,r\in G}A_g(h,l)\overline{A_{g+l}(k+l,r)}\epsilon_l(g+k)\alpha_g\rho(T_{g+h+l})\alpha_{-l}(T_{g+l+r}T_{g+k+2l+r}^*). \\
\end{align*}
The first term is 
$$\frac{\delta_{h,0}\eta_g}{d\sqrt{d}}ST_{g+k}^*+\frac{\delta_{h,0}\eta_g}{d}\sum_{s\in G}T_{g+s}T_{g+s}T_{g+k}^*.$$
The second term is 
\begin{align*}
\lefteqn{A_g(h,-k)\epsilon_{-k}(g+k)\overline{\eta_{g-k}}\alpha_{k-h}(\alpha_{g+h-k}\rho(T_{g+h-k})SS^*)} \\
 &=A_g(h,-k)\epsilon_{-k}(g+k)\epsilon_{k-h}(g+h-k)\overline{\eta_{g-k}}\eta_{g+h-k}T_{g-h+k}SS^*.
\end{align*}
The last term is 
\begin{align*}
\lefteqn{\sum_{l,r\in G}A_g(h,l)A_{g+l}(r,k+l)\epsilon_l(g+k)\epsilon_l(g+k+r)
 \alpha_{-h-l}(\alpha_{g+h+l}\rho(T_{g+h+l})\alpha_h(T_{g+l+r}))T_{g+k+r}^*}\\  
 &=\sum_{l,r\in G}A_g(h,l)A_{g+l}(r,k+l)\epsilon_l(g+k)\epsilon_l(g+k+r)\epsilon_h(g+l+r)\\
&\times \alpha_{-h-l}(\alpha_{g+h+l}\rho(T_{g+h+l})T_{g+2h+l+r})T_{g+k+r}^* \\
 &=\frac{1}{\sqrt{d}}\sum_{l\in G}A_g(h,l)A_{g+l}(-h,k+l)\epsilon_l(g+k)\epsilon_l(g-h+k)\epsilon_h(g-h+l)\overline{\eta_{g+h+l}}
 ST_{g-h+k}^*  \\
&+\sum_{l,r,s\in G}A_g(h,l)A_{g+l}(r,k+l)A_{g+h+l}(h+l+s,h+r)\\
&\times \epsilon_l(g+k)\epsilon_l(g+k+r)\epsilon_h(g+l+r)\alpha_{-h-l}(T_{g+2h+2l+s}T_{g+3h+2l+r+s})T_{g+k+r}^* \\
&=\frac{1}{\sqrt{d}}\sum_{l\in G}A_g(h,l)A_{g+l}(-h,k+l)\epsilon_l(g+k)\epsilon_l(g-h+k)\epsilon_h(g-h+l)\overline{\eta_{g+h+l}}
 ST_{g-h+k}^*  \\
&+\sum_{l,r,s\in G}A_g(h,l)A_{g+l}(r,k+l)A_{g+h+l}(h+l+s,h+r)\\
&\times \epsilon_l(g+k)\epsilon_l(g+k+r)\epsilon_h(g+l+r)
\epsilon_{h+l}(g+s)\epsilon_{h+l}(g+h+r+s) \\
&\times T_{g+s}T_{g+h+r+s}T_{g+k+r}^* \\
\end{align*}

On the other hand, the right-hand side of (\ref{ghk}) is  
\begin{align*}
\lefteqn{\alpha_g\rho(T_{g+h}^*)T_{g+k}\alpha_{g+k}\rho(T_g)=
\alpha_{-h}(\alpha_{g+h}\rho(T_{g+h}^*)\alpha_h(T_{g+k}))\alpha_{g+k}\rho(T_g)} \\
 &=\epsilon_h(g+k)\alpha_{-h}(\alpha_{g+h}\rho(T_{g+h}^*)T_{g+2h+k})\alpha_{g+k}\rho(T_g) \\
 &=\delta_{h+k,0}\epsilon_h(g-h)\overline{\eta_{g+h}}S\alpha_{-h}(S^*\alpha_g\rho(T_g)) \\
 &+\epsilon_h(g+k)\sum_{s\in G}\overline{A_{g+h}(h+k,s+h)}\alpha_{-h}(T_{g+2h+s}T_{g+3h+k+s}^*)\alpha_{g+k}\rho(T_g).
\end{align*}
The first term is
$$\frac{1}{\sqrt{d}}\delta_{h+k,0}\epsilon_h(g-h)\overline{\eta_{g+h}\eta_g}S\alpha_{-h}(T_g^*)
=\frac{1}{\sqrt{d}}\delta_{h+k,0}\epsilon_h(g-h)\epsilon_{-h}(g)\overline{\eta_{g+h}\eta_g}ST_{g-2h}^*.$$
The second term is 
\begin{align*}
\lefteqn{\epsilon_h(g+k)\sum_{s\in G}A_{g+h}(h+s,h+k)\epsilon_h(g+s)T_{g+s}\alpha_k(\alpha_{-h-k}(T_{g+3h+k+s}^*)\alpha_g\rho(T_g))}\\
 &=\epsilon_h(g+k)\sum_{s\in G}A_{g+h}(h+s,h+k)\epsilon_h(g+s)\epsilon_{h+k}(g+h-k+s)\\
&\times T_{g+s}\alpha_k(T_{g+h-k+s}^*\alpha_g\rho(T_g)) \\
 &=A_{g+h}(k,h+k)\epsilon_h(g+k)\epsilon_h(g-h+k)\epsilon_{h+k}(g)\eta_g T_{g-h+k}SS^* \\
 &+\epsilon_h(g+k)\sum_{s,r\in G}A_{g+h}(h+s,h+k)A_g(h-k+s,-k+r)\\
&\times \epsilon_h(g+s)\epsilon_{h+k}(g+h-k+s) T_{g+s}\alpha_k(T_{g+h-2k+s+r}T_{g-k+r}^*) \\
&=A_{g+h}(k,h+k)\epsilon_h(g+k)\epsilon_h(g-h+k)\epsilon_{h+k}(g)\eta_g T_{g-h+k}SS^* \\
 &+\epsilon_h(g+k)\sum_{s,r\in G}A_{g+h}(h+s,h+k)A_g(h-k+s,-k+r)\\
&\times \epsilon_h(g+s)\epsilon_{h+k}(g+h-k+s)\epsilon_{-k}(g+h+s+r)\epsilon_{-k}(g+k+r)\\
&\times T_{g+s}T_{g+h+s+r}T_{g+k+r}^*. \\
\end{align*}

Thus we obtain 
\begin{align}\label{A1'}
\lefteqn{A_g(h,-k)\epsilon_{-k}(g+k)\epsilon_{k-h}(g+h-k)\overline{\eta_{g-k}}\eta_{g+h-k}} \\
 &= A_{g+h}(k,h+k)\epsilon_h(g+k)\epsilon_h(g-h+k)\epsilon_{h+k}(g)\eta_g ,\nonumber
\end{align}
\begin{align}\label{A'}
\lefteqn{\sum_{l\in G}A_g(h,l)A_{g+l}(-h,k+l)\epsilon_l(g+k)\epsilon_l(g-h+k)\epsilon_h(g-h+l)\overline{\eta_{g+h+l}}} \\
 &=\delta_{h+k,0}\epsilon_h(g-h)\epsilon_{-h}(g)\overline{\eta_{g+h}\eta_g}-\frac{\delta_{h,0}\eta_g}{d},\qquad
 \qquad\qquad\qquad \nonumber
\end{align}
\begin{align}\label{A2'}
\lefteqn{\sum_{l\in G}A_g(h,l)A_{g+l}(r,k+l)A_{g+h+l}(h+l+s,h+r)}\\
&\times\epsilon_l(g+k)\epsilon_l(g+k+r)\epsilon_h(g+l+r)\epsilon_{h+l}(g+s)\epsilon_{h+l}(g+h+r+s)\nonumber\\
 &=A_{g+h}(h+s,h+k)A_g(h-k+s,-k+r)\epsilon_h(g+k)\epsilon_h(g+s)\nonumber \\
 &\times \epsilon_{h+k}(g+h-k+s)\epsilon_{-k}(g+h+s+r)\epsilon_{-k}(g+k+r) \nonumber\\
 &-\frac{\delta_{h,0}\delta_{r,0}\eta_g}{d}. \nonumber
\end{align}

Lemma \ref{Frobenius} implies  
$$
A_{g+h}(h,k)=\eta_{g+h}\epsilon_{-k}(g+2h)\epsilon_{-k}(g+h+k)\epsilon_{-k}(g+2h+k)A_{g+h}(-k,h-k),$$
and (\ref{A1'}) is equivalent to 
\begin{align*}
\lefteqn{A_g(h,k)}\\ 
&=A_{g+h}(-k,h-k)\epsilon_k(g-k)\epsilon_{-k-h}(g+h+k)\epsilon_h(g-h-k)\epsilon_{h-k}(g)\epsilon_h(g-k)\\
&\times \eta_g \eta_{g+k}\overline{\eta_{g+h+k}} \\
 &=A_{g+h}(-k,h-k)\epsilon_{h-k}(g+k)\epsilon_{-k}(g+h+k)\epsilon_{-k}(g+2h)\epsilon_h(g)\eta_g \eta_{g+k}
 \overline{\eta_{g+h+k}} \\
 &=A_{g+h}(h,k)\epsilon_{h-k}(g+k)\epsilon_{-k}(g+2h+k)\epsilon_h(g)\eta_g \eta_{g+k}\overline{\eta_{g+h}\eta_{g+h+k}} \\
 &=A_{g+h}(h,k)\epsilon_h(g+k)\epsilon_h(g)\eta_g \eta_{g+k}\overline{\eta_{g+h}\eta_{g+h+k}}. \\
\end{align*}
Therefore the first equation of (\ref{A1}) holds. 
The second equation follows from this and Lemma \ref{Frobenius}. 

Lemma \ref{Frobenius} and (\ref{A1}) imply
\begin{align*}
\lefteqn{A_{g+l}(-h,k+l)}\\
&=A_{g+l}(h+k+l,h)\overline{\eta_{g+l}}\epsilon_h(g-h+l)\epsilon_h(g+k+2l)\epsilon_h(g-h+k+2l) \\
 &=A_{g-h-k}(h+k+l,h)\eta_{g+l}\eta_{g+h+l}\overline{\eta_{g-k}\eta_{g-h-k}}\epsilon_{h+k+l}(g-h-k)\epsilon_{h+k+l}(g-k)\\
&\times \overline{\eta_{g+l}}\epsilon_h(g-h+l)\epsilon_h(g+k+2l)\epsilon_h(g-h+k+2l)\\
&=A_{g-h-k}(h+k+l,h)\eta_{g+h+l}\overline{\eta_{g-k}\eta_{g-h-k}}\epsilon_{k+l}(g-h-k)\epsilon_{k+l}(g-k)\epsilon_h(g-h+l).
 \end{align*}
Thus the left-hand side of (\ref{A'}) is 
\begin{align*}
\lefteqn{\sum_{l\in G}A_g(h,l)\overline{A_{g-h-k}(h,h+k+l)}\epsilon_k(g-k)\epsilon_k(g-h-k)\overline{\eta_{g-h-k}\eta_{g-k}}}\\
 &=(\delta_{h+k,0}-\frac{\delta_{h,0}\eta_g\overline{\eta_{g-h-k}}}{d}) 
 \epsilon_k(g-k)\epsilon_k(g-h-k)\overline{\eta_{g-h-k}\eta_{g-k}}\\
 &=\delta_{h+k,0}\epsilon_{-h}(g+h)\epsilon_{-h}(g)\overline{\eta_g\eta_{g+h}}
 -\frac{\delta_{h,0}\eta_g\overline{\eta_{g-k}^3}}{d}.
\end{align*}
This shows that (\ref{A'}) does not give any new condition.

By the change of variables $r=-x$, $h=x+y$, $k=p$, $s=q-x$ , 
(\ref{A2'}) becomes  
\begin{align}\label{A2"}
\lefteqn{\sum_{l\in G}A_g(x+y,l)A_{g+l}(-x,l+p)A_{g+l+x+y}(l+q+y,y)\epsilon_l(g+p)\epsilon_l(g+p-x)}\\
&\times \epsilon_{x+y}(g+l-x)\epsilon_{l+x+y}(g+q-x)\epsilon_{l+x+y}(g+q-x+y)\qquad\qquad\nonumber\\
&= A_{g+x+y}(q+y,p+x+y)A_g(-p+q+y,-p-x)\nonumber\\
&\times \epsilon_{x+y}(g+p)\epsilon_{x+y}(g+q-x)\epsilon_{p+x+y}(g-p+q+y)\nonumber\\
&\times \epsilon_{-p}(g+q-x+y)\epsilon_{-p}(g+p-x)- \frac{\delta_{x,0}\delta_{y,0}\eta_g}{d}.\nonumber
\end{align}

By (\ref{I}) we get 
\begin{align*}
\lefteqn{A_{g+l}(-x,l+p)} \\
 &=A_{g-p+x}(-x,l+p)\epsilon_{-x}(g+l+x)\epsilon_{l+p}(g-p)\epsilon_{l+p-x}(g-p+x)\eta_{g+l+x}\overline{\eta_{g-p}}.
\end{align*}
By (\ref{A1}) and Lemma \ref{Frobenius}, we get 
\begin{align*}
\lefteqn{A_{g+l+x+y}(l+q+y,y)=\eta_{g+l+x+y}A_{g+l+x+y}(-y,l+q)} \\
&\times \epsilon_{-y}(g+2l+q+x+2y)\epsilon_{-y}(g+l+x+2y)\epsilon_{-y}(g+2l+q+x+3y)\\
&= \eta_{g-q+x+y}\eta_{g-q+x}\overline{\eta_{g+l+x}}A_{g-q+x+y}(-y,l+q)\epsilon_{l+q}(g-q+x+y)\epsilon_{l+q}(g-q+x)\\
&\times \epsilon_{y}(g+2l+q+x)\epsilon_{y}(g+l+x)\epsilon_{y}(g+2l+q+x+y).
\end{align*}
Thus the left-hand side of (\ref{A2"}) is  
\begin{align*}
\lefteqn{\eta_{g+q+x+y}\eta_{g+q+x}\overline{\eta_{g+p}}\sum_{l\in G}A_g(x+y,l)A_{g-p+x}(-x,l+p)A_{g-q+x+y}(-y,l+q)}\\
&\times \epsilon_l(g+p)\epsilon_l(g+p-x)\epsilon_{x+y}(g+l-x)\epsilon_{l+x+y}(g+q-x)\epsilon_{l+x+y}(g+q-x+y)\\
&\times \epsilon_{-x}(g+l+x)\epsilon_{l+p}(g-p)\epsilon_{l+p-x}(g-p+x) \epsilon_{l+q}(g-q+x+y)\epsilon_{l+q}(g-q+x)\\
&\times \epsilon_{y}(g+2l+q+x)\epsilon_{y}(g+l+x)\epsilon_{y}(g+2l+q+x+y)\\
&= \eta_{g-q+x+y}\eta_{g-q+x}\overline{\eta_{g-p}}\epsilon_p(g-p)\epsilon_{p-x}(g-p+x)\\
&\times \sum_{l\in G}A_g(x+y,l)A_{g-p+x}(-x,l+p)A_{g-q+x+y}(-y,l+q)\\
&\times \epsilon_{x+y}(g+l-x)\epsilon_{-x}(g+l+x)\epsilon_{y}(g+l+x)\\
&\times \epsilon_{l+q}(g-q+x+y)\epsilon_{l+q}(g-q+x)\epsilon_{l+x}(g+q-x)\epsilon_{l+x}(g+q-x+y)\\
&= \eta_{g-q+x+y}\eta_{g-q+x}\overline{\eta_{g-p}}\epsilon_p(g-p)\epsilon_{p-x}(g-p+x)
\epsilon_{q-x}(g-q+x+y)\epsilon_{q-x}(g-q+x)\\
&\times \sum_{l\in G}A_g(x+y,l)A_{g-p+x}(-x,l+p)A_{g-q+x+y}(-y,l+q)\\
\end{align*}

By (\ref{A1}) and Lemma \ref{Frobenius}, we have 
\begin{align*}
\lefteqn{A_{g+x+y}(q+y,p+x+y)=A_{g-p}(q+y,p+x+y)} \\
 &\times\eta_{g-p}\eta_{g-p+q+y}\overline{\eta_{g+x+y}\eta_{g+q+x+2y}}\epsilon_{p+x+y}(g-p)\epsilon_{p+x+y}(g-p+q+y),
\end{align*}
\begin{align*}
\lefteqn{A_g(-p+q+y,-p-x)= A_g(p+x,q+x+y)} \\
 &\times\eta_g\epsilon_{p+x}(g-p+q+y)\epsilon_{p+x}(g-p-x)\epsilon_{p+x}(g-2p+q-x+y).
\end{align*}
Thus the right-hand side of (\ref{A2"}) is 
\begin{align*}
\lefteqn{\eta_g\eta_{g+p}\eta_{g+p+q+y}\overline{\eta_{g+x+y}\eta_{g+q+x}} A_{g-p}(q+y,p+x+y)A_g(p+x,q+x+y)} \\ 
&\times \epsilon_{x+y}(g+p)\epsilon_{x+y}(g+q-x)\epsilon_{p+x+y}(g-p+q+y)\\
&\times \epsilon_{-p}(g+q-x+y)\epsilon_{-p}(g+p-x)  \epsilon_{p+x+y}(g-p)\epsilon_{p+x+y}(g-p+q+y)\\
&\times \epsilon_{p+x}(g-p+q+y)\epsilon_{p+x}(g-p-x)\epsilon_{p+x}(g-2p+q-x+y)
- \frac{\delta_{h,0}\delta_{r,0}\eta_g}{d}\\
&= \eta_g\eta_{g+p} \eta_{g+p+q+y}\overline{\eta_{g+x+y}\eta_{g+q+x}}
A_{g-p}(q+y,p+x+y)A_g(p+x,q+x+y) \\ 
&\times    \epsilon_p(g-p)\epsilon_{p+x}(g-p+q+y) 
 \epsilon_x(g+p-x)\epsilon_x(g+q-x+y) \epsilon_{x+y}(g+q-x)\\
& -\frac{\delta_{h,0}\delta_{r,0}\eta_g}{d}.\\
\end{align*}

Therefore (\ref{A2'}) is equivalent to 
\begin{align*}
\lefteqn{
\sum_{l\in G}A_g(x+y,l)A_{g-p+x}(-x,l+p)A_{g-q+x+y}(-y,l+q)} \\
&= \eta_g\eta_{g+p} \eta_{g+p+q+y}\overline{\eta_{g+x+y}\eta_{g+q+x}}\eta_{g+p}\overline{\eta_{g+q+x+y}\eta_{g+q+x}}\\
&\times A_{g-p}(q+y,p+x+y)A_g(p+x,q+x+y) \\ 
&\times     \epsilon_p(g-p)\epsilon_{p+x}(g-p+q+y) \epsilon_x(g+p-x)\epsilon_x(g+q-x+y) \epsilon_{x+y}(g+q-x)\\
&\times \epsilon_p(g-p)\epsilon_{p-x}(g-p+x)\epsilon_{q-x}(g-q+x+y)\epsilon_{q-x}(g-q+x)\\
&-\frac{\delta_{x,0}\delta_{y,0}}{d}\eta_g\eta_{g+p}\overline{\eta_{g+q}^2}\\
&=\eta_g\eta_{g+q+x}\eta_{g+p+q+y}\overline{\eta_{g+p}\eta_{g+x+y}\eta_{g+q+x+y}}\\
&\times A_{g-p}(q+y,p+x+y)A_g(p+x,q+x+y) \\ 
&\times \epsilon_p(g-p+x)\epsilon_{p+x}(g-p+q+y)\epsilon_q(g-q+x+y)\epsilon_{q+y}(g-q+x)\\
&-\frac{\delta_{x,0}\delta_{y,0}}{d}\eta_g\eta_{g+p}\eta_{g+q},\\
\end{align*}
and (\ref{A2}) holds. 
\end{proof}

\begin{remark} (\ref{I}) follows from (\ref{A1}). 
\end{remark}
\section{Reconstruction}\label{Reconstruction}
In this section, we discuss how to recover the C$^*$-generalized Haagerup category from a solution 
of the polynomial equations we got in the previous section. 

Assume that $G$ is a finite abelian group of order $n$. 
We set $d=\frac{n+\sqrt{n^2+4}}{2}$. 
We consider $\epsilon_h(g)\in \{1,-1\}$, $\eta_g\in \T$, $A_g(h,k)\in \C$ satisfying the following condition: 
\begin{equation}\label{cocycle}\epsilon_{h+k}(g)=\epsilon_h(g)\epsilon_k(g+2h),\quad \epsilon_h(0)=1,
\end{equation}
\begin{equation}\label{R1}
\eta_{g+2h}=\eta_g,\quad \eta_g^3=1,
\end{equation}
\begin{equation}\label{O1}
\sum_{h\in G}A_g(h,0)=-\frac{\overline{\eta_g}}{d},
\end{equation}
\begin{equation}\label{O2}
\sum_{h\in G}A_g(h-g,k)\overline{A_{g'}(h-g',k)}=\delta_{g,g'}-\frac{\overline{\eta_g}\eta_{g'}}{d}\delta_{k,0},
\end{equation}
\begin{equation}\label{2hshift}
A_{g+2h}(p,q)=\epsilon_h(g)\epsilon_h(g+p)\epsilon_h(g+q)\epsilon_h(g+p+q)A_g(p,q),
\end{equation}
\begin{equation}\label{CC}
A_g(k,h)=\overline{A_g(h,k)},
\end{equation}
\begin{align}\label{R2}
A_g(h,k)&=A_g(-k,h-k)\eta_g\epsilon_{-k}(g+h)\epsilon_{-k}(g+k)\epsilon_{-k}(g+h+k)\\
&=A_g(k-h,-h)\overline{\eta_g}\epsilon_{-h}(g+h)\epsilon_{-h}(g+k)\epsilon_{-h}(g+h+k),\nonumber
\end{align}
\begin{align}\label{hkshift}
A_g(h,k)&=A_{g+h}(h,k)\eta_g\eta_{g+k}\overline{\eta_{g+h}}\overline{\eta_{g+h+k}}\epsilon_h(g)\epsilon_h(g+k)\\
&=A_{g+k}(h,k)\overline{\eta_g\eta_{g+h}}\eta_{g+k}\eta_{g+h+k}\epsilon_k(g)\epsilon_k(g+h),\nonumber
\end{align}
\begin{align}\label{AAA}
\lefteqn{
\sum_{l\in G}A_g(x+y,l)A_{g-p+x}(-x,l+p)A_{g-q+x+y}(-y,l+q)} \\
&=A_g(p+x,q+x+y)A_{g-p}(q+y,p+x+y)\nonumber\\
&\times \eta_g\eta_{g+q+x}\eta_{g+p+q+y}\overline{\eta_{g+p}\eta_{g+x+y}\eta_{g+q+x+y}}\nonumber \\ 
&\times \epsilon_p(g-p+x)\epsilon_{p+x}(g-p+q+y)\epsilon_q(g-q+x+y)\epsilon_{q+y}(g-q+x)\nonumber\\
&-\frac{\delta_{x,0}\delta_{y,0}}{d}\eta_g\eta_{g+p}\eta_{g+q}.\nonumber
\end{align}

We denote by $\cO_{n+1}$ the Cuntz algebra with the canonical generators $\{S\}\cup\{T_g\}_{g\in G}$. 
We can introduce a $G$-action $\alpha$ on $\cO_{n+1}$ and an endomorphism $\rho$ of $\cO_{n+1}$ 
satisfying $\alpha_g\rho=\rho\alpha_{-g}$ by 
$$\alpha_h(S)=S,\quad \alpha_h(T_g)=\epsilon_h(g)T_{g+2h},$$
$$\rho(S)=\frac{1}{d}S+\frac{1}{\sqrt{d}}\sum_{g\in G}T_gT_g,$$
$$\alpha_g\rho(T_g)=\eta_gT_gSS^*+\frac{\overline{\eta_g}}{\sqrt{d}}ST_g^*+
\sum_{h,k\in G}A_g(h,k)T_{g+h}T_{g+h+k}T_{g+k}^*.$$

\begin{theorem} $S\in (\id,\rho^2)$, $T_g\in (\alpha_g\rho,\rho^2)$. 
\end{theorem}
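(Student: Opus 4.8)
The plan is to route every verification through Izumi's intertwiner criterion (Lemma~\ref{CE}) applied to the endomorphism $\rho^2$, so that the whole proof reduces to checking a short list of operator identities among $S$, $T_g$ and their $\rho$-images, each of which can be traced back to one of the polynomial relations (\ref{cocycle})--(\ref{AAA}).

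Before doing so I would record that $\alpha$ and $\rho$ are genuinely defined. For $\alpha_h$ this is immediate: $g\mapsto g+2h$ is a bijection of $G$ and $\epsilon_h(g)\in\{1,-1\}$, so the images of the generators again satisfy the Cuntz relation, and (\ref{cocycle}) (together with its consequence $\epsilon_0\equiv 1$) gives $\alpha_h\alpha_k=\alpha_{h+k}$ and $\alpha_0=\id$, so $\alpha$ is a genuine $G$-action. For $\rho$ one must check that $\{\rho(S)\}\cup\{\rho(T_g)\}_{g\in G}$, with $\rho(T_g):=\alpha_{-g}(\alpha_g\rho(T_g))$, satisfies the Cuntz relation of $\cO_{n+1}$: $\rho(S)^*\rho(S)=1$ and the completeness relation $\rho(S)\rho(S)^*+\sum_g\rho(T_g)\rho(T_g)^*=1$ use the identity $d^2=nd+1$ (the cross terms in the latter cancelling by (\ref{O1}) and (\ref{O2})), $\rho(S)^*\rho(T_g)=0$ is exactly (\ref{O1}), and $\rho(T_g)^*\rho(T_{g'})=\delta_{g,g'}$ is (\ref{O2}) together with (\ref{CC}). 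Hence $\rho$ extends to a unital endomorphism of $\cO_{n+1}$ by the universal property. Finally $\alpha_g\circ\rho=\rho\circ\alpha_{-g}$ is checked on generators: $\alpha_g$ fixes $\rho(S)$ since it merely permutes the summands $T_hT_h$ among themselves, while on $T_h$ the required identity $\alpha_g\rho(T_h)=\epsilon_{-g}(h)\,\rho(T_{h-2g})$ is (\ref{2hshift}) rewritten with the $\epsilon$-cocycle law --- that is, Lemma~\ref{alpha} read backwards.

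With $\rho$ in place, to prove $S\in(\id,\rho^2)$ I would apply Lemma~\ref{CE} to $\rho^2$ and the generator $S$: it suffices to show $S^*\rho^2(S)S=S$ and $S^*\rho^2(T_g)S=T_g$ for all $g\in G$. Indeed, these say that the family $\{S^*\rho^2(U)S\}_U$ (over $U\in\{S\}\cup\{T_g\}_{g\in G}$) is just the generating family, hence certainly satisfies the Cuntz relation, so Lemma~\ref{CE} produces the unital endomorphism $\sigma(\,\cdot\,)=S^*\rho^2(\,\cdot\,)S$ with $S\in(\sigma,\rho^2)$; and $\sigma=\id$ because $\sigma$ fixes the generators. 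Expanding $\rho^2$ through the formulas of Lemma~\ref{ST} and sorting the resulting monomials according to the decomposition $1=SS^*+\sum_gT_gT_g^*$, the two identities $S^*\rho^2(S)S=S$ and $S^*\rho^2(T_g)S=T_g$ come out of (\ref{O1}), the Frobenius relations (\ref{CC}) and (\ref{R2}), and the degree-reduction identity of Eq.(\ref{I}) (now a consequence of (\ref{hkshift}); see the remark after Lemma~\ref{AAA=AA}); this is precisely the computation of Lemma~\ref{id} performed in reverse. In the same way, $T_g\in(\alpha_g\rho,\rho^2)$ follows from Lemma~\ref{CE} applied to $\rho^2$ and $T_g$ once one checks $T_g^*\rho^2(S)T_g=\alpha_g\rho(S)$ and $T_g^*\rho^2(T_h)T_g=\alpha_g\rho(T_h)$ for all $h$, since then the induced endomorphism $T_g^*\rho^2(\,\cdot\,)T_g$ agrees with $\alpha_g\rho$ on the generators and hence coincides with it.

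The heart of the proof --- and the step I expect to be the main obstacle --- is the last identity $T_g^*\rho^2(T_h)T_g=\alpha_g\rho(T_h)$. Substituting the cubic ansatz for $\alpha_g\rho(T_g)$ on both sides (and, via $\alpha_g\rho=\rho\alpha_{-g}$, for $\alpha_h\rho(T_h)$) and reading off the coefficient of each monomial type --- $ST_\bullet^*$, $T_\bullet S^*$, and the triple products $T_\bullet T_\bullet T_\bullet^*$ --- yields, respectively, the quadratic relations (\ref{hkshift}) and (\ref{R2}) and the cubic relation (\ref{AAA}) (equivalently the form Eq.(\ref{A2'}) obtained inside the proof of Lemma~\ref{AAA=AA}). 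This is a long, sign-delicate computation in which the $\epsilon$-cocycle identity (\ref{cocycle}) must be invoked repeatedly to commute the factors $\epsilon_h(g)$ past one another; it is exactly the chain of manipulations in Lemmas~\ref{Frobenius}, \ref{id} and \ref{AAA=AA} run backwards. In effect the whole theorem amounts to the observation that every step taken in Section~\ref{PEGHC} is reversible, so that the polynomial equations (\ref{cocycle})--(\ref{AAA}), shown there to be necessary, are now seen to be sufficient for $S$ and the $T_g$ to be the claimed intertwiners.
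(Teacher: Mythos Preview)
Your proof for $S\in(\id,\rho^2)$ matches the paper exactly: both compute $S^*\rho^2(S)S=S$ and $S^*\rho^2(T_g)S=T_g$ and invoke Lemma~\ref{CE}, the latter identity being precisely the computation of Lemma~\ref{id}.

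For $T_g\in(\alpha_g\rho,\rho^2)$ your plan is sound but organized differently from the paper. You propose to verify $T_{g+k}^*\rho^2(T_g)T_{g+k}=\alpha_{g+k}\rho(T_g)$ head-on and then invoke Lemma~\ref{CE}; this would work, but it is \emph{not} literally the computation carried out in Lemma~\ref{AAA=AA}, contrary to what you suggest. What Lemma~\ref{AAA=AA} actually verifies is the ``factored'' identity Eq.~(\ref{ghk}),
\[
\alpha_g\rho(T_{g+h}^*\alpha_g\rho(T_g))T_{g+k}=\alpha_g\rho(T_{g+h}^*)T_{g+k}\alpha_{g+k}\rho(T_g),
\]
together with its three simpler analogues obtained by replacing one or both of $T_{g+h}$, $T_g$ by $S$. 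The paper then inserts the resolution of the identity $\alpha_g\rho(SS^*+\sum_hT_{g+h}T_{g+h}^*)=1$ on the left of $\rho^2(T_g)T_{g+k}$ (and similarly $\rho(SS^*+\sum_gT_{-g}T_{-g}^*)=1$ for $\rho^2(S)T_h$); summing the four pieces gives $\rho^2(T_g)T_{g+k}=T_{g+k}\alpha_{g+k}\rho(T_g)$ directly, whence Lemma~\ref{CE} finishes. The advantage of the paper's route is that the identities to be checked are \emph{exactly} the ones whose expansion in Section~\ref{PEGHC} produced (\ref{A1}), (\ref{A'}), (\ref{A2'}); reversibility is then manifest and no fresh computation is needed. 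Your direct evaluation of $T_{g+k}^*\rho^2(T_g)T_{g+k}$ would lead to an equivalent but separately organized calculation, so your claim that it is ``exactly the chain of manipulations in Lemmas~\ref{Frobenius}, \ref{id} and~\ref{AAA=AA} run backwards'' is slightly off: it is the paper's insertion trick, not your direct cut-down, that makes that reversal literal.
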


\begin{proof} Direct computation shows $S^*\rho^2(S)S=S$, and the proof of Lemma \ref{id} shows $S^*\rho^2(T_g)S=T_g$. 
Thus Lemma \ref{CE} implies $S\in (\id,\rho^2)$. 

Direct computation shows 
$$\rho(S^*\rho(S))T_{g+k}=\alpha_{g+k}\rho(S)^*T_{g+k}\alpha_{g+k}\rho(S),$$
$$\alpha_g\rho(T_g^*\alpha_g\rho(S))T_{g+k}=\alpha_g\rho(T_g)^*T_{g+k}\alpha_{g+k}\rho(S),$$
$$\alpha_g\rho(S^*\alpha_g\rho(T_g))T_{g+k}=\alpha_g\rho(S^*)T_{g+k}\alpha_{g+k}\rho(T_g),$$
and the proof of Lemma \ref{AAA=AA} shows  
$$\alpha_g\rho(T_{g+h}^*\alpha_g\rho(T_g))T_{g+k}=\alpha_g\rho(S^*)T_{g+k}\alpha_{g+k}\rho(T_g).$$
Thus we have
\begin{align*}
\lefteqn{\rho^2(S)T_{h}=\rho(SS^*+\sum_{g\in G}T_{-g}T_{-g}^*)\rho^2(S)T_h} \\
 &=\rho(S)\rho(S^*\rho(S))T_h+\sum_{g\in G}\alpha_g\rho(T_gT_g^*\alpha_g\rho(S))T_h\\
 &=\rho(S)\rho(S^*)T_h\alpha_h\rho(S)+\sum_{g\in G}\alpha_g\rho(T_gT_g^*)T_h\alpha_h\rho(S)\\
 &=\rho(SS^*+\sum_{g\in G}T_{-g}T_{-g}^*)T_h\alpha_h\rho(S)=T_h\alpha_h\rho(S),
\end{align*}
\begin{align*}
\lefteqn{\rho^2(T_g)T_{g+k} =\alpha_g\rho(SS^*+\sum_{h\in G}T_{g+h}T_{g+h}^*) \rho^2(T_g)T_{g+k}} \\
&=\alpha_g\rho(SS^*\alpha_g\rho(T_g))T_{g+k}+\sum_{h\in G}\alpha_g\rho(T_{g+h}T_{g+h}^*\alpha_g\rho(T_g))T_{g+k}\\
 &=\alpha_g\rho(SS^*)T_{g+k}\alpha_{g+k}\rho(T_g)+\sum_{h\in G}\alpha_g\rho(T_{g+h}T_{g+h}^*)T_{g+k}\alpha_{g+k}\rho(T_g)\\
 &=T_{g+k}\alpha_{g+k}\rho(T_g),
\end{align*}
and Lemma \ref{CE} implies $T_{g+k}\in (\alpha_{g+k}\rho,\rho^2)$. 
\end{proof}

As in \cite{I93} and \cite[Appendix]{I15}, we introduce a weighted gauge action $\gamma$ on $\cO_{n+1}$ 
by $\gamma_t(S)=e^{2i t}S$, and $\gamma_t(T_g)=e^{i t}T_g$. 
Then $\gamma$ commutes with $\alpha_g$ and $\rho$. 
There exists a unique KMS-state for $\gamma$, and $\alpha_g$ and $\rho$ extend to the weak closure of $\cO_{n+1}$ 
in the GNS representation of the KMS state, which is the hyperfinite type III$_{1/d}$ factor. 
Taking the tensor product with the hyperfinite type III$_1$ factor, we get $\alpha$ and $\rho$ acting 
on the the hyperfinite type III$_1$ factor. 

Let $N=\{h\in G_2|\; \chi_g(h)=1\,\quad \forall g\in G\}$. 
If $N=\{0\}$, then we can recover a C$^*$-generalized Haagerup category having the same solution 
of the polynomial equations with the original category $\cC$ in this way. 
If $N\neq \{0\}$, the action $\alpha$ is not faithful, and the resulting fusion category is 
de-equivariantization of the original category $\cC$ by $N$. 
In Appendix, we will give a free product trick to recover the original category in this case.

\section{The classification of generalized Haagerup categories}\label{classifiction}
In this section, we deduce a complete classification invariant for C$^*$-generalized Haagerup categories 
with a fixed finite abelian group $G$ following our observation in Remark \ref{choices}. 
Throughout this section, we make the same assumption as in Section \ref{PEGHC}. 

We start from an easy case. 

\begin{lemma} If $H^2(G,\T)$ is trivial, there exists a one-to-one correspondence between 
the equivalence classes of C$^*$-generalized Haagerup categories with a finite abelian group 
$G$ and a distinguished simple object $\rho$,  
and the $\Aut(G)$-orbits of the gauge equivalence classes of the solutions $(\epsilon_{h}(g),\eta_g,A_g(h,k))$ 
of the polynomial equations Eq.(\ref{cocycle})-Eq.(\ref{AAA}). 
\end{lemma}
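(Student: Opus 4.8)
The plan is to establish the bijection in two directions, using the results already assembled in Sections \ref{PEGHC} and \ref{Reconstruction}. First I would set up the map from categories to data. Given a C$^*$-generalized Haagerup category $\cC$ with group $G$ and distinguished object $\rho$, Lemma \ref{standard} produces a standard lifting $[\rho,\alpha]$, and the construction in Section \ref{PEGHC} (choices of $S$ and the $T_g$, normalized by $j_{1,g}(T_g)=T_g$) attaches a triplet $(\epsilon_h(g),\eta_g,A_g(h,k))$ satisfying Eq.(\ref{cocycle})--Eq.(\ref{AAA}), since every one of those equations was derived as a lemma in that section. I would then check that the class of this triplet, modulo the gauge transformations of the Definition following Lemma \ref{ST} and modulo $\Aut(G)$ coming from reidentifying $G$ with the invertibles, is well defined: the point is that the only remaining choices are (1) the identification of $G$, (2) the standard lifting, (3) $S$, and (4) the $T_g$, exactly as enumerated in Remark \ref{choices}. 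Choice (1) is $\Aut(G)$; choices (3) and (4) were shown in Section \ref{PEGHC} to act by gauge transformations (and a scalar rescaling of $S$ was shown to have no effect); and choice (2), the standard lifting, is unique up to inner conjugacy \emph{when $H^2(G,\T)=0$} — this is precisely where the hypothesis enters, via the discussion of $G$-kernels in the Preliminaries, since two liftings of the same $G$-kernel to actions differ by an element of $H^2(G,\T)$, which is now trivial. Finally I would invoke Theorem \ref{uniqueness} to see that an inner conjugacy of standard liftings does not change the triplet at all (the unitaries $U_\rho$ get absorbed), so the assignment $\cC\mapsto [\text{triplet}]$ depends only on the equivalence class of $(\cC,\rho)$.

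For the reverse direction, given a solution of Eq.(\ref{cocycle})--Eq.(\ref{AAA}), Section \ref{Reconstruction} builds $\alpha$ and $\rho$ on $\cO_{n+1}$ with $S\in(\id,\rho^2)$, $T_g\in(\alpha_g\rho,\rho^2)$, passes to the hyperfinite type III$_1$ factor via the KMS state and a tensor product trick, and — since the hypothesis $H^2(G,\T)=0$ forces the relevant characters, and in particular makes $N=\{h\in G_2\mid \chi_g(h)=1\ \forall g\}$ trivial (I would remark that $H^2(G_2,\T)$ injects into, or is controlled by, $H^2(G,\T)$, so the obstruction to faithfulness vanishes) — yields a genuine C$^*$-generalized Haagerup category $\cC$ with a standard lifting realizing exactly that triplet. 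I would need to confirm $\cC$ really lies in the class: the fusion rules are built in, $\rho$ is self-dual and real by Lemma \ref{SC}, and $\fc^{0,3}(\cC)=\fc^{1,2}(\cC)=0$ because a standard lifting exists (this is the content of the Remark after Lemma \ref{fixed}, run in reverse, together with Lemma \ref{standard}'s converse being automatic once $\alpha_g\rho=\rho\alpha_{-g}$ holds on the nose).

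The two maps are mutually inverse up to the stated equivalences. One composite is immediate: starting from a triplet, reconstructing $\cC$, and then reading off its triplet via the \emph{same} $S$, $T_g$ gives back the original triplet. For the other composite, starting from $(\cC,\rho)$, reconstructing from its triplet, I would apply Theorem \ref{mf}: two generalized Haagerup categories (here, $\cC$ and the reconstruction) whose standard liftings have gauge-equivalent — indeed equal — numerical data are equivalent as fusion categories via a monoidal functor sending $\rho\mapsto\rho'$, $\alpha_g\mapsto\alpha'_g$, hence equivalent as $(\cC,\rho)$-data. Combined with Theorem \ref{uniqueness} for the uniqueness of the ambient $M$, this closes the loop.

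The main obstacle I expect is the bookkeeping around choice (2): precisely pinning down that, when $H^2(G,\T)=0$, any two standard liftings of a generalized Haagerup category are inner conjugate, and that inner conjugacy acts trivially on the triplet. The subtlety is that ``standard lifting'' carries the extra normalization that $\alpha|_{G_2}$ fixes $(\rho,\rho^2)$, so one must check this normalization is preserved (or can be restored) under the perturbation by a $1$-cocycle coming from the triviality of $H^2$, essentially redoing the $Y$-twisting argument at the end of the proof of Lemma \ref{standard} in a relative form. Everything else is an application of theorems already in hand (Theorems \ref{uniqueness}, \ref{mf}, and the reconstruction theorem of Section \ref{Reconstruction}), plus the routine verification that the gauge transformations and $\Aut(G)$-action exhaust the ambiguity, which Remark \ref{choices} has already laid out.
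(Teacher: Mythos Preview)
Your plan has a genuine gap in the forward direction (well-definedness of the map $\cC\mapsto$ triplet). You assert that when $H^2(G,\T)=0$ any two standard liftings $[\rho,\alpha]$ and $[\rho',\alpha']$ of $\cC$ are \emph{inner conjugate}, and then conclude the triplet is unchanged. The first half of this is false. Triviality of $H^2(G,\T)$ only lets you make the actions $\alpha$ and $\alpha'$ inner conjugate; after arranging $\alpha'=\alpha$ you still have $\rho'=\Ad V\circ\rho$ for some unitary $V$, and the relations $\alpha_g\rho=\rho\alpha_{-g}$, $\alpha_g\rho'=\rho'\alpha_{-g}$ force only $\alpha_g(V)=\chi(g)V$ for a character $\chi\in\widehat{G}$ (trivial on $G_2$ by the standard-lifting normalization). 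If $\chi\neq 1$ then $[\rho,\alpha]$ and $[\rho',\alpha]$ are \emph{not} inner conjugate: any inner $\Ad W$ commuting with all $\alpha_g$ must have $W$ $\alpha$-invariant, and then $W\rho(W^{-1})$ is $\alpha$-invariant too, which forces $\chi=1$. The paper's proof therefore does \emph{not} reduce to inner conjugacy; instead it picks a square root $\chi_0\in\widehat G$ with $\chi_0^2=\chi$, sets $S'=V\rho(V)S$ and $T'_g=\chi_0(g)^{-1}V\rho(V)T_gV^{-1}$, and directly computes that $\epsilon'_h(g)=\epsilon_h(g)$, $\eta'_g=\eta_g$, $A'_g(h,k)=A_g(h,k)$. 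This computation is the heart of the lemma and is missing from your outline; the ``$Y$-twisting'' you allude to is not the same issue.

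A second, smaller error: your claim that $H^2(G,\T)=0$ forces $N=\{h\in G_2:\chi_g(h)=1\ \forall g\}$ to be trivial is unfounded (there is no such implication; $N$ depends on the particular solution $\epsilon$, not on cohomology of $G$). The reconstruction step must in general invoke the free-product trick of the Appendix when $N\neq\{0\}$, not the bare Cuntz-algebra construction of Section~\ref{Reconstruction}. This does not affect the logical structure of the bijection, but your stated reason for why Section~\ref{Reconstruction} alone suffices is wrong.
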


\begin{proof} Thanks to Theorem \ref{uniqueness} and Remark \ref{choices}, it suffices to show that 
two different standard liftings of a generalized Haagerup category $\cC\subset \End_0(M)$ 
give the same gauge equivalence class of the solutions. 
Assume that $[\rho,\alpha]$ and $[\rho',\alpha']$ are standard liftings of $\cC$ with $[\rho]=[\rho']$ and 
$[\alpha_g]=[\alpha'_g]$. 
Then there exist unitaries $U_g\in \cU(M)$ and $V\in \cU(M)$ satisfying 
$\rho'=\Ad V\circ \rho$ and $\alpha'_g=\Ad U_g\circ \rho$. 
Since $H^2(G,\T)$ is trivial, we may further assume that $\{U_g\}_{g\in G}$ is an $\alpha$-cocycle. 
Since it is a coboundary, the two actions $\alpha$ and $\alpha'$ are inner conjugate. 
Since inner conjugate standard liftings give gauge equivalent triplets, by conjugating $[\rho',\alpha']$ 
by an inner automorphism, we may assume $\alpha'=\alpha$. 

Since $\alpha_g\circ \rho=\rho\circ \alpha_{-g}$ and $\alpha_g\circ \rho'=\rho'\circ \alpha_{-g}$, 
we see that $\alpha_g(V)$ is proportional to $V$, and there exists a character $\chi\in \widehat{G}$ 
satisfying $\alpha_g(V)=\chi(g)V$. 
Since $V\rho(V)T_0V^*\in (\rho',\rho'^2)$ and $[\rho',\alpha]$ is a standard lifting, the character is 
trivial on $G_2$. 
From this condition, we can choose a character $\chi_0\in \widehat{G}$ satisfying $\chi_0^2=\chi$ 
(use the fundamental theorem of finitely generated abelian groups). 

Let $S'=V\rho(V)S\in (\id_M,\rho'^2)$. 
Then the two anti-unitaries on $(\alpha_g\rho',\rho'^2)$ take the forms 
$$j'_{1,g}T'=\sqrt{d}T'^*\rho'(S'),$$ 
$$j'_{2,g}T'=\sqrt{d}\alpha_g\rho'(T')^*S'.$$ 
Let $T'_g=\chi_0(g)^{-1}V\rho(V)T_gV^{-1}\in (\alpha_g\rho',\rho'^2)$. 
Then we have $j'_{1,g}T_g'=T'_g$, $\alpha_h(T'_g)=\epsilon_h(g)T'_{g+2h}$, and $j_{2,g}T'_g=\eta_gT'_g$. 
Moreover, 
\begin{align*}
\lefteqn{\alpha_g\rho'(T'_g)=\chi_0(g)^{-1}V\alpha_g\rho(V\rho(V)T_gV^{-1})V^{-1}} \\
 &=\chi_0(g)V\rho(V)\rho^2(V)\alpha_g\rho(T_g)\rho(V^{-1})V^{-1} \\
 &=\chi_0(g)V\rho(V)\rho^2(V) 
 \big(\eta_gT_gSS^*+\frac{\overline{\eta_g}}{\sqrt{d}}ST_g^*+
\sum_{h,k\in G}A_g(h,k)T_{g+h}T_{g+h+k}T_{g+k}^*\big)\\
&\times \rho(V^{-1})V^{-1} \\
 &=\eta_gT'_gS'S'^*+\frac{\overline{\eta_g}}{\sqrt{d}}S'{T'_g}^*+
\sum_{h,k\in G}A_g(h,k)T'_{g+h}T'_{g+h+k}{T'_{g+k}}^*.
\end{align*}
This shows the statement. 
\end{proof}

Now we proceed to the general case. 
The above computation shows that if $[\rho',\alpha']$ is another standard lifting with 
$[\rho']=[\rho]$ and $\alpha'$ inner conjugate to $\alpha$, 
then the change from $[\rho,\alpha]$ to $[\rho',\alpha']$ leave the gauge equivalence class of the triplet 
$(\epsilon_{h}(g),\eta_g,A_g(h,k))$ invariant. 
This means that in general, the effect of replacing $[\rho,\alpha]$ with $[\rho',\alpha']$ satisfying 
$[\rho]=[\rho']$ and $[\alpha'_g]=[\alpha_g]$ depends only on 
the cohomology class in $H^2(G,\T)$ determined by the difference of $\alpha'$ from $\alpha$.  
We give an explicit description of the action of the cohomology group $H^2(G,\T)$ on the gauge equivalence 
classes of the solutions of the polynomial equations. 

Recall that a cocycle $\omega\in Z^2(G,\T)$ is normalized if it satisfies 
$\omega(g,0)=\omega(0,g)=\omega(g,-g)=1$ for all $g\in G$. 
The cocycle relation implies that any normalized cocycle satisfies $\omega(g,h)^{-1}=\omega(-h,-g)$. 
It is known that every cohomology class in $H^2(G,\T)$ is represented by a normalized cocycle. 

\begin{lemma} For a finite abelian group $G$, 
every cohomology class in $H^2(G,\T)$ is represented by a normalized cocycle $\omega$ satisfying 
\begin{equation}\label{2cocycle}
\omega(g,h)\omega(h,g)=\omega(g,h)\overline{\omega(-g,-h)}=1. 
\end{equation}
\end{lemma}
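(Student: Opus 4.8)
The plan is to start from an arbitrary normalized cocycle $\omega_0 \in Z^2(G,\T)$ representing the given cohomology class, which exists by the remark preceding the lemma, and then modify it by a coboundary to achieve the extra symmetry condition Eq.(\ref{2cocycle}). First I would observe that since $G$ is a finite abelian group, the symmetrization $\beta(g,h) := \omega_0(g,h)\omega_0(h,g)^{-1}$ is an alternating bicharacter on $G$ (this is the standard fact that $H^2(G,\T)$ of a finite abelian group is classified by alternating bicharacters, with the antisymmetric part of a cocycle being a bicharacter); the goal $\omega(g,h)\omega(h,g)=1$ is precisely asking that we replace $\omega_0$ by a cohomologous cocycle $\omega$ whose symmetric part is trivial, i.e. $\omega$ is antisymmetric: $\omega(g,h) = \omega(h,g)^{-1}$.

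The key step is a square-root argument. Since $\beta$ is a bicharacter valued in $\T$ and $G$ is finite (hence $\beta$ is valued in the roots of unity of order dividing $\exp(G)$), one can choose a bicharacter $\gamma$ with $\gamma(g,h)^2 = \beta(g,h)$; concretely, decompose $G \cong \bigoplus_i \Z/n_i$, note a bicharacter is determined by its values on generators, and extract square roots of those finitely many values compatibly — the obstruction to doing this globally vanishes because one can make the choices on each cyclic factor and the cross terms. Then set $\omega(g,h) := \omega_0(g,h)\,\gamma(g,h)^{-1}$. Since $\gamma$ is a bicharacter, it is itself a $2$-cocycle and in fact a coboundary (bicharacters are symmetric coboundaries up to the alternating part — here $\gamma$ need not be symmetric, but any bicharacter valued in $\T$ on a finite abelian group lies in the image of $Z^1$ under the appropriate map, or more simply: a bicharacter differs from a coboundary by its alternating part, and we will account for that). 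Cleaner: I would instead directly use that $\gamma$ chosen as above can be taken so that $\gamma$ itself has symmetric part equal to $\beta$ and then $\omega$ has trivial symmetric part, making $\omega$ antisymmetric; adjusting $\gamma$ by a symmetric bicharacter (which is always a coboundary, e.g. $\gamma_s(g,h) = \langle g, Qh\rangle$ for a symmetric form) keeps $\omega$ cohomologous to $\omega_0$. After this, normalize $\omega$ again if needed (rescaling by a coboundary preserving antisymmetry), so $\omega(g,0)=\omega(0,g)=1$ and, from antisymmetry plus the cocycle identity, $\omega(g,-g) = \omega(-g,g)^{-1}$ while the cocycle relation at $(g,-g,g)$-type triples forces $\omega(g,-g)=\omega(-g,g)$, hence $\omega(g,-g)^2=1$; a further coboundary twist by a suitable $\pm 1$-valued function kills the sign and gives $\omega(g,-g)=1$.

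Finally I would check the second equality in Eq.(\ref{2cocycle}): for a normalized cocycle one already has $\omega(g,h)^{-1} = \omega(-h,-g)$ (stated in the excerpt), and since $G$ is abelian $\omega(-h,-g)$ and $\omega(-g,-h)$ differ by the symmetric part, which we have arranged to be trivial, so $\omega(-g,-h) = \omega(-h,-g) = \omega(g,h)^{-1}$, giving $\omega(g,h)\overline{\omega(-g,-h)} = 1$. The main obstacle I anticipate is the square-root extraction: one must verify that a $\T$-valued alternating bicharacter on a finite abelian group admits a bicharacter square root, which is where the structure theorem for finite abelian groups and a careful choice on each cyclic summand (respecting the pairing between a $\Z/n$ factor and its dual) is genuinely needed — on odd-order parts it is automatic, and on $2$-power parts one must check the relevant values are actually squares in $\T$, which they are since $\T$ is divisible. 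The rest is bookkeeping with coboundaries to restore normalization and the $\omega(g,-g)=1$ condition.
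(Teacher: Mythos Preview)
Your plan has a genuine gap. You correctly note that the goal is to make the \emph{symmetric} part $s(g,h)=\omega_0(g,h)\omega_0(h,g)$ trivial, but you then switch to analyzing the \emph{antisymmetric} part $\beta(g,h)=\omega_0(g,h)\omega_0(h,g)^{-1}$ and propose extracting a bicharacter square root of $\beta$. This does not address $s$: if $\omega=\omega_0\gamma^{-1}$ with $\gamma$ a bicharacter, then $\omega(g,h)\omega(h,g)=s(g,h)/(\gamma(g,h)\gamma(h,g))$, and there is no reason for $\gamma(g,h)\gamma(h,g)$ to equal $s(g,h)$ just because $\gamma^2=\beta$. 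More fatally, a bicharacter square root of $\beta$ need not exist at all: for $G=\Z_2\times\Z_2$ every bicharacter takes values in $\{\pm 1\}$ (its values must be $\gcd(2,2)$-th roots of unity), so the square of any bicharacter is trivial; yet the alternating bicharacter $\beta((a_1,a_2),(b_1,b_2))=(-1)^{a_1b_2+a_2b_1}$ is nontrivial. Divisibility of $\T$ does not help here, because the bicharacter constraint forces values into a fixed finite subgroup.

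The paper's argument works directly with the symmetric part. Since $G$ is abelian, $s(g,h)=\omega(g,h)\omega(h,g)=\omega(g,h)\overline{\omega(-g,-h)}$ is a symmetric $2$-cocycle and hence a coboundary: $s=\delta\mu$ for some $\mu:G\to\T$ with $\mu(0)=\mu(g)\mu(-g)=1$. The key step is then to take a square root of the \emph{function} $\mu$, not of a bicharacter: one replaces $\omega$ by $\omega\cdot\delta(\mu^{-1/2})$, which kills $s$. The only subtlety is choosing the square roots so that the resulting cocycle remains normalized (in particular $\omega(g,-g)=1$); for this one first adjusts $\mu$ by a character of $G$ extending $\mu|_{G_2}$ so that $\mu|_{G_2}\equiv 1$, and then chooses square roots compatibly with $\mu(g)^{1/2}\mu(-g)^{1/2}=1$. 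Your later remarks about fixing normalization by a $\pm 1$-valued coboundary are in the right spirit, but the core square-root step should be applied to $\mu$ (coming from the symmetric part), not to $\beta$.
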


\begin{proof} 
Since the group automorphism $G\ni g\mapsto -g\in G$ acts on $H^2(G,\T)$ trivially, 
the two cocycles $\omega(g,h)$ and $\omega(-g,-h)$ 
are cohomologous, and there exists $\mu:G\rightarrow \T$ satisfying 
$$\omega(g,h)\omega(h,g)=\omega(g,h)\overline{\omega(-g,-h)}=\mu(g)\mu(h)\overline{\mu(g+h)}.$$
Since the left hand side is normalized, we have $\mu(0)=\mu(g)\mu(-g)=1$. 
Note that $\mu$ restricted to $G_2$ is a character, 
and there exists a character $\chi\in \widehat{G}$ extending $\mu|_{G_2}$. 
By replacing $\mu(g)$ with $\mu(g)\overline{\chi(g)}$ if necessary, 
we may and do assume $\mu(z)=1$ for all $z\in G_2$. 
Thus we can choose a square root $\mu(g)^{1/2}$ for each $g\in G$ satisfying 
$\mu(0)^{1/2}=\mu(g)^{1/2}\mu(-g)^{1/2}=1$. 
Replacing $\omega(g,h)$ with $\omega(g,h)\overline{\mu(g)^{1/2}\mu(h)^{1/2}}\mu(g+h)^{1/2}$, 
we get a normalized cocycle satisfying Eq.(\ref{2cocycle}).  
\end{proof}

In what follows, we assume that $\omega$ is a normalized cocycle satisfying Eq.(\ref{2cocycle}). 
We choose unitaries $\{U_g\}_{g\in G}$ in $M$ satisfying $U_g\alpha_g(U_h)=\omega(g,h)U_{g+h}$ with $U_0=1$, 
and set $\beta_g=\Ad U_g\circ \alpha_g$. 
Then $\beta$ is an outer action of $G$ on $M$, and we have
$$\rho\circ \beta_{-g}=\Ad(\rho(U_{-g})U_g^{-1})\circ \beta_g\circ \rho.$$
Since 
\begin{align*}
\lefteqn{\rho(U_{-g})U_g^{-1}\beta_g(\rho(U_{-h})U_h^{-1})
 =\rho(U_{-g})\alpha_g(\rho(U_{-h})U_h^{-1})U_g^{-1}}\\
 &=\rho(U_{-g}\alpha_{-g}(U_{-h}))\alpha_g(U_h^{-1})U_g^{-1}
 =\omega(-g,-h)\overline{\omega(g,h)}\rho(U_{-g-h})U_{g+h}^{-1}\\
 &=\rho(U_{-g-h})U_{g+h}^{-1},
\end{align*}
the family $\{\rho(U_{-g})U_g^{-1}\}_{g\in G}$ forms a $\beta$-cocycle. 
Thus there exists a unitary $V\in \cU(M)$ satisfying $\rho(U_{-g})U_g^{-1}=V^{-1}\beta_g(V)$,  
and so $U_g\alpha_g(V)=V\rho(U_{-g})$. 
Let $\sigma=\Ad V\circ \rho$. 
Then we have $\beta_g\circ \sigma=\sigma\circ \beta_{-g}$. 

\begin{lemma}\label{newstandard} The pair $[\sigma,\beta]$ is a standard lifting. 
\end{lemma}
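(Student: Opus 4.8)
The plan is to verify the two defining conditions of a standard lifting for the pair $[\sigma,\beta]$: first, that $\sigma$ and $\beta$ have the correct fusion rules with $\cO(\cC)$ expressed as $\{[\beta_g]\}_{g\in G}\sqcup\{[\beta_g][\sigma]\}_{g\in G}$ together with the relation $\beta_g\circ\sigma=\sigma\circ\beta_{-g}$; and second, that $\beta$ restricted to $G_2$ acts trivially on $(\sigma,\sigma^2)$. The first of these is essentially already in hand: since $\beta_g=\Ad U_g\circ\alpha_g$ and $\sigma=\Ad V\circ\rho$ are inner perturbations of $\alpha_g$ and $\rho$, we have $[\beta_g]=[\alpha_g]$ and $[\sigma]=[\rho]$, so the fusion rules and the isomorphism classes appearing in $\cO(\cC)$ are unchanged; and the computation immediately preceding the lemma has already established $\beta_g\circ\sigma=\sigma\circ\beta_{-g}$ on the nose. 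Also $\beta$ is a genuine $G$-action (outer, by the cocycle identity $U_g\alpha_g(U_h)=\omega(g,h)U_{g+h}$ and the fact that $\alpha$ is an outer action), so it is a valid lifting of the underlying $G$-kernel.

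Thus the only real content is the last condition. I would fix the isometry $S\in(\id_M,\rho^2)$ already chosen for $[\rho,\alpha]$ (recall $\alpha_g(S)=S$ by Lemma \ref{fixed}), and produce from it an isometry $S'\in(\id_M,\sigma^2)$ by transporting along the unitary $V$: explicitly $S'=V\rho(V)S V^{-1}$, mimicking the formula used in the proof of the previous lemma (where $\alpha$ was unchanged). One checks $S'\in(\id_M,\sigma^2)$ directly from $\sigma=\Ad V\circ\rho$ and $S\in(\id_M,\rho^2)$, and since $(\id_M,\sigma^2)$ is one-dimensional this $S'$ spans it. Then for $z\in G_2$ I would compute $\beta_z(S')$ and show it equals $S'$. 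Since $\beta_z=\Ad U_z\circ\alpha_z$, this unwinds to an expression in $U_z$, $\alpha_z(V)$, $\alpha_z(\rho(V))=\rho(\alpha_{-z}(V))=\rho(\alpha_z(V))$ (using $2z=0$), $\alpha_z(S)=S$, and $\alpha_z(V^{-1})$; using the relation $U_g\alpha_g(V)=V\rho(U_{-g})$ derived above (with $g=z$, and $-z=z$), together with $\omega(z,z)=1$ which follows from Eq.~(\ref{2cocycle}) and $\omega$ being normalized with $\omega(z,-z)=1$, the $U_z$'s and $\rho(U_z)$'s should cancel, leaving $\beta_z(S')=S'$.

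The main obstacle I expect is bookkeeping in this final cancellation: keeping straight which instances of $\rho$ versus $\sigma$ act on which unitaries, and correctly using $\beta_z\circ\sigma=\sigma\circ\beta_{-z}$ (equivalently the intertwiner relation $\rho(U_{-g})U_g^{-1}=V^{-1}\beta_g(V)$ rewritten as $U_g\alpha_g(V)=V\rho(U_{-g})$) to absorb all the perturbing unitaries. The key simplifications are that everything in sight is a scalar multiple of $S'$ because $\dim(\id_M,\sigma^2)=1$, so it suffices to track the scalar, and that $2z=0$ collapses $\alpha_{-z}$ to $\alpha_z$ and forces $\omega(z,z)=\omega(z,-z)=1$. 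I would also remark, as in the earlier proof, that one could instead absorb a character twist $\chi_0$ if $\beta_z(S')$ turned out to be a nontrivial sign on some $z\in G_2$ — but the point of choosing $\omega$ to satisfy Eq.~(\ref{2cocycle}) is precisely that no such twist is needed, so I expect the clean identity $\beta_z(S')=S'$ to hold directly.
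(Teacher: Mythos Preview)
Your proposal has a genuine gap: you are checking the wrong intertwiner space. The standard lifting condition requires that $\beta$ restricted to $G_2$ act trivially on $(\sigma,\sigma^2)$, and you correctly state this at the outset; but you then proceed to transport $S\in(\id_M,\rho^2)$ and verify $\beta_z(S')=S'$ for $S'\in(\id_M,\sigma^2)$. That is a different condition, and in fact it is automatic: Lemma \ref{fixed} shows that $\alpha_g(S)=S$ for \emph{all} $g\in G$ as a consequence of the fusion rules and self-conjugacy alone, not of the standard-lifting hypothesis, and the same reasoning applies to any lifting $[\sigma,\beta]$ with $\beta_g\circ\sigma=\sigma\circ\beta_{-g}$. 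So your computation, even if carried out correctly, proves nothing about whether $[\sigma,\beta]$ is standard.

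What you need instead is to transport $T_0\in(\rho,\rho^2)$ to $(\sigma,\sigma^2)$ via $V\rho(V)T_0V^{-1}$ and compute $\beta_h(V\rho(V)T_0V^{-1})$. This is exactly what the paper does. The computation is more involved than for $S$ because $T_0$ is not $\alpha$-invariant in general (only $G_2$-invariant, since $[\rho,\alpha]$ is standard), and one picks up a factor $\omega(-2h,h)$ along the way; for $z\in G_2$ this factor is $\omega(0,z)=1$, which gives the result. (As a minor aside, your formula $S'=V\rho(V)SV^{-1}$ is also off by the trailing $V^{-1}$: that formula lands in $(\Ad V,\sigma^2)$, not $(\id_M,\sigma^2)$. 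The formula $Y\rho(Y)TY^{-1}$ from Lemma~\ref{standard} that you are mimicking was for $T\in(\rho,\rho^2)$, where the extra $Y^{-1}$ is needed to match the domain $\rho'$; for domain $\id_M$ it should be omitted.)
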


\begin{proof} It suffices to show that $G_2$ acts on $(\sigma,\sigma^2)$ trivially. 
Note that we have $V\rho(V)T_0V^{-1}\in (\sigma,\sigma^2)$, and for $z\in G_2$ we have 
\begin{align*}
\lefteqn{\beta_h(V\rho(V)T_0V^{-1})=U_h\alpha_h(V\rho(V)T_0V^{-1})U_h^{-1}}\\
&=U_h\alpha_h(V)\rho(\alpha_{-h}(V))T_{2h}\alpha_h(V^{-1})U_h^{-1}
=V\rho(U_{-h})\rho(U_{-h}^{-1}V\rho(U_h))T_{2h}\rho(U_{-h}^{-1})V^{-1}\\
&=V\rho(V)\rho^2(U_h)T_{2h}\rho(U_{-h}^{-1})V^{-1}
=V\rho(V)T_{2h}\rho(\alpha_{-2h}(U_{h})U_{-h}^{-1})V^{-1}\\
&=V\rho(V)T_{2h}\rho(U_{-2h}^{-1}U_{-2h}\alpha_{-2h}(U_{h})U_{-h}^{-1})V^{-1}
=\omega(-2h,h)V\rho(V)T_{2h}\rho(U_{-2h}^{-1})V^{-1}.\\
 &=\omega(-2h,h)V\rho(V)T_{2h}\alpha_{2h}(V^{-1})U_{2h}^{-1}.\\
\end{align*}
Thus for $z\in G_2$ we get $\beta_h(V\rho(V)T_0V^{-1})=V\rho(V)T_0V^{-1}$.
\end{proof}

\begin{remark} Thanks to the cocycle relation and normalization, we have 
$$\omega(-h,-h)\omega(-2h,h)=\omega(-h,h)\omega(-h,0)=1,$$ 
and we have $\omega(-2h,h)=\omega(h,h)$. 
Thanks to Eq.(\ref{2cocycle}) and normalization, we have $\omega(h,h)=\omega(-h,-h)\in \{1,-1\}$. 
The above computation implies 
$$\beta_h(V\rho(V)T_0V^{-1})=\omega(h,h) V\rho(V)T_{2h}\alpha_{2h}(V^{-1})U_{2h}^{-1}.$$
Since the left-hand side depends only on the class $h+G_2$, so does $\omega(h,h)$. 
Thus we can choose $\mu:G\to \{1,-1\}$ satisfying $\mu(2h)=\omega(h,h)$.  
\end{remark}

Let $S'=V\rho(V)S$. 
Then $S'$ is an isometry in $(\id,\sigma^2)$, and we define two anti-unitaries on $(\beta_g\sigma,\sigma^2)$ by 
$$j'_{1,g}T=\sqrt{d}T^*\sigma(S')=\sqrt{d}T^*V\rho(V\rho(V)S)V^*,$$
$$j'_{2,g}T=\sqrt{d}\beta_g\sigma(T^*)S'=\sqrt{d}U_g\alpha_g(V)\alpha_g\rho(T^*)\alpha_g(V^{-1})U_g^{-1}V\rho(V)S.$$
Let $T'_g=\mu(g)V\rho(V)T_g\alpha_g(V^{-1})U_g^{-1}$.  
Then $T'_g$ is an isometry in $(\beta_g\sigma,\sigma^2)$ satisfying 
$\beta_g(T'_0)=T'_{2g}$.

\begin{lemma} With the above notation, we have $j'_{1,g}T'_g=T'_g$. 
\end{lemma}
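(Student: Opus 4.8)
The plan is to verify directly that $j'_{1,g}$ fixes $T'_g$ by unwinding the definitions of $S'$, $T'_g$, and $j'_{1,g}$ in terms of the data $V$, $\rho$, $U_g$, $S$, $T_g$ of the original standard lifting $[\rho,\alpha]$, and then reducing to the known relation $j_{1,g}(T_g)=T_g$, i.e. $\sqrt{d}\,T_g^*\rho(S)=T_g$. First I would substitute $S'=V\rho(V)S$ and $T'_g=\mu(g)V\rho(V)T_g\alpha_g(V^{-1})U_g^{-1}$ into the formula $j'_{1,g}T'_g=\sqrt{d}\,{T'_g}^*\sigma(S')$, using $\sigma=\Ad V\circ\rho$, so that $\sigma(S')=V\rho(V\rho(V)S)V^{-1}=V\rho(V)\rho^2(V)\rho(S)V^{-1}$.

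The key computation is then to expand
$$
\sqrt{d}\,{T'_g}^*\sigma(S')
=\sqrt{d}\,\overline{\mu(g)}\,U_g\alpha_g(V)T_g^*\rho(V^{-1})\rho(V^{-1})V^{-1}\cdot V\rho(V)\rho^2(V)\rho(S)V^{-1},
$$
where I have used ${T'_g}^*=\overline{\mu(g)}\,U_g\alpha_g(V)T_g^*\rho(V^*)V^*$ (recalling $T_g^*V^*=\rho(V^*)T_g^*$ is \emph{not} what is used; rather one moves $V^*$ past $\rho(V)$ carefully — the routine bookkeeping is that $T_g\in(\alpha_g\rho,\rho^2)$ lets one commute $T_g^*$ with $\rho^2(\cdot)$ on the left and $\alpha_g\rho(\cdot)$ on the right). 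After cancelling the adjacent $V^{-1}V$, the middle simplifies and the inner adjacency $\rho(V^{-1})\cdot\rho(V)=1$ collapses, leaving $\sqrt{d}\,\overline{\mu(g)}\,U_g\alpha_g(V)\,T_g^*\rho(S)\,V^{-1}$ up to one more $\rho^2(V)$ factor that must be pushed through $T_g^*$ using the intertwining property. Then $\sqrt{d}\,T_g^*\rho(S)=T_g$ gives $\overline{\mu(g)}\,U_g\alpha_g(V)T_g V^{-1}$, and I would reconcile this with $T'_g=\mu(g)V\rho(V)T_g\alpha_g(V^{-1})U_g^{-1}$ by invoking the relation $U_g\alpha_g(V)=V\rho(U_{-g})$ established just before Lemma \ref{newstandard}, together with $\mu(g)^2=1$ (since $\mu:G\to\{1,-1\}$) and the intertwining identity $T_g\alpha_g\rho(x)=\rho^2(x)T_g$ applied to $x=U_{-g}$ to convert $\rho(\alpha_g(V^{-1}))$-type factors into $\rho^2$-factors.

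The main obstacle I anticipate is purely bookkeeping: correctly tracking the placement of $V$, $\rho(V)$, $\rho^2(V)$, $U_g$, and $\alpha_g(V)$ as they are commuted past $T_g$ and $T_g^*$ using the intertwiner relations $T_g\in(\alpha_g\rho,\rho^2)$, and ensuring the $\mu(g)$ signs and the $\omega(h,h)$-factors (via $\mu(2h)=\omega(h,h)$) cancel as they should. There is no conceptual difficulty — the statement is exactly the assertion that the \emph{same} numerical triplet is reproduced under the $H^2(G,\T)$-twist, and the argument parallels the $H^2(G,\T)$-trivial case treated in the previous lemma — but the algebra must be carried out with care, matching each factor against the defining formula for $T'_g$ and using $U_g\alpha_g(V)=V\rho(U_{-g})$ as the single nontrivial input that encodes how the cocycle $\omega$ interacts with $\rho$.
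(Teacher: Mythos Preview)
Your approach is correct and matches the paper's proof essentially step for step: compute ${T'_g}^*\sigma(S')$, cancel $V^{-1}V$ and $\rho(V^{-1})\rho(V)$, push $\rho^2(V)$ past $T_g^*$ via the intertwiner relation $T_g^*\rho^2(V)=\rho(\alpha_{-g}(V))T_g^*$, use $\sqrt{d}\,T_g^*\rho(S)=T_g$, and then repeatedly apply the identity $U_g\alpha_g(V)=V\rho(U_{-g})$ (and its $g\mapsto -g$ version) together with $\rho^2(U_g)T_g=T_g\rho(\alpha_{-g}(U_g))$ to recover $T'_g$. One small correction: the $\omega(h,h)$ factors you anticipate do not actually appear in this lemma; what is needed at the final step is the normalization $\omega(-g,g)=1$, which gives $U_{-g}\alpha_{-g}(U_g)=1$ and hence $\rho(\alpha_{-g}(U_g))V^{-1}=\rho(U_{-g}^{-1})V^{-1}=\alpha_g(V^{-1})U_g^{-1}$.
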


\begin{proof}
We have  
\begin{align*}
j'_{1,g}T'_g&=\mu(g)\sqrt{d} U_g\alpha_g(V)T_g^*\rho(V^{-1})V^{-1} V\rho(V\rho(V)S)V^{-1}\\
 &=\mu(g)\sqrt{d} U_g\alpha_g(V)T_g^*\rho^2(V)\rho(S)V^{-1}
 =\mu(g)\sqrt{d} U_g\alpha_g(V)\rho(\alpha_{-g}(V))T_g^*\rho(S)V^{-1}\\
 &=\mu(g)V\rho(U_{-g})\rho(U_{-g}^{-1}V\rho(U_g))T_gV^{-1}
 =\mu(g)V\rho(V)\rho^2(U_g)T_gV^{-1}\\
 &=\mu(g)V\rho(V)T_g\rho(\alpha_{-g}(U_g))  V^{-1},
\end{align*}
and
$$\rho(\alpha_{-g}(U_g))  V^{-1}=\rho(U_{-g}^{-1}U_{-g}\alpha_{-g}(U_g))V^{-1}
= \omega(-g,g)\rho(U_{-g}^{-1})V^{-1}=\alpha_g(V^{-1})U_g^{-1},$$
which shows $j'_{1,g}T'_g=T'_g$. 
\end{proof}

Now we define $\epsilon'_h(g)$ and $A'_g(h,k)$ by $\beta_h(T_g')=\epsilon'_h(g)T'_{g+2h}$ and 
$$\beta_g\circ\sigma(T'_g)=\eta_gT'_gS'{S'}^*+\frac{\overline{\eta_g}}{\sqrt{d}}S'{T'}_g^*+
\sum_{h,k\in G}A'_g(h,k)T'_{g+h}T'_{g+h+k}{T'}_{g+k}^*.$$

\begin{theorem}\label{H2action} The action of the cohomology group $H^2(G,\T)$ on the gauge equivalence classes 
of the solutions $(\epsilon_{h}(g),\eta_g,A_g(h,k))$ of the polynomial equations Eq.(\ref{cocycle})-Eq.(\ref{AAA}) 
is given as follows. 
Let $\omega\in Z^2(G,\T)$ be a normalized cocycle satisfying Eq.(\ref{2cocycle}).
We choose $\mu:G\to\{1,-1\}$ satisfying $\mu(2g)=\omega(g,g)$ for any $g\in G$. 
Then $[\omega]$ transforms $[(\epsilon_{h}(g),\eta_g,A_g(h,k))]$ to $[(\epsilon'_{h}(g),\eta_g,A'_g(h,k))]$ 
with 
\begin{align*}
\lefteqn{\epsilon'_h(g)=\epsilon_h(g)\mu(g)\mu(g+2h)\overline{\omega(h,g)}\overline{\omega(g+h,h)}} \\
 &=\epsilon_h(g)\mu(g)\mu(g+2h)\mu(2h) b_\omega(g,h)\overline{\omega(g,2h)}.
\end{align*}
$$A'_g(h,k)=A_g(h,k)\mu(g+k)\mu(g)\mu(g+h+k)\mu(g+h)\overline{\omega(g+k,h)}\overline{\omega(h,g)},$$
where $b_\omega(g,h)=\omega(g,h)\overline{\omega(h,g)}$ is the antisymmetric bicharacter associated with 
the 2-cocycle $\omega$. 
In particular, we have $\epsilon'_z(g)=\epsilon_z(g)b_\omega(g,z)$ for $z\in G_2$.
\end{theorem}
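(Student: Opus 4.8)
The plan is to carry out the explicit substitution suggested by Lemma \ref{newstandard} and the lemmas immediately preceding this theorem, which have already assembled all the necessary objects: the perturbed action $\beta_g=\Ad U_g\circ\alpha_g$ with $U_g\alpha_g(U_h)=\omega(g,h)U_{g+h}$, the unitary $V$ with $\rho(U_{-g})U_g^{-1}=V^{-1}\beta_g(V)$, the new standard lifting $[\sigma,\beta]$ with $\sigma=\Ad V\circ\rho$, the isometry $S'=V\rho(V)S\in(\id,\sigma^2)$, and the isometries $T'_g=\mu(g)V\rho(V)T_g\alpha_g(V^{-1})U_g^{-1}\in(\beta_g\sigma,\sigma^2)$ which satisfy $j'_{1,g}T'_g=T'_g$ by the last lemma. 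Since $[\sigma,\beta]$ is a standard lifting with this normalization of $T'_g$ and the same $\eta_g$ (because the rotation scalar $\eta_g$ is independent of all choices, as noted in Section \ref{PEGHC}), the triplet $(\epsilon'_h(g),\eta_g,A'_g(h,k))$ it produces represents $[\omega]\cdot[(\epsilon_h(g),\eta_g,A_g(h,k))]$ by Remark \ref{choices} and the discussion opening this part of Section \ref{classifiction}.

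First I would compute $\epsilon'_h(g)$ directly from $\beta_h(T'_g)=\epsilon'_h(g)T'_{g+2h}$. Expanding $\beta_h(T'_g)=U_h\alpha_h(\mu(g)V\rho(V)T_g\alpha_g(V^{-1})U_g^{-1})U_h^{-1}$ and using $\alpha_h(T_g)=\epsilon_h(g)T_{g+2h}$, the relation $U_h\alpha_h(V)=V\rho(U_{-h})$ (equivalently $U_h\alpha_h(V)\rho(U_{-h}^{-1})=V$, derived just before Lemma \ref{newstandard}), the cocycle identity for $\omega$, and the identity $\omega(-2h,h)=\omega(h,h)=\mu(2h)$ from the Remark after Lemma \ref{newstandard}, one collects the scalar factor; comparing with $T'_{g+2h}=\mu(g+2h)V\rho(V)T_{g+2h}\alpha_{g+2h}(V^{-1})U_{g+2h}^{-1}$ yields $\epsilon'_h(g)=\epsilon_h(g)\mu(g)\mu(g+2h)\overline{\omega(h,g)}\,\overline{\omega(g+h,h)}$. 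The second displayed form for $\epsilon'_h(g)$ is then a purely cocycle-theoretic rewriting: using the cocycle relation $\omega(h,g)\omega(h+g,h)=\omega(h,g+h)\omega(g,h)$ and Eq.(\ref{2cocycle}) to introduce $b_\omega(g,h)=\omega(g,h)\overline{\omega(h,g)}$ and isolate $\overline{\omega(g,2h)}$.

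Next I would compute $A'_g(h,k)$ from the expansion of $\beta_g\sigma(T'_g)$. The key input is the relation established inside the proof of the preceding lemma, namely $V\rho(V)\rho^2(U_g)=$ (up to the explicit scalar) the conjugating unitaries needed to pass $\alpha_g\rho(T_g)$ through the $V$'s and $U$'s; concretely one writes $\beta_g\sigma(T'_g)=\mu(g)U_g\alpha_g(V)\alpha_g\rho(V\rho(V)T_g\alpha_g(V^{-1})U_g^{-1}\cdot\text{stuff})\cdots$, substitutes the known formula for $\alpha_g\rho(T_g)$ from Lemma \ref{ST}, and recognizes each of the three resulting monomials $T'_{g+h}T'_{g+h+k}{T'}_{g+k}^*$ after reinserting the definition of $T'$. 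The $T'_gS'{S'}^*$ and $S'{T'}_g^*$ terms must come out with coefficients exactly $\eta_g$ and $\overline{\eta_g}/\sqrt d$ respectively — this is a consistency check that the $\mu$ and $\omega$ factors cancel on those terms, confirming $[\sigma,\beta]$ is standard with the stated $\eta_g$. The remaining scalar attached to the cubic term, after carefully tracking the four $U$-factors $U_g, U_{g+h}, U_{g+h+k}, U_{g+k}$ and the four $\mu$-factors, gives $A'_g(h,k)=A_g(h,k)\mu(g+k)\mu(g)\mu(g+h+k)\mu(g+h)\,\overline{\omega(g+k,h)}\,\overline{\omega(h,g)}$. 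Finally, specializing $h=z\in G_2$ so that $g+2z=g$ and $\mu(g)\mu(g+2z)=1$, and using $\mu(2z)=\omega(z,z)$ together with $\overline{\omega(z,0)}=1$, the formula for $\epsilon'_z(g)$ collapses to $\epsilon_z(g)b_\omega(g,z)$.

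I expect the main obstacle to be bookkeeping rather than conceptual: the computation of $A'_g(h,k)$ requires pushing $\alpha_g\rho$ through a product of unitaries $V\rho(V)T_g\alpha_g(V^{-1})U_g^{-1}$ and then re-expressing roughly a dozen intermediate monomials back in terms of the $T'$'s, keeping every $\omega$- and $\mu$-scalar in its correct slot; the principal technical lever that makes this tractable is the single identity $U_g\alpha_g(V)=V\rho(U_{-g})$ together with $\omega(-2h,h)=\omega(h,h)$, both of which are already recorded in the excerpt, so no new structural input is needed. A secondary subtlety is verifying that the rotation scalar $\eta_g$ genuinely does not change; this follows because $j'_{2,g}$ is defined via $S'$ and $\sigma$ exactly as $j_{2,g}$ was via $S$ and $\rho$, and $\eta_g$ was shown in Section \ref{PEGHC} to be independent of the choice of $S$ and of the sign ambiguity in $T_g$, but one should still confirm it is unaffected by the conjugation by $V$ and the $U_g$'s, which the $T'_gS'{S'}^*$-coefficient computation does automatically.
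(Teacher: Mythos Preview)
Your proposal is correct and follows essentially the same route as the paper's proof: expand $\beta_h(T'_g)$ using $U_h\alpha_h(V)=V\rho(U_{-h})$ and the cocycle relations to read off $\epsilon'_h(g)$, then extract $A'_g(h,k)$ by pushing $\beta_g\sigma$ through the $V$'s and $U_g$'s. The only tactical difference is that the paper isolates $A'_g(h,k)$ by computing the single matrix element ${T'_{g+h}}^*\beta_g\sigma(T'_g)T'_{g+k}$ rather than expanding the full formula for $\beta_g\sigma(T'_g)$ and matching all three types of terms; this sidesteps the consistency check on the $S'$-terms you mention, but otherwise the bookkeeping and the key identities invoked are identical.
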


\begin{proof} 
We first compute $\beta_h(T'_g)$ as 
\begin{align*}
 \beta_h(T'_g)&=\mu(g)U_h\alpha_h(V\rho(V)T_g\alpha_g(V^{-1})U_g^{-1})U_h^{-1} \\
 &=\mu(g)\epsilon_h(g)U_h\alpha_h(V)\rho(\alpha_{-h}(V))T_{g+2h}\alpha_{g+h}(V^{-1})\alpha_h(U_g^{-1})U_h^{-1} \\
 &=\mu(g)\epsilon_h(g)V\rho(U_{-h})\rho(U_{-h}^{-1}V\rho(U_h))T_{g+2h}\alpha_{g+h}(V^{-1})
 \overline{\omega(h,g)}U_{g+h}^{-1} \\
 &=\mu(g)\epsilon_h(g)\overline{\omega(h,g)}V\rho(V)T_{g+2h}\alpha_{g+2h}\circ\rho(U_h)\alpha_{g+h}(V^{-1})
 U_{g+h}^{-1}\\
 &=\mu(g)\epsilon_h(g)\overline{\omega(h,g)}V\rho(V)T_{g+2h}\alpha_{g+h}(\rho(\alpha_{-h}(U_h))V^{-1}) U_{g+h}^{-1}.
\end{align*}
Here we have 
\begin{align*}
\lefteqn{\alpha_{g+h}(\rho(\alpha_{-h}(U_h))V^{-1}) U_{g+h}^{-1}
=\alpha_{g+h}(\rho(U_{-h}^{-1}U_{-h}\alpha_{-h}(U_h))V^{-1}) U_{g+h}^{-1} } \\
 &=\omega(-h,h)\alpha_{g+h}(\rho(U_{-h}^{-1})V^{-1}) U_{g+h}^{-1} 
 =\alpha_{g+h}(\alpha_h(V^{-1})U_h^{-1})) U_{g+h}^{-1}\\
 &=\overline{\omega(g+h,h)}\alpha_{g+2h}(V^{-1})U_{g+2h}^{-1}.
\end{align*}
This shows 
$$\beta_h(T'_g)=\epsilon_h(g)\mu(g)\overline{\omega(h,g)}\overline{\omega(g+h,h)}V\rho(V)T_{g+2h}\alpha_{g+2h}(V^{-1})U_{g+2h}^{-1},$$
and so 
\begin{align*}
\lefteqn{\epsilon'_h(g)=\epsilon_h(g)\mu(g)\mu(g+2h)\overline{\omega(h,g)}\overline{\omega(g+h,h)}} \\
 &=\epsilon_h(g)\mu(g)\mu(g+2h)b_\omega(g,h)\overline{\omega(g,h)}\overline{\omega(g+h,h)} \\
 &=\epsilon_h(g)\mu(g)\mu(g+2h)b_\omega(g,h)\overline{\omega(h,h)}\overline{\omega(g,2h)}.
\end{align*}

For $A'_g(h,k)$, we have  
\begin{align*}
\lefteqn{{T'_{g+h}}^*\beta_g\circ \sigma(T'_g)T'_{g+k} }\\
 &=\mu(g+h)\mu(g+k)U_{g+h}\alpha_{g+h}(V)T_{g+h}^*\Ad(\rho(V^{-1})V^{-1}U_g\alpha_g(V))\circ\alpha_g\circ
 \rho(T'_g)\\
 &\times T_{g+k}\alpha_{g+k}(V^{-1})U_{g+k}^{-1} \\
 &=\mu(g+h)\mu(g+k)U_{g+h}\alpha_{g+h}(V)T_{g+h}^*\\
 &\times \Ad(\rho(V^{-1}U_{-g}))\circ\rho(\alpha_{-g}(T'_g))
 T_{g+k}\alpha_{g+k}(V^{-1})U_{g+k}^{-1} \\
 &=\mu(g+h)\mu(g+k)U_{g+h}\alpha_{g+h}(V)T_{g+h}^*\rho(V^{-1}U_{-g}\alpha_{-g}(T'_g)U_{-g}^{-1}V)
 T_{g+k}\alpha_{g+k}(V^{-1})U_{g+k}^{-1}.
\end{align*}
Here we have 
\begin{align*}
\lefteqn{V^{-1}U_{-g}\alpha_{-g}(T'_g)U_{-g}^{-1}V
=\mu(g)V^{-1}U_{-g}\alpha_{-g}(V\rho(V)T_g\alpha_g(V^{-1})U_g^{-1})U_{-g}^{-1}V} \\
 &=\mu(g)V^{-1}U_{-g}\alpha_{-g}(V)\rho(\alpha_g(V))\alpha_{-g}(T_g)V^{-1}\alpha_{-g}(U_g^{-1})U_{-g}^{-1}V \\
 &=\mu(g)\overline{\omega(-g,g)}V^{-1}U_{-g}\alpha_{-g}(V)\rho(\alpha_g(V))\alpha_{-g}(T_g)\\
 &=\mu(g)\rho(U_g)\rho(U_g^{-1}V\rho(U_{-g}))\alpha_{-g}(T_g)\\
 &=\mu(g)\rho(V\rho(U_{-g}))\alpha_{-g}(T_g)=\mu(g)\rho(U_g\alpha_g(V))\alpha_{-g}(T_g).
\end{align*}
Thus we get 
\begin{align*}
\lefteqn{{T'_{g+h}}^*\beta_g\circ \sigma(T'_g)T'_{g+k}=\mu(g+h)\mu(g+k)\mu(g)} \\
 &\times U_{g+h}\alpha_{g+h}(V)T_{g+h}^*\rho^2(V\rho(U_{-g}))\alpha_g\circ\rho(T_g)T_{g+k}\alpha_{g+k}(V^{-1})U_{g+k}^{-1}\\
 &=\mu(g+h)\mu(g+k)\mu(g)\\ 
 &\times U_{g+h}\alpha_{g+h}(V)\alpha_{g+h}\circ\rho(V\rho(U_{-g}))T_{g+h}^*\alpha_g\circ\rho(T_g)T_{g+k}\alpha_{g+k}(V^{-1})U_{g+k}^{-1}\\
 &=\mu(g+h)\mu(g+k)\mu(g)A_g(h,k)\\ 
 &\times U_{g+h}\alpha_{g+h}(V)\rho(\alpha_{-g-h}(V))\rho^2(\alpha_{g+h}(U_{-g})) T_{g+h+k}
 \alpha_{g+k}(V^{-1})U_{g+k}^{-1}\\
 &=\mu(g+h)\mu(g+k)\mu(g)A_g(h,k)\\ 
 &\times V\rho(U_{-g-h})\rho(U_{-g-h}^{-1}V\rho(U_{g+h}))\rho^2(\alpha_{g+h}(U_{-g})) T_{g+h+k}\alpha_{g+k}(V^{-1})U_{g+k}^{-1}\\
 &= \mu(g+h)\mu(g+k)\mu(g)\omega(g+h,-g)A_g(h,k)\\
 &\times V\rho(V)\rho^2(U_h) T_{g+h+k}\alpha_{g+k}(V^{-1})U_{g+k}^{-1}\\
 &= \mu(g+h)\mu(g+k)\mu(g)\omega(g+h,-g)A_g(h,k)\\
 &\times V\rho(V)T_{g+h+k}\alpha_{g+h+k}\circ\rho(U_h)\alpha_{g+k}(V^{-1})U_{g+k}^{-1}.
\end{align*}
Here we have 
\begin{align*}\lefteqn{
\alpha_{g+h+k}\circ\rho(U_h)\alpha_{g+k}(V^{-1})U_{g+k}^{-1}
 =\alpha_{g+h+k}(V^{-1}U_{-h}\alpha_{-h}(V))\alpha_{g+k}(V^{-1})U_{g+k}^{-1}   }\\
 &=\alpha_{g+h+k}(V^{-1})\alpha_{g+h+k}(U_{-h})U_{g+k}^{-1}
 =\alpha_{g+h+k}(V^{-1})\alpha_{g+k}(U_h^{-1}U_h\alpha_h(U_{-h}))U_{g+k}^{-1}\\
 &=\omega(h,-h)\alpha_{g+h+k}(V^{-1})\alpha_{g+k}(U_h^{-1})U_{g+k}^{-1}
 =\overline{\omega(g+k,h)}\alpha_{g+h+k}(V^{-1})U_{g+h+k}^{-1}.
\end{align*}
Therefore we get 
\begin{align*}
\lefteqn{{T'_{g+h}}^*\beta_g\circ \sigma(T'_g)T'_{g+k}} \\
&=\mu(g+h)\mu(g+k)\mu(g)\omega(g+h,-g)\overline{\omega(g+k,h)}A_g(h,k)\\
 &\times V\rho(V)T_{g+h+k}\alpha_{g+h+k}(V^{-1})U_{g+h+k}^{-1}\\
&=\mu(g+h+k)\mu(g+h)\mu(g+k)\mu(g)\omega(g+h,-g)\overline{\omega(g+k,h)}A_g(h,k)T'_{g+h+k}.
\end{align*}
The cocycle relation implies $\omega(h,g)\omega(h+g,-g)=\omega(g,-g)=1$, and 
we get the statement. 
\end{proof}

In summary we obtain the following theorem. 

\begin{theorem}\label{distinguished} The equivalence classes of C$^*$-generalized Haagerup categories with a finite abelian group 
$G$ and a distinguished object $\rho$ are in one-to-one correspondence with the $H^2(G,\T)\rtimes \Aut(G)$-orbits of the gauge equivalence classes 
of solutions $(\epsilon_{h}(g),\eta_g,A_g(h,k))$ of the polynomial equations Eq.(\ref{cocycle})-Eq.(\ref{AAA}), 
where the action of $H^2(G,\T)$ is given as in Theorem \ref{H2action}. 
\end{theorem}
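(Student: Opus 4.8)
The plan is to assemble Theorem~\ref{distinguished} from the three results already in hand: Theorem~\ref{uniqueness} (the operator-algebraic uniqueness statement), Theorem~\ref{mf} (gauge-equivalent numerical data yield equivalent categories), and Theorem~\ref{H2action} (the explicit $H^2(G,\T)$-action on gauge classes), together with Remark~\ref{choices}, which already isolates the four choices (1)--(4) entering the construction of the triplet $(\epsilon_h(g),\eta_g,A_g(h,k))$ from $\cC\subset\End(M)$. The strategy is a bookkeeping argument: show that the four choices collectively generate exactly the $H^2(G,\T)\rtimes\Aut(G)$-orbit of a gauge class, and that conversely each such orbit comes from a genuine C$^*$-generalized Haagerup category.

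First I would establish the \emph{well-definedness} direction: given $\cC$ with a distinguished simple object $\rho$, the construction of Section~\ref{PEGHC} produces a triplet once choices (1)--(4) are fixed, and I must check that varying those choices moves the triplet only within a single $H^2(G,\T)\rtimes\Aut(G)$-orbit of gauge equivalence classes. Choices (3) and (4) change the triplet only by a gauge transformation by some $\{\delta_g\}\subset\{1,-1\}$ (and a global phase that has no effect), as already recorded after Lemma~\ref{ST}; choice (1), the identification of $G$ with the invertible objects, is acted on by $\Aut(G)$ by relabelling; and choice (2), the standard lifting $[\rho,\alpha]$, is controlled by the discussion preceding Theorem~\ref{H2action}: two standard liftings with $[\rho']=[\rho]$ and $[\alpha'_g]=[\alpha_g]$ differ by a cohomology class in $H^2(G,\T)$, and the resulting change of triplet is exactly the one computed in Theorem~\ref{H2action} (the case where the class is trivial, handled by the earlier lemma, leaves the gauge class fixed). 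Lemma~\ref{standard} guarantees at least one standard lifting exists. One must also check the compatibility of the $\Aut(G)$- and $H^2(G,\T)$-actions so that they indeed combine into an action of the semidirect product $H^2(G,\T)\rtimes\Aut(G)$ — the $\Aut(G)$-action on $H^2(G,\T)$ being the natural one — which is a routine verification from the formulas in Theorem~\ref{H2action}.

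Next I would establish the \emph{injectivity} direction: if $\cC,\cC'\subset\End_0(M)$ are C$^*$-generalized Haagerup categories with the same group $G$ and distinguished objects $\rho,\rho'$, and their triplets lie in the same $H^2(G,\T)\rtimes\Aut(G)$-orbit of gauge classes, then $\cC$ and $\cC'$ are equivalent via a monoidal functor sending $\rho$ to $\rho'$. After applying a suitable element of $\Aut(G)$ (relabel) and a suitable $\omega\in H^2(G,\T)$ (replace the standard lifting of $\cC$ using Theorem~\ref{H2action}), I may assume the triplets are gauge equivalent; then Theorem~\ref{mf} directly provides the desired equivalence. Conversely, for \emph{surjectivity} onto orbits, any triplet satisfying the polynomial equations \eqref{cocycle}--\eqref{AAA} is realized by the reconstruction of Section~\ref{Reconstruction} (using the free-product trick of the Appendix when the action $\alpha$ fails to be faithful, i.e. when $N\neq\{0\}$), yielding a C$^*$-generalized Haagerup category whose associated triplet is gauge equivalent to the given one; hence every orbit is hit.

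The main obstacle I anticipate is \textbf{choice (2)}: pinning down precisely that the ambiguity in the standard lifting, modulo inner conjugacy, is measured by $H^2(G,\T)$ and \emph{only} $H^2(G,\T)$ — not by some larger or smaller group — and that the induced transformation of the triplet is \emph{exactly} the formula in Theorem~\ref{H2action} with \emph{no residual gauge ambiguity unaccounted for}. This requires carefully combining the cohomological classification of liftings of a $G$-kernel to an action (from the $G$-kernel subsection) with the fact, already proved, that inner-conjugate standard liftings give gauge-equivalent triplets, and then checking that the $\mu$-dependence in Theorem~\ref{H2action} (the choice of square-root function $\mu(2g)=\omega(g,g)$) only shifts the triplet by a gauge transformation, so that the $H^2(G,\T)$-action on gauge \emph{classes} is genuinely well defined. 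Everything else is routine tracking of how the explicit formulas compose.
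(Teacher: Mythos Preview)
Your proposal is correct and follows essentially the same approach as the paper. The paper presents Theorem~\ref{distinguished} with no separate proof, just the phrase ``In summary we obtain the following theorem,'' so its argument is precisely the accumulation of Lemma~5.1 (the trivial-$H^2$ case, which isolates exactly your observation that inner-conjugate standard liftings give gauge-equivalent triplets), the discussion after it reducing choice~(2) to the class in $H^2(G,\T)$, and Theorem~\ref{H2action}; your well-definedness/injectivity/surjectivity organization repackages the same ingredients, and the obstacle you flag about choice~(2) and the $\mu$-dependence is exactly what the paper handles in that stretch of Section~\ref{classifiction}.
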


To classify generalized Haagerup categories with $G$ without specifying a distinguished simple object $\rho$, 
we still have freedom to reparametrize the simple objects; namely replacing $\rho$ with $\alpha_p\rho$ 
gives rise to an action of $G$ (in fact $G/2G$, see below) on the gauge equivalence classes of 
the solutions of the polynomial equations. 

We fix $p\in G$. 
Then $(\alpha_p\rho,(\alpha_p\rho)^2)=(\alpha_p\rho,\rho^2)$ and $G_2$ acts on $(\alpha_p\rho,(\alpha_p\rho)^2)$ 
by the character $\chi_p$. 
We choose an extension $\chi\in \widehat{G}$ of $\chi_p$ and choose a unitary $V\in \cU(M)$ satisfying 
$\alpha_g(V)=\chi(g)V$. 
Let $\sigma=\alpha_p\circ \Ad V\circ \rho$. 
Then the pair $[\sigma,\alpha]$ is a standard lifting. 
Let $S'=V\rho(V)S$. 
Then $S'$ is an isometry in $(\id,\sigma^2)$, and we define two anti-unitaries on $(\alpha_g\sigma,\sigma^2)$ by 
$$j'_{1,g}T=\sqrt{d}T^*\sigma(S')=\sqrt{d}T^*V\rho(V\rho(V)S)V^*,$$
$$j'_{2,g}T=\sqrt{d}\alpha_g\sigma(T^*)S'=\sqrt{d}V\alpha_{p+g}\rho(T^*)\rho(V)S.$$

For each $g$, we choose a square root of $\chi(g)$ and denote it by $\nu(g)$. 
Let $T'_g=\overline{\nu(p+g)}V\rho(V)T_{p+g}V^*$.  
Then $T'_g$ is an isometry of $(\alpha_g\sigma,\sigma^2)$ satisfying $j_{1,g}T'_g=T'_g$. 
As before we define $\epsilon'_h(g)$, $\eta'_g$ and $A'_g(h,k)$ by $\alpha_h(T_g')=\epsilon'_h(g)T'_{g+2h}$, 
$j'_{2,g}T'_g=\eta'_gT'_g$, and 
$$\alpha_g\circ\sigma(T'_g)=\eta_gT'_gS'{S'}^*+\frac{\overline{\eta_g}}{\sqrt{d}}S'{T'}_g^*+
\sum_{h,k\in G}A'_g(h,k)T'_{g+h}T'_{g+h+k}{T'}_{g+k}^*.$$
We can choose $\nu$ to satisfy $\nu(p+2h)=\chi(h)\epsilon_h(p)\nu(p)$ for any $h\in G$, 
which makes $\epsilon'_h(0)=1$. 

\begin{lemma} \label{translation} Let the notation be as above. Then 
$$\epsilon'_h(g)=\epsilon_h(p+g)\nu(p+g+2h)\overline{\nu(p+g)\chi(h)},$$
$$\eta'_g=\eta_{p+g},$$
$$A'_g(h,k)=A_{p+g}(h,k)\nu(p+g+h+k)\nu(p+g+h)\overline{\nu(p+g+k)\nu(p+g)\chi(h)}.$$
\end{lemma}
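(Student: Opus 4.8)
The plan is to carry out the same kind of direct intertwiner computation that was used in Theorem \ref{H2action} and in the easy case of the preceding lemma, but now for the reparametrization $\rho \mapsto \alpha_p \circ \rho$. First I would record the basic data: $[\sigma,\alpha]$ with $\sigma = \alpha_p\circ\Ad V\circ\rho$ is a standard lifting (this follows exactly as in Lemma \ref{newstandard}, using that $\alpha_g(V)=\chi(g)V$ and that $\chi|_{G_2}=\chi_p$ is precisely the character by which $G_2$ acts on $(\alpha_p\rho,\rho^2)$, so $G_2$ acts trivially on $(\sigma,\sigma^2)$ after twisting by $V$). Then $S'=V\rho(V)S\in(\id,\sigma^2)$ and $T'_g=\overline{\nu(p+g)}V\rho(V)T_{p+g}V^*\in(\alpha_g\sigma,\sigma^2)$; one checks $j'_{1,g}T'_g=T'_g$ by the same manipulation as in the lemma just before Theorem \ref{H2action}, using $\sigma(S')=V\rho(V\rho(V)S)V^*$ and $\alpha_g(V)=\chi(g)V$.

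The three formulas are then obtained by evaluating three intertwiner expressions. For $\epsilon'_h(g)$ I would compute $\alpha_h(T'_g) = \overline{\nu(p+g)}\,\alpha_h(V\rho(V)T_{p+g}V^*)$, push $\alpha_h$ through using $\alpha_h(V)=\chi(h)V$, $\alpha_h\rho(V)=\rho(\alpha_{-h}(V))=\overline{\chi(h)}\rho(V)$, and $\alpha_h(T_{p+g})=\epsilon_h(p+g)T_{p+g+2h}$, collect the scalar, and compare with $\epsilon'_h(g)T'_{g+2h}=\epsilon'_h(g)\overline{\nu(p+g+2h)}V\rho(V)T_{p+g+2h}V^*$. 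This gives $\epsilon'_h(g)=\epsilon_h(p+g)\nu(p+g+2h)\overline{\nu(p+g)\chi(h)}$, and the constraint $\nu(p+2h)=\chi(h)\epsilon_h(p)\nu(p)$ is exactly what forces $\epsilon'_h(0)=1$. For $\eta'_g$ I would apply $j'_{2,g}$ to $T'_g$: $j'_{2,g}T'_g=\sqrt d\, V\alpha_{p+g}\rho(T'^*_g)\rho(V)S$; substituting $T'_g$ and using $\alpha_{p+g}\rho=\rho\alpha_{-p-g}$ together with $\alpha_{\bullet}(V)=\chi(\bullet)V$, the $\nu$ and $\chi$ factors cancel (the square-root choice $\nu$ being designed precisely so the phases from $V$ cancel) and one is left with $\sqrt d\,\alpha_{p+g}\rho(T_{p+g})^*S\cdot(\text{stuff})$ wrapped by $V\rho(V)(\cdot)V^*$, i.e. $\eta_{p+g}T'_g$. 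For $A'_g(h,k)$ I would compute $T'^*_{g+h}\,(\alpha_g\circ\sigma)(T'_g)\,T'_{g+k}$ by the same bookkeeping as in the $A'_g(h,k)$ part of Theorem \ref{H2action}: expand $\alpha_g\sigma(T'_g)=V\alpha_{p+g}\rho(\overline{\nu(p+g)}\,\rho(V)T_{p+g}V^*)\rho(V)S$-type expression (more carefully, $\sigma(T'_g)=\Ad(V)\circ\alpha_p\circ\rho(T'_g)$), move everything through $\alpha$'s and $\rho$'s using the relations above and $\alpha_p\rho=\rho\alpha_{-p}$, pick out the coefficient $A_{p+g}(h,k)$ of $T_{p+g+h}T_{p+g+h+k}T^*_{p+g+k}$, and match against $A'_g(h,k)T'_{g+h+k}$; the accumulated $\nu$-phases assemble into $\nu(p+g+h+k)\nu(p+g+h)\overline{\nu(p+g+k)\nu(p+g)\chi(h)}$.

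The main obstacle I expect is purely computational: in the $A'_g(h,k)$ calculation one has to be careful to track all four $\nu$-factors (from $T'_{g+h}$, $T'_g$, $T'_{g+h+k}$, $T'_{g+k}$) and every $\chi$-phase produced when $\alpha$-automorphisms or $\rho$ are applied to $V$, and to verify that all extraneous $V\rho(V)(\cdot)V^*$ conjugations line up correctly at each stage — exactly the kind of ledger that was kept in the proof of Theorem \ref{H2action}. There is no conceptual difficulty: $G$ is abelian so there are no ordering subtleties in the group, $\alpha$ is genuinely an action (no cocycle $\omega$ to carry, unlike Theorem \ref{H2action}), and the definitions of $j'_{1,g}$, $j'_{2,g}$ have already been set up to make $T'_g$ the correct gauge choice. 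So I would present the three computations as routine verifications, giving the first one ($\epsilon'$) in a few lines to fix the pattern and then stating that $\eta'$ and $A'$ follow by the identical bookkeeping, as is done for the analogous step earlier in the paper.
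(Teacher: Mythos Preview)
Your proposal is correct and is exactly the approach the paper intends: the paper states Lemma~\ref{translation} without proof, leaving it as a routine verification parallel to the explicit computations already carried out for Theorem~\ref{H2action} and the preceding lemma. Your sketch of the $\epsilon'_h(g)$ computation is accurate (indeed $\alpha_h(V)=\chi(h)V$, $\alpha_h(\rho(V))=\rho(\alpha_{-h}(V))=\overline{\chi(h)}\rho(V)$, and $\alpha_h(V^*)=\overline{\chi(h)}V^*$ combine with $\alpha_h(T_{p+g})=\epsilon_h(p+g)T_{p+g+2h}$ to give the stated formula), and the $\eta'_g$ and $A'_g(h,k)$ formulas follow by the same bookkeeping you describe.
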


When $p=2q\in 2G$, the character $\chi_{p}$ is trivial, and we can choose $\chi=\nu=1$. 
Then we have 
$$\epsilon'_h(g)=\epsilon_h(2q+g)=\epsilon_{h+q}(g)\epsilon_q(g)=\epsilon_h(g)\epsilon_q(g+2h)\epsilon_q(g),$$
$$\eta'_g=\eta_{2q+g}=\eta_g,$$
$$A'_g(h,k)=A_{p+g}(h,k)=A_g(h,k)\epsilon_q(g)\epsilon_q(g+h)\epsilon_q(g+k)\epsilon_q(g+h+k).$$
This shows that $2G$ acts trivially on the gauge equivalence classes of the solutions. 
This corresponds to the fact that the action of $2G$ comes from the inner automorphisms 
$\alpha_q\otimes \cdot \otimes \alpha_{-q}$ of the category $\cC$. 

In summary, we get the following classification result.

\begin{theorem} Let $G$ be a finite abelian group and let 
$$\Gamma=(H^2(G,\T)\times G/2G)\rtimes \Aut(G).$$
The equivalence classes of C$^*$-generalized Haagerup categories with $G$ are in one-to-one correspondence with 
the $\Gamma$-orbits of the gauge equivalence classes of solutions $(\epsilon_{h}(g),\eta_g,A_g(h,k))$ of the polynomial equations 
Eq.(\ref{cocycle})-Eq.(\ref{AAA}). 
\end{theorem}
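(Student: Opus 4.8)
The plan is to assemble the final theorem by combining the three structural results already established in this section with the orbit description of Theorem \ref{distinguished}. First I would recall the chain of reductions recorded in Remark \ref{choices}: a C$^*$-generalized Haagerup category $\cC\subset\End(M)$ with $G$ and a distinguished object $\rho$ produces a triplet $(\epsilon_h(g),\eta_g,A_g(h,k))$ once choices (1)--(4) are made, and Theorem \ref{uniqueness} guarantees that any two such categories whose triplets lie in the same orbit under all admissible changes of choices are equivalent as C$^*$-fusion categories. Theorem \ref{distinguished} already packages the effect of choices (1)--(4) together with the choice of standard lifting (2) as the action of $H^2(G,\T)\rtimes\Aut(G)$ on gauge equivalence classes: choice (1) contributes $\Aut(G)$, choices (3) and (4) are the gauge transformations, and Lemma \ref{newstandard} together with Theorem \ref{H2action} shows that the remaining ambiguity of the standard lifting with $[\rho]$ and $[\alpha_g]$ fixed is exactly the $H^2(G,\T)$-action described there.

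Next I would account for the one remaining freedom, namely that a generalized Haagerup category with $G$ does not come with a canonical choice of distinguished simple object $\rho$: we may replace $\rho$ by $\alpha_p\rho$ for any $p\in G$. Lemma \ref{translation} computes precisely how this reparametrization transforms the triplet, and the computation immediately following it shows that when $p=2q\in 2G$ the new triplet is gauge equivalent (via $\delta_g=\epsilon_q(g)$) to the old one, so the $G$-action factors through $G/2G$. Thus the full group acting on the gauge equivalence classes of solutions, with orbits corresponding to equivalence classes of categories, is the semidirect product $\Gamma=(H^2(G,\T)\times G/2G)\rtimes\Aut(G)$; here $\Aut(G)$ acts on $G/2G$ in the evident way and on $H^2(G,\T)$ by pullback, while $H^2(G,\T)$ and $G/2G$ commute because the $H^2(G,\T)$-action (altering the standard lifting $\alpha$) and the reparametrization $\rho\mapsto\alpha_p\rho$ operate on logically independent pieces of data. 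I would verify this commutativity by a direct but short comparison of the formulas in Theorem \ref{H2action} and Lemma \ref{translation}, or simply observe that the two operations are defined by modifications in disjoint coordinates of the construction.

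The proof then concludes: given two C$^*$-generalized Haagerup categories with $G$ whose solutions lie in the same $\Gamma$-orbit, choose distinguished objects and standard liftings so that the associated triplets become gauge equivalent, apply Theorem \ref{mf} to obtain a monoidal equivalence; conversely an equivalence of categories transports one standard lifting and distinguished object to another, hence moves the triplet within its $\Gamma$-orbit. I expect the main obstacle to be purely bookkeeping: checking that the three partial results glue into a genuine \emph{group} action of $\Gamma$ — in particular that the semidirect-product structure is the right one and that no further relations are forced — rather than merely a set of commuting transformations. This amounts to confirming that the $H^2(G,\T)$ and $G/2G$ pieces commute on the nose at the level of gauge equivalence classes (not just up to gauge), and that $\Aut(G)$ normalizes both; everything else is a direct invocation of Theorem \ref{uniqueness}, Theorem \ref{mf}, Theorem \ref{distinguished}, and Lemma \ref{translation}.
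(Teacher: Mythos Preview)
Your proposal is correct and follows exactly the same route as the paper: the paper's proof is simply the sentence ``In summary, we get the following classification result,'' relying on the reader to combine Theorem~\ref{distinguished}, Lemma~\ref{translation}, and the subsequent computation that $2G$ acts trivially on gauge equivalence classes. If anything you are more careful than the paper, which does not explicitly verify the semidirect-product structure of $\Gamma$ or the commutativity of the $H^2(G,\T)$ and $G/2G$ factors; your bookkeeping concerns are legitimate but minor, and the paper treats them as evident.
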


In view of the above argument and Theorem \ref{mf}, we get 

\begin{theorem}  Let $G$ and $\Gamma$ be as above, and let $\cC$ be a generalized Haagerup category with $G$ 
having a solution $(\epsilon_{h}(g),\eta_g,A_g(h,k))$ of the polynomial equations Eq.(\ref{cocycle})-Eq.(\ref{AAA}). 
We assume $A_g(h,k)\neq 0$ for any $g,h,k\in G$. 
Then the outer automorphism group $\Out(\cC)$ of $\cC$ is the stabilizer subgroup $\Gamma_0$ of 
$\Gamma$ for the gauge equivalence class of the solution $(\epsilon_{h}(g),\eta_g,A_g(h,k))$.
\end{theorem}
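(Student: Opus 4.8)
The plan is to identify both $\Out(\cC)$ and the stabilizer $\Gamma_0$ with the group of self-equivalences of the classification data, using the framework already built up. By Theorem \ref{distinguished} and the subsequent discussion, the equivalence classes of C$^*$-generalized Haagerup categories with $G$ (without a distinguished object) are in bijection with the $\Gamma$-orbits of gauge equivalence classes of solutions, where $\Gamma=(H^2(G,\T)\times G/2G)\rtimes\Aut(G)$. The key point I would make precise is that this bijection is compatible with automorphisms: a monoidal auto-equivalence $F$ of $\cC$, when applied to a fixed standard lifting $[\rho,\alpha]$ together with the choices (1)--(4) of Remark \ref{choices}, produces a new standard lifting of $\cC$ (with possibly a different distinguished object $\alpha_p\rho$ and a different identification of $G$ with the invertibles), whose triplet is therefore obtained from the original one by the action of some element $\gamma\in\Gamma$. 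This gives a well-defined map $\Out(\cC)\to\Gamma_0$, since $F$ fixes the equivalence class of $\cC$ itself, hence its gauge-equivalence-class-of-triplet must lie in the same $\Gamma$-orbit, i.e. $\gamma$ stabilizes it.

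Next I would show this map is a homomorphism and is injective. The homomorphism property is bookkeeping: composing auto-equivalences composes the induced changes of data, and the identifications in Remark \ref{choices} were set up precisely so that $\Gamma$ acts on the triplets in a way matching composition. For injectivity, suppose $F$ induces the trivial element of $\Gamma_0$, i.e. $F$ can be arranged to fix the triplet $(\epsilon_h(g),\eta_g,A_g(h,k))$ on the nose (after adjusting by gauge transformations and inner automorphisms). Then by Theorem \ref{mf}, since $F(\rho)$ and $\rho$ have gauge equivalent numerical data, $F$ is monoidally naturally isomorphic to a functor sending $\rho\mapsto\rho$, $\alpha_g\mapsto\alpha_g$ with trivial tensor structure; Theorem \ref{uniqueness} then forces $F$ to be implemented by an isomorphism $\Phi$ of $M$ which, since it fixes the generating objects up to the chosen intertwiners, is inner, so $F$ is the identity in $\Out(\cC)$. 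Here the hypothesis $A_g(h,k)\neq 0$ for all $g,h,k$ is what guarantees that $\rho$ is rigid enough — it is the unique simple object of its dimension with the prescribed fusion, and its self-fusion structure constants determine $F$ up to natural isomorphism — so that the reconstruction is faithful.

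Finally I would prove surjectivity. Given $\gamma\in\Gamma_0$, it transforms the gauge class of the triplet of $[\rho,\alpha]$ to the gauge class of the triplet of some other standard lifting $[\sigma,\beta]$ of \emph{the same category $\cC$} (since $\gamma$ stabilizes the $\Gamma$-orbit, $[\sigma,\beta]$ is a standard lifting of a category equivalent to $\cC$, and we may realize it inside $\End_0(M)$ as $\cC$ itself). By Theorem \ref{mf}, there is then a monoidal equivalence $(F,L)$ with trivial $L$ from $\cC$ to itself, i.e. an element of $\Out(\cC)$, sending $\rho\mapsto\sigma$, $\alpha_g\mapsto\beta_g$; chasing the definitions, the element of $\Gamma_0$ induced by this $F$ is exactly $\gamma$. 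Combining the three steps gives the asserted isomorphism $\Out(\cC)\cong\Gamma_0$.

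I expect the main obstacle to be surjectivity, specifically the step of realizing the $\gamma$-transformed standard lifting as a lifting of the \emph{same} subcategory of $\End_0(M)$ rather than merely an abstractly equivalent one: one must invoke the uniqueness of the C$^*$-realization (Theorem \ref{uniqueness}) to transport $[\sigma,\beta]$ back into $\cC\subset\End_0(M)$, and then carefully check that the monoidal functor produced by Theorem \ref{mf} is genuinely an auto-equivalence inducing $\gamma$ and not $\gamma$ composed with some hidden inner symmetry. The bookkeeping tying together the $H^2(G,\T)$-action of Theorem \ref{H2action}, the $G/2G$-reparametrization of Lemma \ref{translation}, the $\Aut(G)$-action, and the gauge transformations into a single consistent $\Gamma_0$-element is where the argument is most delicate, though each individual ingredient has already been established above.
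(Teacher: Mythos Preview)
Your overall architecture --- build a map $\pi:\Out(\cC)\to\Gamma_0$ by tracking how an auto-equivalence transforms the standard lifting, invoke Theorem~\ref{mf} for surjectivity, and argue injectivity by reducing to an $F$ that fixes the triplet --- is the paper's approach.

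The gap is in injectivity, specifically your account of how the hypothesis $A_g(h,k)\neq 0$ enters. It is \emph{not} about $\rho$ being ``rigid enough'' as a simple object. Concretely: once you have used Theorem~\ref{uniqueness} and perturbed by inner automorphisms to arrange $F(\alpha_g)=\alpha_g$ and $F(\rho)=\Ad V\circ\rho$ with $\alpha_g(V)=V$, you still only know $F(T_g)=c(g)\,V\rho(V)T_gV^{-1}$ for some signs $c(g)\in\{1,-1\}$ with $c(0)=1$. Applying $F$ to the formula in Lemma~\ref{ST} forces
\[
A_g(h,k)=c(g)c(g+h)c(g+k)c(g+h+k)\,A_g(h,k).
\]
The hypothesis $A_g(h,k)\neq 0$ is used exactly here, to conclude that $c$ is a group homomorphism; together with $c(g+2h)=c(g)$ (from $F(\alpha_h)(F(T_g))=\epsilon_h(g)F(T_{g+2h})$) this lets one absorb $c$ into a monoidal natural isomorphism from $F$ to the identity. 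Without the hypothesis, the Remark following the theorem in the paper shows $\ker\pi$ can be nontrivial --- it is precisely the quotient of all sign maps $c$ satisfying these constraints by those that are homomorphisms --- so your claim that the structure constants alone ``determine $F$ up to natural isomorphism'' is false in general. Also, you should not expect the implementing $\Phi$ to be inner; what is needed (and what the paper proves) is only that $F$ is monoidally naturally isomorphic to the identity functor.
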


\begin{proof} Let $[\rho,\alpha]$ be a standard lifting of $\cC\subset \End_0(M)$, and let 
$S\in (\id,\rho^2)$ and $T_g\in (\alpha_g,\rho^2)$ be isometries for which we have the solution 
$(\epsilon_{h}(g),\eta_g,A_g(h,k))$ of the polynomial equations 
Eq.(\ref{cocycle})-Eq.(\ref{AAA}). 
Let $(F,L)$ be a monoidal functor from $\cC$ to itself giving an element in $\Out(\cC)$. 
Thanks to Theorem \ref{uniqueness}, we may assume, up to natural transformation, that there exists 
$\Phi\in \Aut(M)$ such that $L$ is trivial and $F$ is given by $F(\sigma)=\Phi\circ \sigma \circ \Phi^{-1}$ 
for objects $\sigma$ and by $F(X)=\Phi(X)$ for morphisms $X$. 
Thus the pair $[F(\rho),F(\alpha_\cdot)]$ is a standard lifting of $\cC$ too, and there exist $p\in G$, 
$\theta\in \Aut(G)$, $\omega\in Z^2(g,\T)$, $V\in \cU(M)$, and $U_g\in \cU(M)$ such that 
$$F(\rho)=\Ad V\circ \alpha_{\theta(p)}\circ \rho,$$
$$F(\alpha_g)=\Ad U_{\theta(g)}\circ \alpha_{\theta(g)},$$
$$U_g\alpha_g(U_h)=\omega(g,h)U_{g+h}.$$
We define a homomorphism $\pi:\Out(\cC)\to \Gamma_0$ sending $[(F,L)]$ to 
$([\omega],p+2G,\theta)$. 
Thanks to Theorem \ref{mf}, it is a surjection. 

Assume $[(F,L)]\in \ker \pi$. 
Perturbing $(F,L)$ by an inner automorphism of $\cC$ if necessary, we may assume $p=0$, $\theta=\id$, and $\omega=1$. 
This implies that $\{U_g\}_{g\in G}$ is an $\alpha$-cocycle, which is always a coboundary, and so we may assume $U_g=1$ by perturbing 
$\Phi$ with an inner automorphism of $M$. 
Thus 
$$F(\rho)=\Phi\circ \rho\circ \Phi^{-1}=\Ad V\circ \rho,$$
$$F(\alpha_g)=\Phi\circ \alpha_g\circ \Phi^{-1}=\alpha_g,$$
and 
$$F(S)\in (\id,F(\rho)^2)=\C V\rho(V)S,$$
$$F(T_g)\in (F(\alpha_g)F(\rho),F(\rho)^2)=\C V\rho(V)T_g\alpha_g(V^*).$$
Note that $[F(\rho),F(\alpha_\cdot)]$ is a standard lifting of $\cC$, 
Since $F(\alpha_g)\circ F(\rho)=F(\rho)\circ F(\alpha_{-g})$, we see that $\alpha_g(V)$ is a multiple of $V$, 
Since $F(\alpha_z)(F(T_0))=F(T_0)$ for any $z\in G_2$, we have $\alpha_z(V)=V$. 
Moreover as in the proof of Lemma \ref{standard}, we may further assume that $\alpha_g(V)=V$ for any $g\in G$ 
by perturbing $\Phi$ with an inner automorphism of $M$ if necessary. 

By replacing $V$ with a multiple of $V$, we may assume $F(S)=V\rho(V)S$. 
We still have freedom to replace $V$ with $-V$ maintaining this equality. 
Since $F(T_g)$ is proportional to $V\rho(V)T_gV^*$, there exists $c_g\in \T$ satisfying 
$F(T_g)=c(g)V\rho(V)T_gV^*$. 
Applying $\Phi$ to the both sides of the two equations in Lemma \ref{ST}, we get 
$c(g)\in \{1,-1\}$, and 
\begin{equation}\label{E51}
A_g(h,k)=c(g)c(g+h)c(g+k)c(g+h+k)A_g(h,k).
\end{equation}
Since $A_g(h,k)\neq 0$, the map $c:G\to \{1,-1\}$ is a group homomorphism. 
Since $F(\alpha_h)(F(T_g))=\epsilon_h(g)F(T_{g+2h})$, we get 
\begin{equation} \label{E52}
c(g+2h)=c(g).
\end{equation}
By replacing $V$ with $-V$ if necessary, we may further assume 
\begin{equation}\label{E53}
c(0)=1.
\end{equation}

Now it is easy to construct a natural transformation to make $(F,L)$ equivalent to 
the identify functor as in the proof of \cite[Theorem 13.3]{I15}.  
\end{proof}

\begin{remark} 
In general, we can show that there exists an exact sequence 
$$0\to \ker \pi \to \Out(\cC)\to \Gamma_0\to 0,$$
where $\ker \pi$ is isomorphic to 
the quotient group of the set of maps $c:G\to \{1,-1\}$ satisfying Eq.(\ref{E51}),(\ref{E52}),(\ref{E53}) 
modulo the subgroup 
$$\{c\in \Hom(G,\{1,-1\});\;c(g+2h)=c(g),\; c(0)=1\}.$$
To the best knowledge of the author, there is no known example not satisfying 
the assumption $A_g(h,k)\neq 0$ for any $g,h,k\in G$. 
\end{remark}

When $G$ is an odd abelian group, the polynomial equations were already obtained in \cite{I01} 
under the additional assumption that $\id \oplus \rho$ has a $Q$-system, 
and they were extensively studied in \cite{EG11}.  
They take a very simple form, and so does the $H^2(G,\T)$-action (and $G/2G$ is trivial). 
Since $G_2$ is trivial, we can take $\epsilon_h(g)=1$, and the gauge freedom disappears. 
Since $2G=G$, neither $A_g(h,k)$ nor $\eta_g$ depends on $g$, and we denote them 
by $A(h,k)$ and $\eta$ respectively. 
The polynomial equations are reduced to 
\begin{equation}\label{R1'}
\eta^3=1,
\end{equation}
\begin{equation}\label{O1'}
\sum_{h\in G}A(h,0)=-\frac{\overline{\eta}}{d},
\end{equation}
\begin{equation}\label{O2'}
\sum_{h\in G}A(h-g,k)\overline{A(h-g',k)}=\delta_{g,g'}-\frac{\delta_{k,0}}{d},
\end{equation}
\begin{equation}\label{CC'}
A(k,h)=\overline{A(h,k)},
\end{equation}
\begin{equation}\label{R2'}
A(h,k)=A(-k,h-k)\eta=A(k-h,-h)\overline{\eta},
\end{equation}
\begin{align}\label{AAA'}
\lefteqn{
\sum_{l\in G}A(x+y,l)A(-x,l+p)A(-y,l+q)} \\
&=A(p+x,q+x+y)A(q+y,p+x+y)-\frac{\delta_{x,0}\delta_{y,0}}{d}.\nonumber
\end{align}

Since $G$ is an odd abelian group, every cohomology class in $H^2(G,\T)$ can be represented by a anti-symmetric 
bicharacter $\omega$. 
Under this assumption, the action of $H^2(G,\T)$ is now given by 
\begin{equation}A'(h,k)=A(h,k)\omega(h,k).\end{equation}

The smallest odd abelian group $G$ with non-trivial $H^2(G,\T)$ is $\Z_3\times \Z_3$. 
However, Evans-Gannon \cite{EG11} showed that there is no generalized Haagerup category 
for $\Z_3\times \Z_3$ with a $Q$-system for $\id\oplus \rho$. 
We do not know if there exists a solution of Eq.(\ref{R1'})-Eq.(\ref{AAA'}) for $\Z_3\times \Z_3$ 
without a $Q$-system for $\id\oplus \rho$. 

\section{$Q$-systems}
\subsection{When $\id\oplus \rho$ has a $Q$-system}
When a C$^*$-generalized Haagerup category comes from a $3^G$ subfactor, the object 
$\id\oplus \rho$ has a $Q$-system. 
It is shown in \cite[Section 7]{I01} that $\id\oplus \rho$ has a $Q$-system if and only if 
\begin{equation}\label{Q1}
A_0(h,0)=\delta_{h,0}-\frac{1}{d-1}.
\end{equation}

As we will see later, it is often the case that any solution of 
(\ref{cocycle})-(\ref{hkshift}) and (\ref{Q1}) in fact satisfies 
\begin{equation}\label{Q2}
A_g(h,0)=\delta_{h,0}-\frac{1}{d-1}, 
\end{equation}
for any $g,h\in G$ (e.g $G=\Z_4, \Z_2\times \Z_2$). 
In other words, once $\id\oplus \rho$ has a $Q$-system, so does any other $\id\oplus \alpha_g\rho$ 
in this case.

As in \cite[Lemma 7.3]{I01}, we will simplify part of (\ref{AAA}) under the assumption Eq.(\ref{Q2}). 

\begin{lemma} If $(\epsilon,\eta,A)$ is a solution of (\ref{cocycle})-(\ref{hkshift}), the following holds:
\begin{align}\label{O3}
\lefteqn{\sum_{l\in G}A_{g-p+x}(-x,l+p)A_{g-q}(x,l+q)} \\
 &=\delta_{p-q+x,0}\overline{\eta_{g+q}}\epsilon_x(g-p-x)-\frac{\delta_{x,0}}{d}\eta_{g+p}\eta_{g+q}.  \nonumber
\end{align}
\end{lemma}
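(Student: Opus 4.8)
The plan is to derive Eq.~(\ref{O3}) as a specialization of the big associativity identity Eq.~(\ref{AAA}), exactly in the spirit of \cite[Lemma 7.3]{I01}. Looking at Eq.~(\ref{AAA}), the left-hand side is a sum over $l$ of a product of three $A$-factors, while the right-hand side contains a product of two $A$-factors. The idea is to choose the free parameters $p,q,x,y$ so that one of the three $A$-factors on the left degenerates into a factor of the form $A_{g'}(\cdot,0)$, which by the hypothesis Eq.~(\ref{Q2}) equals $\delta_{\cdot,0}-\frac{1}{d-1}$ and is therefore very rigid. The natural choice is to force the third factor $A_{g-q+x+y}(-y,l+q)$ to have its first slot equal to $0$, i.e.\ to set $y=0$; then this factor becomes $A_{g-q+x}(0,l+q)$, which by Frobenius reciprocity Eq.~(\ref{CC}) equals $\overline{A_{g-q+x}(l+q,0)}$, and by Eq.~(\ref{Q2}) this is $\delta_{l+q,0}-\frac{1}{d-1}$.

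Concretely, I would first set $y=0$ in Eq.~(\ref{AAA}). The left-hand side becomes
\[
\sum_{l\in G}A_g(x,l)A_{g-p+x}(-x,l+p)\Big(\delta_{l+q,0}-\frac{1}{d-1}\Big),
\]
using Eq.~(\ref{CC}) and Eq.~(\ref{Q2}). The term with the Kronecker delta picks out $l=-q$, giving $A_g(x,-q)A_{g-p+x}(-x,-q+p)$; and the $-\frac{1}{d-1}$ term produces $-\frac{1}{d-1}\sum_l A_g(x,l)A_{g-p+x}(-x,l+p)$. The right-hand side of Eq.~(\ref{AAA}) with $y=0$ simplifies using Eq.~(\ref{Q2}) applied to $A_{g-p}(q,p+x)$? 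No --- rather one observes that after $y=0$ the right-hand side still has $A_{g-p}(q,p+x)A_g(p+x,q+x)$ times an $\eta$-$\epsilon$ monomial minus $\frac{\delta_{x,0}}{d}\eta_g\eta_{g+p}\eta_{g+q}$. The plan is then to solve the resulting relation for the quantity $\sum_l A_g(x,l)A_{g-p+x}(-x,l+p)$, which after reindexing $l\mapsto l$ and shifting $g$ (replace $g$ by $g-q$, say, or match indices with the claimed identity) gives precisely Eq.~(\ref{O3}). Along the way I will use Eq.~(\ref{R2}) and Eq.~(\ref{hkshift}) to rewrite the products $A_g(x,-q)A_{g-p+x}(-x,p-q)$ and $A_{g-p}(q,p+x)A_g(p+x,q+x)$ in a common form, and the cocycle identity Eq.~(\ref{cocycle}) together with the normalization $\epsilon_h(0)=1$ to collapse the $\epsilon$-monomials to the single factor $\epsilon_x(g-p-x)$ appearing in the statement.

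The main obstacle I anticipate is the bookkeeping of the $\eta$- and $\epsilon$-cocycle factors: Eq.~(\ref{AAA}) carries the heavy monomial $\eta_g\eta_{g+q+x}\eta_{g+p+q+y}\overline{\eta_{g+p}\eta_{g+x+y}\eta_{g+q+x+y}}$ and four $\epsilon$-factors, and after setting $y=0$ and applying Eq.~(\ref{R2}) and Eq.~(\ref{hkshift}) to move indices around, one must verify that everything cancels down to $\overline{\eta_{g+q}}\epsilon_x(g-p-x)$ in the degenerate term and to $-\frac{1}{d}\eta_{g+p}\eta_{g+q}$ in the $\delta_{x,0}$ term (note $\eta_g^3=1$ from Eq.~(\ref{R1}) will be needed to kill a stray $\eta_g^{\pm 3}$). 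This is routine but error-prone; the conceptual content is entirely contained in the substitution $y=0$ plus the rigidity of $A_\cdot(\cdot,0)$ under Eq.~(\ref{Q2}). A secondary check is to confirm the index shift between the ``$g$'' of Eq.~(\ref{AAA}) and the ``$g$'' of Eq.~(\ref{O3}) is consistent, e.g.\ by testing the identity at $x=0$, where Eq.~(\ref{O3}) reads $\sum_l A_{g-p}(0,l+p)A_{g-q}(0,l+q)=\delta_{p-q,0}\overline{\eta_{g+q}}-\frac{1}{d}\eta_{g+p}\eta_{g+q}$, which should follow directly from Eq.~(\ref{O2}) after using Eq.~(\ref{CC}) and Eq.~(\ref{Q2}) to turn each $A_\cdot(0,\cdot)$ into the explicit value; matching this special case pins down all sign and shift conventions.
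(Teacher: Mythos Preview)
Your approach has a genuine gap: it uses hypotheses that the lemma does not assume. The lemma states explicitly that $(\epsilon,\eta,A)$ is a solution of Eq.~(\ref{cocycle})--(\ref{hkshift}) only; neither Eq.~(\ref{AAA}) nor Eq.~(\ref{Q2}) is available here. Your plan sets $y=0$ in Eq.~(\ref{AAA}) and invokes Eq.~(\ref{Q2}) to collapse $A_{g-q+x}(0,l+q)$, so it proves a weaker statement (one requiring the full associativity relation and the $Q$-system hypothesis) rather than the lemma as written. This matters architecturally: the very next lemma in the paper uses Eq.~(\ref{O3}) together with Eq.~(\ref{Q2}) to simplify Eq.~(\ref{AAA}) in the case $x+y=0$, so deriving Eq.~(\ref{O3}) from Eq.~(\ref{AAA}) and Eq.~(\ref{Q2}) would make that argument circular.

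The paper's proof is much shorter and stays entirely within Eq.~(\ref{cocycle})--(\ref{hkshift}). One rewrites the first factor using Eq.~(\ref{R2}) and Eq.~(\ref{2hshift}):
\[
A_{g-p+x}(-x,l+p)=A_{g-p-x}(l+p+x,x)\,\overline{\eta_{g-p+x}}\,\epsilon_x(g-p-x),
\]
then takes the complex conjugate via Eq.~(\ref{CC}) so that the sum $\sum_l A_{g-p-x}(l+p+x,x)\,A_{g-q}(x,l+q)$ becomes an instance of the orthogonality relation Eq.~(\ref{O2}) (with the identification $g'=g-p-x$, $g=g-q$, $k=x$ after the shift $h=l+g$). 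The $\delta$- and $1/d$-terms of Eq.~(\ref{O2}) then produce exactly the two terms on the right of Eq.~(\ref{O3}). No use of Eq.~(\ref{AAA}) or Eq.~(\ref{Q2}) is needed.
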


\begin{proof}
By (\ref{R2}) and (\ref{2hshift}), we have 
\begin{align*}
\lefteqn{A_{g-p+x}(-x,l+p)} \\
 &= A_{g-p+x}(l+p+x,x)\overline{\eta_{g-p+x}}\epsilon_x(g-p)\epsilon_x(g+l+x)\epsilon_x(g+l)\\
 &= A_{g-p-x}(l+p+x,x)\overline{\eta_{g-p+x}}\epsilon_x(g-p-x).
\end{align*}
Therefore by (\ref{CC}) and (\ref{O2}), we get the statement. 
\end{proof}

\begin{lemma} Let $(\epsilon,\eta,A)$ be a solution of (\ref{cocycle})-(\ref{hkshift}), and 
let $g\in G$. 
We assume that (\ref{Q2}) holds for any $h\in G$. 
Then (\ref{AAA}) with $x+y=0$ is equivalent to 
\begin{align}\label{AA-AA}
\lefteqn{A_g(-x,p)A_g(x,q)\epsilon_x(g-x)\epsilon_x(g+p-x)} \\
 &-A_g(p+x,q)A_g(q-x,p)\epsilon_x(g+q-x)\epsilon_x(g+p+q-x) \nonumber \\
 &=\frac{\delta_{p-q+x,0}}{d-1}\epsilon_x(g+p)-\frac{\delta_{x,0}}{d-1}. \nonumber
\end{align}
In particular,  (\ref{AAA}) with $x=y=0$ is equivalent to 
\begin{equation}\label{|A|}
|A_g(p,q)|^2=\delta_{p,0}\delta_{q,0}-\frac{\delta_{p,0}+\delta_{q,0}+\delta_{p,q}}{d-1}+\frac{d}{(d-1)^2}.
\end{equation}
\end{lemma}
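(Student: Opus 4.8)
The plan is to specialize the general associativity equation (\ref{AAA}) to the case $x+y=0$ and simplify the right-hand side using the $Q$-system hypothesis (\ref{Q2}), exactly as in \cite[Lemma 7.3]{I01}. First I would set $y=-x$ in (\ref{AAA}). The left-hand side becomes $\sum_{l\in G}A_g(0,l)A_{g-p+x}(-x,l+p)A_{g-q-x}(x,l+q)$. Using (\ref{Q2}) with $g$ replaced by $g$ and $h$ replaced by $l$ (together with (\ref{CC}), which gives $A_g(0,l)=\overline{A_g(l,0)}=A_g(l,0)$ since the latter is real by (\ref{Q2})), the factor $A_g(0,l)$ equals $\delta_{l,0}-\frac{1}{d-1}$. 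So the sum splits as
\begin{align*}
\lefteqn{A_{g-p+x}(-x,p)A_{g-q-x}(x,q)}\\
&\quad-\frac{1}{d-1}\sum_{l\in G}A_{g-p+x}(-x,l+p)A_{g-q-x}(x,l+q).
\end{align*}
The second piece is evaluated by the previous lemma, namely (\ref{O3}) with $g$ there replaced by $g-x$ (so that $g-p+x\mapsto (g-x)-p+x$ is matched and $g-q\mapsto (g-x)-q+\dots$; I would double-check the index bookkeeping here, replacing $g$ by $g-x$ and keeping $p,q,x$), yielding a term proportional to $\delta_{p-q+x,0}\overline{\eta_{g-x+q}}\epsilon_x(g-p-x)-\frac{\delta_{x,0}}{d}\eta_{g-x+p}\eta_{g-x+q}$.

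Next I would handle the right-hand side of (\ref{AAA}) with $y=-x$. The product $A_g(p+x,q+x+y)A_{g-p}(q+y,p+x+y)$ becomes $A_g(p+x,q)A_{g-p}(q-x,p)$, and the $\eta$-prefactor $\eta_g\eta_{g+q+x}\eta_{g+p+q+y}\overline{\eta_{g+p}\eta_{g+x+y}\eta_{g+q+x+y}}$ collapses considerably when $y=-x$: several indices coincide and, using (\ref{R1}) (i.e. $\eta_{g+2h}=\eta_g$ and $\eta_g^3=1$), it should reduce to a short monomial in the $\eta$'s, ideally trivializing against the $\eta$ factors on the left. The $\epsilon$-prefactor $\epsilon_p(g-p+x)\epsilon_{p+x}(g-p+q-x)\epsilon_q(g-q)\epsilon_{q-x}(g-q+x)$ would then be rewritten using the cocycle identity (\ref{cocycle}) and (\ref{hkshift})/(\ref{2hshift}) to move everything onto base point $g$. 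After also using (\ref{hkshift}) to shift $A_{g-p+x}(-x,p)$, $A_{g-q-x}(x,q)$, $A_g(p+x,q)$, $A_{g-p}(q-x,p)$ all to argument $g$ (picking up matching $\eta$ and $\epsilon$ factors), the identity should collapse to (\ref{AA-AA}): the leftover constant terms $-\frac{\delta_{x,0}}{d}$ (from the trailing term of (\ref{AAA})) and $-\frac{\delta_{x,0}}{d(d-1)}$ and $\frac{\delta_{p-q+x,0}}{d-1}$-type contributions combine via $\frac{1}{d}+\frac{1}{d(d-1)}=\frac{1}{d-1}$ into the stated right-hand side.

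Finally, for the last assertion I would set $x=0$ in (\ref{AA-AA}). Then the two terms on the left become $A_g(0,p)A_g(0,q)-A_g(p,q)A_g(q,p)$; by (\ref{Q2}) and (\ref{CC}) the first product is $\bigl(\delta_{p,0}-\frac1{d-1}\bigr)\bigl(\delta_{q,0}-\frac1{d-1}\bigr)$ and the second is $|A_g(p,q)|^2$, while the right-hand side becomes $\frac{\delta_{p,q}}{d-1}-\frac{1}{d-1}$. Solving for $|A_g(p,q)|^2$ and expanding $\bigl(\delta_{p,0}-\frac1{d-1}\bigr)\bigl(\delta_{q,0}-\frac1{d-1}\bigr)=\delta_{p,0}\delta_{q,0}-\frac{\delta_{p,0}+\delta_{q,0}}{d-1}+\frac1{(d-1)^2}$ gives $|A_g(p,q)|^2=\delta_{p,0}\delta_{q,0}-\frac{\delta_{p,0}+\delta_{q,0}+\delta_{p,q}}{d-1}+\frac{1}{(d-1)^2}+\frac{1}{d-1}$, and since $\frac1{(d-1)^2}+\frac1{d-1}=\frac{d}{(d-1)^2}$ this is exactly (\ref{|A|}). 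The main obstacle I anticipate is the bookkeeping in the second paragraph: verifying that the $\eta$-prefactor and the accumulated $\epsilon$-cocycle factors from the various index shifts cancel cleanly so that only the displayed $\epsilon_x$ terms in (\ref{AA-AA}) survive; this is purely mechanical but error-prone, and I would organize it by reducing every $A$ and every $\eta$ to base point $g$ first, then collecting $\epsilon$'s last using (\ref{cocycle}).
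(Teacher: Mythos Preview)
Your approach is the same as the paper's: specialize (\ref{AAA}) to $y=-x$, use (\ref{Q2}) to replace $A_g(0,l)$ by $\delta_{l,0}-\frac{1}{d-1}$, evaluate the remaining sum via (\ref{O3}), and then shift all $A$'s to base point $g$ using (\ref{hkshift})/(\ref{I}). Two small corrections will make the bookkeeping go through cleanly. First, with $y=-x$ the third factor on the left of (\ref{AAA}) is $A_{g-q+x+y}(-y,l+q)=A_{g-q}(x,l+q)$, not $A_{g-q-x}(x,l+q)$; consequently (\ref{O3}) applies \emph{verbatim} with no substitution $g\mapsto g-x$. Second, the reason the $\eta$-factors ``trivialize'' is not a miraculous cancellation but the observation (stated in the paper) that (\ref{Q2}) gives $A_g(0,0)=\frac{d-2}{d-1}\neq 0$, and then (\ref{R2}) with $h=k=0$ forces $\eta_g=1$; once you use this, the $\eta$'s coming from the shifts and from (\ref{O3}) drop out and only the displayed $\epsilon_x$ factors survive. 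Your derivation of (\ref{|A|}) from (\ref{AA-AA}) at $x=0$ is correct as written.
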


\begin{proof} Note that since $A_g(0,0)\neq 0$, we have $\eta_g=1$. 
By (\ref{Q2}) and (\ref{O3}), the left-hand side of (\ref{AAA}) with $x+y=0$ is  
\begin{align*}
\lefteqn{\sum_{l\in G}(\delta_{l,0}-\frac{1}{d-1})A_{g-p+x}(-x,l+p)A_{g-q}(x,l+q)
}\\
 &=A_{g-p+x}(-x,p)A_{g-q}(x,q)-\frac{\delta_{p-x+q,0}\overline{\eta_{g+q}}\epsilon_x(g-p-x)}{d-1}
 +\frac{\delta_{x,0}\eta_{g+p}\eta_{g+q}}{d(d-1)}. \\
\end{align*}
The right-hand side is 
\begin{align*}
\lefteqn{A_g(p+x,q)A_{g-p}(q-x,p)\eta_{g+q+x}\eta_{g+p+q-x}\overline{\eta_{g+p}\eta_{g+q}}} \\
 &\times \epsilon_p(g-p+x) \epsilon_{p+x}(g-p+q-x) \epsilon_q(g-q)\epsilon_{q-x}(g-q+x) 
 -\frac{\delta_{x,0}\eta_{g+p}\eta_{g+q}}{d},
\end{align*}
and so \begin{align*}
\lefteqn{A_{g-p+x}(-x,p)A_{g-q}(x,q)} \\
 &-A_g(p+x,q)A_{g-p}(q-x,p)\eta_{g+q+x}\eta_{g+p+q+x}\overline{\eta_{g+p}\eta_{g+q}} \\
 &\times \epsilon_p(g-p+x) \epsilon_{p+x}(g-p+q-x) \epsilon_q(g-q)\epsilon_{q-x}(g-q+x)\\
 &=\frac{\delta_{p-x+q,0}\overline{\eta_{g+q}}\epsilon_x(g-p-x)}{d-1}- \frac{\delta_{x,0}\eta_{g+p}\eta_{g+q}}{d-1}.\\
\end{align*}
By (\ref{R2}),
\begin{align*}\lefteqn{A_{g-p+x}(-x,p)=A_g(-x,p)\eta_{g-p}\overline{\eta_{g+x}}\epsilon_{-x}(g+x)\epsilon_p(g-p)\epsilon_{p-x}(g-p+x)}\\
&=A_g(-x,p)\eta_{g+p}\overline{\eta_{g+x}}
\epsilon_x(g-x)\epsilon_x(g+p-x)\epsilon_p(g-p)\epsilon_p(g-p+x),
\end{align*}
$$A_{g-q}(x,q)=A_g(x,q)\eta_{g+x}\overline{\eta_{g-q}\eta_{g-q+x}}\epsilon_q(g-q)\epsilon_q(g-q+x),$$
$$A_{g-p}(q-x,p)=A_g(q-x,p)\eta_{g+q-x}\overline{\eta_{g-p}\eta_{g-p+q-x}}\epsilon_p(g-p)\epsilon_p(g-p+q-x).$$
Therefore we get (\ref{AA-AA}). 
By setting $x=0$, we get (\ref{|A|}). 
\end{proof}

\subsection{In the case without $Q$-systems}
When Eq.(\ref{Q2}) is not satisfied, we still have a counterpart of Eq.(\ref{|A|}).  

Let 
$$x_{g,h}=\epsilon_{-h}(g)A_g(-h,-h)=\eta_gA_g(h,0)=\eta_g^{-1}A_g(0,h)\in \R.$$
Then Eq.(\ref{R2}) implies 
\begin{equation}
\eta_gx_{g,0}=x_{g,0},
\end{equation}
and we see that $\eta_g\neq 1$ could occur only if $x_{g,0}=0$. 
Eq.(\ref{2hshift}) and (\ref{hkshift}) imply    
\begin{equation}
x_{g,h}=x_{g+2l,h}=x_{g+h,h}. 
\end{equation}
Eq.(\ref{O1}), (\ref{O2}), and (\ref{AAA}) with $x=y=p=0$ imply 
\begin{equation}\label{Deg1}
\sum_{l\in G}x_{g,l}=-\frac{1}{d},
\end{equation}
\begin{equation}\label{Deg2}
\sum_{l\in G}x_{g,l-g}x_{g',l-g'}=\delta_{g,g'}-\frac{1}{d},
\end{equation}
\begin{equation}\label{Deg3}
\sum_{l\in G}x_{g,l}^2x_{g-h,l+h}=x_{g,h}^2-\frac{1}{d}.
\end{equation}
We first solve this system of equations. 
With a solution, the absolute value of $A_g(h,k)$ is determined by 
\begin{equation}\label{|A|1}
\sum_{l\in G}x_{g,l}x_{g-h,l+h}x_{g-k,l+k}=|A_g(h,k)|^2-\frac{1}{d},
\end{equation}
where we used (\ref{AAA}) with $x=y=0$. 

When $G$ is odd, we may assume $x_{g,h}=x_{0,h}$, which we denote by $x_h$. 
Then these equations become 
\begin{equation}
\eta_gx_0=x_0,
\end{equation}
\begin{equation}\label{Deg1'}
\sum_{h\in G}x_h=-\frac{1}{d},
\end{equation}
\begin{equation}\label{Deg2'}
\sum_{h\in G}x_{h}x_{h+g}=\delta_{g,0}-\frac{1}{d},
\end{equation}
\begin{equation}\label{Deg3'}
\sum_{h\in G}x_h^2x_{h+g}=x_g^2-\frac{1}{d},
\end{equation}
\begin{equation}
\sum_{l\in G}x_lx_{l+h}x_{l+k}=|A(h,k)|^2-\frac{1}{d}.
\end{equation}

\begin{remark} \label{|A|2} From the above relation, we can see that the existence of a $Q$-system for 
$\id\oplus \rho$ solely implies that Eq.(\ref{|A|}) holds for $g=0$. 
Indeed, Eq.(\ref{Q1}) implies 
\begin{align*}
\lefteqn{|A_0(h,k)|^2=\frac{1}{d}+\sum_{l\in G}x_{0,l}x_{-h,l+h}x_{-k,l+k}} \\
 &=\frac{1}{d} +\sum_{l\in G}(\delta_{l,0}-\frac{1}{d-1})x_{-h,l+h}x_{-k,l+k}\\
 &=\frac{1}{d} +x_{-h,h}x_{-k,k}-\frac{1}{d-1}\sum_{l\in G}x_{-h,l+h}x_{-k,l+k}\\
 &=\frac{1}{d} +x_{0,h}x_{0,k}-\frac{1}{d-1}(\delta_{h,k}-\frac{1}{d}) \\
 &=\delta_{h,0}\delta_{k,0}-\frac{\delta_{h,0}+\delta_{k,0}+\delta_{h,k}}{d-1}+\frac{d}{(d-1)^2}
 \end{align*}
\end{remark}
\section{The structure of the $3^G$ subfactors}\label{subfactors} 
In this section, we prove our classification theorems for the $3^G$ subfactors using 
results obtained so far. 

\begin{lemma}\label{vanishing} 
Let $G$ be a (not necessarily commutative) finite group. 
If a quadratic category $\cC$ with $(G,\tau,1)$ comes from a $3^G$ subfactor, 
then $\fc^{0,3}(\cC)$ and $\fc^{1,2}(\cC)$ are trivial. 
\end{lemma}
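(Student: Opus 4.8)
The plan is to realize the $3^G$ subfactor $N\subset M$ in the endomorphism framework and extract $\rho$ together with the group $G$ of automorphisms from the structure of the principal graph, then show that the cohomological obstructions $\fc^{0,3}$ and $\fc^{1,2}$ vanish because the relevant intertwiners can be chosen inside the ambient factor with all the associativity data simultaneously trivializable. Concretely, I would write the subfactor as $\iota:N\hookrightarrow M$ with conjugate $\biota$, set $\gamma=\iota\biota\in\End(M)$ and $\hat\gamma=\biota\iota\in\End(N)$, and use the $3^G$ fusion rules to decompose $\gamma\cong\id_M\oplus\rho$ with $\rho$ the distinguished simple object and the endpoints of the three-leg graph giving automorphisms $\alpha_g$, $g\in G$, with $[\alpha_g][\rho]=[\rho][\alpha_{g^\tau}]$ and $[\rho]^2=[\id_M]+\sum_g[\alpha_g\rho]$. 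The point is that $\gamma$ carries a $Q$-system (Frobenius algebra) structure $(\gamma,W,X)$ coming from $\iota$, and in particular $\id_M\oplus\rho$ has a $Q$-system.

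The key step is to use the $Q$-system to \emph{rigidify} the choices of isomorphisms. First I would argue that the $G$-kernel $\{[\alpha_g]\}$ lifts to an honest $G$-action: the obstruction in $H^3(G,\C^\times)$ is exactly $\fc^{0,3}(\cC)$ by the identification in subsection 2.1, and one shows it vanishes because $\gamma=\id\oplus\rho$ being a commutative-enough algebra object (a $Q$-system), the subcategory generated by the invertibles embeds compatibly; more directly, the existence of the $Q$-system on $\id\oplus\rho$ together with the fusion rules forces the invertibles to form a genuine subgroup realized by automorphisms whose product unitaries satisfy the cocycle condition on the nose. This gives Eq.(\ref{C4}), i.e. $\fc^{0,3}(\cC)=0$. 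Then, given a $G$-action $\alpha$, I would produce the unitaries $W_g\in(\alpha_g\rho,\rho\alpha_{g^\tau})$ and use the $Q$-system multiplication $X\in(\gamma,\gamma^2)$ — equivalently the isometry $T\in(\rho,\rho^2)$ together with $S\in(\id,\rho^2)$ — to normalize them: the compatibility of $W_g$ with the canonical intertwiners coming from the $Q$-system forces the $2$-cocycle $\xi$ of Eq.(\ref{C2}) to be a coboundary in the $\tau$-twisted sense, i.e. $\fc^{1,2}(\cC)=0$. This is essentially the computation carried out in the proof of Lemma \ref{standard}, where the existence of a standard lifting (with $\alpha$ an action and $\alpha_g\rho=\rho\alpha_{-g}$) is established precisely by trivializing $V$ and $W$; here the input $A_g(h,0)=\delta_{h,0}-\frac{1}{d-1}$ type relations, or rather the mere existence of the $Q$-system, replace the a priori assumption that the obstructions vanish.

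The main obstacle I anticipate is the first half: showing $\fc^{0,3}(\cC)=0$ for a \emph{possibly nonabelian} $G$ purely from the existence of the $3^G$ subfactor, without already assuming $G$ abelian and $\tau=-1$. The clean argument is that the dual category, or the $M$–$N$ bimodule category, contains $\kappa={}_MM_N$-type objects such that $\alpha_g\iota\cong\iota$ or controlled translates thereof, and the associativity of the bimodule tensor product over $M$ pins down the $3$-cocycle; alternatively one invokes that a fusion category with a $Q$-system giving a subfactor whose principal graph has the $3^G$ shape must have the group part generated by the graph endpoints splitting as a Tannakian-type subcategory, forcing triviality of $\fc^{0,3}$. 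I would handle this by working directly with the unitaries: choose $V_{g,h}\in(\alpha_g\alpha_h,\alpha_{gh})$, note the $3$-cocycle $\omega$ they define is the obstruction to lifting the $G$-kernel; then use that $\rho\cong\gamma\ominus\id$ and the half-braiding-free but $Q$-system-constrained structure to adjust $V_{g,h}$ so $\omega\equiv1$, citing that a $G$-kernel appearing inside a $Q$-system for $\id\oplus\rho$ with these fusion rules always lifts. Once both obstructions are killed, the conclusion $\fc^{0,3}(\cC)=\fc^{1,2}(\cC)=0$ is immediate, and the lemma follows; the later remark after Lemma \ref{fixed} then even upgrades this to triviality of $\fc^{2,1}$ and $\fc^{3,0}$.
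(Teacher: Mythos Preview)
Your proposal has genuine gaps in both halves of the argument, and neither matches the concrete mechanisms the paper actually uses.

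For $\fc^{0,3}(\cC)=0$: your sketch invokes the $Q$-system on $\id\oplus\rho$ and phrases like ``commutative-enough algebra object'' and ``Tannakian-type subcategory,'' but never produces an actual argument that the $3$-cocycle of the $G$-kernel vanishes. The existence of a $Q$-system for $\id\oplus\rho$ by itself does not touch the associator among the invertible objects $\alpha_g$. The paper's argument is completely different and quite direct: it uses the central vertex $\kappa$ of the $3^G$ graph. From $[\rho\iota]=[\iota]+[\kappa]$ and $[\kappa\biota]=\sum_g[\alpha_g\rho]$ one gets $[\alpha_g\kappa]=[\kappa]$ for every $g$, so one can choose the representatives $\alpha_g$ to satisfy $\alpha_g\circ\kappa=\kappa$ \emph{on the nose}. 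Two such representatives multiply to a third (since $\alpha_g\alpha_h$ also fixes $\kappa$ and is in the class of $\alpha_{gh}$), so $\{\alpha_g\}$ is an honest group and $\fc^{0,3}=0$. You never identify $\kappa$ as the object doing the work.

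For $\fc^{1,2}(\cC)=0$: you refer to the proof of Lemma~\ref{standard}, but that lemma \emph{assumes} $\fc^{0,3}=\fc^{1,2}=0$ as hypotheses of the generalized Haagerup category; it does not derive them. Saying the $Q$-system ``replaces'' those hypotheses is not an argument. The paper's proof is again based on $\kappa$: with $\alpha$ now a genuine action, one has $\rho\alpha_{g^\tau}=\Ad U_g\circ\alpha_g\rho$ with $U_g\alpha_g(U_h)=\omega(g,h)U_{gh}$, and $X\mapsto U_g\alpha_g(X)$ defines an $\omega$-projective representation of $G$ on $\cH=(\kappa,\rho\kappa)$. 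A fusion-rule count gives $\dim\cH=|G|-1$, and taking determinants yields $(|G|-1)[\omega]=0$ in $H^2(G,\T)$; since $|G|$ also annihilates $H^2(G,\T)$, one concludes $[\omega]=0$. This coprimality trick is the heart of the argument and is entirely absent from your proposal.
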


\begin{proof} 
Let $N\subset M$ be a $3^G$ subfactor realizing the quadratic category $\cC$ as $M-M$ bimodules. 
We may assume that $M$ is of type III. 
Let $\iota:N\hookrightarrow M$ be the inclusion map. 
Then, we have $[\iota\biota]=[\id]+[\rho]$, and $\rho$ generates $\cC$. 
Thus we have $\cO(\cC)=\{[\rho]\}\sqcup \{[\alpha_g][\rho]\}_{g\in G}$ with the fusion rules 
$$[\alpha_g][\alpha_h]=[\alpha_{gh}],$$
$$[\alpha_g][\rho]=[\rho][\alpha_{g^\tau}],$$
$$[\rho]^2=[\id]+\sum_{g\in G}[\alpha_g][\rho].$$
Since the principal graph is $3^G$, we have a homomorphism $\kappa:N\to M$ with irreducible image satisfying 
$$[\rho][\iota]=[\iota]+[\kappa],$$
$$[\kappa\biota]=\sum_{g\in G}[\alpha_g][\rho],$$
(see Fig.\ref{Haagerup}). 
By symmetry, we have $[\alpha_g][\kappa]=[\kappa]$ for any $g\in G$, and we can choose a representative $\alpha_g$ 
satisfying $\alpha_g\circ \kappa=\kappa$. 
These representatives $\{\alpha_g\}_{g\in G}$ form a group and the class $\fc^{0,3}(\cC)$ vanishes. 

There exist unitaries $U_g\in \cU(M)$ and $\omega\in Z^2(G,\T)$ satisfying 
$$\rho\circ \alpha_{g^\tau}=\Ad U_g\circ \alpha_g\circ \rho,$$
$$U_g\alpha_h(U_h)=\omega(g,h)U_{gh}.$$
To show that the class $\fc^{1,2}(\cC)$ vanishes, it suffices to show that the cohomology class 
$[\omega]\in H^2(G,\T)$ is trivial. 
Let $\cH=(\kappa,\rho\kappa)$. 
We introduce a projective representation of $G$ on $\cH$. 
For $X\in \cH$, 
\begin{align*}
\lefteqn{U_g\alpha_g(X)\kappa(x)=U_g\alpha_g(X\kappa(x))=U_g\alpha_g(\rho\kappa(x)X)} \\
 &=U_g\alpha_g\rho\kappa(x)\alpha_g(X)=\rho\alpha_{g^\tau}\kappa(x)U_g\alpha_g(X)=\rho\kappa(x)U_g\alpha_g(X).
\end{align*}
Thus we get a linear transformation of $\cH$ mapping $X$ to $U_g\alpha_g(X)$, which is denoted by $V_g$. 
For $g,h\in G$, we have 
$$V_gV_hX=U_g\alpha_g(U_h\alpha_h(X))=U_g\alpha_g(U_h)\alpha_{gh}(X)=\omega(g,h)U_{gh}\alpha_{gh}(X)=\omega(g,h)V_{gh}X,$$
and $\{V_g\}_{g\in G}$ form a projective representation of $G$. 
On the other hand, since $[\rho][\iota]=[\iota]+[\kappa]$, we have  
\begin{align*}
\lefteqn{\dim \cH=\dim(\kappa,\rho\rho\iota)-\dim(\kappa,\rho\iota)} \\
 &=\dim(\kappa,(\id\oplus \bigoplus_{g\in G}\alpha_g\rho)\iota)-1
 =\sum_{g\in G}(\dim(\kappa,\alpha_g\iota)+\dim(\kappa,\alpha_g\kappa))-1 \\
 &=|G|-1.
\end{align*}
Thus we get 
$$\det V_g \det V_h=\omega(g,h)^{|G|-1}\det V_{gh},$$ 
and $(|G|-1)[\omega]=0$ in $H^2(G,\T)$, which implies $[\omega]=0$ 
(see \cite[Corollary 10.2]{B94}). 
\end{proof}

\begin{theorem} Let $G$ be a finite abelian group. 
The $3^G$ subfactors giving quadratic categories with $(G,-1,1)$ are completely classified by 
the $H^2(G,\T)\rtimes \Aut(G)$-orbits of the gauge equivalence classes of the solutions of 
Eq.(\ref{cocycle})-(\ref{AAA}) and (\ref{Q1}). 
\end{theorem}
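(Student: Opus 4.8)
The plan is to assemble this classification from the pieces already in place, reducing it to two facts: (1) $3^G$ subfactors are in bijection with generalized Haagerup categories with $G$ equipped with a $Q$-system on $\id\oplus\rho$, and (2) the earlier classification of generalized Haagerup categories translates into the stated orbit description once the $Q$-system condition Eq.(\ref{Q1}) is imposed. First I would recall the standard dictionary between subfactors and categories: by Lemma \ref{vanishing}, a quadratic category with $(G,-1,1)$ coming from a $3^G$ subfactor has $\fc^{0,3}(\cC)=\fc^{1,2}(\cC)=0$, hence is a generalized Haagerup category; conversely, given a generalized Haagerup category $\cC$ with $G$, the object $\id\oplus\rho$ has a $Q$-system precisely when Eq.(\ref{Q1}) holds, and such a $Q$-system yields a subfactor $N\subset M$ whose even part is $\cC$ and whose principal graph is the $3^G$ graph (by the fusion rules $[\rho][\iota]=[\iota]+[\kappa]$ forced by $[\iota\biota]=[\id]+[\rho]$). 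I would note that since $\dim(\id\oplus\rho,(\id\oplus\rho))=1$ and $\id\oplus\rho$ is self-conjugate, the $Q$-system is unique up to the obvious equivalence, so the subfactor is determined by $\cC$ together with the choice of $\rho$ among the $[\alpha_g\rho]$.

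Next I would invoke Theorem \ref{distinguished}: equivalence classes of C$^*$-generalized Haagerup categories with $G$ and a distinguished object $\rho$ correspond bijectively to $H^2(G,\T)\rtimes\Aut(G)$-orbits of gauge equivalence classes of solutions of Eq.(\ref{cocycle})--Eq.(\ref{AAA}). The point is that the extra datum needed for a $3^G$ subfactor — the $Q$-system on $\id\oplus\rho$ — picks out exactly the distinguished object (namely $\rho$ itself, not $\alpha_g\rho$ in general, since having a $Q$-system on $\id\oplus\rho$ is Eq.(\ref{Q1}), a condition on the $g=0$ slice), so one does \emph{not} further quotient by the $G/2G$-action that permutes the $[\alpha_g\rho]$. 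Thus I would check that the $H^2(G,\T)$-action of Theorem \ref{H2action} and the $\Aut(G)$-action both preserve the subset of solutions satisfying Eq.(\ref{Q1}): for $\Aut(G)$ this is immediate since Eq.(\ref{Q1}) is $\Aut(G)$-equivariant in form (it only involves $A_0(h,0)$ and $\delta_{h,0}$, and $0$ is fixed), and for $H^2(G,\T)$ one uses the explicit transformation $A'_0(h,0)=A_0(h,0)\mu(h)^2\overline{\omega(h,0)}\,\overline{\omega(0,0)}=A_0(h,0)$ from Theorem \ref{H2action} (with $g=0$, $k=0$), so Eq.(\ref{Q1}) is actually invariant under the whole $H^2(G,\T)\rtimes\Aut(G)$. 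Hence the subset of $Q$-system solutions is a union of orbits, and the bijection of Theorem \ref{distinguished} restricts to the desired one.

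The one genuinely delicate point — and the step I expect to be the main obstacle — is verifying that two $3^G$ subfactors giving the \emph{same} generalized Haagerup category with a distinguished $\rho$ are actually isomorphic as subfactors, i.e.\ that the $Q$-system does not carry extra moduli beyond the category. Here I would argue that any two $Q$-systems for $\id\oplus\rho$ in the C$^*$-setting are related by a unitary in $(\id\oplus\rho,\id\oplus\rho)$ (which is finite-dimensional and, by irreducibility of $\id$ and $\rho$ together with $\dim(\id,\rho)=0$, two-dimensional diagonal), and the resulting ambiguity is absorbed by rescaling the generators $S\in(\id,\rho^2)$ — precisely the freedom already accounted for in the remark following Lemma \ref{ST}, which showed a different choice of $S$ does not change the triplet $(\epsilon_h(g),\eta_g,A_g(h,k))$. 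Combined with Popa's uniqueness (Theorem \ref{uniqueness}) for realizing the ambient category in $\End(M)$ for $M$ hyperfinite type III$_1$, and the standard fact that the subfactor is reconstructed from the $Q$-system up to conjugacy, this closes the loop. I would then conclude: the assignment $(N\subset M)\mapsto[\cC,\rho,(\epsilon,\eta,A)]$ is well-defined and injective on isomorphism classes, it is surjective because the reconstruction of Section \ref{Reconstruction} (supplemented by the free product trick of the Appendix when the action $\alpha$ fails to be faithful) produces a $3^G$ subfactor from any solution of Eq.(\ref{cocycle})--Eq.(\ref{AAA}) and Eq.(\ref{Q1}), and the fibers are exactly the $H^2(G,\T)\rtimes\Aut(G)$-orbits of gauge equivalence classes. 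This gives the stated classification.
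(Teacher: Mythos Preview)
Your proposal is correct and follows essentially the same approach as the paper, which gives a one-line proof citing only Theorem \ref{distinguished} and Lemma \ref{vanishing}. You have filled in details the paper leaves implicit: the verification that the $H^2(G,\T)\rtimes\Aut(G)$-action preserves the condition Eq.(\ref{Q1}) (your computation is right up to a harmless typo---the relevant factors are $\overline{\omega(0,h)}\,\overline{\omega(h,0)}$, both equal to $1$ by normalization), the uniqueness of the $Q$-system on $\id\oplus\rho$, and the role of the reconstruction in Section \ref{Reconstruction} for surjectivity.
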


\begin{proof} The statement follows from Theorem \ref{distinguished} and Lemma \ref{vanishing}. 
\end{proof}

\begin{theorem} Let $G$ be a finite group with odd order. 
Then a $3^G$ subfactor exists only if $G$ is abelian. 
Moreover, the $3^G$ subfactors are completely classified by 
the $H^2(G,\T)\rtimes \Aut(G)$-orbits of the solutions of 
Eq.(\ref{O1'})-(\ref{AAA'}) and (\ref{Q1}). 
\end{theorem}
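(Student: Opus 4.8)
The plan is to assemble the statement from the pieces already established. First I would handle the claim that oddness forces abelianness. A $3^G$ subfactor produces a spherical (indeed unitary, hence spherical) quadratic category $\cC$ with $(G,\tau,1)$; since $|G|$ is odd and $m=1$ is odd, Theorem \ref{odd} applies directly and yields that $G$ is abelian with $g^\tau=g^{-1}$, so $\cC$ is a quadratic category with $(G,-1,1)$. Combined with Lemma \ref{vanishing}, which gives $\fc^{0,3}(\cC)=0$ and $\fc^{1,2}(\cC)=0$, we conclude $\cC$ is in fact a generalized Haagerup category with the abelian group $G$. So the odd $3^G$ subfactors are precisely the generalized Haagerup categories with abelian $G$ carrying a $Q$-system for $\id\oplus\rho$.

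Next I would invoke the classification machinery of Section \ref{classifiction}. By Theorem \ref{distinguished}, the equivalence classes of C$^*$-generalized Haagerup categories with abelian $G$ and a distinguished $\rho$ are in bijection with the $H^2(G,\T)\rtimes\Aut(G)$-orbits of gauge equivalence classes of solutions of Eq.(\ref{cocycle})--(\ref{AAA}). Now specialize to odd $G$: here $G_2=\{0\}$ and $2G=G$, so $\epsilon_h(g)$ can be normalized to $1$, the gauge freedom collapses, and $A_g(h,k)$, $\eta_g$ are independent of $g$; the polynomial system reduces to Eq.(\ref{R1'})--(\ref{AAA'}), and the $H^2(G,\T)$-action becomes $A'(h,k)=A(h,k)\omega(h,k)$ for an antisymmetric bicharacter $\omega$ (as already recorded in the excerpt). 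Moreover $G/2G$ is trivial, so $\Gamma=H^2(G,\T)\rtimes\Aut(G)$ and specifying $\rho$ versus not is immaterial. The $Q$-system constraint Eq.(\ref{Q1}) reads $A(h,0)=\delta_{h,0}-\tfrac{1}{d-1}$ in this notation; adding it to the system gives exactly the equations whose orbit set is claimed to classify the subfactors.

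Finally I would note the reconstruction direction: given a solution of Eq.(\ref{R1'})--(\ref{AAA'}), Section \ref{Reconstruction} builds $\rho$ and $\alpha$ on a Cuntz algebra and passes to the hyperfinite type III$_1$ factor, producing a C$^*$-generalized Haagerup category; since $G$ is odd, $G_2=\{0\}$, hence the subgroup $N=\{h\in G_2:\chi_g(h)=1\,\forall g\}$ is trivial, so the action is faithful and we recover the category on the nose with no free-product correction needed. When Eq.(\ref{Q1}) holds, the object $\id\oplus\rho$ carries a $Q$-system (by the cited result of \cite[Section 7]{I01}), and the corresponding subfactor is a $3^G$ subfactor because the even part realizes the quadratic category with $(G,-1,1)$ and the principal graph is $3^G$ by construction. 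Bijectivity of the correspondence then follows by combining Theorem \ref{distinguished} (equivalence classes $\leftrightarrow$ orbits) with the faithful reconstruction, exactly as in the abelian case handled in the preceding theorem, merely substituting the simplified equation list.

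The main obstacle I anticipate is not any single hard computation but making the bookkeeping airtight: one must verify that imposing Eq.(\ref{Q1}) is compatible with the $\Gamma$-action (so that it descends to a well-defined condition on orbits), and that the $Q$-system giving a $3^G$ subfactor is unique up to isomorphism so that the map from subfactors to orbits is genuinely well-defined and injective --- this is where the results of \cite{I01} on uniqueness of the $Q$-system, together with Theorem \ref{uniqueness}, must be cited carefully rather than re-proved.
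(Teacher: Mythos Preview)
Your proposal is correct and follows essentially the same approach as the paper, which proves the theorem in one line by citing Theorem~\ref{odd}, Theorem~\ref{distinguished}, and Lemma~\ref{vanishing}. You simply spell out in more detail how the odd-order hypothesis collapses the gauge freedom and the $G/2G$ factor, and you flag the bookkeeping point about $\Gamma$-invariance of the $Q$-system condition Eq.~(\ref{Q1}), which the paper leaves implicit.
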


\begin{proof} The statement follows from Theorem \ref{odd}, Theorem \ref{distinguished}, and Lemma \ref{vanishing}. 
\end{proof}

In the rest of this section, we give an algorithm to compute the other principal graph and the fusion rules 
of the dual category for the $3^G$ subfactors. 

Let $M$, $\cC$, $G$, $\alpha$, $\rho$, and $(\epsilon_h(g),\eta_g,A_g(h,k))$ be as in Section \ref{PEGHC}, 
and we assume Eq.(\ref{Q1}) holds. 
We describe the $Q$-system for $\id\oplus \rho$ now (see \cite{GI08}). 
Let $M^G$ be the fixed point subalgebra of $M$ under the $G$-action $\alpha$. 
We choose two isometries $V_0,V_1\in M^G$ with $V_0V_0^*+V_1V_1^*=1$, and set 
$$\gamma(x)=V_0xV_0^*+V_1\rho(x)V_1^*.$$
Let 
\begin{align*}
\lefteqn{W=\frac{1}{\sqrt{d+1}}V_0+\frac{1}{\sqrt{d+1}}V_1\rho(V_0)V_1^*} \\
 &+\sqrt{\frac{d}{d+1}}V_1\rho(V_1)SV_0^*+\sqrt{\frac{d-1}{d+1}}V_1\rho(V_1)T_0V_1^*.
\end{align*}
Then $(\gamma,V_0,W)$ is a $Q$-system, and the corresponding $3^G$ subfactor $N$ is given by 
$$N=\{x\in M;\; Wx=\gamma(x)W,\; Wx^*=\gamma(x^*)W\}.$$
Let $\iota:N\hookrightarrow M$ be the inclusion map. 
Then $\biota$ is identified with $\gamma$ regarded as a map from $M$ into $N$. 

We denote by $\cD$ the fusion category in $\End_0(N)$ arising from the subfactor $N\subset M$, 
which is the dual category of $\cC$ with respect to the above $Q$-system. 
Note that the set $\{[\alpha_g\iota]\}_{g\in G}\sqcup \{[\kappa]\}$ exhausts the equivalence classes 
of irreducible $M-N$ sectors associated with the subfactor $N\subset M$. 
To determine the irreducible objects in $\cD$, it suffices to compute the irreducible decomposition of 
$\biota\alpha_g\iota$ and $\biota\kappa$. 
Since 
$$\dim(\biota\iota,\biota\iota)=\dim(\iota\biota,\iota\biota)=2,$$ 
there exists an irreducible $\hrho\in \cD$ with 
$[\biota\iota]=[\id]+[\hrho]$. 
From the principal graph $3^G$, we get $[\iota\hrho]=[\iota]+[\kappa]$. 
We have $d(\hrho)=d$, $d(\iota)=\sqrt{d+1}$, and $d(\kappa)=(d-1)\sqrt{d+1}$. 

For $z\in G_2$, we have $\alpha_z\gamma=\gamma\alpha_z$ and $\alpha_z(W)=W$. 
Thus $\alpha_z$ globally preserves $N$, and we denote by $\beta_z$ the restriction of $\alpha_z$ to $N$. 
By definition, we have $\alpha_z\iota=\iota\beta_z$, 
and we get  
$$[\biota\alpha_z\iota]=[\biota\iota\beta_z]=[\beta_z]+[\hrho][\beta_z].$$ 
Thus $\beta_z,\hrho\beta_z\in \cD$. 
For $z_1,z_2\in G_2$ with $z_1\neq z_2$, we have 
$$\dim(\biota\alpha_{z_1}\iota,\biota\alpha_{z_2}\iota)=\dim(\iota\biota\alpha_{z_1},\alpha_{z_2}\iota\biota)=
\dim(\alpha_{z_1},\alpha_{z_2})+\dim(\rho\alpha_{z_1},\alpha_{z_2}\rho)=0,$$
which implies $[\beta_{z_1}]\neq[\beta_{z_2}]$ and $[\hrho\beta_{z_1}]\neq [\hrho\beta_{z_2}]$. 
In particular $\beta$ is an outer action of $G_2$ on $N$. 
Since 
$$[\iota][\hrho\beta_z]=[\iota\beta_z]+[\kappa\beta_z],$$
$\kappa\beta_z$ is equivalent to either $\kappa$ or $\alpha_g\iota$. 
Comparing dimensions, we get $[\kappa\beta_z]=[\kappa]$, 
and $\dim(\kappa,\iota\hrho\beta_z)=1$. 

For $g,h\in G\setminus G_2$, we have 
$$\dim(\biota\alpha_g\iota,\biota\alpha_h\iota)=\dim(\iota\biota\alpha_g,\alpha_h\iota\biota)
=\dim(\alpha_g,\alpha_h)+\dim(\rho\alpha_g,\alpha_h\rho)=\delta_{g,h}+\delta_{g,-h}.$$
Thus $\biota\alpha_g\iota$ is irreducible and $[\biota\alpha_g\iota]=[\biota\alpha_h\iota]$ if and only 
if either $g=h$ or $g=-h$. 
We introduce an equivalence relation in $G\setminus G_2$ as $g\sim h$ if and only if either $g=h$ or $g=-h$, 
and choose a transversal $J_0\subset G$ of the equivalence relation. 
For $g\in J_0$, we set $\pi_g=\biota\alpha_g\iota\in \cD$. 
Then 
$$[\iota\pi_g]=[\alpha_g\iota]+[\rho\alpha_g\iota]=[\alpha_g\iota]+[\alpha_{-g}\rho\iota]=
[\alpha_g\iota]+[\alpha_{-g}\iota]+[\kappa],$$
and $\dim(\kappa,\iota\pi_g)=1$. 

It remains to compute the irreducible decomposition of $\biota\kappa$. 
By Frobenius reciprocity, we get $\dim(\biota\kappa,\hrho\beta_z)=\dim(\biota\kappa,\pi_g)=1$. 
Thus there exists mutually inequivalent irreducible endomorphisms $\{\sigma_j\}_{j\in J_1}\subset \cD$ and 
natural numbers $n_j$ satisfying 
\begin{equation}\label{dualgraph}
[\biota\kappa]=\sum_{z\in G_2}[\hrho\beta_z]+\sum_{g\in J_0}[\pi_g]+\sum_{j\in J_1}n_j[\sigma_j].
\end{equation}
Since 
$$\dim(\iota\sigma_j,\alpha_g\iota)=\dim(\sigma_j,\biota\alpha_g\iota)=0,$$
we have $[\iota\sigma_j]=n_j[\kappa]$, and 
$$d(\sigma_j)=\frac{n_jd(\kappa)}{d(\iota)}=n_j(d-1).$$ 
Comparing the dimensions of the both sides of Eq.(\ref{dualgraph}), we get 
$$\sum_{j\in J_1}n_j^2=\frac{|G|-|G_2|}{2}.$$
Summing up the above argument, we get 

\begin{prop} With the above notation, 
$$\cO(\cD)=\{[\beta_z]\}_{z\in G_2}\sqcup \{[\pi_g]\}_{g\in J_0}\sqcup\{[\sigma_j]\}_{j\in J_0},$$
\end{prop}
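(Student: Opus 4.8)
The plan is to assemble the irreducible decompositions that have already been computed and to verify that the resulting list is both complete and repetition-free. For completeness one uses that $\cD$ is the fusion category of $N$--$N$ sectors generated by $\biota\iota$: every irreducible $\sigma\in\cD$ is a subsector of some $(\biota\iota)^{\otimes k}=\biota(\iota\biota)^{\otimes(k-1)}\iota$, and since $(\iota\biota)^{\otimes(k-1)}$ is a sum of objects of $\cC=\{[\alpha_g],[\alpha_g\rho]\}$ while $[\rho\iota]=[\iota]+[\kappa]$ and $[\alpha_g\kappa]=[\kappa]$, the sector $(\iota\biota)^{\otimes(k-1)}\iota$ is a sum of $M$--$N$ sectors from $\{[\alpha_g\iota]\}_{g\in G}\sqcup\{[\kappa]\}$. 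Hence $\cO(\cD)$ is contained in the set of irreducible components of the $n+1$ sectors $\biota\alpha_g\iota$ ($g\in G$) and $\biota\kappa$; and since these $n+1$ sectors themselves lie in $\cD$, equality holds. The whole proof thus reduces to decomposing these and discarding repetitions.

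First I would record the decompositions of the $\biota\alpha_g\iota$. For $z\in G_2$, using $\alpha_z\iota=\iota\beta_z$ and $[\biota\iota]=[\id]+[\hrho]$ one has $[\biota\alpha_z\iota]=[\beta_z]+[\hrho\beta_z]$, producing the families $\{[\beta_z]\}_{z\in G_2}$ and $\{[\hrho\beta_z]\}_{z\in G_2}$; their internal distinctness, and the fact that $\beta$ is an outer action of $G_2$ on $N$, were already obtained from the computation of $\dim(\biota\alpha_{z_1}\iota,\biota\alpha_{z_2}\iota)$. For $g\in G\setminus G_2$ the analogous computation gives $\dim(\biota\alpha_g\iota,\biota\alpha_h\iota)=\delta_{g,h}+\delta_{g,-h}$, so $\pi_g=\biota\alpha_g\iota$ is irreducible and $[\pi_g]=[\pi_h]$ iff $g=\pm h$; passing to a transversal $J_0$ of the equivalence relation $g\sim h\iff g=\pm h$ on $G\setminus G_2$ gives the family $\{[\pi_g]\}_{g\in J_0}$.

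Next I would decompose $\biota\kappa$. Frobenius reciprocity gives $\dim(\biota\kappa,\hrho\beta_z)=\dim(\biota\kappa,\pi_g)=1$, so each $\hrho\beta_z$ and each $\pi_g$ occurs in it exactly once, and the remainder is a sum $\sum_{j\in J_1}n_j[\sigma_j]$ of mutually inequivalent irreducibles with $d(\sigma_j)=n_j(d-1)$ and $n_j\ge 1$; this is Eq.(\ref{dualgraph}), and comparing dimensions on the two sides yields $\sum_{j\in J_1}n_j^2=(|G|-|G_2|)/2$. Together with the previous paragraph,
$$\cO(\cD)=\{[\beta_z]\}_{z\in G_2}\sqcup\{[\hrho\beta_z]\}_{z\in G_2}\sqcup\{[\pi_g]\}_{g\in J_0}\sqcup\{[\sigma_j]\}_{j\in J_1}.$$

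Finally I would check that these four families are pairwise disjoint. Within each it is already done. Across families I would compare dimensions, namely $1$, $d$, $d+1$, and $n_j(d-1)$, using that $d=\frac{n+\sqrt{n^2+4}}{2}$ is irrational for every $n\ge 1$ (since $n^2+4$ is never a perfect square then): a short case check then shows no two of these values agree for $n_j\ge 1$. It remains only to note $\beta_z\not\prec\biota\kappa$, immediate from $\dim(\beta_z,\biota\kappa)=\dim(\alpha_z\iota,\kappa)=0$, so the families are genuinely disjoint. The one genuinely substantive point — and the expected main obstacle — is the isolation of the summands $\sigma_j$ inside $\biota\kappa$ together with the identity $\sum_j n_j^2=(|G|-|G_2|)/2$: this is where the hypothesis that the principal graph is exactly $3^G$ is really used, and where one must make sure the $\sigma_j$ are new rather than duplicates of some $\hrho\beta_z$ or $\pi_g$. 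Everything else is routine bookkeeping with Frobenius reciprocity and dimension counts.
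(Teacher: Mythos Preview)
Your proof is correct and follows the same approach as the paper: the proposition is simply stated as a summary of the preceding discussion, which decomposes each $\biota\alpha_g\iota$ and $\biota\kappa$ exactly as you do, and your added completeness and disjointness checks are routine elaborations of what the paper leaves implicit. Your more careful bookkeeping in fact exposes two typos in the stated proposition: the family $\{[\hrho\beta_z]\}_{z\in G_2}$ is missing from the displayed list, and the index set for the $\sigma_j$ should be $J_1$, not $J_0$.
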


Our goal is to show 

\begin{theorem} \label{mfree} $n_j=1$ for any $j\in J_1$. 
\end{theorem}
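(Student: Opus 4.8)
The plan is to show that the multiplicities $n_j$ in the irreducible decomposition Eq.(\ref{dualgraph}) are all equal to $1$ by a combination of dimension counting and a rigidity argument for the endomorphisms $\sigma_j \in \cD \subset \End_0(N)$. First I would record what is already known: each $\sigma_j$ satisfies $[\iota\sigma_j] = n_j[\kappa]$, hence $d(\sigma_j) = n_j(d-1)$, and the partition identity $\sum_{j\in J_1} n_j^2 = (|G|-|G_2|)/2$. To get $n_j = 1$ it suffices to produce enough independent constraints forcing each $n_j$ down; equivalently, it suffices to show $\dim(\sigma_j,\sigma_j) = 1$ can only be consistent with $\dim(\iota\sigma_j,\iota\sigma_j) = n_j^2$ when $n_j = 1$, together with a count of how $\kappa\overline\kappa$ decomposes.

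The key computation I would carry out is the decomposition of $\kappa\overline{\kappa}$ (equivalently $\overline{\iota}\kappa\overline{\kappa}\iota$ compared against $\overline\iota\iota$ on both sides), using the fusion data already fixed in Section \ref{PEGHC}. Since $[\kappa\overline\iota] = \sum_{g\in G}[\alpha_g\rho]$ on the $M$–$M$ side, Frobenius reciprocity gives $\dim(\kappa,\kappa) = 1$ and lets me compute $\dim(\kappa\overline\kappa,\alpha_g\rho)$ and $\dim(\kappa\overline\kappa,\id)$, $\dim(\kappa\overline\kappa,\alpha_g)$ for each $g$. From the $3^G$ graph one has $[\iota\overline\iota] = [\id]+[\rho]$ and $[\overline\iota\kappa] = $ the sum in Eq.(\ref{dualgraph}); pairing $[\overline\kappa\kappa]$ against itself through $\iota$ I can express $\dim_{\cD}(\overline\iota\kappa,\overline\iota\kappa) = \dim(\iota\overline\iota\kappa,\kappa)$ in two ways: once as $\dim(\kappa,\kappa)+\dim(\rho\kappa,\kappa) = 1 + (\text{multiplicity of }\kappa\text{ in }\rho\kappa)$, and once as $|G_2| + |J_0| + \sum_{j\in J_1} n_j^2$ from Eq.(\ref{dualgraph}). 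Matching these, together with the dimension equation, should pin the $n_j$.

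The main obstacle I expect is that pure dimension counting and pairing in the Grothendieck ring leaves open the possibility $n_j \geq 2$ for some single exceptional $j$ while the counts still balance (for instance $|G|$ large with a solitary $\sigma$ of multiplicity $2$ or $3$). To exclude this I would invoke the explicit Cuntz-algebra model: realize $\kappa$ concretely using the isometries $S, T_g$ and the $Q$-system $(\gamma, V_0, W)$ already written down, so that $(\kappa,\kappa) = \C$ and the intertwiner spaces $(\overline\iota\kappa, \sigma)$ can be exhibited as explicit matrix blocks. Alternatively, and perhaps more cleanly, I would use self-duality and a Frobenius–Schur / rotation argument in the spirit of Lemma \ref{SC}: if some $n_j \geq 2$, the space $(\iota\sigma_j,\kappa)$ has dimension $n_j$, and the $G$-action $\beta$ on $N$ restricted to $G_2$ (which fixes $\kappa$) would have to act on this multiplicity space; analyzing this action, together with the constraint that $\sigma_j$ is irreducible and the known outerness of $\beta$ on $G_2$, should force $n_j = 1$. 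I would expect the write-up to reduce to: (i) compute $\dim(\rho\kappa,\kappa)$ from the $M$–$M$ fusion rules and the principal graph, getting a specific small number; (ii) substitute into the two expressions for $\dim_{\cD}(\overline\iota\kappa,\overline\iota\kappa)$; (iii) conclude $\sum n_j^2 = \sum n_j$ (each term counted with the matching index set), whence $n_j = 1$ for all $j$.
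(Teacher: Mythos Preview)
Your proposal has a genuine gap: the Grothendieck-ring computation you outline is circular. You propose to compute $\dim(\biota\kappa,\biota\kappa)$ in two ways. By Frobenius reciprocity,
\[
\dim(\biota\kappa,\biota\kappa)=\dim(\kappa,\iota\biota\kappa)=\dim(\kappa,\kappa)+\dim(\kappa,\rho\kappa)=1+(|G|-1)=|G|,
\]
since $[\rho\kappa]=\sum_{g}[\alpha_g\iota]+(|G|-1)[\kappa]$. From the decomposition Eq.~(\ref{dualgraph}) the same quantity equals $|G_2|+|J_0|+\sum_j n_j^2$. Setting these equal and using $|J_0|=(|G|-|G_2|)/2$ gives exactly $\sum_j n_j^2=(|G|-|G_2|)/2$, which is the identity already recorded before the theorem. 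No new constraint appears, and there is no evident pairing that produces $\sum_j n_j$ linearly, so step (iii) of your plan has no argument behind it. Your two fallback suggestions are too vague to close the gap: an action of $G_2$ on the $n_j$-dimensional space $(\iota\sigma_j,\kappa)$ places no bound on $n_j$ without further input, and ``realize $\kappa$ concretely'' is a restatement of the problem, not a strategy.

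The paper's proof is entirely different and does not proceed through the Grothendieck ring at all. It works inside the finite-dimensional C$^*$-algebra $\cA=(\biota\rho\iota,\biota\rho\iota)$, whose structure is
\[
\cA\cong \C\oplus M_2(\C)\oplus \C^{|G_2|-1}\oplus \C^{|J_0|}\oplus\bigoplus_{j\in J_1}M_{n_j}(\C),
\]
so that all $n_j=1$ is equivalent to $\dim[\cA,\cA]\leq 5$. The paper exhibits an explicit $6$-dimensional subspace $\cB\subset\cA$ (spanned by the identity and five concrete diagrams in the graphical calculus built from $S,T_0,V_0,V_1,W$), proves by direct computation that $\cB$ is a $*$-subalgebra and that every commutator $[f(Q_g),X_i]$ and $[f(Q_{g_1}),f(Q_{g_2})]$ lies in $\cB$, and then observes that the minimal central projection $z(\id)$ belongs to $\cB$ so $[\cA,\cA]\subset(1-z(\id))\cB$, which has dimension at most $5$. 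The crucial inputs are the specific polynomial identities satisfied by the Cuntz-algebra data: Eq.~(\ref{Q1}) gives $T_0^*\rho(T_0)=\rho(T_0)^*T_0$ as a simple expression, Remark~\ref{|A|2} supplies $|A_0(g,k)|^2$, and Eqs.~(\ref{2hshift}),(\ref{hkshift}) give the symmetry needed to show $\rho(T_0^*)Q_{g_1}\rho(Q_{g_2})T_0$ is symmetric in $g_1,g_2$. None of this is visible at the level of fusion rules; the argument genuinely uses the C$^*$-structure and the explicit intertwiner relations.
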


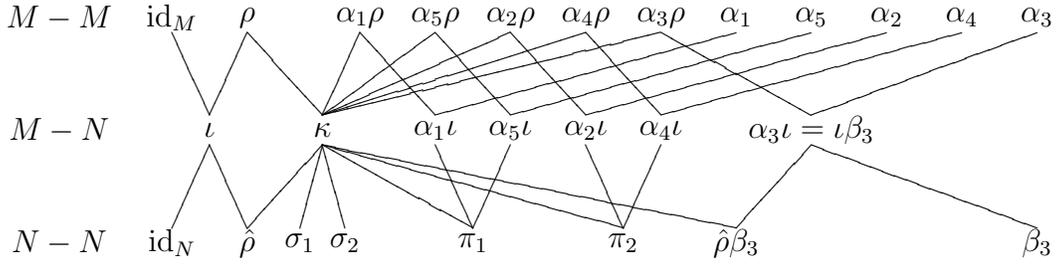
\begin{figure}[h]
${}$
\begin{xy} 
(0,15)*{M-M},(15,15)*{\id_M},(25,15)*{\rho},(40,15)*{\alpha_1\rho},(50,15)*{\alpha_5\rho},(60,15)*{\alpha_2\rho},(70,15)*{\alpha_4\rho},
(80,15)*{\alpha_3\rho},(90,15)*{\alpha_1},(100,15)*{\alpha_5},(110,15)*{\alpha_2},(120,15)*{\alpha_4},(130,15)*{\alpha_3},
(0,0)*{M-N},(20,0)*{\iota},(35,0)*{\kappa},(50,0)*{\alpha_1\iota},(60,0)*{\alpha_5\iota},(70,0)*{\alpha_2\iota},(80,0)*{\alpha_4\iota},
(100,0)*{\alpha_3\iota=\iota\beta_3},
(0,-15)*{N-N},(15,-15)*{\id_N},(25,-15)*{\hat{\rho}},(32,-15)*{\sigma_1},(38,-15)*{\sigma_2},(55,-15)*{\pi_1},(75,-15)*{\pi_2},(90,-15)*{\hrho\beta_3},
(130,-15)*{\beta_3},
{(15,13) \ar @{-} (20,2)},{(20,2) \ar @{-} (25,13)},{(25,13) \ar @{-} (35,2)},{(35,2) \ar @{-} (40,13)},
{(35,2) \ar @{-} (50,13)},{(35,2) \ar @{-} (60,13)},{(35,2) \ar @{-} (70,13)},{(35,2) \ar @{-} (80,13)},
{(40,13) \ar @{-} (50,2)},{(50,13) \ar @{-} (60,2)},{(60,13) \ar @{-} (70,2)},{(70,13) \ar @{-} (80,2)},{(80,13) \ar @{-} (100,2)},
{(50,2) \ar @{-} (90,13)},{(60,2) \ar @{-} (100,13)},{(70,2) \ar @{-} (110,13)},{(80,2) \ar @{-} (120,13)},{(100,2) \ar @{-} (130,13)},
{(15,-13) \ar @{-} (20,-2)},{(20,-2) \ar @{-} (25,-13)},{(25,-13) \ar @{-} (35,-2)},{(35,-2) \ar @{-} (32,-13)},{(35,-2) \ar @{-} (38,-13)},
{(35,-2) \ar @{-} (55,-13)},{(35,-2) \ar @{-} (75,-13)},{(35,-2) \ar @{-} (90,-13)},
{(50,-2) \ar @{-} (55,-13)},{(60,-2) \ar @{-} (55,-13)},{(70,-2) \ar @{-} (75,-13)},{(80,-2) \ar @{-} (75,-13)},
{(100,-2) \ar @{-} (90,-13)},{(100,-2) \ar @{-} (130,-13)},
\end{xy}
\caption{The principal graphs of the $3^{\Z_6}$ subfactor}
\end{figure}

To prove the above theorem, we consider a finite dimensional C$^*$-algebra 
$\cA=(\biota\rho\iota,\biota\rho\iota)$. 
Since 
$$[\biota\rho\iota]=[\biota\iota]+[\biota\kappa]=[\id]+2[\hrho]+\sum_{z\in G_2\setminus\{0\}}[\hrho\beta_z]
+\sum_{g\in J_0}[\pi_g]+\sum_{j\in J_1}n_j[\sigma_j],$$
it is isomorphic to 
$$\C\oplus M_2(\C)\oplus \C^{G_2\setminus\{0\}}\oplus \C^{J_0}\oplus \bigoplus_{j\in J_1}M_{n_j}(\C).$$
Let $[\cA,\cA]$ be the linear span of $\{xy-yx\in \cA;\; x,y\in \cA\}$. 
To show that $n_j=1$ for any $j\in J_1$, it suffices to show $\dim [\cA,\cA]<6$. 

In what follows we use graphical expression of intertwines as follows: 

$\begin{xy}(-5,-5)*{\rho},(5,-5)*{\rho},
{(-5,-3) \ar @(u,u)@{-} (5,-3)},
\end{xy}=\sqrt{d}S$,  
$\begin{xy}(-5,5)*{\rho},(5,5)*{\rho},
{(-5,3) \ar @(d,d)@{-} (5,3)},
\end{xy}=\sqrt{d}S^*$,  
$\begin{xy}(-5,-5)*{\rho},(5,-5)*{\rho},(0,5)*{\rho},
{(-5,-3) \ar @{-} (0,0)},
{(5,-3) \ar @{-} (0,0)},
{(0,3) \ar @{-} (0,0)},
\end{xy}=T_0$,  
$\begin{xy}(-5,5)*{\rho},(5,5)*{\rho},(0,-5)*{\rho},
{(-5,3) \ar @{-} (0,0)},
{(5,3) \ar @{-} (0,0)},
{(0,-3) \ar @{-} (0,0)},
\end{xy}=T_0^*$, 

$\begin{xy}(-5,-5)*{\iota},(5,-5)*{\biota},
{(-5,-3) \ar @(u,u)@{-} (5,-3)},
\end{xy}=\sqrt{d(\iota)}V_0$,  
$\begin{xy}(-5,5)*{\iota},(5,5)*{\biota},
{(-5,3) \ar @(d,d)@{-} (5,3)},
\end{xy}=\sqrt{d(\iota)}V_0^*$,  
$\begin{xy}(-5,-5)*{\biota},(5,-5)*{\iota},
{(-5,-3) \ar @(u,u)@{-} (5,-3)},
\end{xy}=\sqrt{d(\iota)}W$,  
$\begin{xy}(-5,5)*{\biota},(5,5)*{\iota},
{(-5,3) \ar @(d,d)@{-} (5,3)},
\end{xy}=\sqrt{d(\iota)}W$,

$\begin{xy}(-5,-5)*{\iota},(5,-5)*{\biota},(0,5)*{\rho},
{(-5,-3) \ar @{-} (0,0)},
{(5,-3) \ar @{-} (0,0)},
{(0,3) \ar @{-} (0,0)},
\end{xy}=V_1$,  
$\begin{xy}(-5,5)*{\iota},(5,5)*{\biota},(0,-5)*{\rho},
{(-5,3) \ar @{-} (0,0)},
{(5,3) \ar @{-} (0,0)},
{(0,-3) \ar @{-} (0,0)}.
\end{xy}=V_1^*$,

Since $j_{1,0}(T_0)=j_{2,0}(T_0)=T_0$, we have 
$$\begin{xy}(-10,5)*{\rho},(0,5)*{\rho},(5,-7)*{\rho},
{(0,0) \ar @(ld,d)@{-} (-10,3)},
{(0,3) \ar @{-} (0,0)},
{(5,-5) \ar @{-} (0,0)},
\end{xy}=
\begin{xy}(10,5)*{\rho},(0,5)*{\rho},(-5,-7)*{\rho},
{(0,0) \ar @(rd,d)@{-} (10,3)},
{(0,3) \ar @{-} (0,0)},
{(-5,-5) \ar @{-} (0,0)},
\end{xy}=
\begin{xy}(-5,5)*{\rho},(5,5)*{\rho},(0,-7)*{\rho},
{(-5,3) \ar @{-} (0,0)},
{(5,3) \ar @{-} (0,0)},
{(0,-5) \ar @{-} (0,0)},
\end{xy},
$$
$$\begin{xy}(-10,-5)*{\rho},(0,-5)*{\rho},(5,7)*{\rho},
{(0,0) \ar @(lu,u)@{-} (-10,-3)},
{(0,-3) \ar @{-} (0,0)},
{(5,5) \ar @{-} (0,0)},
\end{xy}=
\begin{xy}(10,-5)*{\rho},(0,-5)*{\rho},(-5,7)*{\rho},
{(0,0) \ar @(ru,u)@{-} (10,-3)},
{(0,-3) \ar @{-} (0,0)},
{(-5,5) \ar @{-} (0,0)},
\end{xy}=
\begin{xy}(-5,-5)*{\rho},(5,-5)*{\rho},(0,7)*{\rho},
{(-5,-3) \ar @{-} (0,0)},
{(5,-3) \ar @{-} (0,0)},
{(0,5) \ar @{-} (0,0)},
\end{xy}.
$$

Note that we have 
$\begin{xy}(-5,5)*{\rho},(5,5)*{\rho},(-5,-5)*{\rho},(5,-5)*{\rho},
{(-5,3) \ar @{-} (-5,-3)},{(5,3) \ar @{-} (5,-3)},{(-5,-1) \ar @{-} (5,1)^{\rho}},
\end{xy}=T_0^*\rho(T_0)$ and 
$\begin{xy}(-5,5)*{\rho},(5,5)*{\rho},(-5,-5)*{\rho},(5,-5)*{\rho},
{(-5,3) \ar @{-} (-5,-3)},{(5,3) \ar @{-} (5,-3)},{(-5,1) \ar @{-} (5,-1)^{\rho}},
\end{xy}=\rho(T_0)^*T_0$. 
On the other hand, Lemma \ref{ST} and Eq.(\ref{Q1}) imply
\begin{align*}
\lefteqn{T_0^*\rho(T_0)=\rho(T_0)^*T_0} \\
 &=SS^*+T_0T_0^*-\frac{1}{d-1}\sum_{g\in G}T_gT_g^*=-\frac{1}{d-1}+\frac{d}{d-1}SS^*+T_0T_0^*.
\end{align*}
Thus it makes sense to express the above two diagrams simply by  
$\begin{xy}(-5,5)*{\rho},(5,5)*{\rho},(-5,-5)*{\rho},(5,-5)*{\rho},
{(-5,3) \ar @{-} (-5,-3)},{(5,3) \ar @{-} (5,-3)},{(-5,0) \ar @{-} (5,0)^{\rho}},
\end{xy}$, or by the letter $\mathrm{H}$,  and we get the relation 
$$\frac{1}{d-1}
\begin{xy}(-5,12)*{\rho},(5,12)*{\rho},(-5,-12)*{\rho},(5,-12)*{\rho},
{(-5,10) \ar @{-} (-5,-10)},{(5,10) \ar @{-} (5,-10)}
\end{xy}
+\begin{xy}(-5,12)*{\rho},(5,12)*{\rho},(-5,-12)*{\rho},(5,-12)*{\rho},
{(-5,10) \ar @{-} (-5,-10)},{(5,10) \ar @{-} (5,-10)},{(-5,0) \ar @{-} (5,0)^{\rho}}
\end{xy}
=\frac{1}{d-1}
\begin{xy}(-5,12)*{\rho},(5,12)*{\rho},(-5,-12)*{\rho},(5,-12)*{\rho},
{(-5,10) \ar @(d,d)@{-}  (5,10)},{(-5,-10) \ar @(u,u)@{-}  (5,-10)}\end{xy}
+\begin{xy}(-5,12)*{\rho},(5,12)*{\rho},(-5,-12)*{\rho},(5,-12)*{\rho},
{(-5,10) \ar @{-} (0,5)},{(5,10) \ar @{-} (0,5)},
{(0,-5) \ar @{-} (-5,-10)},{(0,-5) \ar @{-} (5,-10)},{(0,5) \ar @{-} (0,-5)^\rho}
\end{xy}.
$$
We frequently use this relation without mentioning it. 

We define 
$\begin{xy}(0,5)*{\iota},(-5,-5)*{\rho},(0,-5)*{\iota},
{(0,3) \ar @{-} (0,-3)},{(0,0) \ar @{-} (-5,-3)},
\end{xy}:=
\begin{xy}(-5,5)*{\iota},(0,-5)*{\rho},(10,-5)*{\iota},
{(-5,3) \ar @{-} (0,0)},{(0,0) \ar @(ru,u)@{-} (10,-3)},
{(0,0) \ar @{-} (0,-3)}
\end{xy}=\sqrt{d(\iota)}V_1^*W$, and its conjugate 
$\begin{xy}(0,-5)*{\iota},(-5,5)*{\rho},(0,5)*{\iota},
{(0,-3) \ar @{-} (0,3)},{(0,0) \ar @{-} (-5,3)}\end{xy}:=
(\begin{xy}(0,5)*{\iota},(-5,-5)*{\rho},(0,-5)*{\iota},
{(0,3) \ar @{-} (0,-3)},{(0,0) \ar @{-} (-5,-3)}\end{xy})^*.$
Then direct computation shows $\sqrt{d}S^*\rho(V_1^*W)=W^*V_1$, and we get the relation 
$\begin{xy}(10,5)*{\iota},(0,5)*{\rho},(10,-5)*{\iota},
{(0,3) \ar @(d,ld)@{-} (10,0)},{(10,3) \ar @{-} (10,-3)}\end{xy}
=\begin{xy}(0,-5)*{\iota},(-5,5)*{\rho},(0,5)*{\iota},
{(0,-3) \ar @{-} (0,3)},{(0,0) \ar @{-} (-5,3)}\end{xy}$. 
In a similar way, we define 
$\begin{xy}(0,5)*{\biota},(5,-5)*{\rho},(0,-5)*{\biota},
{(0,3) \ar @{-} (0,-3)},{(0,0) \ar @{-} (5,-3)},
\end{xy}:=\sqrt{d(\iota)}\gamma(V_0)^*W$, and its conjugate 
$\begin{xy}(0,-5)*{\biota},(5,5)*{\rho},(0,5)*{\biota},
{(0,-3) \ar @{-} (0,3)},{(0,0) \ar @{-} (5,3)}\end{xy}:=
(\begin{xy}(0,5)*{\biota},(5,-5)*{\rho},(0,-5)*{\biota},
{(0,3) \ar @{-} (0,-3)},{(0,0) \ar @{-} (5,-3)},
\end{xy})^*$. 
Then we have 
$\begin{xy}(-10,5)*{\biota},(0,5)*{\rho},(-10,-5)*{\biota},
{(0,3) \ar @(d,rd)@{-} (-10,0)},{(-10,3) \ar @{-} (-10,-3)}\end{xy}
=\begin{xy}(0,-5)*{\biota},(5,5)*{\rho},(0,5)*{\biota},
{(0,-3) \ar @{-} (0,3)},{(0,0) \ar @{-} (5,3)}\end{xy}$. 

\begin{lemma}\label{reduction} With the above notation, we have 
\begin{enumerate}
 \item $\begin{xy}(0,8)*{\iota},(0,-8)*{\iota},
{(0,4) \ar @(l,l)@{-} (0,-4)_\rho},{(0,6) \ar @{-} (0,-6)}\end{xy}=\frac{d}{d(\iota)}\;
\begin{xy}(0,8)*{\iota},(0,-8)*{\iota},
{(0,6) \ar @{-} (0,-6)}\end{xy}$.

 \item $\begin{xy}(0,-8)*{\rho},(5,8)*{\iota},(5,-8)*{\iota},
{(5,6) \ar @{-} (5,-6)},{(5,4) \ar @{-} (0,-3)_\rho}, 
{(5,1) \ar @{-} (0,-3)^\rho},
{(0,-3) \ar @{-} (0,-6)}
\end{xy}=\sqrt{\frac{d-1}{d(\iota)}}
\begin{xy}(0,-8)*{\rho},(5,8)*{\iota},(5,-8)*{\iota},
{(5,6) \ar @{-} (5,-6)},{(5,0) \ar @{-} (0,-6)}\end{xy}$.

 \item $\begin{xy}(0,-8)*{\rho},(5,-8)*{\rho},(10,8)*{\iota},(10,-8)*{\iota},
{(10,6) \ar @{-} (10,-6)},{(10,3) \ar @{-} (0,-6)}, 
{(10,-1) \ar @{-} (5,-6)}\end{xy}
=\frac{1}{d(\iota)} 
\begin{xy}(0,-8)*{\rho},(5,-8)*{\rho},(10,8)*{\iota},(10,-8)*{\iota},
{(10,6) \ar @{-} (10,-6)},{(0,-6) \ar @(u,u)@{-} (5,-6)}\end{xy}+\sqrt{\frac{d-1}{d(\iota)}}
\begin{xy}(0,-8)*{\rho},(6,-8)*{\rho},(10,8)*{\iota},(10,-8)*{\iota},
{(10,6) \ar @{-} (10,-6)},{(3,-3) \ar @{-} (0,-6)},{(3,-3) \ar @{-} (6,-6)},
{(10,3) \ar @{-} (3,-3)_\rho},\end{xy}$.

 \item $\begin{xy}(0,8)*{\biota},(0,-8)*{\biota},
{(0,4) \ar @(r,r)@{-} (0,-4)_\rho},{(0,6) \ar @{-} (0,-6)}\end{xy}=\frac{d}{d(\iota)}\;
\begin{xy}(0,8)*{\biota},(0,-8)*{\biota},
{(0,6) \ar @{-} (0,-6)}\end{xy}$.

 \item $\begin{xy}(0,-8)*{\rho},(-5,8)*{\biota},(-5,-8)*{\biota},
{(-5,6) \ar @{-} (-5,-6)},{(-5,4) \ar @{-} (0,-3)_\rho}, 
{(-5,1) \ar @{-} (0,-3)^\rho},
{(0,-3) \ar @{-} (0,-6)}
\end{xy}=\sqrt{\frac{d-1}{d(\iota)}}\;
\begin{xy}(0,-8)*{\rho},(-5,8)*{\biota},(-5,-8)*{\biota},
{(-5,6) \ar @{-} (-5,-6)},{(-5,0) \ar @{-} (0,-6)}\end{xy}$.

 \item $\begin{xy}(0,-8)*{\rho},(-5,-8)*{\rho},(-10,8)*{\biota},(-10,-8)*{\biota},
{(-10,6) \ar @{-} (-10,-6)},{(-10,3) \ar @{-} (0,-6)}, 
{(-10,-1) \ar @{-} (-5,-6)}\end{xy}
=\frac{1}{d(\iota)} \;
\begin{xy}(0,-8)*{\rho},(-5,-8)*{\rho},(-10,8)*{\biota},(-10,-8)*{\biota},
{(-10,6) \ar @{-} (-10,-6)},{(0,-6) \ar @(u,u)@{-} (-5,-6)}\end{xy}+\sqrt{\frac{d-1}{d(\iota)}}\;
\begin{xy}(0,-8)*{\rho},(-6,-8)*{\rho},(-10,8)*{\biota},(-10,-8)*{\biota},
{(-10,6) \ar @{-} (-10,-6)},{(-3,-3) \ar @{-} (0,-6)},{(-3,-3) \ar @{-} (-6,-6)},
{(-10,3) \ar @{-} (-3,-3)_\rho},\end{xy}$.

\end{enumerate}
\end{lemma}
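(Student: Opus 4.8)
The plan is to translate each of the six graphical relations into an identity among explicit operators in $M$ by means of the dictionary fixed above, and then to verify each by direct computation. Write $u:=\sqrt{d(\iota)}\,V_1^*W$ for the $\iota$--$\rho$--$\iota$ trivalent vertex and $\bar u:=\sqrt{d(\iota)}\,\gamma(V_0)^*W$ for the $\bar\iota$--$\rho$--$\bar\iota$ one; after inserting the explicit formula for $W$ these become explicit words in $V_0,V_1,S,T_0$ and their images under $\rho$ and $\gamma$. All the computations rely only on: the Cuntz relations $S^*S=T_0^*T_0=1$, $S^*T_0=0$, $SS^*+\sum_gT_gT_g^*=1$; the relations $V_i^*V_j=\delta_{ij}$, $V_0V_0^*+V_1V_1^*=1$; the intertwining relations $S\in(\id,\rho^2)$, $T_0\in(\rho,\rho^2)$ with $j_{1,0}(T_0)=j_{2,0}(T_0)=T_0$; the formula for $\rho(S)$ in Lemma \ref{ST}; the $Q$-system axioms for $(\gamma,V_0,W)$; and the relation
$$\rho(T_0)^*T_0=T_0^*\rho(T_0)=-\frac{1}{d-1}+\frac{d}{d-1}SS^*+T_0T_0^*,$$
in which the standing hypothesis Eq.(\ref{Q1}) is encoded.

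First I would treat (1) and (4). The closed $\rho$-bubble in (1) equals $u^*u=d(\iota)\,W^*V_1V_1^*W=d(\iota)\big(W^*W-W^*V_0V_0^*W\big)$, and a short computation with the explicit $W$ and the relations above gives $W^*W=1$ and $W^*V_0=\tfrac1{d(\iota)}$; hence $u^*u=d(\iota)\big(1-\tfrac1{d(\iota)^2}\big)=\tfrac{d}{d(\iota)}$, which is (1). Relation (4) is the same computation with $\bar u$ in place of $u$: one finds $\bar u^*\bar u=d(\iota)\,W^*\gamma(V_0V_0^*)W=\tfrac{d}{d(\iota)}$ after the analogous reduction. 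For (2) and (5) it is convenient to note that $(\iota,\rho\iota)$ and $(\bar\iota,\rho\bar\iota)$ are one-dimensional (as read off from the $3^G$ principal graph), so the left-hand side is a priori a scalar multiple of the trivalent vertex and only the scalar has to be identified; expanding the composite in terms of $W$ one meets the factor $\rho(T_0)^*T_0$, which the displayed relation replaces, and together with $S^*T_0=0$, $T_0^*T_0=1$ this yields the coefficient $\sqrt{\tfrac{d-1}{d(\iota)}}$.

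Finally (3) and (6). The point is that the left-hand side of (3) lives in $(\iota,\rho^2\iota)$, which is two-dimensional: since $[\rho^2]=[\id]+\sum_g[\alpha_g\rho]$ and the multiplicity of $\iota$ in $\alpha_g\rho\iota$ is $\delta_{g,0}$ (from $[\rho\iota]=[\iota]+[\kappa]$ and $[\alpha_g\kappa]=[\kappa]$), the two diagrams on the right-hand side of (3) form a basis. One therefore inserts the resolution $1=SS^*+\sum_gT_gT_g^*$ between the two $\rho$-strands: the terms with $g\neq0$ vanish for dimension reasons, the $SS^*$-term reduces by (1) to $\tfrac1{d(\iota)}$ times the disconnected $\rho\rho$-cup, and the $T_0T_0^*$-term reduces by (2) to $\sqrt{\tfrac{d-1}{d(\iota)}}$ times the remaining diagram, which is (3); relation (6) follows from the same argument with $\bar\iota$ replacing $\iota$ (equivalently, by reflecting the diagrams left--right and using the $\iota\leftrightarrow\bar\iota$ symmetry of the $Q$-system). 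I expect the main obstacle to be purely one of bookkeeping -- keeping the normalization constants $\tfrac1{d(\iota)}$, $\sqrt{\tfrac{d-1}{d(\iota)}}$ and the operator orderings under control -- and the only place where something beyond the Cuntz and $Q$-system relations is really used is the $\rho$-channel reduction in (2) and (5), where the relation $T_0^*\rho(T_0)=\cdots$, and thus the hypothesis Eq.(\ref{Q1}), is indispensable.
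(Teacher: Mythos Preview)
Your proposal is correct and for (1), (3), (4), (6) it matches the paper's argument exactly. The only difference is in (2) (and by symmetry (5)): the paper does not expand everything in terms of $S,T_0$ and invoke the relation $T_0^*\rho(T_0)=\mathrm{H}$; instead it uses the $Q$-system associativity $\gamma(W)W=WW$ directly. Concretely, since $V_1^*\gamma(x)=\rho(x)V_1^*$, one has
\[
d(\iota)\,T_0^*\rho(V_1^*W)V_1^*W=d(\iota)\,T_0^*\rho(V_1^*)V_1^*\gamma(W)W=d(\iota)\,T_0^*\rho(V_1^*)V_1^*WW,
\]
and then $T_0^*\rho(V_1^*)V_1^*W=\sqrt{\tfrac{d-1}{d+1}}\,V_1^*$ is read off from the explicit formula for $W$ using only $T_0^*S=0$, $T_0^*T_0=1$. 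Your route via the full expansion and the $\mathrm{H}$-relation does work (one eventually recombines the pieces back into $\sqrt{\tfrac{d-1}{d+1}}\,V_1^*W$), but it is longer and your remark that Eq.~(\ref{Q1}) is ``indispensable'' here is slightly misleading: the paper never touches $T_0^*\rho(T_0)$ in proving (2), using only the Cuntz relations, the $Q$-system axioms, and the explicit coefficients of $W$. Of course those coefficients already encode Eq.~(\ref{Q1}), so the distinction is one of packaging rather than content.
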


\begin{proof} We show only (1),(2),(3) because the rest follows from similar arguments. 

(1) The left-hand side is 
$$d(\iota)W^*V_1V_1^*W=d(\iota)W^*(1-V_0V_0^*)W=d(\iota)(1-\frac{1}{d(\iota)^2})=\frac{d}{d(\iota)}.$$

(2) The left-hand side is 
\begin{align*}
\lefteqn{d(\iota)T_0^*\rho(V_1^*W)V_1^*W=d(\iota)T_0^*\rho(V_1^*)V_1^*\gamma(W)W} \\
 &=d(\iota)T_0^*\rho(V_1^*)V_1^*WW=d(\iota)\sqrt{\frac{d-1}{d+1}}V_1^*W=\sqrt{d-1}V_1^*W.\\
\end{align*}
(3) The statement follows from (1),(2) and $1=SS^*+\sum_{g\in G}T_gT_g^*$ with 
consideration that $(\alpha_g\iota,\iota)=\{0\}$ for any $g\in G\setminus \{0\}$. 
\end{proof}

For $X\in (\rho^2,\rho^2)$, we set 
$f(X)=\begin{xy}(0,0)*+[F]{X},
(-7,10)*{\biota},(0,10)*{\rho},(7,10)*{\iota},
(-7,-10)*{\biota},(0,-10)*{\rho},(7,-10)*{\iota},
{(-7,8) \ar @{-} (-7,-8)},{(7,8) \ar @{-} (7,-8)},
{(0,8) \ar @{-} (1,3)},{(-1,-3) \ar @{-} (0,-8)},
{(-7,6) \ar @{-} (-1,3)^\rho},{(1,-3) \ar @{-} (7,-6)_\rho}
\end{xy}\in \cA$. 
We define $X_1,X_2,X_3,X_4,X_5\in \cA$ by 
$X_1=\begin{xy}(-5,8)*{\biota},(0,8)*{\rho},(5,8)*{\iota},
(-5,-8)*{\biota},(0,-8)*{\rho},(5,-8)*{\iota},
{(-5,6) \ar @{-} (-5,-6)},{(0,6) \ar @{-} (0,-6)},{(5,6) \ar @{-} (5,-6)},
{(0,0) \ar @{-} (5,0)^\rho}\end{xy}, 
X_2=\begin{xy}(-5,8)*{\biota},(0,8)*{\rho},(5,8)*{\iota},
(-5,-8)*{\biota},(0,-8)*{\rho},(5,-8)*{\iota},
{(-5,6) \ar @{-} (-5,-6)},{(0,6) \ar @{-} (0,-6)},{(5,6) \ar @{-} (5,-6)},
{(-5,0) \ar @{-} (0,0)^\rho},\end{xy}$,
$X_3=f(1)=\begin{xy}(-5,5)*{\biota},(-5,-5)*{\biota},(0,5)*{\rho},(0,-5)*{\rho},(5,5)*{\iota},(5,-5)*{\iota},
{(-5,3) \ar @{-} (-5,-3)},{(5,3) \ar @{-} (5,-3)},{(-5,0) \ar @{-} (0,-3)},{(0,3) \ar @{-} (5,0)},
\end{xy}$, 
$X_4=f(dSS^*)=\begin{xy}(-5,5)*{\biota},(-5,-5)*{\biota},(0,5)*{\rho},(0,-5)*{\rho},(5,5)*{\iota},(5,-5)*{\iota},
{(-5,3) \ar @{-} (-5,-3)},{(5,3) \ar @{-} (5,-3)},{(-5,0) \ar @{-} (0,3)},{(0,-3) \ar @{-} (5,0)},\end{xy}$, 
$X_5=f(T_0T_0^*)=
\begin{xy}(-5,5)*{\biota},(-5,-5)*{\biota},(0,5)*{\rho},(0,-5)*{\rho},(5,5)*{\iota},(5,-5)*{\iota},
{(-5,3) \ar @{-} (-5,-3)},{(5,3) \ar @{-} (5,-3)},{(0,3) \ar @{-} (0,-3)},
{(-5,1) \ar @{-} (0,1)},{(0,-1) \ar @{-} (5,-1)},
\end{xy}$. 
Let $\cB$ be the linear span of $\{1,X_1,X_2,X_3,X_4,X_5\}$. 
In view of the linear isomorphism, 
$$(\biota\rho\iota,\biota\rho\iota)\cong (\gamma\rho,\rho\gamma)=((\id\oplus \rho)\rho,\rho(\id\oplus \rho)),$$
we see that $\cB$ and $\{f(T_gT_g^*)\}_{g\in G\setminus\{0\}}$ linearly span $\cA$. 
It is easy to show $X_1^*=X_1$, $X_2^*=X_2$, $X_3^*=X_4$, 
$$X_5^*=f(\mathrm{H})=-\frac{1}{d-1}X_3+\frac{1}{d-1}X_4+X_5,$$
and $\cB$ is closed under the adjoint operation. 

In what follows we compute the multiplication table of $\cA$ modulo $\cB$ using Lemma \ref{reduction} (3),(6). 
For $X_1^2$, we have  
$$
X_1^2=\begin{xy}(-5,6)*{\biota},(-5,-6)*{\biota},(0,6)*{\rho},(0,-6)*{\rho},(5,6)*{\iota},(5,-6)*{\iota},
{(-5,4) \ar @{-} (-5,-4)},
{(5,4) \ar @{-} (5,2)},{(5,2) \ar @{-} (5,-2)^\iota},{(5,-2) \ar @{-} (5,-4)},
{(0,4) \ar @{-} (0,2)},{(0,2) \ar @{-} (0,-2)_\rho},{(0,-2) \ar @{-} (0,-4)},
{(0,2) \ar @{-} (5,2)^\rho},{(0,-2) \ar @{-} (5,-2)_\rho},
\end{xy} =\frac{1}{d(\iota)}\; 
 \begin{xy}(-5,6)*{\biota},(-5,-6)*{\biota},(0,6)*{\rho},(0,-6)*{\rho},(12,6)*{\iota},(12,-6)*{\iota},
{(-5,4) \ar @{-} (-5,-4)},
{(12,4) \ar @{-} (12,-4)},
{(0,4) \ar @{-} (0,2)},{(0,2) \ar @{-} (0,-2)_\rho},{(0,-2) \ar @{-} (0,-4)},
{(0,2) \ar @(r,r)@{-} (0,-2)_\rho},
\end{xy}
+\sqrt{\frac{d-1}{d(\iota)}}\;
\begin{xy}(-5,6)*{\biota},(-5,-6)*{\biota},(0,6)*{\rho},(0,-6)*{\rho},(10,6)*{\iota},(10,-6)*{\iota},
{(-5,4) \ar @{-} (-5,-4)},
{(10,4) \ar @{-} (10,-4)},
{(0,4) \ar @{-} (0,2)},{(0,2) \ar @{-} (0,-2)_\rho},{(0,-2) \ar @{-} (0,-4)},
{(0,2) \ar @{-} (5,0)^\rho},{(0,-2) \ar @{-} (5,0)_\rho},
{(5,0) \ar @{-} (10,0)^\rho}
\end{xy}
=\frac{1}{d(\iota)}+\frac{d-2}{\sqrt{d(\iota)(d-1)}}X_1, $$
where we used 
$$\begin{xy}(0,10)*{\rho},(-5,-7)*{\rho},(5,-7)*{\rho},
{(0,8) \ar @{-} (0,5)},{(0,5) \ar @{-} (-5,-2)_\rho},{(0,5) \ar @{-} (5,-2)^\rho},{(-5,-2) \ar @{-} (5,-2)_\rho},
{(-5,-2) \ar @{-} (-5,-5)},{(5,-2) \ar @{-} (5,-5)}
\end{xy}=T_0^*\rho(T_0)T_0=\frac{d-2}{d-1}T_0=\frac{d-2}{d-1}
\begin{xy}(0,10)*{\rho},(-5,-7)*{\rho},(5,-7)*{\rho},
{(0,8) \ar @{-} (0,1)},{(0,1) \ar @{-} (-5,-5)}, {(0,1) \ar @{-} (5,-5)}\end{xy}.$$
In particular $X_1^2\in \cB$. 
In a similar way, we have $X_2^2\in \cB$. 
From the definition of $X_5$, we get $X_1X_2=X_5\in \cB$, and so $X_2X_1=(X_1X_2)^*=X_5^*\in \cB$. 
We can easily show $X_iX_j\in \cB$ for $1\leq i,j\leq 4$ by the same type of computation as above.  

\begin{lemma} \label{product} 
Let the notation be as above, and let $Q_g=T_gT_g^*$. 
Then 
\begin{itemize}
\item[(1)] $X_1f(Q_g)\in \cB+\sqrt{\frac{d-1}{d(\iota)}}f(\mathrm{H}Q_g)$,
\item[(2)] $f(Q_g)X_1\in \cB+\sqrt{\frac{d-1}{d(\iota)}}f(\rho(T_0)^*Q_g\rho(T_0))$,
\item[(3)] $X_2f(Q_g)\in \cB+\sqrt{\frac{d-1}{d(\iota)}}f(T_0^*\rho(Q_g)T_0)$,
\item[(4)] $f(Q_g)X_2\in \cB+\sqrt{\frac{d-1}{d(\iota)}}f(Q_g\mathrm{H})$,
\item[(5)] $X_3f(Q_g)\in \cB$,
\item[(6)] $f(Q_g)X_3\in \cB$,
\item[(7)] $X_4f(Q_g)\in \cB$,
\item[(8)] $f(Q_g)X_4\in \cB$,
\item[(9)] $f(Q_{g_1})f(Q_{g_2})\in \cB+f(\rho(T_0)^*Q_{g_1}\rho(Q_{g_2})T_0)$. 
\end{itemize}
In particular $\cB$ is a $*$-subalgebra of $\cB$. 
\end{lemma}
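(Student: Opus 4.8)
The plan is to prove (1)--(9) by one and the same graphical computation carried out in each case. One draws the product in question as the composite, inside $(\biota\rho\iota,\biota\rho\iota)$, of the two defining diagrams and then simplifies it with the moves of Lemma \ref{reduction}(1)--(6) --- which rewrite a configuration where one or two $\rho$-legs meet an $\iota$- or $\biota$-strand as a ``straight'' diagram plus a diagram carrying one extra trivalent vertex, with the explicit scalars $d/d(\iota)$ and $\sqrt{(d-1)/d(\iota)}$ --- together with the identity
\[
\mathrm{H}=T_0^*\rho(T_0)=\rho(T_0)^*T_0=-\tfrac{1}{d-1}+\tfrac{d}{d-1}SS^*+T_0T_0^*\in(\rho^2,\rho^2),
\]
the rotation relations $j_{1,0}(T_0)=j_{2,0}(T_0)=T_0$ (i.e.\ $\rho(T_0)^*S=\tfrac{1}{\sqrt{d}}T_0$ and $T_0^*\rho(S)=\tfrac{1}{\sqrt{d}}T_0^*$), the Cuntz relations $1=SS^*+\sum_gT_gT_g^*$, $S^*T_g=0$, $T_g^*T_h=\delta_{g,h}$, the formulas of Lemma \ref{ST}, and the vanishing $(\alpha_g\iota,\iota)=\{0\}$ for $g\neq 0$. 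After these reductions the composite is a $\C$-linear combination of diagrams, and the content of the lemma is that every summand lies in $\cB=\operatorname{span}\{1,X_1,\dots,X_5\}$ except at most one, namely the displayed $f(\,\cdot\,)$-term with the stated coefficient.

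For the product $f(Q_{g_1})f(Q_{g_2})$ in (9) one stacks the two $f$-diagrams; the (continuous) $\biota$- and $\iota$-strands of the composite each acquire two incoming $\rho$-legs, one from $Q_{g_1}$ and one from $Q_{g_2}$. Applying Lemma \ref{reduction}(6) on the $\biota$-side and Lemma \ref{reduction}(3) on the $\iota$-side splits the diagram, on each side, into a ``cap-off'' term and a ``$T_0$-merge'' term. The contributions in which at least one side is capped off collapse into $\cB$: the cap produces a scalar factor $T_{g_i}^*T_{g_j}$ or $S^*T_{g_i}$ (this is where it is essential that $Q_{g_i}=T_{g_i}T_{g_i}^*$), after which one is left with a combination of $1$, $SS^*$, $X_1$ and $X_2$. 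The one remaining contribution, built from the two $T_0$-merges, is cleaned up with Lemma \ref{reduction}(1),(4) and Lemma \ref{ST} and equals $f(\rho(T_0)^*Q_{g_1}\rho(Q_{g_2})T_0)$. Identities (5)--(8) follow by the same method, with $X_3=f(1)$ and $X_4=f(dSS^*)$; because the box of $X_3$ (resp.\ $X_4$) is the identity (resp.\ the rank-one projection $SS^*$), the reduction together with the rotation relations above leaves a remainder that simplifies, via $T_0^*T_g=\delta_{g,0}$ and $S^*T_g=0$, into a multiple of $f(\mathrm{H})=X_5^*$, $f(1)=X_3$ or $f(SS^*)=\tfrac{1}{d}X_4$, all of which lie in $\cB$; hence (5)--(8) have empty remainder.

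Identities (1)--(4) are treated analogously, using that $X_1$ (resp.\ $X_2$) is the diagram in which a horizontal $\rho$ joins the $\rho$-strand to the $\iota$-strand (resp.\ to the $\biota$-strand). In $X_1 f(Q_g)$ the $\iota$-strand of the composite carries two $\rho$-legs --- the horizontal $\rho$ of $X_1$ and the leg from the box $Q_g$ --- so Lemma \ref{reduction}(3) applies and produces a $\cB$-term together with one trivalent-vertex term; after reabsorbing the auxiliary $\rho$-caps by Lemma \ref{reduction}(1),(4) the latter equals $\sqrt{(d-1)/d(\iota)}\,f(\mathrm{H}Q_g)$, the factor $\mathrm{H}=T_0^*\rho(T_0)$ appearing precisely because the new $T_0$-merge of Lemma \ref{reduction}(3) is composed with the $T_0$-vertex already present in $X_1$. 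The computations for $f(Q_g)X_1$, $X_2 f(Q_g)$ and $f(Q_g)X_2$ are identical, using the $\biota$-side moves Lemma \ref{reduction}(5),(6) in the $X_2$-cases; which of $\mathrm{H}Q_g$, $Q_g\mathrm{H}$, $\rho(T_0)^*Q_g\rho(T_0)$ or $T_0^*\rho(Q_g)T_0$ one lands on is dictated by the side of $f(\,\cdot\,)$ on which the new $T_0$-merge occurs.

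Finally, the concluding assertion that $\cB$ is a $*$-subalgebra of $\cA$ requires no more than the identities already established before the lemma together with the $g=0$ (resp.\ $g_1=g_2=0$) instances of (1)--(9): $\cB$ is $*$-closed since $X_1^*=X_1$, $X_2^*=X_2$, $X_3^*=X_4$ and $X_5^*=-\tfrac{1}{d-1}X_3+\tfrac{1}{d-1}X_4+X_5$; it is closed under multiplication because $X_iX_j\in\cB$ for $1\le i,j\le 4$ and $X_1^2,X_2^2\in\cB$ are known, while $X_iX_5$, $X_5X_i$ and $X_5^2$ lie in $\cB$ by (1)--(9) at the index value $0$, using $\mathrm{H}Q_0=\tfrac{d-2}{d-1}Q_0$ and $\rho(T_0)^*Q_0\rho(Q_0)T_0=\mathrm{H}^3\in\operatorname{span}\{1,SS^*,T_0T_0^*\}$. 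The step I expect to be the main obstacle is the coefficient bookkeeping in (9) and in (1)--(4): one must track every factor of $d(\iota)$ and $\sqrt{(d-1)/d(\iota)}$ produced by the repeated use of Lemma \ref{reduction} and by the normalisations built into the $f$-diagram so that the surviving term comes out with coefficient exactly $1$ (resp.\ $\sqrt{(d-1)/d(\iota)}$), and one must verify --- via the orthogonality $T_h^*T_k=\delta_{h,k}$ --- that no spurious $f(Q_h)$-term with $h$ different from the expected index is left over.
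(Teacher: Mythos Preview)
Your approach is essentially the paper's: reduce each product diagrammatically via Lemma~\ref{reduction}(3),(6) (and their $\biota$-analogues), isolate the single surviving $f(\,\cdot\,)$-term, and check that all other pieces lie in~$\cB$. The paper is even terser than you are, declaring (1)--(6) ``easy to verify'' and writing out only the computation for~(9).

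One point where the paper is slicker: for (7) and (8) it does \emph{not} redo the graphical computation. From~(5) (applied also at $g=0$, since $X_5=f(Q_0)$) together with $X_3X_i\in\cB$ for $1\le i\le 4$ one gets $X_3\cA\subset\cB$; taking adjoints and using $\cB^*=\cB$ yields $\cA X_4\subset\cB$, which is~(8). Likewise~(6) gives~(7). Your direct route works too, but this $*$-trick avoids repeating the reduction.

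A small imprecision in your sketch of~(9): the ``cap-off'' term of Lemma~\ref{reduction}(3),(6) is a $\rho$-cap $\sqrt{d}\,S$ joining the two $\rho$-legs, not directly a Kronecker $T_{g_i}^*T_{g_j}$. The resulting diagram still collapses to $\cB$ --- after the cap one is left with an element built from $1$, $SS^*$, $X_3$, $X_4$ and the $X_i$'s --- but the mechanism is one step longer than you indicate. Similarly, in the surviving term the factor $\tfrac{d-1}{d(\iota)}$ from the two merges is exactly cancelled when one rewrites the resulting diagram in the form $f(\,\cdot\,)$ using Lemma~\ref{reduction}(2),(5) in reverse; this is the ``coefficient bookkeeping'' you correctly flag as the delicate step.
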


\begin{proof} In the proof we suppress the labeling of edges unless there is possibility of confusion. 
The left external edges are always labeled by $\biota$, the right external edges are always labeled by $\iota$, 
and internal edges are always labeled by $\rho$. 
(1)-(6) are easy to verify. 
From (5) we get $X_3\cA\subset \cB$, and since $X_4=X_3^*$, we get (7). 
(8) follows from (6) in the same way. 

For $g_1,g_2\in G$, 
\begin{align*}
f(Q_{g_1})f(Q_{g_2})&=
\begin{xy}(0,7)*+[F]{Q_{g_2}},(0,-7)*+[F]{Q_{g_1}},
{(-10,16) \ar @{-} (-10,-16)},{(10,16) \ar @{-} (10,-16)},
{(0,16) \ar @{-} (2,11)},{(-10,13) \ar @{-} (-2,11)},
{(-2,3) \ar @{-} (2,-3)},
{(-10,0) \ar @{-} (-2,-3)},{(2,3) \ar @{-} (10,0)},
{(0,-16) \ar @{-} (-2,-11)},{(10,-13) \ar @{-} (2,-11)},
\end{xy}
\in \cB+\frac{d-1}{d(\iota)}\;
 \begin{xy}(0,7)*+[F]{Q_{g_2}},(0,-7)*+[F]{Q_{g_1}},
{(-11,16) \ar @{-} (-11,-16)},{(11,16) \ar @{-} (11,-16)},
{(0,16) \ar @{-} (2,11)},{(-11,13) \ar @{-} (-2,11)},
{(-2,3) \ar @{-} (2,-3)},
{(-9,12) \ar @{-} (-2,-3)},{(2,3) \ar @{-} (9,-12)},
{(0,-16) \ar @{-} (-2,-11)},{(11,-13) \ar @{-} (2,-11)},
\end{xy} \\
&=\cB+\frac{d-1}{d(\iota)}f(
\begin{xy} (0,6)*+[F]{Q_{g_2}},(0,-6)*+[F]{Q_{g_1}},
{(-2,16) \ar @{-} (-2,9)},{(2,16) \ar @{-} (2,9)},
{(-2,13) \ar @(ld,lu)@{-} (-2,-2)},{(2,-13) \ar @(ru,rd)@{-} (2,2)},{(-2,2) \ar @{-} (2,-2)},
{(2,-16) \ar @{-} (2,-9)},{(-2,-16) \ar @{-} (-2,-9)}
\end{xy})=\cB +f(\rho(T_0)^*Q_{g_1}\rho(Q_{g_2})T_0),  
\end{align*}
and we obtain (9). 

Putting $g=g_1=g_2=0$ in the above, we see that $\cB$ is a subalgebra. 
\end{proof}

\begin{lemma} The minimal central projection $z(\id)$ of $\cA$ corresponding to the irreducible 
component $\id$ contained in $\biota\rho\iota$ belongs to $\cB$. 
\end{lemma}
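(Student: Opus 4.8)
The plan is to exhibit $z(\id)$ explicitly as an element of $\cB$ by constructing the rank-one projection onto the copy of $\id$ inside $\biota\rho\iota$ and checking it lies in the span of $\{1,X_1,X_2,X_3,X_4,X_5\}$. First I would recall that the copy of $\id$ in $\biota\rho\iota$ comes from the subobject $\id$ of $\biota\iota$ composed with the isometry $S\in(\id,\rho^2)$, i.e. the relevant isometry $\cZ\in(\id,\biota\rho\iota)$ is $\biota$ applied to $S$ conjugated appropriately, graphically a cup joining the $\biota$-strand to the $\iota$-strand through the $Q$-system together with the $S$-cup on the $\rho$-strand. Concretely, using the diagrammatic notation set up just before the lemma, $\cZ$ is a scalar multiple of $d(\iota)^{1/2}\gamma(S)\,W$ (regarded as an intertwiner $\id\to\biota\rho\iota$), normalized to an isometry; then $z(\id)=\cZ\cZ^*$. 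The main computational task is to evaluate this diagram and express it in the chosen basis.

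The key steps, in order, are: (i) write down $\cZ$ as the appropriate composition of the cup morphisms $\sqrt{d(\iota)}V_0$, $\sqrt{d(\iota)}W$, $V_1$ (and their adjoints) together with $\sqrt{d}\,S$, and normalize; (ii) form $z(\id)=\cZ\cZ^*\in(\biota\rho\iota,\biota\rho\iota)=\cA$; (iii) simplify the resulting closed-up diagram using Lemma \ref{reduction}, which tells us how a $\rho$-strand hanging off a $\biota\iota$-cup or a cap reduces to a multiple of the trivial strand or to one of the elementary diagrams; (iv) recognize the reduced expression as a linear combination of $1$ (the identity of $\cA$, which is the two vertical strands with a straight $\rho$-strand, call it the diagram behind $X_3,X_4$ before bending), $X_1,X_2,X_3,X_4,X_5$. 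In fact I expect $z(\id)$ to be, up to normalization, a combination of $X_3$ and $X_4$ only — the two ``turn-back'' diagrams $f(1)$ and $f(dSS^*)$ — since the $\id$-summand is detected precisely by the $S$-channel in both the domain and codomain, and Lemma \ref{reduction}(1) converts the $\rho$-cap on $\biota\iota$ into the trivial strand. The normalization constants will involve $d$ and $d(\iota)=\sqrt{d+1}$, and the condition Eq.(\ref{Q1}) (equivalently $T_0^*\rho(T_0)=-\frac1{d-1}+\frac{d}{d-1}SS^*+T_0T_0^*$) will be used to collapse the remaining $\mathrm H$-diagrams back into $\cB$.

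The main obstacle will be bookkeeping: correctly tracking which strands are $\iota$, $\biota$, $\rho$ in the concatenated diagram for $\cZ\cZ^*$, and making sure the normalization of $\cZ$ as an isometry (so that $\cZ\cZ^*$ is genuinely the minimal central projection and not merely a scalar multiple of it) is right. The actual algebra is routine given Lemma \ref{reduction} and Lemma \ref{product}, but there is real risk of sign and factor-of-$d$ errors, so I would double-check by computing $z(\id)\cdot z(\id)=z(\id)$ and the trace $\mathrm{Tr}_{\cA}(z(\id))=1$ (in the Markov trace on $\cA$, the summand $\id$ has multiplicity one) as internal consistency checks. Once $z(\id)$ is written in terms of $X_3,X_4$ (and possibly $1$), the conclusion $z(\id)\in\cB$ is immediate. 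This lemma then feeds into the count $\dim[\cA,\cA]<6$ needed for Theorem \ref{mfree}, because removing the central $\id$-block (which is one-dimensional and contributes nothing to $[\cA,\cA]$) together with knowing $\cB$ is a commutative-enough subalgebra controls the commutator space.
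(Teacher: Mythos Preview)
Your approach is essentially the paper's: construct the isometry $\cZ\in(\id,\biota\rho\iota)$ diagrammatically, form $z(\id)=\cZ\cZ^*$, and reduce using Lemma~\ref{reduction} until the expression lands in $\cB$. One small correction to your expectations: the paper's reduction does not yield a combination of $X_3$ and $X_4$ alone---after isotoping the cup around and applying the resolution of identity on the $\iota$--$\biota$ pair, the calculation produces a term $\frac{d-1}{d(\iota)}f(\mathrm{H})$, and since $f(\mathrm{H})=X_5^*=-\frac{1}{d-1}X_3+\frac{1}{d-1}X_4+X_5$, an $X_5$ contribution appears as well (which is still in $\cB$, so the conclusion stands). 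Also, your candidate formula $d(\iota)^{1/2}\gamma(S)W$ for $\cZ$ is not quite the right intertwiner---the paper's $\cZ$ is the cup joining $\biota$ to $\iota$ with the $\rho$-strand tied into the $\biota$-side, and its norm-square is $d$ rather than $1$---but this is exactly the bookkeeping issue you anticipated, and does not affect the argument.
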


\begin{proof}
Since $\begin{xy}(-10,-5)*{\biota},(10,-5)*{\iota},(0,-5)*{\rho},
{(-10,-3) \ar @(u,u)@{-} (10,-3)},
{(0,-3) \ar @{-} (-5,5)}
\end{xy}\in(\id,\biota\rho\iota)$, and 
$$\begin{xy}(-10,0)*{\biota},(10,0)*{\iota},(0,0)*{\rho},
{(-10,2) \ar @(u,u)@{-} (10,2)},
{(0,2) \ar @{-} (-5,10)},
{(-10,-2) \ar @(d,d)@{-} (10,-2)},
{(0,-2) \ar @{-} (-5,-10)},
\end{xy}=\frac{d}{d(\iota)}\;
\begin{xy}(-10,0)*{\biota},(10,0)*{\iota},
{(-10,2) \ar @(u,u)@{-} (10,2)},
{(-10,-2) \ar @(d,d)@{-} (10,-2)},
\end{xy}=d,
$$
we have 
$$z(\id)=\frac{1}{d}\;
\begin{xy}(-10,-13)*{\biota},(10,-13)*{\iota},(0,-13)*{\rho},
{(-10,-11) \ar @(u,u)@{-} (10,-11)},
{(0,-11) \ar @{-} (-5,-3)},
(-10,13)*{\biota},(10,13)*{\iota},(0,13)*{\rho},
{(-10,11) \ar @(d,d)@{-} (10,11)},
{(0,11) \ar @{-} (-5,3)}.
\end{xy}$$
Using 
$$\begin{xy}
(-5,-13)*{\iota},(5,-13)*{\biota},
(-5,13)*{\iota},(5,13)*{\biota},
{(-5,11) \ar @{-} (-5,-11)},
{(5,11) \ar @{-} (5,-11)},
\end{xy}
=\frac{1}{d(\iota)}
\begin{xy}(-10,-13)*{\iota},(10,-13)*{\biota},
{(-10,-11) \ar @(u,u)@{-} (10,-11)},
(-10,13)*{\iota},(10,13)*{\biota},
{(-10,11) \ar @(d,d)@{-} (10,11)},
\end{xy}+
\begin{xy}
(-5,-13)*{\iota},(5,-13)*{\biota},(-5,13)*{\iota},(5,13)*{\biota},
{(-5,11) \ar @{-} (0,4)},{(5,11) \ar @{-} (0,4)},
{(0,4) \ar @{-} (0,-4)^\rho},
{(-5,-11) \ar @{-} (0,-4)},{(5,-11) \ar @{-} (0,-4)},
\end{xy},$$
we get \begin{align*}
\begin{xy}(-10,-13)*{\biota},(10,-13)*{\iota},(0,-13)*{\rho},
{(-10,-11) \ar @(u,u)@{-} (10,-11)},
{(0,-11) \ar @{-} (-5,-3)},
(-10,13)*{\biota},(10,13)*{\iota},(0,13)*{\rho},
{(-10,11) \ar @(d,d)@{-} (10,11)},
{(0,11) \ar @{-} (-5,3)}.
\end{xy}
&=\begin{xy}(-10,-13)*{\biota},(10,-13)*{\iota},(0,-13)*{\rho},
(-10,13)*{\biota},(10,13)*{\iota},(0,13)*{\rho},
{(-10,11) \ar @(d,d)@{-} (10,11)},
{(0,11) \ar @{-} (-5,3)}.
{(-10,-11) \ar @(ru,l)@{-} (20,5)},
{(20,5) \ar @(r,ru) @{-} (10,-11)},
{(0,-11) \ar @{-} (-4,-5)},
\end{xy} 
=\frac{1}{d(\iota)}
\begin{xy}(-5,7)*{\biota},(-5,-7)*{\biota},(0,7)*{\rho},(0,-7)*{\rho},(5,7)*{\iota},(5,-7)*{\iota},
{(-5,5) \ar @{-} (-5,-5)},{(5,5) \ar @{-} (5,-5)},
{(0,5) \ar @{-} (-5,2)},{(0,-5) \ar @{-} (-5,-2)},
\end{xy}+
\begin{xy}(-5,7)*{\biota},(-5,-7)*{\biota},(0,7)*{\rho},(0,-7)*{\rho},(5,7)*{\iota},(5,-7)*{\iota},
{(-5,5) \ar @{-} (-5,-5)},{(5,5) \ar @{-} (5,-5)},
{(0,5) \ar @{-} (-5,2)},{(0,-5) \ar @{-} (-5,-2)},
{(-5,0) \ar @{-} (5,0)}
\end{xy}\\
&\in \cB+\frac{1}{d(\iota)}\;
\begin{xy}(-5,7)*{\biota},(-5,-7)*{\biota},(0,7)*{\rho},(0,-7)*{\rho},(5,7)*{\iota},(5,-7)*{\iota},
{(-5,5) \ar @{-} (-5,-5)},{(5,5) \ar @{-} (5,-5)},
{(0,5) \ar @{-} (5,0)},{(-5,0) \ar @{-} (0,-5)},
\end{xy}+\sqrt{\frac{d-1}{d(\iota)}}\;
\begin{xy}(-5,7)*{\biota},(-5,-7)*{\biota},(0,7)*{\rho},(0,-7)*{\rho},(5,7)*{\iota},(5,-7)*{\iota},
{(-5,5) \ar @{-} (-5,-5)},{(5,5) \ar @{-} (5,-5)},
{(0,5) \ar @{-} (0,0)},{(0,-5) \ar @{-} (-5,-2)},
{(-5,0) \ar @{-} (5,0)}\end{xy}\\
&=\cB+\frac{1}{d(\iota)}\sqrt{\frac{d-1}{d(\iota)}}\;
\begin{xy}(-5,7)*{\biota},(-5,-7)*{\biota},(0,7)*{\rho},(0,-7)*{\rho},(5,7)*{\iota},(5,-7)*{\iota},
{(-5,5) \ar @{-} (-5,-5)},{(5,5) \ar @{-} (5,-5)},
{(0,5) \ar @{-} (0,-5)},{(0,0) \ar @{-} (5,0)},\end{xy}
+\frac{d-1}{d(\iota)}\;
\begin{xy}(-5,7)*{\biota},(-5,-7)*{\biota},(0,7)*{\rho},(0,-7)*{\rho},(5,7)*{\iota},(5,-7)*{\iota},
{(-5,5) \ar @{-} (-5,-5)},{(5,5) \ar @{-} (5,-5)},
{(0,5) \ar @{-} (2,0)},{(0,-5) \ar @{-} (-2,0)},
{(-5,0) \ar @{-} (5,0)}\end{xy}
=\cB+\frac{d-1}{d(\iota)}\;f(\mathrm{H})=\cB,
\end{align*}
which shows the statement. 
\end{proof}

\begin{proof}[Proof of Theorem \ref{mfree}] 
We first claim that it suffices to show $[\cA,\cA]\subset \cB$. 
Indeed, we have already observed that if $\dim [\cA,\cA]\leq 5$, we are done. 
On the other hand, since $z(\id)$ is a central projection, we have $z(\id)[\cA,\cA]=0$. 
Thus if $[\cA,\cA]\subset \cB$, we get 
$$[\cA,\cA]=(1-z(\id))[\cA,\cA]\subset (1-z(\id))\cB,$$
and $\dim [\cA,\cA]\leq \dim (1-z(\id))\cB=5$. 

Let $\cB_0$ be the linear span of $\{1,SS^*,T_0T_0^*\}$. 
In view of Lemma \ref{product}, it suffices to show that $\mathrm{H}Q_g-\rho(T_0^*)Q_g\rho(T_0)$, 
$T_0^*\rho(Q_g)T_0-Q_g\mathrm{H}$, and $\rho(T_0^*)Q_{g_1}\rho(Q_{g_2})T_0-\rho(T_0^*)Q_{g_2}\rho(Q_{g_1})T_0$ 
belong $\cB_0$ for any $g,g_1,g_2\in G\setminus \{0\}$. 
Indeed, we have $\mathrm{H}Q_g=Q_g\mathrm{H}=-\frac{1}{d-1}Q_g$ and 
$$T_0^*\rho(Q_g)T_0=\rho(T_0^*)Q_g\rho(T_0)=\sum_{k\in G}|A_0(g,k)|^2Q_k,$$
Thanks to Remark \ref{|A|2}, the right-hand side is 
$$\frac{1}{(d-1)^2}(1-SS^*)-\frac{1}{d-1}Q_g,$$
and we get 
$$\mathrm{H}Q_g-\rho(T_0^*)Q_g\rho(T_0)=Q_g\mathrm{H}-T_0^*\rho(Q_g)T_0=\frac{1}{(d-1)^2}(SS^*-1)\in \cB_0.$$

We finally show that $\rho(T_0^*)Q_{g_1}\rho(Q_{g_2})T_0$ is symmetric in $g_1$ and $g_2$. Indeed, it is equal to 
\begin{align*}
\lefteqn{\rho(T_0)T_{g_1}T_{g_1}^*\rho(T_{g_2}T_{g_2}^*)T_0=
\rho(T_0)T_{g_1}T_{g_1}^*\alpha_{-g_2}\rho(T_{-g_2}T_{-g_2}^*)T_0} \\
 &=\sum_{l\in G}\overline{A_0(g_1,l)}T_lT_{g_1+l}^*T_{g_1}^*\alpha_{-g_2}\rho(T_{-g_2}T_{-g_2}^*)T_0 \\
 &=\sum_{l\in G}\overline{A_0(g_1,l)}A_{-g_1}(g_1+g_2,l)T_lT_{-g_2+l}^*\alpha_{-g_2}\rho(T_{-g_2}^*)T_0 \\
 &=\sum_{l\in G}\overline{A_0(g_1,l)}A_{-g_1}(g_1+g_2,l)\overline{A_{-g_2}(g_2,l)}T_lT_l^*\\
 &=\sum_{l\in G}\overline{A_0(g_1,l)A_0(g_2,l)}A_{-g_2}(g_1+g_2,l)\eta_l\overline{\eta_{-g_2}\eta_{-g_2+l}}
 \epsilon_{g_2}(-g_2)\epsilon_{g_2}(-g_2+l)T_lT_l^*,
\end{align*}
where we used Eq.(\ref{hkshift}) at the end. 
Thus it suffices to show that 
$$A_{-g_2}(g_1+g_2,l)\overline{\eta_{-g_2}\eta_{-g_2+l}}\epsilon_{g_2}(-g_2)\epsilon_{g_2}(-g_2+l),$$
is symmetric in $g_1$ and $g_2$, which follows from Eq.(\ref{hkshift}),(\ref{2hshift}). 
\end{proof}

\begin{remark}
Let $z(\sigma_j)$ be the minimal projection in $\cA$ corresponding to $\sigma_j$. 
We can determine how the conjugation acts on $\{\sigma_j\}_{j\in J_1}$ by computing  the rotation 
of $z(\sigma_j)$ by 180 degrees, which can be easily done once we concretely write down $z(\sigma_{j})$ . 
We can determine the fusion coefficient $\dim(\sigma_{j_3},\sigma_{j_1}\sigma_{j_2})$ by computing
$$\dim \big(z(\sigma_{j_3})(\biota\rho\iota,(\biota\rho\iota)^2)z(\sigma_{j_1})\biota\rho\iota(z(\sigma_{j_2}))\big).$$
Since we can concretely write down a basis of $(\biota\rho\iota,(\biota\rho\iota)^2)$ by using the isomorphism 
$$(\biota\rho\iota,(\biota\rho\iota)^2)\cong (\gamma\rho,\rho\gamma\rho\gamma),$$
we can compute the fusion coefficient in principle, though it is a challenging problem to implement it 
in a concrete example. 
\end{remark}
\section{Orbifold construction}\label{orbiforld}
\subsection{De-equivariantization}
Let $M$, $\cC$, $G$, $\alpha$, $\rho$, $(\epsilon_h(g),\eta_g,A_g(h,k))$ be as in Section \ref{PEGHC}. 
We assume $z\in G\setminus\{0\}$ satisfies $2z=0$ and $\epsilon_z(z)=1$. 
We also assume that the map $G\ni g\mapsto \epsilon_z(g)\in \{1,-1\}$ is a character, 
which is always the case if $A_0(g,h)\neq 0$ for all $g,h\in G$ thanks to Lemma \ref{alpha}. 
Let $P=M\rtimes_{\alpha_z}\Z_2$ be the crossed product, which is the von Neumann algebra generated by 
$M$ and a unitary $\lambda$ satisfying $\lambda^2=1$ and $\lambda x\lambda^{-1}=\alpha_z(x)$ for all $x\in M$. 
Since $\alpha_z$ is outer, $P$ is a factor. 

We can extend $\alpha_g$ and $\rho$ to $P$ by $\talpha_g(\lambda)=\epsilon_z(g)\lambda$ and 
$\trho(\lambda)=\lambda$. 
Then $\talpha$ is a $G$-action on $P$, and $\talpha_z=\Ad \lambda$ thanks to the assumption on 
$\epsilon_z(g)$. 
Thus it makes sense to write $[\talpha_{\dot{g}}]=[\talpha_g]$ for $\dot{g}\in G/\{0,z\}$ as we have 
$[\talpha_{g+z}]=[\talpha_g]$ for all $g\in G$. 
It is easy to show the following theorem by using $P=M+M\lambda$. 

\begin{theorem}\label{orbifold1} Let the notation be as above. Then
\begin{itemize}
\item [(1)] $[\trho]$ is irreducible. 
\item [(2)] $[\talpha_g\trho]=[\talpha_h\trho]$ if and only if $g-h\in \{0,z\}$. 
\item [(3)]
$$[\trho^2]=[\id]\oplus \bigoplus_{\dot{g}\in G/\{0,z\}}2[\talpha_{\dot{g}}\trho].$$
\item [(4)] If $\id\oplus \rho$ has a $Q$-system, so does $\id\oplus \trho$.  
\end{itemize}
\end{theorem}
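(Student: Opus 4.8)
The plan is to prove parts (1)--(3) simultaneously by writing $P = M + M\lambda$ and analyzing intertwiner spaces between $\trho$-words on $P$ as direct sums of pieces coming from $M$ and from $M\lambda$. The key observation is that for $\sigma,\sigma'\in \End(P)$ obtained by extending endomorphisms of $M$ commuting with $\Ad\lambda$ in the prescribed way, an element $X = a + b\lambda \in P$ lies in $(\sigma,\sigma')$ iff $a\in \Hom_{\End(M)}(\sigma|_M,\sigma'|_M)$, $b\in \Hom_{\End(M)}(\sigma|_M, \sigma'|_M\circ\alpha_z)$, and a compatibility condition relating $a$, $b$ with the scalars $\epsilon_z(\cdot)$ holds. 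Concretely, for part (1), $(\trho,\trho)\cap P$ decomposes as $(\rho,\rho)$ (scalars, since $\rho$ is irreducible on $M$) plus the $\lambda$-component $(\rho,\rho\circ\alpha_z) = (\rho,\alpha_z\rho)$ (using $\alpha_z\rho = \rho\alpha_{-z} = \rho\alpha_z$ since $2z=0$); but $[\alpha_z\rho]\neq[\rho]$ in $\cC$, so this space is $0$ and $\trho$ is irreducible. Part (2) is the same computation with $\trho$ replaced by $\talpha_g\trho$ and $\talpha_h\trho$: the $M$-component is $(\alpha_g\rho,\alpha_h\rho)$, nonzero iff $g=h$, and the $\lambda$-component is $(\alpha_g\rho,\alpha_{h+z}\rho)$, nonzero iff $g=h+z$; so $[\talpha_g\trho]=[\talpha_h\trho]$ iff $g-h\in\{0,z\}$, and one checks the compatibility condition is satisfiable in the nonzero cases.

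For part (3), I would compute $(\id_P,\trho^2)$ and $(\talpha_g\trho,\trho^2)$ and read off multiplicities. The $M$-component of $(\talpha_g\trho,\trho^2)$ is $(\alpha_g\rho,\rho^2)$, which is one-dimensional in $\cC$; the $\lambda$-component is $(\alpha_g\rho,\alpha_z\rho^2) = (\alpha_{g+z}\rho,\rho^2)$, also one-dimensional. Since $\talpha_g\trho\cong\talpha_{g+z}\trho$, these two one-dimensional contributions add up, giving multiplicity $2$ for each class $\dot g\in G/\{0,z\}$ with $\dot g\neq 0$; for $\id_P$ one gets multiplicity $1$ from $S$ (the $M$-component $(\id,\rho^2)$) and the $\lambda$-component $(\id,\alpha_z\rho^2)$ vanishes because $[\alpha_z\rho^2]$ contains no copy of $[\id]$ beyond... actually here one must be slightly careful: $(\id,\alpha_z\rho^2) = (\alpha_z,\rho^2)$ which is one-dimensional in $\cC$, so naively this would give multiplicity $2$ for $\id_P$ too. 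The resolution is that $S$ extended to $P$ satisfies $\trho^2$-compatibility with $\lambda$-weight $+1$ while $T_z$ has $\lambda$-weight $\epsilon_z(z)=1$ too, and by Lemma \ref{fixed} $\alpha_z(S)=S$; so the genuine intertwiner is a single vector $S\in(\id_P,\trho^2)$ and $T_z\lambda$ gives the second summand of $2[\talpha_0\trho]=2[\trho]$, not of $[\id_P]$. I would verify the dimension count $d(\trho)^2 = 1 + 2\sum_{\dot g\neq 0} d(\trho)$, i.e. $d^2 = 1 + 2\cdot\frac{n-2}{2}\cdot d + 2d$... — here $\#(G/\{0,z\}) = n/2$, and since $[\talpha_0\trho]=[\trho]$ appears with multiplicity $2$ among the nontrivial classes, $d^2 = 1 + n\cdot d$, matching $d = \frac{n+\sqrt{n^2+4}}{2}$; this sanity check is the cleanest way to confirm no summand is missed or double-counted.

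For part (4), I would take the $Q$-system $(\gamma, V_0, W)$ for $\id_M\oplus\rho$ on $M$ and extend $\gamma$, the isometries, and $W$ to $P$ via the same recipe ($\tilde\gamma(\lambda)=\lambda$ on the canonical unitary, with the distinguished isometries and $W$ viewed inside $P$), then check that the $Q$-system axioms (associativity, unit, Frobenius) survive the extension. Since $P = M + M\lambda$ and $\tilde\gamma$ restricts to $\gamma$ on $M$ while $\lambda$ is a unitary normalizing everything with $\lambda^2=1$, the axioms on $P$ follow from those on $M$ together with the already-verified relation $\talpha_z = \Ad\lambda$ and the fact that $W, V_0$ are fixed by $\alpha_z$ (which holds because they lie in $M^G$ in the construction preceding Theorem \ref{mfree}, hence are $\alpha_z$-fixed; more precisely $\alpha_z(W)=W$ as noted in Section \ref{subfactors}). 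The resulting $Q$-system has underlying object $\tilde\gamma = \id_P\oplus\trho$ by the computation in part (1)--(3).

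The main obstacle I expect is the bookkeeping of the $\lambda$-weights in the intertwiner decomposition — making sure that a morphism $a+b\lambda$ genuinely intertwines the \emph{extended} endomorphisms (not just their restrictions to $M$) requires tracking how $\epsilon_z$ enters, and it is precisely this weight condition that redistributes the second copy of the $\cC$-morphism space from $[\id_P]$ to $[\trho]$ in part (3). This is exactly where Lemma \ref{fixed} ($\alpha_g(S)=S$) and the hypothesis $\epsilon_z(z)=1$ are used, so I would isolate a short lemma computing $(\sigma,\sigma')_P$ in terms of $\End(M)$-data with explicit weight constraints, and then parts (1)--(4) become mechanical applications of it.
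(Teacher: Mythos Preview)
Your overall strategy via the decomposition $P=M+M\lambda$ is exactly what the paper has in mind (it remarks that the theorem is ``easy to show \dots\ by using $P=M+M\lambda$''), and your treatment of parts (1), (2), and (4) is correct.

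For part (3), however, you manufacture a difficulty that is not there. You worry that the $\lambda$-component of $(\id_P,\trho^2)$, namely $(\alpha_z,\rho^2)$, might be one-dimensional; but in $\cC$ we have $[\rho^2]=[\id]\oplus\bigoplus_{g}[\alpha_g\rho]$, and since $z\neq 0$ the invertible object $[\alpha_z]$ does not occur here, so $(\alpha_z,\rho^2)=0$ directly --- there is nothing to resolve. The genuine role of the weight condition $\epsilon_z(g)a=\alpha_z(a)$ is only for the summands $[\talpha_g\trho]$: it is automatically satisfied because $\alpha_z(T_g)=\epsilon_z(g)T_{g+2z}=\epsilon_z(g)T_g$, so both the $M$-component $\C T_g$ and the $\lambda$-component $\C T_{g+z}$ survive, giving multiplicity $2$ for each class $\dot g$.

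The paper's own proof of (3) is even more direct than your dimension count: it simply checks that each $T_g\in M$ already lies in $(\talpha_g\trho,\trho^2)$ as an element of $P$, by verifying on the extra generator that $\trho^2(\lambda)T_g=\lambda T_g=\epsilon_z(g)T_g\lambda=T_g\talpha_g\trho(\lambda)$. Together with $S\in(\id_P,\trho^2)$ and the Cuntz relation $SS^*+\sum_gT_gT_g^*=1$, this exhibits the full decomposition without ever writing $X=a+b\lambda$. Your approach has the virtue of being systematic and reusable for other crossed-product intertwiner computations; the paper's exploits the lucky fact that the $M$-intertwiners $S,T_g$ are themselves already $P$-intertwiners.
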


\begin{proof} We show only (3). 
For $x\in M$ we have $\trho^2(x)T_g=T_g\talpha_g\trho(x)$. 
For $\lambda$, we have 
$$\trho^2(\lambda)T_g=\lambda T_g=\alpha_z(T_g)\lambda=\epsilon_z(g)T_g\lambda=T_g\talpha_g(\lambda)
=T_g\talpha_g\trho(\lambda).$$
This shows 
$$[\trho^2]=[\id]\oplus \bigoplus_{g\in G}[\talpha_g\trho]
=[\id]\oplus \bigoplus_{\dot{g}\in G/\{0,z\}}2[\talpha_{\dot{g}}\trho].$$
\end{proof}

\begin{remark} One can compute the obstruction class of 
$\{[\talpha_{\dot{g}}]\}_{\dot{g}\in G/\{0,z\}}$ in the cohomology group 
$H^3(G/\{0,z\},\T)$ easily from $\epsilon_z(g)$. 
\end{remark}

\begin{remark} We can easily generalize the above argument to a subgroup $H$ of $G_2$ satisfying $\epsilon_h(k)=1$ 
for any $h,k\in H$. 
\end{remark}
\subsection{Equivariantization}
Let $(\epsilon_h(g),\eta_g,A_g(h,k))$ be a solution of 
(\ref{cocycle})-(\ref{AAA}) invariant under a group automorphism $\theta\in \Aut(G)$, that is, 
$\epsilon_{\theta(h)}(\theta(g))=\epsilon_h(g)$, $\eta_{\theta(g)}=\eta_g$, and $A_{\theta(g)}(\theta(h),\theta(k))=A_g(h,k)$. 
Let $\beta$ be an automorphism of the Cuntz algebra $\cO_{n+1}$ defined by $\beta(S)=S$ and $\beta(T_g)=T_{\theta(g)}$. 
Then it is easy to show $\beta\circ \rho=\rho\circ \beta$ and $\beta\circ \alpha_g=\alpha_{\theta(g)}\circ \beta$. 

Let $M$ be the weak closure of $\cO_{n+1}$ in the GNS representation of a KMS-state as in \cite{I93}, 
and we use the same symbols $\alpha$, $\rho$, and $\beta$ for their extensions to $M$. 
Then we may assume (replacing $(M,\rho,\alpha)$ with $(\tilde{M},\trho,\talpha)$ in Appendix if necessary) 
that $\alpha$ and $\beta$ generate an outer action of the semi-direct product group 
$G\rtimes_\theta \Z_m$ on $M$, where $m$ is the order of $\theta$. 
Let $P=M\rtimes_\beta \Z_m$ be the crossed product, which is the von Neumann algebra generated by $M$ and a unitary 
$\lambda$ satisfying $\lambda^m=1$ and $\lambda x\lambda^{-1}=\beta(x)$ for any $x\in M$. 
We extend $\rho$ to $P$ by $\trho(\lambda)=\lambda$. 
Then since $S$ and $T_0$ are invariant under $\beta$, we have $S\in (\id,\trho^2)$ and $T_0\in (\trho,\trho^2)$. 
We will compute the fusion rule of the fusion category generated by $\trho$.

Let 
$$G=\bigsqcup_{i=0}^p O_i$$ 
be the $\theta$-orbit decomposition of $G$ with $O_0=\{0\}$, and let $l_i=m/\# O_i$.  
For $0\leq i\leq p$, we set 
$$P_i=\sum_{g\in O_i}T_gT_g^*,$$
which is a projection in the fixed point algebra $M^\beta$. 
Since $M^\beta$ is a type III factor, there exists an isometry $V_i\in M^\beta$ whose range projection is $P_i$. 
We define $\sigma_i\in \End(M)$ by 
$$\sigma_i(x)=V_i^*(\sum_{g\in O_i}T_g\alpha_g(x)T_g^*)V_i.$$
By construction, we have 
$$[\sigma_i]=\bigoplus_{g\in O_i}[\alpha_g].$$
Note that $\beta\circ \sigma_i=\sigma_i\circ \beta$ holds for any $0\leq i\leq p$. 

\begin{lemma} The endomorphism $\sigma_i$ extends to $\tsigma_i\in \End(P)$ by 
$\tilde{\sigma_i}(\lambda)=V_i^*\lambda V_i=V_i^*\beta(V_i)\lambda$. 
\end{lemma}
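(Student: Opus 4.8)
The plan is to exploit the fact that $\Z_m$ is finite, so that $P=M\rtimes_\beta\Z_m$ is, as a vector space, the direct sum $\bigoplus_{k=0}^{m-1}M\lambda^k$, with the product determined by $\lambda x=\beta(x)\lambda$ and $\lambda^m=1$. Consequently, to define a unital normal $*$-endomorphism $\tsigma_i$ of $P$ extending $\sigma_i$ it is enough to exhibit a unitary $w\in P$ with $w^m=1$ and $w\sigma_i(x)=\sigma_i(\beta(x))w$ for all $x\in M$, and then to set $\tsigma_i\big(\sum_{k=0}^{m-1}x_k\lambda^k\big)=\sum_{k=0}^{m-1}\sigma_i(x_k)w^k$; well-definedness and normality are then automatic from the (unique, finite) direct sum decomposition and the normality of $\sigma_i$.

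First I would take $w=V_i^*\lambda V_i$ and check that it is a unitary of $P$ with $w^m=1$. Using $\lambda V_i=\beta(V_i)\lambda$ and $V_i\in M^\beta$ one gets $w=V_i^*\beta(V_i)\lambda=V_i^*V_i\lambda=\lambda$, hence in particular $w^m=\lambda^m=1$; the same conclusion also follows without this simplification from $ww^*=V_i^*\lambda P_i\lambda^{-1}V_i=V_i^*\beta(P_i)V_i=V_i^*P_iV_i=1$ and likewise $w^*w=1$, where one uses $V_iV_i^*=P_i$ together with the $\beta$-invariance $\beta(P_i)=P_i$, which holds because $\beta$ permutes $\{T_g\}_{g\in O_i}$ and hence fixes $P_i=\sum_{g\in O_i}T_gT_g^*$.

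Next I would verify the covariance $w\sigma_i(x)w^*=\sigma_i(\beta(x))$ for $x\in M$. Since $w=\lambda$, the left-hand side equals $\lambda\sigma_i(x)\lambda^{-1}=\beta(\sigma_i(x))$, which in turn equals $\sigma_i(\beta(x))$ by the commutation $\beta\circ\sigma_i=\sigma_i\circ\beta$ already recorded just above this lemma. With $w$ unitary, $w^m=1$, and this covariance in hand, the multiplicativity and $*$-preservation of $\tsigma_i$ reduce to the routine identities $\tsigma_i\big((x\lambda^j)(y\lambda^k)\big)=\sigma_i\big(x\beta^j(y)\big)w^{j+k}=\sigma_i(x)w^j\sigma_i(y)w^k=\tsigma_i(x\lambda^j)\tsigma_i(y\lambda^k)$ and $\tsigma_i\big((x\lambda^k)^*\big)=\sigma_i\big(\beta^{-k}(x^*)\big)w^{-k}=\big(\sigma_i(x)w^k\big)^*$, together with $\tsigma_i(1)=1$. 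This produces $\tsigma_i\in\End(P)$ with $\tsigma_i|_M=\sigma_i$ and $\tsigma_i(\lambda)=w=V_i^*\lambda V_i=V_i^*\beta(V_i)\lambda$, as asserted.

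I do not anticipate a genuine obstacle here; the only point needing a moment's care is checking that $w=V_i^*\lambda V_i$ is truly a unitary rather than merely an isometry, which is precisely where $V_iV_i^*=P_i$ and the $\beta$-invariance of $P_i$ (equivalently $\lambda P_i=P_i\lambda$) enter.
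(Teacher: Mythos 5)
Your proof is correct, and its core is the same as the paper's: show that $w=V_i^*\lambda V_i$ is a unitary of order dividing $m$ satisfying $w\,\sigma_i(x)=\sigma_i(\beta(x))\,w$, and then extend over the finite crossed product decomposition $P=\bigoplus_k M\lambda^k$ (a step the paper leaves implicit and you rightly spell out). The one genuine difference is your opening simplification $w=V_i^*\beta(V_i)\lambda=\lambda$, which uses $\beta(V_i)=V_i$. This is legitimate as the section is written, since the paper explicitly chooses $V_i\in M^\beta$, and it makes the covariance and $w^m=1$ immediate from $\beta\circ\sigma_i=\sigma_i\circ\beta$. Note, however, that the paper's own proof deliberately avoids using $\beta(V_i)=V_i$ and relies only on $\beta(P_i)=P_i$ (equivalently $\lambda P_i=P_i\lambda$), which is why the lemma records the formula as $V_i^*\beta(V_i)\lambda$ rather than $\lambda$: the argument is then valid for any isometry with $\beta$-invariant range projection. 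Your fallback computation $ww^*=V_i^*\beta(P_i)V_i=1$ recovers exactly the paper's unitarity argument, but in that general setting you would still owe the induction $(V_i^*\lambda V_i)^k=V_i^*\lambda^kV_i$ to get $w^m=1$, and the covariance would need the direct computation with $P_i$ rather than the appeal to $w=\lambda$. Under the stated hypothesis $V_i\in M^\beta$ none of this is needed, so your argument stands as written.
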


\begin{proof}
Since the range projection of $V_i$ is invariant under $\beta$, the operator $V_i^*\lambda V_i$ is unitary. 
As 
$$V_i^*\lambda V_iV_i^*\lambda^kV_i=V_i^*\lambda P_i\lambda^kV_i=V_i^*P_i\lambda^{k+1}V_i,$$
we can show $(V_i^*\lambda V_i)^k=V_i^*\lambda^k V_i$ by induction, and so $(V_i^*\lambda V_i)^m=1$. 
For $x\in M$, we have 
\begin{align*}
V_i^*\lambda V_i\sigma_i(x)&=V_i^*\lambda P_i\sum_{g\in O_i}T_g\alpha_g(x)T_g^*V_i
=V_i^*\sum_{g\in O_i}T_{\theta(g)}\beta(\alpha_g(x))T_{\theta(g)}^*\lambda V_i\\
&=V_i^*\sum_{g\in O_i}T_{\theta(g)}\alpha_{\theta(g)}(\beta(x))T_{\theta(g)}^*V_iV_i^*\lambda V_i\\
&=\sigma_i(\beta(x))V_i^*\lambda V_i,
\end{align*}
which shows the statement. 
\end{proof}

For $i=0$, we can choose $V_0=T_0$, and so $\tsigma_0=\id$.

\begin{theorem}\label{orbifold2} Let the notation be as above. Then
\begin{itemize}
\item [(1)] Let $\hat{\beta}$ be the dual action of $\beta$. 
For $0\leq k,l\leq l_i-1$, we have $(\hbeta^k\tsigma_i\trho,\hbeta^l\tsigma_j\trho)=\delta_{k,l}\delta_{i,j}\C1.$ 
In particular, $\hbeta^k\tsigma_i\trho$ is irreducible. 
\item [(2)] 
$$[\trho^2]=[\id]\oplus\bigoplus_{i=0}^p[\tsigma_i\trho].$$
\item [(3)] If $\id\oplus\rho$ has a $Q$-system, so does $\id\oplus \trho$. 
\end{itemize}
\end{theorem}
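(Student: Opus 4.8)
The plan is to handle (2) first (it is essentially formal), then do the bulk of the work in (1), and finally deduce (3) by a $\beta$-equivariant extension of the $Q$-system already in hand. A preliminary remark I would record at the outset: since each $V_i$ lies in the fixed-point algebra $M^\beta$ we have $\lambda V_i=\beta(V_i)\lambda=V_i\lambda$, hence $\tsigma_i(\lambda)=V_i^*\lambda V_i=\lambda$; thus every $\hbeta^k\tsigma_i\trho$ restricts on $M$ to $\sigma_i\rho$, and $d(\trho)=d$ (for instance because $\trho(P)=\rho(M)\rtimes_\beta\Z_m$ has index $[M:\rho(M)]_0=d^2$ in $P$). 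For (2): besides the given $S\in(\id,\trho^2)$ and $T_0=V_0\in(\trho,\trho^2)$, I claim $V_i\in(\tsigma_i\trho,\trho^2)$ for each $i$. For $x\in M$ this is the identity $V_i\tsigma_i\trho(x)=P_i\bigl(\sum_{g\in O_i}T_g\alpha_g\rho(x)T_g^*\bigr)V_i=\rho^2(x)V_i=\trho^2(x)V_i$, using $T_g\in(\alpha_g\rho,\rho^2)$, that $P_i:=\sum_{g\in O_i}T_gT_g^*$ commutes with $\rho^2(M)$, and $P_iV_i=V_i$; and $V_i\tsigma_i\trho(\lambda)=V_i\lambda=\lambda V_i=\trho^2(\lambda)V_i$. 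Since $SS^*$ and the projections $P_i=V_iV_i^*$ are mutually orthogonal with sum $1$, one gets $\trho^2(p)=SpS^*+\sum_{i=0}^pV_i\tsigma_i\trho(p)V_i^*$ for all $p\in P$, i.e.\ $[\trho^2]=[\id]\oplus\bigoplus_{i=0}^p[\tsigma_i\trho]$; consistency with $d(\trho)=d$ and $\sum_{i=0}^p\#O_i=n$ is the relation $d^2=1+nd$.

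For (1), expand an intertwiner as $X=\sum_{a=0}^{m-1}x_a\lambda^a$ with $x_a\in M$. Since $\hbeta^k\tsigma_i\trho$ and $\hbeta^l\tsigma_j\trho$ restrict on $M$ to $\sigma_i\rho$ and $\sigma_j\rho$, comparing coefficients of $\lambda^a$ forces $x_a\in(\sigma_i\rho\,\beta^a,\ \sigma_j\rho)_{\End(M)}$ for every $a$ (using $\beta\rho=\rho\beta$, $\beta\sigma_i=\sigma_i\beta$). Now $[\sigma_i\rho\,\beta^a]=\bigoplus_{h\in O_i}[\beta^a\alpha_h\rho]$, and the $3^G$ fusion rule $[\rho\alpha_c\rho]=[\alpha_{-c}]\oplus\bigoplus_{g\in G}[\alpha_g\rho]$ combined with the outerness of the $G\rtimes_\theta\Z_m$-action on $M$ gives $\dim(\beta^a\alpha_h\rho,\alpha_{h'}\rho)=\delta_{a,0}\delta_{h,h'}$. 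Hence $x_a=0$ for $a\neq0$, the space vanishes when $i\neq j$, and for $i=j$ we have $X=x_0$ in the commutative algebra $(\sigma_i\rho,\sigma_i\rho)$ with minimal projections $p_h:=V_i^*T_hT_h^*V_i$ ($h\in O_i$). The remaining condition, from $p=\lambda$, is $\zeta^kx_0=\zeta^l\beta(x_0)$ with $\zeta$ a primitive $m$-th root of unity, i.e.\ $\beta(x_0)=\zeta^{k-l}x_0$; since $\beta(p_h)=p_{\theta(h)}$ and $O_i$ is a single $\theta$-cycle of length $\#O_i$, writing $x_0=\sum_h\xi_hp_h$ this forces $\zeta^{(k-l)\#O_i}=1$, i.e.\ $l_i\mid(k-l)$, so $k=l$ in the range $0\le k,l\le l_i-1$, and then $\xi_h$ is constant and $x_0\in\C1$. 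This yields $(\hbeta^k\tsigma_i\trho,\hbeta^l\tsigma_j\trho)=\delta_{k,l}\delta_{i,j}\C1$, in particular irreducibility.

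For (3), recall that $\id\oplus\rho$ having a $Q$-system amounts to Eq.(\ref{Q1}) and is witnessed by the explicit $Q$-system $(\gamma,V_0,W)$ of Section \ref{subfactors}, with $\gamma(x)=V_0xV_0^*+V_1\rho(x)V_1^*$. I would choose its defining isometries $V_0,V_1$ in the fixed-point algebra $M^{G\rtimes_\theta\Z_m}$ (a type III factor, so this is possible, and it replaces $(\gamma,V_0,W)$ only by an isomorphic $Q$-system); then $\beta(V_0)=V_0$, $\beta(V_1)=V_1$, and since $\beta(S)=S$ and $\beta(T_0)=T_{\theta(0)}=T_0$ also $\beta(W)=W$. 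Consequently $N$ is $\beta$-invariant, so $\tilde N:=N\rtimes_\beta\Z_m\subseteq P=M\rtimes_\beta\Z_m$, and the canonical endomorphism $\tilde\gamma=\tilde\iota\,\bar{\tilde\iota}$ of $\tilde N\subseteq P$ restricts on $M$ to $\gamma$; hence $[\tilde\gamma]=[\id]\oplus[\trho]$ and $\id\oplus\trho$ admits a $Q$-system. Equivalently, one defines $\tilde\gamma$ directly by $\tilde\gamma|_M=\gamma$ and $\tilde\gamma(\lambda)=V_0\lambda V_0^*+V_1\lambda V_1^*$ (checking the crossed-product relations via $V_0^*V_1=0$ and $\lambda x\lambda^{-1}=\beta(x)$), observes that $\tilde\gamma(p)=V_0pV_0^*+V_1\trho(p)V_1^*$, and notes that $(\tilde\gamma,V_0,W)$ satisfies the $Q$-system axioms: these are intertwiner identities valid in $\End(M)$, and they persist in $\End(P)$ because the data is $\beta$-fixed and $\tilde\gamma$ is built $\beta$-equivariantly.

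The main obstacle is part (1). The delicate steps are: showing that $(\sigma_i\rho\,\beta^a,\sigma_j\rho)_{\End(M)}$ vanishes for $a\neq0$—this is exactly where one must use that $\alpha$ and $\beta$ together form an outer action of $G\rtimes_\theta\Z_m$, fed into the $3^G$ fusion rules in the right way—and then controlling how $\beta$ permutes the minimal projections of the commutative algebra $(\sigma_i\rho,\sigma_i\rho)$ so that the $\lambda$-grading condition really forces $k=l$. By comparison, the identities in (2) are routine, and in (3) the only things needing care are the legitimacy of placing the $Q$-system isometries in $M^{G\rtimes_\theta\Z_m}$ and the (mechanical) verification that the $Q$-system axioms survive the extension; establishing $d(\trho)=d$ cleanly via the crossed-product index identity is a minor technical point.
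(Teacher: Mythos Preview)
Your proof is correct and follows essentially the same route as the paper for parts (1) and (2): the paper also expands $X=\sum_a X_a\lambda^a$, shows $X_a\in(\beta^a\sigma_i\rho,\sigma_j\rho)$, computes this dimension via Frobenius reciprocity and the outerness of the $G\rtimes_\theta\Z_m$-action, and then analyzes the $\lambda$-condition on $X_0$ to force $k=l$ and $X_0\in\C1$. Your observation that $\tsigma_i(\lambda)=\lambda$ (since $V_i\in M^\beta$) is a clean simplification of the paper's $\tsigma_i(\lambda)=V_i^*\beta(V_i)\lambda$, which the paper does not bother to reduce.

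For part (3) the paper is much terser: it simply writes ``The statement follows from $(\rho,\rho^2)=(\trho,\trho^2)=\C T_0$.'' The point is that the $Q$-system data for $\id\oplus\rho$ is built from $S$ and $T_0$ (as in Section~\ref{subfactors}), these same isometries lie in $(\id,\trho^2)$ and $(\trho,\trho^2)$, and since $\trho|_M=\rho$ all the $Q$-system identities (which are relations inside $M$) persist. Your explicit construction---choosing the $Q$-system isometries in $M^{G\rtimes_\theta\Z_m}$ and extending $\gamma$ to $\tilde\gamma$ on $P$---is exactly this argument spelled out in full; it is not a different idea, just a more detailed execution of the same one.
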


\begin{proof} (1) It suffices to show the statement for $l=0$. 
Note that any element $X\in P$ has a unique expansion $X=\sum_{a=0}^{m-1}X_a\lambda^a$ with $X_a\in M$. 
Thus $X\in (\hbeta^k\tsigma_i\trho,\tsigma_j\trho)$ if and only if $X_a\in (\beta^a\sigma_i\rho,\sigma_j\rho)$ 
and $\zeta_m^kX_a\beta^a(V_i^*\beta(V_i))=V_i^*\beta(V_iX_a)$, where $\zeta_m=e^{2\pi i/m}$. 
Since 
\begin{align*}
\dim(\beta^a\sigma_i\rho,\sigma_j\rho) &= 
\dim(\overline{\sigma_j}\beta^a\sigma_i,\rho^2)
=\dim(\overline{\sigma_j}\sigma_i\beta^a,\rho^2)\\
&=\sum_{g\in O_i,\;h\in O_j}\dim(\alpha_{g-h}\beta^a,\id)
=\delta_{i,j}\# O_i,
\end{align*}
we have $X_a=0$ for $a\neq 0$ and $X_0$ has the following form:
$$X_0=V_i^*\sum_{g\in O_i}c_gT_gT_g^*V_i,\quad c_g\in \C.$$
Now the condition  $\zeta_m^kX_0V_i^*\beta(V_i)=V_i^*\beta(V_iX_0)$ is equivalent to 
$c_{g^\theta}=\zeta_m^{-1}c_g$. 
However, this implies $c_g=c_{\theta^{\#O_i}(g)}=\zeta_m^{-k\#O_i}c_g$. 
Since $0\leq k \leq l_i-1$, we have $\zeta_m^{-k\#O_i}\neq 1$ except for $k=0$, 
and we get the statement. 

(2) It suffices to show $V_i\in (\tsigma_i\trho,\trho)$. 
For $x\in M$, we have 
$$V_i\tsigma_i\trho(x)=P_i\sum_{g\in O_i}T_g\alpha_g(\rho(x))T_g^*V_i=T_g\alpha_g(\rho(x))T_g^*V_i=\rho^2(x)V_i=\trho^2(x)V_i.$$
For $\lambda$,  
$$V_i\tsigma_i\trho(\lambda)=P_i\lambda V_i=\lambda P_iV_i=\lambda V_i=\trho^2(\lambda)V_i,$$
which shows $V_i\in (\tsigma_i\trho,\trho^2)$ 

(3) The statement follows from $(\rho,\rho^2)=(\trho,\trho^2)=\C T_0$. 
\end{proof}

Let $M^\beta$ be the fixed point subalgebra of $M$ under $\beta$, and let $\Gamma=\hat{G}\rtimes_{\theta}\Z/m\Z$. 
Then we have $P=M^\beta\rtimes \Gamma$. 
Let $\iota_1:M\hookrightarrow P$ and $\iota_2:M^\beta\hookrightarrow M$ be the inclusion maps. 
Then we have 
$$[\hbeta^k\tsigma_i][\iota_1]=\bigoplus_{g\in O_i}[\iota_1][\alpha_g],$$
and 
$$[\hbeta^k\tsigma_i][\iota_1\iota_2]=\#O_i[\iota_1\iota_2].$$
This shows that $[\iota_1\iota_2]\overline{[\iota_1\iota_2]}$ contains $[\hbeta^k\tsigma_i]$ with multiplicity $\#O_i$. 
Since 
$$\sum_{i=0}^pl_i(\#O_i)^2=m\sum_{i=0}^p\#O_i=m\#G=\#\Gamma,$$
we get 
$$[\iota_1\iota_2]\overline{[\iota_1\iota_2]}=\bigoplus_{i=0}^p\bigoplus_{k=0}^{l_i-1}\#O_i[\hbeta^k\tsigma_i],$$
and $\{\hbeta^k\tsigma_i\}$ form the set of irreducibles in a fusion category equivalent to the representation category 
$\hat{\Gamma}$ of $\Gamma$. 
This shows that we can describe the fusion rules for the category generated by $\trho$ in terms of $\hat{\Gamma}$. 

\begin{remark} We can easily generalize the above argument to a subgroup of $\Aut(G)$ leaving the solution 
$(\epsilon_h(g),\eta_g,A_g(h,k))$ invariant. 
\end{remark}

\subsection{Accompanying solutions}
Let $M$, $\cC$, $G$, $\alpha$, $\rho$, $(\epsilon_h(g),\eta_g,A_g(h,k))$ be as in Section \ref{PEGHC}. 
For a subgroup $H\subset G$, we can extend $\rho$ to the crossed product $M\rtimes_\alpha H$ by $\tilde{\rho}(\lambda_h)=\lambda_{-h}$, 
where $\{\lambda_h\}_{h\in H}$ is the implementing unitary representation of $H$. 
It often happens that $\tilde{\rho}$ generates a generalized Haagerup category with a group whose 
order is the same as that of $G$ (see \cite{GS12}), 
which gives a new solution of the equations Eq.(\ref{cocycle})-Eq.(\ref{AAA}). 
We call it the accompanying solution of $(\epsilon_h(g),\eta_g,A_g(h,k))$ with respect to the subgroup $H$.  
In this subsection, we compute it in easy cases. 

Assume first that $G$ is an odd group and $H=G$. 
In this case the solution reduces to $(A(g,h),\eta)$. 
For $\chi\in \hat{G}$, we set 
$$\hat{T}_\chi=\frac{1}{\sqrt{|G|}}\sum_{g\in G}\inpr{g}{\chi}T_{2g}\lambda_{2g}.$$
Then $\hat{T}_\chi$ is an isometry in $(\hat{\alpha}_\chi\tilde{\rho},\tilde{\rho}^2)$, where $\hat{\alpha}$ is the dual action of $\alpha$, 
and $\hat{\alpha}_\tau(\hat{T}_\chi)=\hat{T}_{\chi+2\tau}$ holds. 
We have $S\in (\id,\tilde{\rho}^2)$, and 
\begin{align*}
\lefteqn{\sqrt{d}\hat{T}_\chi^*\rho(S)=\sqrt{\frac{d}{|G|}}\sum_{g\in G}\inpr{-g}{\chi}\lambda_{-2g}T_{2g}^*\rho(S)} \\
 &=\frac{1}{\sqrt{|G|}}\sum_{g\in G}\inpr{-g}{\chi}\lambda_{-2g}T_{2g}=\frac{1}{\sqrt{|G|}}\sum_{g\in G}\inpr{-g}{\chi}T_{-2g}\lambda_{-2g}=\hat{T}_\chi,\\
\end{align*}
\begin{align*}
\lefteqn{\sqrt{d}S^*\hat{\alpha}_\chi\circ \tilde{\rho}(\hat{T}_\chi^*)S
=\sqrt{\frac{d}{|G|}}\sum_{g\in G}\inpr{-g}{\chi}
\hat{\alpha}_\chi(\tilde{\rho}(\lambda_{-2g}T_{2g}^*)S)} \\
 &=\sqrt{\frac{d}{|G|}}\sum_{g\in G}\inpr{-g}{\chi}\hat{\alpha}_\chi(\lambda_{2g}\rho(T_{2g}^*)S)
 =\sqrt{\frac{d}{|G|}}\sum_{g\in G}\inpr{-g}{\chi}\hat{\alpha}_\chi(\alpha_{2g}\circ\rho(T_{2g}^*)S\lambda_{2g})\\
 &=\frac{\eta}{\sqrt{|G|}}\sum_{g\in G}\inpr{-g}{\chi}\hat{\alpha}_\chi(T_{2g}\lambda_{2g})=\eta\hat{T}_{\chi}. 
\end{align*}

\begin{theorem} When $G$ is an odd abelian group, the accompanying solution of $(A(g,h),\eta)$ with respect to $G$ is 
$(\hat{A}(\chi_1,\chi_2),\eta)$ with 
$$\hat{A}(\chi_1,\chi_2)=\frac{1}{|G|}\sum_{h,k\in G}A(2h,2k)\inpr{h}{\chi_2}\overline{\inpr{k}{\chi_1}}.$$
\end{theorem}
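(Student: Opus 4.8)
The plan is to compute the numerical data $(\hat{A}(\chi_1,\chi_2),\hat\eta)$ for the endomorphism $\tilde\rho$ on $P=M\rtimes_\alpha G$ directly from the definition in Section~\ref{PEGHC}, using the isometries $S\in(\id,\tilde\rho^2)$ and $\hat T_\chi\in(\hat\alpha_\chi\tilde\rho,\tilde\rho^2)$ already introduced. The computations preceding the theorem show that $\sqrt d\,\hat T_\chi^*\tilde\rho(S)=\hat T_\chi$ and $\sqrt d\,\hat\alpha_\chi\tilde\rho(\hat T_\chi^*)S=\eta\hat T_\chi$, i.e.\ $\hat T_\chi$ is $j_{1,\chi}$-invariant and $\hat\eta=\eta$ (note $\hat G$ is odd abelian, so $G_2$ for $\hat G$ is trivial and the $\epsilon$-data is trivially $1$, and $2\hat G=\hat G$ so $\hat A$ and $\hat\eta$ do not depend on the base variable $\chi$). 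Hence the only thing left is to read off $\hat A(\chi_1,\chi_2)$ from the decomposition of $\hat\alpha_{\chi}\tilde\rho(\hat T_\chi)$ against the Cuntz generators $\{S\}\cup\{\hat T_\psi\}_{\psi\in\hat G}$, and by the $2\hat G$-independence it suffices to work at $\chi=0$, i.e.\ to decompose $\tilde\rho(\hat T_0)$, where $\hat T_0=\frac1{\sqrt{|G|}}\sum_{g}T_{2g}\lambda_{2g}$.

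First I would expand $\tilde\rho(\hat T_0)=\frac1{\sqrt{|G|}}\sum_{g\in G}\tilde\rho(T_{2g})\tilde\rho(\lambda_{2g})=\frac1{\sqrt{|G|}}\sum_{g}\tilde\rho(T_{2g})\lambda_{-2g}$, using $\tilde\rho(\lambda_h)=\lambda_{-h}$. Since $G$ is odd, $2G=G$, so reindexing $g\mapsto$ (something) lets me replace $2g$ by a general $g$; but it is cleaner to keep $T_{2g}$ and use $\alpha_{2g}\rho(T_{2g})=\epsilon_{-2g}(2g)\rho(T_{-2g})$ together with Lemma~\ref{ST}. Concretely, for odd $G$ one has $\alpha_g\rho(T_g)=\eta T_gSS^*+\frac{\bar\eta}{\sqrt d}ST_g^*+\sum_{h,k}A(h,k)T_{g+h}T_{g+h+k}T_{g+k}^*$, with $A=A_g$ independent of $g$; rewriting $\rho(T_{2g})=\alpha_{-2g}(\alpha_{2g}\rho(T_{2g}))$ and pushing $\lambda_{-2g}$ through, I would express $\tilde\rho(\hat T_0)$ as a sum of terms in $S$, $\hat T_\psi$, and triple products of $\hat T$'s. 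The $SS^*$ and $ST^*$ terms will reassemble into $\hat\eta\,\hat T_0 SS^*+\frac{\bar{\hat\eta}}{\sqrt d}S\hat T_0^*$ after summing the Fourier phases, confirming $\hat\eta=\eta$.

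The main work is the cubic term. I would substitute $T_{g+h}T_{g+h+k}T_{g+k}^*$ (with $g$ replaced by $2g$) into the sum $\frac1{\sqrt{|G|}}\sum_{g}(\cdots)\lambda_{-2g}$, then insert the inverse Fourier transform $T_m=\frac1{\sqrt{|G|}}\sum_{\psi}\overline{\inpr{m/2}{\psi}}\hat T_\psi\lambda_{-m}$ for each of the three legs (legitimate since $2G=G$ gives a unique halving), collect all the $\lambda$'s (they cancel because the three exponents $2g+2h$, $2g+2h+2k$, $-(2g+2k)$ sum to $0$, as they must for an intertwiner), and sum the resulting geometric/character sums over $g$ and over the three dummy characters. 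The character orthogonality will force two of the three new character indices to be determined by $\chi_1,\chi_2$ and collapse the $g$-sum, leaving exactly the double sum $\frac1{|G|}\sum_{h,k}A(2h,2k)\inpr{h}{\chi_2}\overline{\inpr{k}{\chi_1}}$ as the coefficient of $\hat T_{\chi_1}\hat T_{\chi_1+\chi_2}\hat T_{\chi_2}^*$. I expect the bookkeeping of the three halvings, the three Fourier insertions, and matching which of $\chi_1,\chi_2$ sits on which leg (getting the bar and the index-shift right) to be the only genuinely error-prone step; everything else is orthogonality of characters and the already-established fact that $S,\hat T_\chi$ satisfy the Cuntz relations and the $j_1,j_2$ normalizations. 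Finally I would remark that, since $\hat T_\chi$ is determined only up to sign and we have fixed it by $j_{1,\chi}$-invariance and $\hat\alpha_\tau(\hat T_\chi)=\hat T_{\chi+2\tau}$, the resulting $\hat A$ is the accompanying solution in the sense of the definition, and being odd abelian there is no residual gauge freedom, so the formula is canonical.
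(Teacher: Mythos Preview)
Your approach is essentially the same as the paper's: both extract $\hat A(\chi_1,\chi_2)$ from $\tilde\rho(\hat T_0)$ via character orthogonality, and both rely on the already-verified facts that $S,\hat T_\chi$ satisfy the Cuntz relations with the correct $j_1,j_2$ normalizations. The paper organizes the computation slightly more efficiently by directly evaluating the scalar
\[
\hat A(\chi_1,\chi_2)=\hat T_{\chi_1+\chi_2}^*\hat T_{\chi_1}^*\tilde\rho(\hat T_0)\hat T_{\chi_2},
\]
expanding each $\hat T$ in terms of $T$'s and $\lambda$'s, and pushing the $\lambda$'s through; this avoids separately verifying the $SS^*$ and $ST^*$ terms. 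Your dual route (fully expand $\tilde\rho(\hat T_0)$ in the $\hat T$-basis, then read off the coefficient) is valid and yields the same answer.

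One point of caution: your stated reason for the $\lambda$-cancellation is not correct as written. The three exponents you list, $2g+2h$, $2g+2h+2k$, $-(2g+2k)$, sum to $2g+4h$, not zero. What actually happens is that after applying $\alpha_{-2g}$ (to pass from $\alpha_{2g}\rho(T_{2g})$ to $\rho(T_{2g})$), inserting $T_m=\frac{1}{\sqrt{|G|}}\sum_\psi\overline{\inpr{m/2}{\psi}}\hat T_\psi\lambda_{-m}$, and commuting each intermediate $\lambda_a$ through $\hat T_\psi$ via $\lambda_a\hat T_\psi=\overline{\inpr{a}{\psi}}\hat T_\psi\lambda_{-a}$, the accumulated $\lambda$-exponent does collapse to $0$ and the accumulated phases combine with the $g$-sum to enforce $\psi_2=\psi_1+\psi_3$ by orthogonality. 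So your conclusion is right, but the bookkeeping is a bit more delicate than ``the exponents sum to zero''; the paper's contraction-first organization sidesteps this by never needing to commute $\lambda$'s through $\hat T$'s.
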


\begin{proof} The statement follows from 
\begin{align*}
\lefteqn{\hat{A}(\chi_1,\chi_2)=\hat{T}_{\chi_1+\chi_2}^*\hat{T}_{\chi_1}^*\tilde{\rho}(\hat{T}_0)\hat{T}_{\chi_2}} \\
 &=\frac{1}{|G|^2}\sum_{g,h,k,l\in G}\inpr{k}{\chi_2}\overline{\inpr{h}{\chi_1}\inpr{l}{\chi_1+\chi_2}}
 \lambda_{-2l}T_{2l}^*\lambda_{-2h}T_{2h}^*
 \tilde{\rho}(T_{2g}\lambda_{2g})T_{2k}\lambda_{2k} \\
 &=\frac{1}{|G|^2}\sum_{g,h,k,l\in G}\inpr{k}{\chi_2}\overline{\inpr{h}{\chi_1}\inpr{l}{\chi_1+\chi_2}}
 \lambda_{-2l}T_{2l}^*\lambda_{-2h}T_{2h}^*
 \rho(T_{2g})\lambda_{-2g}T_{2k}\lambda_{2k} \\
 &=\frac{1}{|G|^2}\sum_{g,h,k,l\in G}\inpr{k}{\chi_2}\overline{\inpr{h}{\chi_1}\inpr{l}{\chi_1+\chi_2}}
 \lambda_{-2h-2l}T_{4h+2l}^*T_{2h}^*
 \alpha_{-2g}\rho(T_{-2g})T_{-4g+2k}\lambda_{-2g+2k} \\
 &=\frac{1}{|G|^2}\sum_{g,h,k,l\in G}\inpr{k}{\chi_2}\overline{\inpr{h}{\chi_1}\inpr{l}{\chi_1+\chi_2}}
 A(2g+2h,-2g+2k)\delta_{h+l,-g+k}. \\
\end{align*}
\end{proof}

Now we consider the case $G=H=\Z_{2m}.$ 
We assume that the cocycle $\epsilon_h(g)$ is non-trivial, and we have $\epsilon_m(g)=(-1)^g$. 
In this case $\trho$ generates a generalized Haagerup category with $\widehat{\Z_{2m}}\cong \Z_{2m}$. 
For a natural number $n$, we denote $\zeta_n=e^{\frac{2\pi i }{n}}$. 
The dual action $\hat{\alpha}$ is given by $\hat{\alpha}_k(\lambda_l)=\zeta_{2m}^{kl}\lambda_l$. 
For $0\leq a<m$, we set 
$$\hat{T}_{2a}=\frac{1}{\sqrt{m}}\sum_{k=0}^{m-1}\zeta_m^{ak}T_{2k}\lambda_{2k},$$
$$\hat{T}_{2a+1}=\frac{\zeta_{4m}^{2a+1}}{\sqrt{m}}\sum_{k=0}^{m-1}\zeta_{2m}^{(2a+1)k}\alpha_k(T_1)\lambda_{2k+1}.$$
Then $\hat{T}_b\in (\hat{\alpha}_b\trho,\trho^2)$. 
We have $\hat{\alpha}_b(\hat{T}_{2a})=\hat{T}_{2a+2b}$ and 
$$\hat{\alpha}_1(\hat{T}_{2a+1})=\left\{
\begin{array}{ll}
\hat{T}_{2a+3} , &\quad 0\leq a<m-1 \\
-\hat{T}_1 , &\quad a=m-1
\end{array}
\right..
$$ 
We can easily compute the accompanying solution in this case too. 

For $G=\Z_2\times K$ with odd $K$, we can compute the accompanying solutions for $H=K,G$ 
in a similar way too.  
\section{Examples}
Under assumption of Eq.(\ref{Q2}), we have Eq.(\ref{AA-AA}) that is extremely useful for solving the whole equations 
(\ref{cocycle})-(\ref{AAA}) (see \cite{I01}, \cite{EG11}).  
When we do not assume Eq.(\ref{Q2}), our strategy to solve (\ref{cocycle})-(\ref{AAA}) is as follows. 
We first solve Eq.(\ref{Deg1})-(\ref{Deg3}), and we get Eq.(\ref{|A|1}). 
Next we try to solve Eq.(\ref{O2}) and (\ref{AAA}) with $p=q=x+y=0$. 
The author has checked that this strategy works at least up to $|G|\leq 6$ with help of Mathematica. 

\subsection{$G=\Z_2$}
When $G=\Z_2$, it is easy to show that there is no solution of (\ref{cocycle})-(\ref{AAA}) with trivial $\epsilon$, 
and the only remaining case is $\epsilon_{h}(g)=(-1)^{gh}$. 
In this case, there exists a unique solution $\eta_0=\eta_1=1$, 
$$A_0=\frac{1}{d-1}\left(
\begin{array}{cc}
d-2 &-1  \\
-1 &-1
\end{array}
\right),\quad 
A_1=\frac{1}{d-1}\left(
\begin{array}{cc}
d-2 &-1  \\
-1 &1
\end{array}
\right),
$$
with $d=1+\sqrt{2}$. 
This solution comes from the even part of the $A_7$ subfactor, which is discussed in \cite[Example 3.6]{I93}. 

\subsection{$G=\Z_3$}
Those solutions of (\ref{cocycle})-(\ref{AAA}) for $G=\Z_3$ satisfying (\ref{Q1}) are already given in \cite{I01}, 
and we look for those not satisfying (\ref{Q1}), which should exist as the corresponding 
fusion category was found in \cite{GS12} (see also \cite{EG15}). 
Since $G$ is an odd group, we can choose $\epsilon_h(g)$ satisfying $\epsilon_h(g)=1$. 
In this case neither $A_g(h,k)$ nor $\eta_g$ depends on $g$, which we denote $A(g,h)$ and $\eta$ respectively. 

We can set 
$$A=\left(
\begin{array}{ccc}
x_0 &\eta x_1 &\eta x_2  \\
\overline{\eta}x_1 &x_2 &y  \\
\overline{\eta}x_2 &\overline{y} &x_1 
\end{array}
\right),
$$  
with $x_0,x_1,x_2\in \R$, $y\in \C$, $\eta\in \T$ satisfying $\eta^3=1$. 
If $\eta\neq 1$, we have $x_0=y=0$, which contradicts (\ref{O2}).
Thus we have $\eta=1$. 

Eq.(\ref{Deg1'})-(\ref{Deg3'}) allow three different solutions up to group automorphism:
\begin{equation}\label{Z/3Z1}
(x_0,x_1,x_2)=\left\{
\begin{array}{ll}
(\frac{7-\sqrt{13}}{6},\frac{1-\sqrt{13}}{6},\frac{1-\sqrt{13}}{6}) , &  \\
(\frac{2-\sqrt{13}}{3},\frac{5-\sqrt{13}+\sqrt{6(1+\sqrt{13})}}{12},\frac{5-\sqrt{13}-\sqrt{6(1+\sqrt{13})}}{12}), &  \\
(0,\frac{3-\sqrt{13}+\sqrt{2(-1+\sqrt{13})}}{4},\frac{3-\sqrt{13}-\sqrt{2(-1+\sqrt{13})}}{4}). &
\end{array}
\right.
\end{equation}
Now (\ref{cocycle})-(\ref{hkshift}) are equivalent to 
$$A=\left(
\begin{array}{ccc}
x_0 & x_1 &x_2  \\
x_1 &x_2 &y  \\
x_2 &\overline{y} &x_1 
\end{array}
\right),
$$  
\begin{equation}\label{Z/3Z2}
x_1^2+x_2^2+|y|^2=1,
\end{equation}
\begin{equation}\label{Z/3Z3}
x_1x_2+x_1\overline{y}+x_2y=0.
\end{equation}  
By (\ref{Z/3Z3}), we have either  $x_1=x_2$ or $y\in \R$. 
Assume $x_1=x_2$ first. 
Then we have $x_0=(d-2)/(d-1)$, $x_1=x_2=-1/(d-1)$, which satisfies (\ref{Q1}), and 
$$y=\frac{1\pm i\sqrt{4d-1}}{2(d-1)}.$$ 
We can check that $(x_0,x_1,x_2,y)$ satisfy the whole system of equations, and we conclude that there exists 
a unique solution, up to group automorphism, satisfying (\ref{Q1}). 

Assume $y\in \R$ now. 
Then $y$ satisfies 
$$y^2=1-x_1^2-x_2^2,$$
$$y=-\frac{x_1x_2}{x_1+x_2},$$
and the only second solutions in (\ref{Z/3Z1}) is allowed with $y=(1+\sqrt{13})/2$. 
This solution is the accompanying solution of the previous one. 

\begin{theorem} There exists exactly two solutions of (\ref{cocycle})-(\ref{AAA}) for $G=\Z_3$ 
up to group automorphism. 
One of them satisfies (\ref{Q1}), and the other does not. 
\end{theorem}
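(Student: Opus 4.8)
The plan is to follow the structure already laid out in the discussion preceding the statement, turning it into a self-contained argument. Since $G=\Z_3$ is odd, by the remark after Lemma~\ref{fixed-point-free} (and Theorem~\ref{odd}) we may normalize so that $\epsilon_h(g)=1$ for all $g,h$, the gauge freedom collapses, and neither $A_g(h,k)$ nor $\eta_g$ depends on $g$; write $A=(A(h,k))_{h,k\in\Z_3}$ and $\eta$. First I would record the general shape of $A$ forced by Eq.(\ref{CC'}) (Hermitian symmetry $A(k,h)=\overline{A(h,k)}$) and by Eq.(\ref{R2'}): the rotation relations $A(h,k)=\eta A(-k,h-k)=\overline\eta A(k-h,-h)$ pin down all entries in terms of $x_0=A(0,0)$, $x_1$, $x_2$ and one off-diagonal complex number $y=A(1,2)$, and they are all real on the diagonal. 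Then I would observe that if $\eta\neq1$ then $\eta A(0,0)=A(0,0)$ forces $x_0=0$, and likewise $y=0$; but then Eq.(\ref{O2'}) with $g=g'$ would give $\sum_h|A(h,k)|^2=1-1/d<1$ contradicting $\sum_h A(h,k)\overline{A(h,k)}$ summed appropriately, so $\eta=1$. This is exactly the short paragraph already in the text; I would just make the contradiction explicit.

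Next I would solve the \emph{degree-one} subsystem Eq.(\ref{Deg1'})--(\ref{Deg3'}) for the real triple $(x_0,x_1,x_2)$. With $\eta=1$ these are three polynomial equations in three real unknowns (using that $2G=G$ so the shift $g\mapsto g+2h$ permutes everything), and a direct elimination — substituting $x_0=-1/d-x_1-x_2$ into the quadratic relations — yields exactly the three solutions listed in Eq.(\ref{Z/3Z1}) up to the $\Z_3$-automorphism swapping $x_1\leftrightarrow x_2$. I would present this as a finite computation whose output is Eq.(\ref{Z/3Z1}); the verification that these three are \emph{all} solutions is the routine but slightly delicate point, since one must check there are no further real branches. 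Having fixed $(x_0,x_1,x_2)$, the remaining unknown is $y=A(1,2)\in\C$, constrained by the orthogonality relation Eq.(\ref{O2}) with $k\neq0$, which becomes Eq.(\ref{Z/3Z2}): $x_1^2+x_2^2+|y|^2=1$; and by Eq.(\ref{hkshift}) applied to the off-diagonal entries, which becomes the linear-type relation Eq.(\ref{Z/3Z3}): $x_1x_2+x_1\overline y+x_2 y=0$.

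From Eq.(\ref{Z/3Z3}) I would split into the two cases $x_1=x_2$ and $y\in\R$ (these exhaust the possibilities because subtracting the relation from its conjugate gives $(x_1-x_2)(y-\overline y)=0$). In the case $x_1=x_2$, only the first triple of Eq.(\ref{Z/3Z1}) survives — and one checks it equals $(x_0,x_1,x_2)=(\tfrac{d-2}{d-1},-\tfrac1{d-1},-\tfrac1{d-1})$, i.e. Eq.(\ref{Q1}) holds — with $y=\tfrac{1\pm i\sqrt{4d-1}}{2(d-1)}$ from Eq.(\ref{Z/3Z2}); the $\pm$ are interchanged by complex conjugation, which is induced by an equivalence, so this is one solution. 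In the case $y\in\R$, combining $y^2=1-x_1^2-x_2^2$ with $y=-x_1x_2/(x_1+x_2)$ (when $x_1+x_2\neq0$; the case $x_1+x_2=0$, i.e. the third triple, forces $x_1x_2=0$ hence $y^2=1-x_1^2$ but also one of the original degree-three equations fails — I would check this kills the third triple) leaves only the second triple of Eq.(\ref{Z/3Z1}) with $y=(1+\sqrt{13})/2$. The main obstacle, and the one real task beyond bookkeeping, is verifying that each surviving candidate actually satisfies the \emph{full} system Eq.(\ref{AAA}) (the cubic ``associativity'' relations), not merely the linear and quadratic fragments used so far; I would assert this holds by a finite check (done with computer algebra, as the section's preamble indicates), and note that the $Q$-system candidate is the one already constructed in \cite{I01} while the $y\in\R$ one is its accompanying solution with respect to $G$, computed via the formula in Theorem~8.x, which guarantees it is genuinely a solution. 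Since the two solutions have different $A(0,0)$ (one satisfies Eq.(\ref{Q1}), the other has $x_0=(2-\sqrt{13})/3\neq \tfrac{d-2}{d-1}$) and the $\Aut(\Z_3)$-action cannot interchange them, they are inequivalent, completing the count.
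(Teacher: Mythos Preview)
Your approach is the same as the paper's, and the overall structure is sound, but there are two concrete slips you should fix.

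First, Eq.~(\ref{Z/3Z3}) does not come from Eq.~(\ref{hkshift}). For odd $G$ with $\epsilon\equiv1$ and $\eta_g$ constant, Eq.~(\ref{hkshift}) is automatically satisfied and gives no constraint. The relation $x_1x_2+x_1\overline y+x_2y=0$ is instead an instance of the orthogonality Eq.~(\ref{O2'}) with $g\neq g'$ and $k\neq0$ (e.g.\ $g=0$, $g'=1$, $k=2$). Your derivation of the case split $(x_1-x_2)(y-\overline y)=0$ from it is correct.

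Second, the third triple in Eq.~(\ref{Z/3Z1}) does \emph{not} have $x_1+x_2=0$; it has $x_0=0$ and $x_1+x_2=(3-\sqrt{13})/2=-1/d\neq0$. So your parenthetical elimination argument is wrong. The correct way to kill it in the $y\in\R$ case is simply to compute $y=-x_1x_2/(x_1+x_2)$ and check that $y^2\neq 1-x_1^2-x_2^2$ for that triple; both quantities are explicit and the check is immediate. (Alternatively, note that the first triple has $x_1=x_2$ so only the $y\notin\R$ branch applies to it, and the third triple fails the compatibility just described, leaving only the second.)

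A minor point: your explanation of the $\eta\neq1$ contradiction is garbled. The clean version is that $x_0=y=0$ forces, via Eq.~(\ref{O2'}) with $g=g'$, both $x_1^2+x_2^2=1-1/d$ (from $k=0$) and $x_1^2+x_2^2=1$ (from $k=1$), which is impossible.
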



\subsection{$G=\Z_4$} When $G=\Z_4$, we have $d=2+\sqrt{5}$, and 
we may assume that  $\epsilon_{1}(3)=\epsilon_3(1)=\epsilon$, $\epsilon_2(g)=\epsilon^g$ with $\epsilon\in \{1,-1\}$, and $\epsilon_h(g)=1$ otherwise. 
All the solutions of Eq.(\ref{Deg1})-(\ref{Deg3}) satisfy $x_{g,h}=x_{0,h}$, and they are as follows up to group automorphism: 
\begin{equation}\label{Z/4Z1}
(x_0,x_1,x_2,x_3)=\left\{
\begin{array}{ll}
( \frac{5 - \sqrt{5}}{4},  \frac{1 - \sqrt{5}}{4},\frac{1 - \sqrt{5}}{4},\frac{1 - \sqrt{5}}{4}) , &\quad  \\
(\frac{ 2 - \sqrt{5}}{2}, 
\frac{1 - \sqrt{5} + \sqrt{2(-1+\sqrt{5})}}{4}, 
\frac{1}{2}, 
\frac{1 - \sqrt{5} - \sqrt{2 (-1+\sqrt{5})}}{4}). &\quad
\end{array}
\right.
\end{equation}
where $x_{g}=x_{h,g}$. 
Since $x_0\neq 0$, we have $\eta_g=1$ for any $g\in G$. 

Let $y=A_0(1,2)$. 
Then 
$$A_0=\left(
\begin{array}{cccc}
x_0 &x_1 &x_2 &x_3  \\
x_1 &x_3 &y &\epsilon y  \\
x_2 &\overline{y} &x_2 &y \\
x_3 &\epsilon \overline{y} &\overline{y} &x_1 
\end{array}
\right),\quad
A_1=\left(
\begin{array}{cccc}
x_0 &x_1 &x_2 &x_3  \\
x_1 & x_3 &y & y  \\
x_2 &\overline{y} &\epsilon x_2 &\epsilon y \\
x_3 & \overline{y} &\epsilon \overline{y} &\epsilon x_1 
\end{array}
\right),
$$
$$A_2=\left(
\begin{array}{cccc}
x_0 &x_1 &x_2 &x_3  \\
x_1 &x_3 &\epsilon y & y  \\
x_2 &\epsilon \overline{y} &x_2 &\epsilon y \\
x_3 & \overline{y} &\epsilon \overline{y} &x_1 
\end{array}
\right),\quad
A_3=\left(
\begin{array}{cccc}
x_0 &x_1 &x_2 &x_3  \\
x_1 &\epsilon x_3 &\epsilon y & y  \\
x_2 &\epsilon \overline{y} &\epsilon x_2 & y \\
x_3 & \overline{y} & \overline{y} & x_1 
\end{array}
\right),
$$
and Eq.(\ref{O2}) are equivalent to 
\begin{equation}\label{Z/4Z2}
x_1^2+x_3^2+2|y|^2=2x_2^2+2|y|^2=1,
\end{equation}
\begin{equation}\label{Z/4Z3}
x_1y+x_3\overline{y}+x_1x_3+\epsilon |y|^2=0,
\end{equation}
\begin{equation}\label{Z/4Z4}
(1+\epsilon)x_2(y+\overline{y})=0,
\end{equation}
\begin{equation}\label{Z/4Z5}
(y+\epsilon \overline{y})(x_1+\epsilon x_3)=0,
\end{equation}
\begin{equation}\label{Z/4Z6}
2x_2^2+\epsilon(y^2+\overline{y}^2)=0.
\end{equation}
Eq.(\ref{Z/4Z3}) implies that either $x_1=x_3$ or $y\in \R$. 

Assume that $x_1=x_3$. 
Then $x_0=1-\frac{1}{d-1}$, and $x_1=x_2=x_3=-\frac{1}{d-1}$, and so Eq.(\ref{Q2}) is satisfied. 
Solving Eq.(\ref{Z/4Z1})-(\ref{Z/4Z6}), we get $\epsilon=-1$ and 
$$y=-\frac{1}{2}\pm i\frac{1}{\sqrt{2(d-1)}}.$$
We can show that this satisfies the whole equations. 

Assume now that $y$ is real. 
Then Eq.(\ref{Z/4Z1})-(\ref{Z/4Z6}) allow only the second solution of Eq.(\ref{Z/4Z1}), and 
they imply $\epsilon=-1$ and $y=\frac{-1}{2}$. 
This is the accompanying solution of the previous one.

\begin{theorem} There exist exactly two solutions, up to gauge equivalence and group automorphism, 
of (\ref{cocycle})-(\ref{AAA}) for $G=\Z_4$, and they satisfy $\eta_g=1$ and $\epsilon_1(2)=-1$. 
They are mutually accompanying solutions, and only one of them satisfies Eq.(\ref{Q1}), which is given by  
$$A_0=\frac{1}{d-1}\left(
\begin{array}{cccc}
d-2 &-1 &-1 &-1  \\
-1 &-1 &z &-z  \\
-1 &\overline{z} &-1 &z  \\
-1&-\overline{z} &\overline{z} &-1 
\end{array}
\right),\quad 
A_1=\frac{1}{d-1}\left(
\begin{array}{cccc}
d-2 &-1 &-1 &-1  \\
-1 &-1 &z &z  \\
-1 &\overline{z} &1 &-z  \\
-1&\overline{z} &-\overline{z} &1 
\end{array}
\right),
$$
$$A_2=\frac{1}{d-1}\left(
\begin{array}{cccc}
d-2 &-1 &-1 &-1  \\
-1 &-1 &-z &z  \\
-1 &-\overline{z} &-1 &-z  \\
-1&\overline{z} &-\overline{z} &-1 
\end{array}
\right),\quad
A_3=\frac{1}{d-1}\left(
\begin{array}{cccc}
d-2 &-1 &-1 &-1  \\
-1 &1 &-z &z  \\
-1 &-\overline{z} &1 &z  \\
-1&\overline{z} &\overline{z} &-1 
\end{array}
\right),
$$
$$z=-\frac{d-1}{2}\pm i\sqrt{\frac{d-1}{2}}=-\frac{1+\sqrt{5}}{2}\pm i\sqrt{\frac{1+\sqrt{5}}{2}}.$$
This gives a subfactor of index $3+\sqrt{5}$ whose canonical endomorphism is $\id\oplus \rho$ (see \cite{MP15-1}).  
\end{theorem}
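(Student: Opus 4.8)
The strategy is to apply the general classification machinery built up in Sections \ref{PEGHC}--\ref{classifiction} to the concrete group $G=\Z_4$, reducing the statement to an explicit (finite) computation that has been organized so that only a handful of polynomial equations actually need to be solved. Concretely, I would first invoke the reduction scheme described at the start of Section 9: rather than attacking the full system Eq.(\ref{cocycle})--Eq.(\ref{AAA}) directly, solve the ``degree one'' system Eq.(\ref{Deg1})--(\ref{Deg3}) for the real quantities $x_{g,h}=\epsilon_{-h}(g)A_g(-h,-h)$. Here $d=2+\sqrt5$, $G_2=\{0,2\}$, and the cocycle $\epsilon_h(g)$ has only two possibilities up to gauge: the trivial one and the one with $\epsilon_1(2)=\epsilon_1(3)=\epsilon_3(1)=-1$, $\epsilon_2(g)=(-1)^g$. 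One checks $x_{g,h}=x_{0,h}$ for all solutions (this falls out of Eq.(\ref{Deg2}), Eq.(\ref{Deg3}) once one exploits the $\Z_4$-symmetry), and one obtains exactly the two families listed in Eq.(\ref{Z/4Z1}). Since $x_0\neq 0$ in both, Eq.(\ref{R1}) plus the relation $\eta_g x_{g,0}=x_{g,0}$ forces $\eta_g=1$ everywhere.

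Next I would set $y=A_0(1,2)$ and use Eq.(\ref{2hshift}), Eq.(\ref{hkshift}), Eq.(\ref{CC}) and Eq.(\ref{R2}) — the ``shift'' and Frobenius-reciprocity relations — to express all of $A_1,A_2,A_3$ in terms of $x_0,x_1,x_2,x_3,y$ and $\epsilon$, producing the four explicit matrices displayed in the theorem statement. The orthogonality relations Eq.(\ref{O2}) then translate into the system Eq.(\ref{Z/4Z2})--(\ref{Z/4Z6}). From Eq.(\ref{Z/4Z3}) one gets a dichotomy: either $x_1=x_3$ or $y\in\R$. In the first case the first family of Eq.(\ref{Z/4Z1}) is forced, Eq.(\ref{Q2}) holds automatically ($x_1=x_2=x_3=-1/(d-1)$, $x_0=1-1/(d-1)$), and Eq.(\ref{Z/4Z2})--(\ref{Z/4Z6}) pin down $\epsilon=-1$ and $y=-\tfrac12\pm i(2(d-1))^{-1/2}$; in the second case only the second family survives and one gets $\epsilon=-1$, $y=-\tfrac12$. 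In both cases one must verify that these data satisfy the full equation Eq.(\ref{AAA}) (not merely the $p=q=x+y=0$ slice used so far); under Eq.(\ref{Q2}) this is handled by Lemma with Eq.(\ref{AA-AA}), and in the non-$Q$-system case one uses the strategy sketched in Section 9 together with Eq.(\ref{|A|1}). This verification is the genuine content and the main obstacle — it is the part the author reports having checked with Mathematica — whereas everything upstream is bookkeeping forced by the symmetry and the linear relations.

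Finally, to conclude the uniqueness-up-to-equivalence statement and the identification of the $Q$-system solution, I would invoke Theorem (the $\Gamma$-orbit classification) with $\Gamma=(H^2(\Z_4,\T)\times \Z_4/2\Z_4)\rtimes\Aut(\Z_4)$. Here $H^2(\Z_4,\T)=0$, $\Z_4/2\Z_4\cong\Z_2$, and $\Aut(\Z_4)\cong\Z_2$, and one checks that the action of $\Gamma$ together with gauge equivalence (by $\{\delta_g\}\subset\{1,-1\}$) identifies the various sign choices and the $\pm$ in $z$, so that there are exactly two orbits — the two mutually accompanying solutions. That one of them satisfies Eq.(\ref{Q1}) is immediate from $A_0(h,0)=\delta_{h,0}-1/(d-1)$ in the $x_1=x_3$ family, and Theorem (existence of the $3^G$ subfactor) together with Lemma \ref{vanishing} then yields the subfactor of index $3+\sqrt5$ with canonical endomorphism $\id\oplus\rho$; the match with the subfactor of \cite{MP15-1} is by comparison of principal graphs, which one reads off via Section \ref{subfactors}. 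I would write $z$ in the stated closed form by substituting $d=2+\sqrt5$ into $-\tfrac{d-1}{2}\pm i\sqrt{\tfrac{d-1}{2}}$.
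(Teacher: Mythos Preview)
Your proposal is correct and follows essentially the same route as the paper: solve the degree-one system Eq.(\ref{Deg1})--(\ref{Deg3}) to obtain Eq.(\ref{Z/4Z1}) and force $\eta_g=1$, parametrize the $A_g$ matrices by the single unknown $y=A_0(1,2)$ via the shift and Frobenius relations, reduce Eq.(\ref{O2}) to Eq.(\ref{Z/4Z2})--(\ref{Z/4Z6}), split on the dichotomy $x_1=x_3$ versus $y\in\R$ coming from the imaginary part of Eq.(\ref{Z/4Z3}), and then verify Eq.(\ref{AAA}) (with the $Q$-system case handled through Eq.(\ref{AA-AA})). Your closing discussion of the $\Gamma$-orbit count is slightly more explicit than what the paper writes in this subsection, but it is exactly the intended mechanism; one small slip is that in the paper's normalization $\epsilon_1(2)=1$ (it is $\epsilon_2(1)$ and $\epsilon_1(3)=\epsilon_3(1)$ that carry the sign $\epsilon$), though this does not affect the argument.
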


Note that the above solution satisfies $\epsilon_2(2)=1$, to which Theorem \ref{orbifold1} applies. 
Using the notation in Section \ref{orbiforld} with $G=\Z_4$ and $z=2$, we get 
$$[\trho^2]=[\id]\oplus 2[\trho]\oplus 2[\talpha\trho],$$
$$[\talpha\trho]=[\trho\talpha],\quad [\talpha^2]=[\id],$$
where we denote $\talpha=\talpha_1$ for simplicity. 
Since $\talpha^2=\Ad \lambda$ and $\talpha(\lambda)=\epsilon_2(1)\lambda=-\lambda$, 
the homomorphism $\Z_2\ni 1\mapsto [\alpha]\in \Out(M)$ has an non-trivial obstruction in $H^3(\Z_2,\T)$. 
Since $\id\oplus \trho$ has a $Q$-system, we get the following result. 

\begin{cor} There exists a subfactor of index $3+\sqrt{5}$ with one of the principal graph as in Figure 1 (see \cite{MP15-2}). 

\begin{figure}
\begin{picture}(100,50)(0,0)
\put(10,10){\makebox(0,0){$\bullet$}}
\put(20,10){\makebox(0,0){$\bullet$}}
\put(30,10){\makebox(0,0){$\bullet$}}
\put(40,20){\makebox(0,0){$\bullet$}}
\put(40,0){\makebox(0,0){$\bullet$}}
\put(50,10){\makebox(0,0){$\bullet$}}
\put(60,10){\makebox(0,0){$\bullet$}}
\put(70,10){\makebox(0,0){$\bullet$}}
\put(10,10){\line(1,0){10}}
\put(20,10){\line(1,0){10}}
\put(30,10){\line(1,1){10}}
\put(30,10){\line(1,-1){10}}
\put(40,20){\line(1,-1){10}}
\put(40,0){\line(1,1){10}}
\put(50,10){\line(1,0){10}}
\put(60,10){\line(1,0){10}}
\end{picture}
\caption{$3^{\Z_4}$/$\Z_2$}
\end{figure}
\end{cor}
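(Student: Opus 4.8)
The plan is to assemble this corollary entirely from the machinery already in place, with essentially no new computation. First I would recall the concrete solution for $G=\Z_4$ satisfying Eq.(\ref{Q1}) displayed in the preceding theorem, and in particular observe that it has $\epsilon_2(2)=1$, so that the hypotheses of Theorem \ref{orbifold1} are met with $z=2$ (one must check that $g\mapsto \epsilon_2(g)=\epsilon^g$ is a character, which is immediate, or invoke Lemma \ref{alpha} since all the $A_g(h,k)$ in that solution are nonzero). Applying Theorem \ref{orbifold1} to $P=M\rtimes_{\alpha_2}\Z_2$ then yields an endomorphism $\trho$ of $P$ with $[\trho]$ irreducible, $[\talpha_{\dot g}\trho]$ running over $\dot g\in G/\{0,2\}\cong\Z_2$, the fusion rule $[\trho^2]=[\id]\oplus 2[\trho]\oplus 2[\talpha\trho]$, and — crucially — part (4) of that theorem: since $\id\oplus\rho$ has a $Q$-system (Eq.(\ref{Q1}) holds), so does $\id\oplus\trho$.

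Next I would extract the subfactor. The $Q$-system for $\id\oplus\trho$ on $P$ produces, via the reconstruction of Section \ref{Reconstruction} / the $Q$-system formalism of \cite{L94}, a subfactor $N\subset P$ with $[\biota\iota]=[\id]\oplus[\trho]$; its index is $1+d(\trho)=1+d=3+\sqrt5$ since $d(\trho)=d(\rho)=d=2+\sqrt5$. This is the asserted index value. The principal graph on the $M$--$M$ (here $P$--$P$) side is read off from the fusion data: from $[\trho][\iota]$ one gets a vertex $\kappa$ with $[\trho\iota]=[\iota]+[\kappa]$, and then the multiplicity-$2$ appearances of $[\trho]$ and $[\talpha\trho]$ in $[\trho^2]$ translate, exactly as in the $3^n$-graph discussion but now with doubled edges, into the graph of Figure 1: a length-$3$ tail, then a double edge at the branch vertex $\kappa$ splitting into two length-$2$ arms. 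I would spell out the induction-reduction count to confirm the shape matches Figure 1 precisely, citing \cite{MP15-2} for the identification with the known subfactor.

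\textbf{Main obstacle.} The genuinely substantive point — and the one I expect to dwell on — is the claim about the $H^3(\Z_2,\T)$-obstruction, i.e.\ that the $\Z_2$ appearing here acts on $P$ with a nontrivial three-cocycle, which is what distinguishes this principal graph from the one that would arise from a naive $\Z_2$-equivariantization. Here I would argue as follows: $\talpha=\talpha_1$ satisfies $\talpha^2=\Ad\lambda$ on $P$, and $\talpha(\lambda)=\epsilon_2(1)\lambda=-\lambda$ because in the displayed $\Z_4$-solution $\epsilon_2(g)=(-1)^g$; hence the obvious attempt to realize $\Z_2\ni 1\mapsto[\talpha]$ by a genuine order-two automorphism fails, and the standard computation of the associator for the generated $\Z_2$-kernel (as in subsection on $G$-kernels) gives the nontrivial class in $H^3(\Z_2,\T)\cong\Z_2$. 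This is really the only step requiring care; everything else is bookkeeping with the fusion rules and dimensions. I would keep the writeup short, leaning on Theorem \ref{orbifold1} and the references \cite{MP15-1}, \cite{MP15-2} for the final identification, and not re-derive the $Q$-system or the graph combinatorics from scratch.
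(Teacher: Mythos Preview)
Your approach is essentially the paper's: verify $\epsilon_2(2)=1$, apply Theorem \ref{orbifold1} with $z=2$ to get the fusion rule $[\trho^2]=[\id]\oplus 2[\trho]\oplus 2[\talpha\trho]$, observe that $\id\oplus\trho$ inherits a $Q$-system from $\id\oplus\rho$ (part (4) of that theorem), and record the $H^3(\Z_2,\T)$ obstruction via $\talpha^2=\Ad\lambda$, $\talpha(\lambda)=-\lambda$. The paper does exactly this, in fact more tersely than your outline.

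There is, however, a concrete error in your sketch of the principal graph. You write $[\trho\iota]=[\iota]\oplus[\kappa]$ with a single $\kappa$ and interpret the multiplicity $2$ in $[\trho^2]$ as a doubled edge at $\kappa$. But $\dim(\trho\iota,\trho\iota)=\dim(\trho^2,\iota\biota)=\dim([\id]\oplus2[\trho]\oplus2[\talpha\trho],[\id]\oplus[\trho])=3$, and since $\dim(\trho\iota,\iota)=1$, the remaining part of $[\trho\iota]$ splits as $[\kappa_1]\oplus[\kappa_2]$ with two \emph{distinct} irreducible $M$--$N$ sectors, each joined to $\trho$ by a single edge. One then checks that $\{[\talpha\kappa_1],[\talpha\kappa_2]\}=\{[\kappa_1],[\kappa_2]\}$, so the same two sectors are adjacent to $\talpha\trho$; this is the diamond in Figure~1, not a double edge with two separate arms. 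Your induction--reduction count would reveal this once carried out, but the heuristic you state beforehand is misleading and does not match the figure.
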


\subsection{$G=\Z_2\times \Z_2$}
We use the parametrization $G=\Z_2\times \Z_2=\{0,a,b,c\}$ of the elements of the group $G$, and   
fix the order $0,a,b,c$, to express every function $f$ on $G\times G$ by the matrix 
$$f=\left(
\begin{array}{cccc}
f(0,0) &f(0,a) &f(0,b) &f(0,c)  \\
f(a,0) &f(a,a) &f(a,b) &f(a,c)  \\
f(b,0) &f(b,a) &f(b,b) &f(b,c)  \\
f(c,0) &f(c,a) &f(c,b) &f(c,c) 
\end{array}
\right). 
$$
There exists a unique solution of Eq.(\ref{Deg1})-(\ref{Deg3}), which is 
$$x_{g,0}=\frac{\sqrt{5}-1}{2}=1-\frac{1}{d-1},$$
$$x_{g,a}=x_{g,b}=x_{g,c}=\frac{1-\sqrt{5}}{4}=-\frac{1}{d-1}.$$
Since $x_{g,0}\neq 0$, we have $\eta_g=1$, and (\ref{2hshift}) with $g=0$ shows that $\chi(h,g):=\epsilon_h(g)$ is a bicharacter. 
Therefore we can put 
$$A_0=\frac{1}{d-1}\left(
\begin{array}{cccc}
d-2 &-1 &-1 &-1  \\
-1 &-1&z &\overline{z}  \\
-1 &\overline{z} &-1&z  \\
-1 &z &\overline{z} &-1 
\end{array}
\right).
$$

\begin{theorem} The solutions of (\ref{cocycle})-(\ref{AAA}) for $G=\Z_2\times \Z_2$ are 
$$A_0=\frac{1}{d-1}\left(
\begin{array}{cccc}
d-2 &-1 &-1 &-1  \\
-1 &-1&z &\overline{z}  \\
-1 &\overline{z} &-1&z  \\
-1 &z &\overline{z} &-1 
\end{array}
\right),
$$
$$
A_a=\frac{1}{d-1}\left(
\begin{array}{cccc}
d-2 &-1 &-1 &-1  \\
-1 &1&\chi(a,b)z &\chi(a,c)\overline{z}  \\
-1 &\chi(a,b)\overline{z} &-\chi(b,a)&-z  \\
-1 &\chi(a,c)z &-\overline{z} &-\chi(c,a) 
\end{array}
\right),
$$
$$A_b=\frac{1}{d-1}\left(
\begin{array}{cccc}
d-2 &-1 &-1 &-1  \\
-1 &-\chi(a,b)&\chi(b,a)z &-\overline{z}  \\
-1 &\chi(b,a)\overline{z} &1&\chi(b,c)z  \\
-1 &-z &\chi(b,c)\overline{z} &-\chi(c,b) 
\end{array}
\right),
$$
$$A_c=\frac{1}{d-1}\left(
\begin{array}{cccc}
d-2 &-1 &-1 &-1  \\
-1 &-\chi(a,c)&-z &\chi(c,a)\overline{z}  \\
-1 &-\overline{z} &-\chi(b,c)&\chi(c,b)z  \\
-1 &\chi(c,a)z &\chi(c,b)\overline{z} &1 
\end{array}
\right),
$$
$$\chi=\left(
\begin{array}{cccc}
1 &1 &1 &1  \\
1 &-1 &s &-s  \\
1 &-s &-1 &s  \\
1 &s &-s &-1 
\end{array}
\right),
$$
where $s\in \{1,-1\}$, $z\in \{\sqrt{d},-\sqrt{d}, i\sqrt{d},-i\sqrt{d}\}$. 
By gauge transformations and group automorphisms, those solutions with $z\in \R$ (respectively $z\in i\R$) are transformed to each other. 
Up to gauge transformations, group automorphisms, and $H^2(G,\T)$-actions, there is only one equivalence class of the 
solutions. 

Let $\cC$ be the generalized Haagerup category generated by $\rho$. 
Then $\Out(\cC)$ is isomorphic to the alternating group $\fA_4=(\Z_2\times \Z_2)\rtimes \Z_3$. 
\end{theorem}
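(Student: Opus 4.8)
The plan is to verify the listed family of solutions by a brute-force finite check and then to read off the orbit structure and the outer automorphism group from the general machinery already established.

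\textbf{Step 1: Parametrize and reduce.} First I would use the already-established facts that for $G=\Z_2\times\Z_2$ there is a unique solution of Eq.(\ref{Deg1})--(\ref{Deg3}), that $x_{g,0}\ne 0$ forces $\eta_g=1$ for all $g$, and that Eq.(\ref{2hshift}) with $g=0$ forces $\chi(h,g):=\epsilon_h(g)$ to be a bicharacter on $G\times G$. Since $G=G_2$, every bicharacter is symmetric up to the sign $s$ recorded in the displayed matrix $\chi$; the only two choices of (nondegenerate, by Lemma \ref{fixed-point-free}-type reasoning, or rather by the requirement that the $\epsilon$ be nontrivial) alternating bicharacters correspond to $s=\pm1$, and these are swapped by the automorphism of $G$ interchanging two nonzero elements. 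Then $A_0$ is forced to the displayed form up to one complex scalar $z=A_0(a,b)$, and the shift relations Eq.(\ref{2hshift}), (\ref{hkshift}), together with Eq.(\ref{R2}) and Eq.(\ref{CC}), express $A_a,A_b,A_c$ in terms of $A_0$, $\chi$, and the sign $s$, giving precisely the four displayed matrices. This reduces the whole system to finitely many scalar constraints on $z\in\C$ with $|z|$ already pinned down by Eq.(\ref{|A|}) (via Remark \ref{|A|2} and the $Q$-system, or directly via Eq.(\ref{O2}) which gives $|z|^2=d$).

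\textbf{Step 2: Solve for $z$ and check the full system.} Next I would substitute into the remaining equations---Eq.(\ref{O1}), Eq.(\ref{O2}) off the already-used cases, and the cubic constraint Eq.(\ref{AAA})---which after the reduction become a short list of polynomial identities in $z,\bar z$ with $|z|^2=d$. The orthogonality-type relations immediately force $z^2\in\R$ (compare the $G=\Z_4$ analysis where $y^2+\bar y^2$ appears), so $z\in\{\pm\sqrt d,\pm i\sqrt d\}$; then one checks that all four of these values, for each $s\in\{1,-1\}$, actually satisfy Eq.(\ref{AAA}). This is a routine but somewhat lengthy Mathematica-style verification of the kind the paper already alludes to for $|G|\le 6$; I would present it as ``direct computation shows that the displayed $(\epsilon,\eta,A)$ satisfy Eq.(\ref{cocycle})--Eq.(\ref{AAA}), and conversely any solution is of this form.''

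\textbf{Step 3: Orbit count.} Then I would determine the $\Gamma$-orbits, where $\Gamma=(H^2(G,\T)\times G/2G)\rtimes\Aut(G)$. Here $G/2G=0$ since $G=G_2=2G$ is trivial... wait, $2G=\{0\}$ so $G/2G=G\cong\Z_2\times\Z_2$; however the action of $G/2G$ on solutions is via $\alpha_q\otimes\cdot\otimes\alpha_{-q}$ and one checks (as in the general discussion after Lemma \ref{translation}) that this acts through inner automorphisms, hence trivially on equivalence classes but I should track it for $\Out$. We have $H^2(\Z_2\times\Z_2,\T)\cong\Z_2$ generated by the nontrivial antisymmetric bicharacter $b_\omega$, and $\Aut(G)\cong \fS_3$. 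By Theorem \ref{H2action}, the nontrivial class of $H^2(G,\T)$ acts on $(\epsilon_h(g),A_g(h,k))$ by multiplying $\epsilon_z(g)$ by $b_\omega(g,z)$ and correspondingly twisting $A$; a short computation shows this sends the $s=1$ family to the $s=-1$ family (or relates $z\in\R$ to $z\in i\R$, whichever bookkeeping is cleaner). Gauge transformations by $\{\delta_g\}$ relate the four sign choices of $z$ within a real (resp. imaginary) class, and group automorphisms do the rest. The conclusion is that all solutions lie in a single $\Gamma$-orbit.

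\textbf{Step 4: Compute $\Out(\cC)$.} Finally, by the theorem computing $\Out(\cC)$ as the stabilizer $\Gamma_0\subset\Gamma$ of the gauge-equivalence class (valid since $A_g(h,k)\ne0$ for all $g,h,k$ here, as $|z|^2=d\ne0$ and the other entries are nonzero), I would compute the stabilizer explicitly. Since $\Gamma$ has order $|H^2(G,\T)|\cdot|G/2G|\cdot|\Aut G| = 2\cdot 4\cdot 6 = 48$ and the orbit has size $48/|\Gamma_0|$, and the number of gauge-equivalence classes of solutions is $2$ (namely $z\in\R$ up to gauge/the sign $s$, and $z\in i\R$), wait---I need to recount carefully: the gauge classes are indexed by $(s, z^2\text{-sign})$ giving $4$ classes, but $G/2G$-action and some automorphisms may already identify some, so the orbit size is $4$ under the part of $\Gamma$ acting nontrivially, forcing $|\Gamma_0|=12$. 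One then identifies $\Gamma_0$ with $\fA_4=(\Z_2\times\Z_2)\rtimes\Z_3$ by exhibiting the $\Z_2\times\Z_2$ (from $G/2G$ together with $H^2(G,\T)$, or from the cyclic-permutation automorphisms composed with gauge) and the order-$3$ cyclic automorphism of $G$ permuting $a,b,c$, and checking none of the transpositions in $\fS_3\subset\Aut(G)$ stabilize the class. I expect the main obstacle to be exactly this last bookkeeping: keeping straight which combinations of $H^2$-twist, $G/2G$-reparametrization, gauge, and graph automorphism fix a given solution, and verifying the semidirect product structure gives $\fA_4$ rather than $\Z_2\times\Z_2\times\Z_3$ or $\Z_{12}$; the verification in Step 2 is long but mechanical, whereas Step 4 requires care.
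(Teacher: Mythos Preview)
Your overall strategy matches the paper's: reduce to a finite problem, verify the solutions by direct computation, and then compute the $\Gamma$-orbit and stabilizer. Steps 1 and 2 are fine and essentially identical to what the paper does (which it summarizes as ``routine work'').

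However, your Step 3 contains a genuine error that propagates into Step 4. You write that the $G/2G$-action ``acts through inner automorphisms, hence trivially on equivalence classes,'' citing the discussion after Lemma \ref{translation}. That discussion says the opposite: it is $2G$, not $G/2G$, whose action comes from inner automorphisms $\alpha_q\otimes\cdot\otimes\alpha_{-q}$. For $G=\Z_2\times\Z_2$ one has $2G=\{0\}$, so this observation is vacuous, and the $G/2G=G$ action must be computed directly from Lemma \ref{translation}. The paper carries this out: it shows $\epsilon'_h(g)=\epsilon_h(g)$ (so $p$ fixes $s$) and $A'_0(a,b)=\pm A_0(a,b)$ (so $p$ fixes the gauge class of $z$). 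The conclusion you want---trivial action---is correct, but your justification is wrong.

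This matters for Step 4, where you speculate that the $\Z_2\times\Z_2$ inside the stabilizer comes ``from $G/2G$ together with $H^2(G,\T)$.'' In fact the paper's computation shows $H^2(G,\T)$ does \emph{not} stabilize: the nontrivial class acts by $s\mapsto -s$ and $z\mapsto iz$ simultaneously, hence moves the gauge class. The stabilizer is generated by $G/2G\cong\Z_2\times\Z_2$ (acting trivially, as just discussed) and the cyclic $\Z_3\subset\Aut(G)=\fS_3$ of even permutations (which the paper checks leaves the solution invariant). The odd permutations act by $s\mapsto -s$, $z\mapsto\bar z$ and hence do not stabilize. The nontrivial semidirect product structure $(\Z_2\times\Z_2)\rtimes\Z_3=\fA_4$ is then inherited from the natural action of $\Aut(G)$ on $G/2G=G$ inside $\Gamma$, not something you need to verify separately. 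With these corrections your orbit count ($|\Gamma|=48$, four gauge classes, stabilizer of order $12$) is right.
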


\begin{proof} It is routine work to obtain the solutions. 
The gauge transformations can switch only $z$ and $-z$ and leave $s$ invariant. 
Note that the automorphism group of $\Z_2\times \Z_2$ is the permutation group $\fS_3$. 
The even permutations leave the solutions invariant, and odd permutations switch $s$ and $-s$ and switch $z$ and $\overline{z}$. 
 
The non-trivial element of $H^2(\Z_2\times \Z_2,\T)\cong \Z_2$ is represented by a cocycle $\omega\in Z^2(\Z_2\times \Z_2,\T)$ 
given as follows 
$$\omega=\left(
\begin{array}{cccc}
1 &1 &1 &1  \\
1 &1 &i &-i  \\
1 &-i&1 &i  \\
1 &i &-i&1 
\end{array}
\right),
$$
which satisfies $\omega(g,h)\omega(h,g)=1$. 
Thus in Theorem \ref{H2action}, we may choose $\mu(g)=1$, and the action of $H^2(\Z_2\times \Z_2,\T)$ on the solutions 
is given by $\epsilon_h(g)\mapsto \epsilon_h(g)b_\omega(g,h)$ and $A_g(h,k)\mapsto A_g(h,k)\overline{\omega(g+k,h)\omega(h,g)}$, 
where 
$$b_\omega=\left(
\begin{array}{cccc}
1 &1 &1 &1  \\
1 &1 &-1 &-1  \\
1 &-1 &1 &-1  \\
1 &-1 &-1 &1 
\end{array}
\right).
$$
Thus the action of $[\omega]$ affects the parameters as $s\mapsto -s$ and $z\mapsto iz$ up to gauge transformation. 

Finally we compute the action of $G/2G=G$. 
We use the notation in Lemma \ref{translation}. 
Note that $\chi(g)$ is identified with $\chi(g,p)$.  
For the action of $p$ on $s$, we have 
$$\epsilon'_h(g)=\epsilon_h(p+g)\chi(h)=\chi(h,p+g)\chi(h,p)=\chi(h,g)=\epsilon_h(g),$$
which shows that the action of $p$ on $s$ is trivial. 
Recall that $\zeta(g)$ is a square root of $\chi(g,p)$. 
Thanks to Lemma \ref{translation}, we have 
$$A'_0(a,b)=A_p(a,b)\zeta(p+c)\zeta(p+a)\overline{\zeta(p+b)\zeta(p)\chi(a,p)},$$
and 
\begin{align*}
\lefteqn{(\zeta(p+c)\zeta(p+a)\overline{\zeta(p+b)\zeta(p)\chi(a,p)})^2} \\
 &=\chi(p+c,p)\chi(p+a,p)\chi(p+b,p)\chi(p,p)=\chi(0,p)=1.
\end{align*}
This implies $A'_0(a,b)=\pm A_0(a,b)$ for any $p\in G$, which shows that $p$ acts on $z$ trivially up to gauge transformation. 

The above computation shows that the group
$$\Gamma =(H^2(G,\T)\times G/2G)\rtimes \Aut(G)=\Z_2\times ((\Z_2\times \Z_2)\rtimes \fS_3)$$
acts on the gauge equivalence classes of the solutions of Eq.(\ref{Deg1})-(\ref{Deg3}) transitively, and the point stabilizer is 
$$(\Z_2\times \Z_2)\rtimes \Z_3,$$
which is isomorphic to the alternating group $\fA_4$. 
\end{proof}

Pinhas Grossman is the first to obtain the outer automorphism group $\Out(\cC)\cong \fA_4$. 
He also determined that the order of the Bauer-Picard group of $\cC$ is 360. 

Let $\theta\in \Aut(G)$ be given by $\theta(a)=b$, $\theta(b)=c$, $\theta(c)=a$. 
Then the above solutions are invariant under $\theta$, to which Theorem \ref{orbifold2} applies. 
We use the notation in Section \ref{orbiforld} for this $\theta$. 
Then 
$$\Gamma=\hat{G}\rtimes_\theta \Z_3\cong (\Z_2\times \Z_2)\rtimes \Z_3\cong \fA_4.$$   
For simplicity, we denote $[\tsigma]=[\tsigma_1]$. 
Then
$$[\tsigma^2]=[\id]\oplus [\hbeta]\oplus[\hbeta^2]\oplus 2[\tsigma].$$
Thus we get the following fusion rule: 
$$[\trho^2]=[\id]\oplus [\trho]\oplus [\tsigma\trho],$$
$$[\tsigma\trho][\trho]=[\tsigma]\oplus[\tsigma\trho]\oplus[\tsigma^2][\trho]=[\tsigma]\oplus 
3[\tsigma\trho]\oplus [\trho]\oplus[\hbeta\trho]\oplus[\hbeta^2\trho].$$
Since $\id\oplus \trho$ has a $Q$-system, we get the following result first shown by Morrison-Penneys \cite{MP15-1}: 

\begin{cor} There exists a subfactor for the 4442 graph. 
\end{cor}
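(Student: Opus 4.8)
The plan is to invoke the general reconstruction and orbifold machinery of Sections \ref{Reconstruction} and \ref{orbiforld} applied to the explicit $\Z_2\times\Z_2$ solution just exhibited. First I would fix the generalized Haagerup category $\cC$ with $G=\Z_2\times\Z_2$ given by the matrices $A_g$ above, realized (via the construction of Section \ref{Reconstruction}) as endomorphisms of the hyperfinite type III$_1$ factor $M$, together with a standard lifting $[\rho,\alpha]$ and the isometries $S\in(\id,\rho^2)$, $T_g\in(\alpha_g\rho,\rho^2)$. Since the solution satisfies Eq.(\ref{Q1}) (indeed $A_0(h,0)=\delta_{h,0}-\tfrac{1}{d-1}$, and by Eq.(\ref{Q2}) the same holds for all $g$), the object $\id\oplus\rho$ carries a $Q$-system, and the associated $3^{\Z_2\times\Z_2}$ subfactor exists.

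Next I would apply the equivariantization construction of Theorem \ref{orbifold2} with the order-$3$ automorphism $\theta\in\Aut(G)$ cycling $a\mapsto b\mapsto c\mapsto a$, which leaves the solution $(\epsilon_h(g),\eta_g,A_g(h,k))$ invariant (this invariance is exactly what was checked in the proof of the preceding theorem). Thus $\beta$ on $\cO_{n+1}$ defined by $\beta(S)=S$, $\beta(T_g)=T_{\theta(g)}$ commutes with $\rho$ and satisfies $\beta\alpha_g=\alpha_{\theta(g)}\beta$; passing to $M$ and then to the crossed product $P=M\rtimes_\beta\Z_3$, the endomorphism $\trho$ (with $\trho(\lambda)=\lambda$) generates a fusion category whose fusion rules are computed by Theorem \ref{orbifold2}. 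The $\theta$-orbit decomposition of $G$ is $O_0=\{0\}$, $O_1=\{a,b,c\}$, so there is a single nontrivial $\tsigma=\tsigma_1$ with $[\tsigma]=[\alpha_a]\oplus[\alpha_b]\oplus[\alpha_c]$, $l_1=1$, and Theorem \ref{orbifold2}(2) gives $[\trho^2]=[\id]\oplus[\tsigma_0\trho]\oplus[\tsigma_1\trho]=[\id]\oplus[\trho]\oplus[\tsigma\trho]$. The fusion rule $[\tsigma^2]=[\id]\oplus[\hbeta]\oplus[\hbeta^2]\oplus 2[\tsigma]$ (the group-like part being the representation category of $\Gamma=\hat G\rtimes_\theta\Z_3\cong\fA_4$, which has a $3$-dimensional irreducible) then yields, after multiplying the relation $[\trho^2]=[\id]\oplus[\trho]\oplus[\tsigma\trho]$ through by $[\trho]$ and by $[\tsigma\trho]$, the list of irreducibles and the multiplicities recorded above. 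Finally, Theorem \ref{orbifold2}(3) guarantees that $\id\oplus\trho$ still has a $Q$-system, since $(\rho,\rho^2)=(\trho,\trho^2)=\C T_0$; this $Q$-system produces the desired finite-depth subfactor.

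The remaining step is to identify the principal graph of this subfactor with the 4442 graph. For this I would compute the induction--restriction graph between the $M-M$ system generated by $\trho$ and the intermediate $M-N$ system, exactly as in the algorithm of Section \ref{subfactors}: the principal graph is read off from $[\trho][\iota]=[\iota]\oplus[\kappa]$ together with the decomposition of $[\kappa\biota]$, and the $3^G$ shape of the $M-M$ side plus the above fusion rules forces the graph to have the $4442$ branching. One must check that all relevant endomorphisms are irreducible and mutually inequivalent and that the multiplicities come out to be $1$ — here I would either invoke Theorem \ref{mfree} (which shows the analogous multiplicities $n_j$ are $1$ in the $3^G$ setting, and whose proof adapts to the equivariantized situation via the same finite-dimensional commutator-space estimate $\dim[\cA,\cA]<6$) or simply match dimensions: $d(\trho)=d=1+\sqrt5$ wait — here $d=\tfrac{4+\sqrt{20}}{2}=2+\sqrt5$ — and the depth-count is rigidly constrained. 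I expect the main obstacle to be precisely this last identification: verifying that the combinatorics of the induction graph, together with the computed fusion rules and dimension constraints, pins down the principal graph as the $4442$ graph rather than some other graph with the same vertex count, and ruling out the existence of a nontrivial intermediate subfactor. Once that is done, the corollary follows, and one may cite \cite{MP15-1} for the independent verification that this is indeed the $4442$ subfactor.
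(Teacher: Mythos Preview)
Your approach is essentially identical to the paper's: apply Theorem \ref{orbifold2} with the order-$3$ automorphism $\theta$ to the $\Z_2\times\Z_2$ solution, obtain $[\trho^2]=[\id]\oplus[\trho]\oplus[\tsigma\trho]$ and the fusion rule for $[\tsigma\trho][\trho]$, and conclude from Theorem \ref{orbifold2}(3) that $\id\oplus\trho$ carries a $Q$-system.

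Your final paragraph, however, overcomplicates matters. The paper does not invoke Theorem \ref{mfree} here, nor does it carry out any commutator-space estimate; Theorem \ref{mfree} concerns the \emph{dual} principal graph of a $3^G$ subfactor and is irrelevant to this equivariantized construction. The identification of the principal graph as $4442$ is read off directly from the fusion rules you have already computed: the $Q$-system for $\id\oplus\trho$ gives $[\iota\biota]=[\id]\oplus[\trho]$, and then the decomposition of $[\trho^2]$ and $[\tsigma\trho][\trho]$ (together with the $\hat{\fA_4}$ structure on the invertible part) determines the induction--restriction graph without further work. The paper simply records these fusion rules and states the corollary, citing \cite{MP15-1} for the prior construction; there is no hidden multiplicity-one obstacle to overcome.
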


Recently the generalized Haagerup category for $\Z_2\times \Z_2$ attracts attention of specialists 
(see \cite{X16}) and we finish this section with the following statement. 

\begin{prop} Let $\cC$ be a C$^*$-quadratic category with $(\Z_2\times \Z_2,\id,1)$. 
If $\fc^{0,3}(\cC)$ vanishes (i.e. the associator for the group of invertible objects is trivial), 
so does $\fc^{1,2}(\cC)$, and in consequence $\cC$ is a generalized Haagerup category. 
\end{prop}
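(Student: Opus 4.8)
The plan is to show that for a C$^*$-quadratic category $\cC$ with $(\Z_2\times\Z_2,\id,1)$ and vanishing $\fc^{0,3}(\cC)$, the remaining cohomological obstruction $\fc^{1,2}(\cC)\in H^1(\Z_2,H^2(G,\T))$ must already be trivial for purely group-theoretic reasons. The key observation is that here $\tau=\id$, so $\Z_2$ acts trivially on $H^2(G,\T)$, and hence $H^1(\Z_2,H^2(G,\T))=H^2(G,\T)[2]=H^2(G,\T)$, since $H^2(\Z_2\times\Z_2,\T)\cong\Z_2$ is already 2-torsion. So a priori $\fc^{1,2}(\cC)$ could be the non-trivial class, and the content of the proposition is that this does not happen. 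First I would recall from the construction of $\fc^{1,2}$ in subsection \ref{GHC} that, once $v_{g,h}$ is chosen so that Eq.(\ref{C4}) holds, the invariant is the class of the 2-cocycle $\xi=\{\xi(g,h)\}$ defined by Eq.(\ref{C2}); since $\tau=\id$, Eq.(\ref{C2}) simplifies to $(w_g\otimes 1_h)\circ(1_g\otimes w_h)\circ(v_{g,h}\otimes 1_\rho)\circ w_{gh}^{-1}=\xi(g,h)\,1_\rho\otimes v_{g,h}$, i.e. the $w_g$ furnish a projective representation-type twist with cocycle $\xi$.

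Next I would introduce the parity/sign data on $\rho$. Since $\rho$ is self-dual and $G=G_2$ (every element is 2-torsion in $\Z_2\times\Z_2$), for each $g\in G$ the endomorphism $g\otimes\rho$ is again self-dual, and by Lemma \ref{SC} (using $\dim(\rho,\rho^2)=1$, which forces $\dim(g\rho,(g\rho)^2)=1$) each $g\otimes\rho$ is real, so its Frobenius–Schur sign is $+1$. The plan is to exploit this: choosing isometries $S\in(\mathbf{1},\rho^2)$ and $T_g\in(g\rho,\rho^2)$ fixed by the Frobenius-reciprocity anti-unitary $j_{1,g}$, exactly as in the standard-lifting analysis of Section \ref{PEGHC} (Lemma \ref{fixed}, Lemma \ref{ST}), pins down the $w_g$ up to signs $\{1,-1\}$ rather than up to arbitrary phases. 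Concretely I would realize $\cC$ inside $\End(M)$ for the hyperfinite type III$_1$ factor, build $\alpha_g$ from the invertible objects (the obstruction $\fc^{0,3}=0$ lets the $\alpha_g$ be chosen to be a genuine action, as in Lemma \ref{standard}), and then show that the family $\{W_g\in(\alpha_g\rho,\rho\alpha_g)\}$ can be normalized so that $W_g\alpha_g(W_h)V_{g,h}W_{gh}^{-1}=\rho(V_{g,h})$, i.e. Eq.(\ref{C6}) holds — which is precisely the statement $\fc^{1,2}(\cC)=0$. The mechanism is that the failure $\xi(g,h)$ is an obstruction valued in $H^2(G,\T)$, but once the $T_g$ are chosen real the only freedom left is rescaling $T_g\mapsto\delta_g T_g$ with $\delta_g\in\{1,-1\}$, so $\xi$ is forced to lie in the image of $B^2(G,\{1,-1\})$ inside $Z^2(G,\T)$, hence $[\xi]$ is the image of a class in $H^2(G,\Z_2)$; then an explicit check that the real structure makes $\xi(g,g)=1$ (using $\eta(g)=d\,\alpha_g(S^*\rho(\alpha_g(S)))$-type identities, cf. the Remark after Lemma \ref{fixed}) rules out the non-trivial class, since the non-trivial element of $H^2(\Z_2\times\Z_2,\T)$ is represented by the cocycle $\omega$ with $\omega(g,g)$ not all equal to $1$ on the generators — more precisely by the antisymmetric bicharacter argument, the non-trivial class has non-trivial $b_\omega$, which would force a $\rho$-level intertwiner relation incompatible with all $g\otimes\rho$ being real.

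An alternative, cleaner route I would try first is to mimic Lemma \ref{vanishing}: even though we are not assuming $\cC$ comes from a subfactor, the rigidity/C$^*$ structure alone lets us attach to $\rho$ and the would-be cocycle $\omega$ representing $\fc^{1,2}(\cC)$ a projective representation $\{V_g\}$ of $G$ on a Hilbert space of morphisms — here the natural candidate is $\cH=(\rho,g\rho\otimes\rho)$-type spaces or, following the structure of $[\rho^2]=[\mathbf 1]+\sum_g[g\rho]$, a space of dimension related to $|G|$. Taking determinants of the $V_g$ gives $(\dim\cH)[\omega]=0$ in $H^2(G,\T)$, and if $\dim\cH$ is odd this forces $[\omega]=0$ since $H^2(\Z_2\times\Z_2,\T)$ is 2-torsion. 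So the crux is a dimension count: I would compute the relevant multiplicity space using the fusion rules $[\rho]^2=[\mathbf 1]+\sum_{g\in G}[g\rho]$ and $[g][\rho]=[\rho][g]$, expecting to get something like $\dim\cH=|G|-1=3$, which is odd — exactly as in the $|G|-1$ count in the proof of Lemma \ref{vanishing}.

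The main obstacle will be making the "only signs remain" step fully rigorous at the level of an abstract C$^*$-fusion category rather than inside $\End(M)$: one must verify that the real (Frobenius–Schur $+1$) structure on each $g\otimes\rho$, combined with Eq.(\ref{C3}) relating $\omega$ and $\xi$ (and with $\tau=\id$, so $\omega(g,h,k)=\xi(h,k)\xi(gh,k)^{-1}\xi(g,hk)\xi(g,h)^{-1}$, i.e. $\xi$ is a genuine 2-cocycle since $\fc^{0,3}=0$), leaves only a $\{1,-1\}$-valued gauge freedom on the $w_g$ and that $\xi(g,g)$ is forced to be $1$. Equivalently, in the determinant approach, the obstacle is identifying the correct multiplicity space $\cH$ carrying an honest projective $G$-representation with cocycle $\omega$ and computing $\dim\cH$ to be odd. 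I expect the determinant/parity argument to be the shortest and to be the one I would write up, falling back to the explicit sign analysis only if the projective representation is harder to produce without a subfactor at hand.
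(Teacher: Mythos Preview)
Both of your proposed routes have genuine gaps, and the paper's actual argument is quite different from either.

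In your sign-analysis approach, the step ``the only freedom left is rescaling $T_g\mapsto\delta_gT_g$ with $\delta_g\in\{1,-1\}$, so $\xi$ is forced to lie in the image of $B^2(G,\{1,-1\})$'' is a non sequitur: restricting the \emph{gauge freedom} to signs only tells you that $\xi$ is well-defined modulo $\{\pm1\}$-valued coboundaries, not that $\xi$ itself is a coboundary. Moreover, the non-trivial class in $H^2(\Z_2\times\Z_2,\T)$ \emph{can} be represented by a $\{\pm1\}$-valued cocycle --- for instance $\omega((a_1,a_2),(b_1,b_2))=(-1)^{a_1b_2}$ --- and also by a normalized cocycle with $\omega(g,g)=1$ for every $g$ (as in the explicit $\omega$ written in the $\Z_2\times\Z_2$ subsection of the paper). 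So neither sign-valuedness nor the diagonal condition $\xi(g,g)=1$ separates the two classes; what distinguishes them is the antisymmetric bicharacter $b_\omega$, and you have not given an argument linking realness of $g\otimes\rho$ to triviality of $b_\omega$.

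In your determinant approach, you correctly isolate the obstacle but do not overcome it. The proof of Lemma~\ref{vanishing} hinges on the object $\kappa$ with $\alpha_g\circ\kappa=\kappa$, which exists because of the $3^G$ subfactor; there is no analogue of $\kappa$ in the bare fusion category, and you have not exhibited a concrete odd-dimensional morphism space carrying a projective $G$-representation with cocycle $\omega$. Hoping for ``something like $|G|-1=3$'' is not enough --- one has to actually produce the space and the action.

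The paper's proof takes a completely different route. It passes to the crossed product $M\rtimes_\alpha G$ and shows that the intertwiner algebra $(\iota\rho\biota,\iota\rho\biota)$ is isomorphic to the twisted group algebra $\C_\omega G$. If $[\omega]$ were non-trivial this algebra would be $M_2(\C)$, so $[\iota\rho\biota]=2[\sigma]$ for an irreducible $\sigma$; a fusion computation then yields $[\sigma]^2=\sum_{\chi\in\hat G}[\hat\alpha_\chi]+8[\sigma]$, i.e.\ $\sigma$ would generate a near-group category for $\Z_2\times\Z_2$ with multiplicity $8$. The contradiction comes from invoking an external non-existence result for such near-group categories (\cite[Theorem~10.24]{I15}). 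This is a considerably more indirect and structural argument than anything in your proposal, and it relies on input you did not anticipate.
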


\begin{proof} We may assume $\cC\subset \End_0(M)$ for a type III factor, and 
$$\cO(\cC)=\{[\alpha_g]\}_{g\in G}\sqcup \{[\alpha_g][\rho]\}_{g\in G},$$
where $G=\Z_2\times \Z_2$. 
Since $\fc^{0,3}(\cC)=0$, we may assume that $\alpha$ is a $G$-action, and there exist 
$U_g\in \cU(M)$ and $\omega\in Z^2(G,\T)$ satisfying 
$$\rho\circ \alpha_g=\Ad U_g\circ \alpha_g\circ \rho,\quad U_g\alpha_g(U_h)=\omega(g,h)U_{g+h}.$$
Let $\iota: M\hookrightarrow M\rtimes_\alpha G$ be the inclusion map, and let $\hat{\alpha}$ 
be the dual action of $\alpha$, which is an action of $\hat{G}$ on $M\rtimes_\alpha G$. 
We first compute the algebra structure of  $\cA:=(\iota\rho\biota,\iota\rho\biota)$. 

We set $\begin{xy}(0,5)*{g+h},(-5,-5)*{g},(5,-5)*{h},
{(0,3) \ar @{-} (0,0)},{(0,0) \ar @{-} (-5,-3)}, 
{(0,0) \ar @{-} (5,-3)}\end{xy}=1$, 
$\begin{xy}(0,-5)*{g+h},(-5,5)*{g},(5,5)*{h},
{(0,-3) \ar @{-} (0,0)},{(0,0) \ar @{-} (-5,3)}, 
{(0,0) \ar @{-} (5,3)}\end{xy}=1$,and choose 
\begin{xy}(0,5)*{\iota},(0,-5)*{\iota},(5,-5)*{g},
{(0,3) \ar @{-} (0,-3)},{(0,0) \ar @{-} (5,-3)}\end{xy} and 
\begin{xy}(0,5)*{\biota},(0,-5)*{\biota},(-5,-5)*{g},
{(0,3) \ar @{-} (0,-3)},{(0,0) \ar @{-} (-5,-3)}\end{xy} satisfying 
$$\begin{xy}(0,10)*{\iota},(0,-10)*{\iota},(10,-10)*{g+h},
{(0,8) \ar @{-} (0,-8)},{(0,6) \ar @{-} (10,-4)^g},
{(0,2) \ar @{-} (10,-4)_h},
{(10,-4) \ar @{-} (10,-8)}\end{xy}=
\begin{xy}(0,10)*{\iota},(0,-10)*{\iota},(10,-10)*{g+h},
{(0,8) \ar @{-} (0,-8)},{(0,0) \ar @{-} (10,-8)}\end{xy},\quad 
\begin{xy}(0,10)*{\biota},(0,-10)*{\biota},(-10,-10)*{g+h},
{(0,8) \ar @{-} (0,-8)},{(0,6) \ar @{-} (-10,-4)^g},
{(0,2) \ar @{-} (-10,-4)_h},
{(-10,-4) \ar @{-} (-10,-8)}\end{xy}=
\begin{xy}(0,10)*{\biota},(0,-10)*{\biota},(-10,-10)*{g+h},
{(0,8) \ar @{-} (0,-8)},{(0,0) \ar @{-} (-10,-8)}\end{xy}.$$

Let $X_g=\begin{xy}(0,0)*+[F]{U_g},
(-7,10)*{\iota},(0,10)*{g},(7,10)*{\biota},
(-7,-10)*{\iota},(0,-10)*{\rho},(7,-10)*{\biota},
{(-7,8) \ar @{-} (-7,-8)},{(7,8) \ar @{-} (7,-8)},
{(0,8) \ar @{-} (1,3)},{(-1,-3) \ar @{-} (0,-8)},
{(-7,6) \ar @{-} (-1,3)^g},{(1,-3) \ar @{-} (7,-6)_g}
\end{xy}\in \cA$. 
In view of the linear isomorphism 
$$\cA\cong (\biota\iota\rho,\rho\biota\iota)\cong 
((\bigoplus_{g\in G} \alpha_g)\rho,\rho(\bigoplus_{g\in G} \alpha_g)),$$
we see that $\{X_g\}_{g\in G}$ forms a basis of $\cA$. 
Since 
$$X_gX_h=
\begin{xy}(0,7)*+[F]{U_h},(0,-7)*+[F]{U_g},
{(-10,16) \ar @{-} (-10,-16)},{(10,16) \ar @{-} (10,-16)},
{(0,16) \ar @{-} (2,11)},{(-10,13) \ar @{-} (-2,11)},
{(-2,3) \ar @{-} (2,-3)},
{(-10,0) \ar @{-} (-2,-3)},{(2,3) \ar @{-} (10,0)},
{(0,-16) \ar @{-} (-2,-11)},{(10,-13) \ar @{-} (2,-11)},
\end{xy}=\begin{xy}(0,7)*+[F]{U_{h}},(0,-7)*+[F]{U_{g}},
{(-11,16) \ar @{-} (-11,-16)},{(11,16) \ar @{-} (11,-16)},
{(0,16) \ar @{-} (2,11)},{(-11,13) \ar @{-} (-2,11)},
{(-2,3) \ar @{-} (2,-3)},
{(-9,12) \ar @{-} (-2,-3)},{(2,3) \ar @{-} (9,-12)},
{(0,-16) \ar @{-} (-2,-11)},{(11,-13) \ar @{-} (2,-11)},
\end{xy} 
=\begin{xy}(0,0)*+[F]{U_g\alpha_g(U_h)},
{(-11,16) \ar @{-} (-11,-16)},{(11,16) \ar @{-} (11,-16)},
{(0,16) \ar @{-} (1,3)},{(-1,-3) \ar @{-} (0,-16)},
{(-11,12) \ar @{-} (-1,3)},{(1,-3) \ar @{-} (11,-12)}\end{xy}=\omega(g,h)X_{g+h},$$
and $\cA$ is isomorphic to the twisted group algebra $\C_\omega G$. 

We assume that the cohomology class $[\omega]$ is not trivial in $H^2(G,\T)$, and deduce contradiction. 
The algebra $\cA$ is noncommutative with $\dim \cA=4$, and it is isomorphic to the 2 by 2 matrix algebra. 
Thus there exists an irreducible $\sigma\in \End_0(M\rtimes_\alpha G)$ satisfying 
$[\iota\rho\biota]=2[\sigma]$. 
On the other hand, we have 
\begin{align*}
[\iota\rho\biota][\iota\rho\biota]&=[\iota\rho(\bigoplus_{g\in G}\alpha_g)\rho\biota]
=\sum_{g\in G}[\iota\alpha_g\rho^2\biota]=4[\iota\rho^2\biota]=4([\iota\biota]+\sum_{g\in G}[\iota\alpha_g\rho\biota]) \\
 &=4(\sum_{\chi\in \hat{G}})[\hat{\alpha}_\chi]+4[\iota\rho\biota])=4(\sum_{\chi\in \hat{G}})[\hat{\alpha}_\chi]+8[\sigma]),
\end{align*}
and 
$$[\sigma][\sigma]=\sum_{\chi\in \hat{G}}[\hat{\alpha}_\chi]+8[\sigma].$$
This means that $\sigma$ generates a near-group category with group $\hat{G}\cong \Z_2\times \Z_2$ and 
multiplicity 8. 
However, such a category does not exist (see \cite[Theorem 10.24]{I15}). 
\end{proof}
\section{Appendix} Let $(\epsilon_{h}(g),\eta_g,A_g(h,k))$ be a solution of the polynomial equations 
Eq.(\ref{cocycle})-Eq.(\ref{AAA}), and let $\cO_{n+1}$, $\alpha$, and $\rho\in \End(\cO_{n+1})$ be 
as in Section \ref{Reconstruction}. 
As in \cite{I93} and \cite[Appendix]{I15}, we introduce a weighted gauge action $\gamma$ on $\cO_{n+1}$ 
by $\gamma_t(S)=e^{2i t}S$, and $\gamma_t(T_g)=e^{i t}T_g$, and we denote by $\varphi$ the unique KMS state 
of $\gamma$. 
Since $\varphi\circ \rho$ is a KMS state for $\gamma$ too, we have $\varphi(\rho(x))=\varphi(x)$. 
We denote by $M$ the weak closure of $\cO_{n+1}$ in the GNS representation for $\varphi$. 
We still use the same symbols $\alpha,\rho, \varphi$ for their extension to $M$. 
Let $E_\rho(x)=\rho(S^*\rho(x)S)$ for $x\in M$, then $E_\rho$ is a $\varphi$-preserving 
normal conditional expectation from $M$ onto $\rho(M)$ (see \cite{I93}). 

If $\{h\in G_2|\; \chi_g(h)=1,\quad \forall g\in G\}\neq \{0\}$,  
the fusion category in $\End_0(M)$ generated by $\alpha$ and $\rho$ does not give 
the original data $(\epsilon_{h}(g),\eta_g,A_g(h,k))$ back because $\alpha$ is not a faithful action of $G$. 
In this section, we give a remedy for this problem by using a free product method. 
For the basics of free products of von Neumann algebras with general faithful normal states, 
the reader is referred to \cite{U11} and references therein.  

We set $M_0=M$, $\psi_0=\varphi$, $\rho_0=\rho$. 
Let $\tau$ be the trace on $\ell^\infty(G)$ defined by 
$$\tau(f)=\frac{1}{|G|}\sum_{g\in G}f(g).$$
We set $(M_i,\psi_i)=(\ell^\infty(G),\tau)$ for $i=1,2$. 
Let 
$$(\tilde{M},\psi)=(M_0,\psi_0)*(M_1,\psi_1)*(M_2,\psi_2),$$ 
be the free product von Neumann algebra, which is a factor of type III$_{\frac{1}{d}}$ 
(see \cite[Theorem 3.4]{U11}). 
Although $\tilde{M}$ is not hyperfinite, it is enough for our purpose of constructing a generalized Haagerup 
category having $(\epsilon_{h}(g),\eta_g,A_g(h,k))$. 
Let $\iota_i:M_i\to \tilde{M}$ be the embedding map for $i=0,1,2$. 
Whenever there is no possibility of confusion, we suppress $\iota_i$. 

Let $\theta :G\to \Aut(\ell^\infty(G))$ be the left translation. 
We define a $G$-action $\talpha:G\to \Aut(\tilde{M})$ by 
$\talpha_g=\alpha_g*\theta_g*\theta_{-g}$, 
which is an outer action of $G$ on $\tilde{M}$. 
We define unital homomorphisms $\rho_i:N_i\to \tilde{M}$ for $i=1,2$ by 
$\rho_1(\iota_1(f))=\iota_2(f)$ and 
$$\rho_2(\iota_2(f))=S\iota_1(f)S^*+\sum_{g\in G}T_g\talpha_g(\iota_2(f))T_g^*.$$
Then $\psi(\rho_1(f))=\psi_1(f)$ holds, and the KMS condition of $\psi$ implies 
$\psi(\rho_2(f))=\psi_2(f)$. 
Recall that we have $\psi(\rho_0(x))=\psi_0(x)$. 

\begin{lemma} With the above notation, the three von Neumann algebras $\rho_i(M_i)$, $i=0,1,2$,  
are free independent with respect to $\psi$. 
\end{lemma}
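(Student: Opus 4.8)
The plan is to verify freeness directly from the free product construction by exploiting the universal characterization of freeness in terms of alternating products of centered elements. Since $\tilde M$ is the free product $(M_0,\psi_0)*(M_1,\psi_1)*(M_2,\psi_2)$, the subalgebras $\iota_0(M_0)$, $\iota_1(M_1)$, $\iota_2(M_2)$ are free independent with respect to $\psi$ by construction. The maps $\rho_0$, $\rho_1$, $\rho_2$ do not send these subalgebras back into themselves, so one cannot conclude freeness of the $\rho_i(M_i)$ trivially; instead one must argue that $\rho_0=\rho$ maps $M_0$ into the von Neumann subalgebra generated by $M_0$ and $\{S,T_g\}$, that $\rho_1$ maps $M_1$ onto $\iota_2(M_2)$, and that $\rho_2$ maps $M_2$ into the algebra generated by $M_1$ and $\{S,T_g\}$ together with the $\talpha_g$-translates of $M_2$. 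The key structural point is that $S$ and the $T_g$ are isometries lying in $M_0=M$, so $\rho_2(M_2)$ sits inside the algebra generated by $M_0$, $M_1$, and $\talpha_g(M_2)$.

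First I would set up the bookkeeping: write a general element of $\rho_i(M_i)$ as an image under $\rho_i$ of a centered element (one with $\psi_i$-expectation zero), and reduce the claim to showing that any alternating word $\rho_{i_1}(x_1)\rho_{i_2}(x_2)\cdots \rho_{i_m}(x_m)$ with consecutive indices distinct and each $x_j$ centered has $\psi$-expectation zero. Then I would expand each $\rho_{i_j}(x_j)$ using the explicit formulas: $\rho_0(x)=\rho(x)$ involves only $M_0$-elements and the intertwiners $S,T_g\in M_0$; $\rho_1(\iota_1(f))=\iota_2(f)$ is literally an element of $\iota_2(M_2)$ and is centered iff $f$ is; and $\rho_2(\iota_2(f))=S\iota_1(f)S^*+\sum_g T_g\talpha_g(\iota_2(f))T_g^*$ is a sum of $M_0$-elements sandwiching $\iota_1(f)$ (an $M_1$-element) plus $M_0$-elements sandwiching $\talpha_g(\iota_2(f))$. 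Since $\talpha_g=\alpha_g*\theta_g*\theta_{-g}$ acts as $\theta_{-g}$ on the third free factor, $\talpha_g(\iota_2(f))=\iota_2(\theta_{-g}(f))$ remains in $\iota_2(M_2)$, and it is still centered because $\tau$ is translation-invariant. So after expansion every term is an alternating product of centered elements in the original free factors $\iota_0(M_0),\iota_1(M_1),\iota_2(M_2)$ — possibly after absorbing the scalars $S^*S=1$, $S^*T_g=0$, $T_g^*T_h=\delta_{g,h}$, which collapse cross terms — and freeness of the original factors gives $\psi=0$ on each such alternating product, hence on the whole sum.

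The main obstacle, and the step deserving the most care, is controlling the reduction of the expanded words to genuinely \emph{alternating, centered} words in the three original free factors. When an element of $\iota_2(M_2)$ (coming out of $\rho_1$) is followed by an element of $\rho_2(M_2)$, the latter has summands containing $\iota_2(\theta_{-g}(f))$, so two $\iota_2(M_2)$-factors become adjacent and must be multiplied within $M_2$; the product of two centered elements need not be centered, so one must split off its $\psi_2$-expectation and re-center, producing a scalar-times-shorter-word plus a centered remainder, and then induct on word length. Similarly adjacent $M_0$-elements arising from the intertwiners $S,T_g$ and from $\rho_0$ must be merged inside $M_0$ and re-centered. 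I expect the cleanest route is an induction on the total length of the alternating $\rho$-word, with the base case handled by the state-preservation identities $\psi\circ\rho_i=\psi_i$ and the inductive step handled by the center-and-absorb maneuver together with the orthonormality relations $S^*S=1$, $S^*T_g=0$, $T_g^*T_h=\delta_{g,h}1$, which guarantee that the $M_0$-valued coefficients multiply up cleanly and that no unexpected non-centered survivors appear. One should also record, as a preliminary remark, that $S,T_g\in\iota_0(M_0)$ so that all manipulations with them stay inside the zeroth free factor; this is what makes the argument go through and is presumably why the construction was arranged this way.
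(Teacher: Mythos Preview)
Your overall plan is the same as the paper's: expand the $\rho_i$-factors into products of elements of the original free factors $M_0,M_1,M_2$, merge adjacent same-factor pieces, re-center, and reduce to alternating centered words in the original free product. Two points, however, need sharpening before the argument goes through.

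First, the centering of the $M_0$-blocks that arise is not a consequence of the Cuntz relations or of $\psi\circ\rho_i=\psi_i$ alone. The relevant facts are $\psi_0(S^*\rho(x)S)=\psi_0(T_g^*\rho(x)T_g)=\psi_0(\rho(x)T_g)=\psi_0(T_g^*\rho(x))=0$ for $x\in\ker\psi_0$, and these come from the KMS property of $\varphi$ together with the $\varphi$-preserving conditional expectation $E_\rho(y)=\rho(S^*\rho(y)S)$ onto $\rho(M)$; for instance $\psi_0(T_g^*\rho(x)T_g)=d\,\psi_0(\rho(x)T_gT_g^*)=d\,\psi_0(\rho(x)E_\rho(T_gT_g^*))=\psi_0(\rho(x))=0$. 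Your appeal to ``state-preservation identities'' and the orthonormality $T_g^*T_h=\delta_{g,h}$ does not yield these, because the middle $\rho(x)$ prevents the Cuntz relations from collapsing anything. The paper uses these KMS computations to identify exactly which $M_0$-blocks are \emph{not} automatically centered (namely $T_g^*\rho(x)S$, $S^*\rho(x)T_g$, and the boundary pieces $\rho(x)S$, $S^*\rho(x)$), and only those need to be handled by the re-centering maneuver.

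Second, induction on the length $m$ of the alternating $\rho$-word does not terminate as stated: when you re-center a merged $M_0$-block, the centered remainder has the same $\rho$-word length. The paper instead expands once and for all into a class $\mathcal W$ of words in the original factors, and inducts on the integer $l(z)$ that counts the occurrences of the four non-centered $M_0$-block types above. Re-centering a single such block drops $l$ by one on the centered piece, and on the scalar piece the two neighbors (which lie in $\ker\psi_1$ and $\ker\psi_2$ respectively, hence in different factors) stay alternating with $l$ again strictly smaller. That is the well-founded descent you need; your ``shorter word'' heuristic is correct only if the inductive parameter is $l$ (or an equivalent count), not $m$.
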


\begin{proof}
It suffices to show the free independence of the three sets:
$$\rho_0(M_0),\; M_2,\;SM_1S^*+\sum_{g\in G}T_gM_2T_g^*.$$
Note that we have $S,T_g\in \ker \psi_0$. 
Let $x\in \ker \psi_0$. 
The KMS condition of $\psi_0$ implies 
$$\psi_0(S^*\rho(x)S)=d^2\psi_0(\rho(x)SS^*)=d^2\psi_0(\rho(x)E_\rho(SS^*))=\psi_0(\rho(x))=0,$$
$$\psi_0(T_g^*\rho(x)T_g)=d\psi_0(\rho(x)T_gT_g^*)=d\psi_0(\rho(x)E_\rho(T_gT_g^*))=\psi_0(\rho(x))=0.$$
Since $E_\rho(T_g)=0$, we also have $\psi_0(\rho(x)T_g)=\psi_0(T_g^*\rho(x))=0$. 
Using these conditions, we claim that any alternative word of $\rho_0(\ker \psi_0)$ and $\rho_2(\ker \psi_2)$ 
is a linear combination of words of the form $y_1y_2\cdots y_n$ such that 
$y_i\in \ker\psi_{j(i)}$ with $j(i)\neq j(i+1)$ and $j(1),j(n)\neq 2$. 
It is clear that the claim implies the lemma. 

We denote by $\cW$ the set of words $z=z_1z_2\cdots z_n$ with $z_i\in M_{j(i)}$ and $j(i+1)\neq j(i)$ 
satisfying the following properties. 
\begin{itemize}
\item[(1)] when $j(i)=1,2$, we have $z_i\in \ker \psi_{j(i)}$,
\item[(2)] when $j(i)=0$ and $i\neq 0,n$, the element $z_i\in M_0$ belongs to either of the following set: 
$\ker\psi_0$, $T_g^*\rho_0(\ker\psi_0)S$, $S^*\rho_0(\ker\psi_0)T_g$. 
\item[(3)] $z_1$ belongs to either of the following set: $\ker\psi_1$, $\rho_0(\ker \psi_0)$, $\rho_0(\ker\psi_0)S$. 
\item[(4)] $z_n$ belongs to either of the following set: $\ker\psi_1$, $\rho_0(\ker \psi_0)$, $S^*\rho_0(\ker\psi_0)$. 
\end{itemize}
We define $l(z)$ by the number of $1\leq i\leq n$ with 
\begin{equation}\label{word}
z_i\in \rho_0(\ker\psi_0)S\cup S^*\rho_0(\ker\psi_0)\cup\bigcup_{g\in G}T_g^*\rho_0(\ker\psi_0)S \cup 
\bigcup_{g\in G}S^*\rho_0(\ker\psi_0)T_g.\end{equation}
Note that any alternative word of $\rho_0(\ker \psi_0)$ and $\rho_2(\ker \psi_2)$ is a linear combination 
of words in $\cW$. 

We prove that any word $z=z_1z_2\cdots z_n\in \cW$ is a linear combination of words of the desired form 
by induction of $l(z)$, which will finish the proof. 
When $l=0$, the word $z$ is of the desired form. 
Assume that the statement holds for $l=L$, and assume $l(z)=L+1$. 
Then there exists $i$ with Eq.(\ref{word}). 
Assume $i=1$ and $z_1\in \rho_0(\ker\psi_0)S$ first. 
Then $z_2\in \ker\psi_1$, and 
$$z_1z_2\cdots z_n=\overset{\circ}{z_1}z_2\cdots z_n+\psi_0(z_1)z_2\cdots z_n,$$
where $\overset{\circ}{z_1}=z_1-\psi_0(z_1)$. 
Applying the induction hypothesis to $\overset{\circ}{z_1}z_2\cdots z_n$ and $z_2\cdots z_n$, 
we see that $z$ is a linear combination of words of the desired form. 
The case with $i=n$ can be handled in a similar way. 
Assume that $1<i<n$ and $z_i\in T_g^*\rho_0(\ker \psi_0)S$. 
Then $z_{i-1}\in \ker\psi_2$ and $z_{i+1}\in\ker\psi_1$, and 
$$z_1z_2\cdots z_n=z_1\cdots z_{i-1}\overset{\circ}{z_i}z_{i+1}\cdots z_n
+\psi_0(z_i)z_1\cdots z_{i-1}z_{i+1}\cdots z_n.$$
Applying the induction hypothesis to $z_1\cdots z_{i-1}\overset{\circ}{z_i}z_{i+1}\cdots z_n$ and 
$z_1\cdots z_{i-1}z_{i+1}\cdots z_n$, we see that $z$ is a linear combination of words of the desired form.  
The case with $z_i\in S^*\rho_0(\ker \psi_0)T_g$ can be handled in the same way. 
\end{proof}

Thanks to the above lemma, we can defined an endomorphism $\trho\in \End(\tilde{M})$ extending 
$\rho_i$ for $i=0,1,2$. 
Thanks to \cite[Corollary 3.2]{U11}, we have $\trho(\tilde{M})'\cap \tilde{M}=\C$. 
 
\begin{theorem} With the above notation, we have $\talpha_g\circ \trho=\trho\circ \talpha_{-g}$ and 
$$\trho^2(x)=SxS^*+\sum_{g\in G}T_g\talpha_g\circ \trho(x)T_g^*,\quad \forall x\in \tilde{M}.$$
\end{theorem}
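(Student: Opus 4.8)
The plan is to verify the two displayed identities separately, using the free-product structure and the fact that $\trho$, $\talpha$ are determined by their restrictions to the three free factors $M_0,M_1,M_2$. First I would check the intertwining relation $\talpha_g\circ\trho=\trho\circ\talpha_{-g}$ by testing it on each $\iota_i(M_i)$. On $\iota_0(M_0)=M$ it reduces to the relation $\alpha_g\circ\rho=\rho\circ\alpha_{-g}$ already established in Section \ref{Reconstruction}, together with the compatibility of $\talpha_g$ with $S$ and $T_g$: since $\talpha_g(S)=S$, $\talpha_g(T_h)=\epsilon_g(h)T_{h+2g}$ (these follow from $\alpha_g(S)=S$, $\alpha_g(T_h)=\epsilon_g(h)T_{h+2g}$ on $M_0$, and $\theta$ acting trivially on the distinguished generators), the formulas defining $\rho_1,\rho_2$ transform correctly. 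On $\iota_1(M_1)$ we have $\trho(\iota_1(f))=\iota_2(f)$, $\talpha_g$ acts as $\theta_g$ on $M_1$ and as $\theta_{-g}$ on $M_2$, so both sides send $\iota_1(f)$ to $\iota_2(\theta_{-g}(f))$. On $\iota_2(M_2)$ one unwinds the definition of $\rho_2$ and uses $\talpha_g(T_h)=\epsilon_g(h)T_{h+2g}$ plus the cocycle identity \eqref{cocycle} together with $\talpha_g\talpha_h=\talpha_{g+h}$ to match the sum term by term after reindexing $h\mapsto h-2g$.

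Next I would prove the $\trho^2$ formula, again factor by factor. The right-hand side $\Phi(x):=SxS^*+\sum_{g\in G}T_g\talpha_g\trho(x)T_g^*$ is visibly a normal $*$-homomorphism of $\tilde M$ into $\tilde M$ (because $\{S\}\cup\{T_g\}$ satisfy the Cuntz relations and $\talpha_g\trho$ are endomorphisms), so it suffices to check $\Phi=\trho^2$ on each of the three free factors. On $\iota_0(M_0)$ this is exactly the content of the Theorem "$S\in(\id,\rho^2)$, $T_g\in(\alpha_g\rho,\rho^2)$" proved in Section \ref{Reconstruction}: there one shows $\rho^2(x)=SxS^*+\sum_g T_g\alpha_g\rho(x)T_g^*$ for $x\in M$, and $\talpha_g$ restricts to $\alpha_g$ on $M_0$. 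On $\iota_2(M_2)$ one computes $\trho^2(\iota_2(f))=\trho(\rho_2(\iota_2(f)))=\trho\big(S\iota_1(f)S^*+\sum_g T_g\talpha_g(\iota_2(f))T_g^*\big)$; since $\trho(S)$ and $\trho(T_g)$ are the images of $S,T_g$ under $\rho_0$ and $\trho(\iota_1(f))=\iota_2(f)$, $\trho\talpha_g(\iota_2(f))=\talpha_g\trho(\iota_2(f))$ by the first part, this must be reconciled with $\Phi(\iota_2(f))=S\iota_2(f)S^*+\sum_g T_g\talpha_g\trho(\iota_2(f))T_g^*$. The match here will use the second identity of the $\rho$-formulas in Lemma \ref{ST}, i.e. that $\rho$ on $\{S,T_g\}$ is chosen precisely so that $\rho_0$ interacts with the Cuntz generators the right way; concretely $\rho(S)=\frac1d S+\frac1{\sqrt d}\sum_g T_gT_g$ and the $\alpha_g\rho(T_g)$ formula. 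On $\iota_1(M_1)$: $\trho^2(\iota_1(f))=\trho(\iota_2(f))=\rho_2(\iota_2(f))=S\iota_1(f)S^*+\sum_g T_g\talpha_g\trho(\iota_1(f))T_g^*=\Phi(\iota_1(f))$ directly, since $\talpha_g\trho(\iota_1(f))=\talpha_g(\iota_2(f))$.

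The main obstacle I anticipate is the $\iota_2(M_2)$ computation in the $\trho^2$ identity: there the two expressions for $\trho^2(\iota_2(f))$ — one obtained by applying $\trho$ to the defining formula for $\rho_2$, the other the claimed $\Phi(\iota_2(f))$ — are genuinely different-looking triple sums in $S,T_g$, and collapsing one onto the other requires invoking the Cuntz relations $S^*S=1$, $T_g^*T_h=\delta_{g,h}$, $SS^*+\sum_g T_gT_g^*=1$ in combination with the explicit values $\rho(S),\alpha_g\rho(T_g)$ from Lemma \ref{ST} and the polynomial relations (orthogonality \eqref{O1}, \eqref{O2}) among the $A_g(h,k)$. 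This is essentially the same bookkeeping that appears in the proof that $T_g\in(\alpha_g\rho,\rho^2)$, and I would organize it so as to quote that proof rather than redo it: indeed, once one knows $\Phi$ and $\trho^2$ agree on $M_0$ and one knows they are both normal homomorphisms agreeing on the generators $S$, $T_g$ (which lie in $M_0$) and on $M_1$, the equality on $M_2$ follows since $\rho_2(M_2)$ is generated as a von Neumann algebra by $S M_1 S^*$ and the $T_g \talpha_g(M_2) T_g^*$, all of which are built from pieces on which agreement is already known — so the real work is just assembling these observations carefully, and no new estimate beyond what Section \ref{Reconstruction} provides is needed.
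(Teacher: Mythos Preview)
Your overall strategy — verify both identities free-factor by free-factor, with $M_0$ reducing to Section~\ref{Reconstruction} and $M_1$ being immediate — is exactly the paper's approach, and your diagnosis that the only nontrivial check is $\trho^2$ on $\iota_2(M_2)$ is correct.

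There is, however, a genuine gap in the final paragraph. The suggested shortcut (``equality on $M_2$ follows since $\rho_2(M_2)$ is generated \dots\ by pieces on which agreement is already known'') does not go through. The expression $\rho_2(\iota_2(f))$ still contains the terms $T_g\,\talpha_g(\iota_2(f))\,T_g^*$, and $\talpha_g(\iota_2(f))$ lies again in $\iota_2(M_2)$; so when you apply $\trho$ once more you are back to evaluating $\trho$ on $M_2$, and there is no induction or reduction to $M_0\cup M_1$. Concretely, after one unfolding you must compare
\[
\rho(S)\,\iota_2(f)\,\rho(S)^* + \sum_{g} \rho(T_g)\,\rho_2\big(\iota_2(\theta_{-g}f)\big)\,\rho(T_g)^*
\quad\text{with}\quad
S\,\iota_2(f)\,S^* + \sum_{g} T_g\,\talpha_g\big(\rho_2(\iota_2(f))\big)\,T_g^*,
\]
and these are genuinely different triple sums that only collapse after inserting the explicit formulas from Lemma~\ref{ST} and using, in addition to the orthogonality relations \eqref{O1}--\eqref{O2}, the shift relations \eqref{2hshift} and \eqref{hkshift} (the paper needs them to turn $\sum_g \eta_g A_g(h-g,0)$ and $\sum_g A_g(h-g,k)\overline{A_g(l-g,k)}$ into expressions independent of the summation base point before \eqref{O1}, \eqref{O2} can be applied). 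This is a direct computation, not a formal consequence of agreement on $M_0$ and $M_1$; the paper carries it out line by line. A minor slip along the way: you invoke ``$\trho\talpha_g=\talpha_g\trho$ by the first part,'' but the first part gives $\trho\talpha_g=\talpha_{-g}\trho$, not commutation.
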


\begin{proof} It is easy to show the first relation on $\tilde{M}$, and  the second relation on $M_0$ and $M_1$. 
For $x\in \ell^\infty(G)$, 
\begin{align*}
\lefteqn{\trho^2(\iota_2(x))=\trho(S\iota_1(x)S^*+\sum_{g\in G}T_g\iota_2(\theta_{-g}(x))T_g^*)} \\
 &=\rho(S)(\iota_2(x))\rho(S)^*+\sum_{g\in G}
 \rho(T_g)(S\iota_1(\theta_{-g}(x))S^*+\sum_{h\in G}T_h\iota_2(\theta_{-g-h}(x))T_h^*)\rho(T_g)^* \\
 &=\rho(S)(\iota_2(x))\rho(S)^*+
 \sum_{g\in G}\rho(T_g)S\iota_1(\theta_{-g}(x))S^*\rho(T_g^*)\\
 &+\sum_{g,h\in G}\rho(T_g)T_h\iota_2(\theta_{-g-h}(x))T_h^*\rho(T_g^*). 
\end{align*}
The first term is 
\begin{align*}
\lefteqn{\frac{1}{d^2}S\iota_2(x)S^*+\frac{1}{d\sqrt{d}}\sum_{g\in G}S\iota_2(x)T_g^*T_g^*} \\
 &+\frac{1}{d\sqrt{d}}\sum_{g\in G}T_gT_g\iota_2(x)S^*+\frac{1}{d^2}\sum_{g,h}T_gT_g\iota_2(x)T_h^*T_h^*.
\end{align*}
The second term is 
$$\sum_{g\in G}\alpha_g\rho(T_g)S\iota_1(\theta_g(x))S^*\alpha_g\rho(T_g^*)=
\sum_{g\in G}T_gS\iota_1(\theta_g(x))S^*T_g^*.$$
The third term is 
\begin{align*}
\lefteqn{\sum_{g,k}\alpha_g\rho(T_g)T_{g+k}\iota_2(\theta_{-k})T_{g+k}^*\alpha_g\rho(T_g^*)} \\
 &=\sum_{g,k}(\frac{\eta_g^{-1}\delta_{k,0}}{\sqrt{d}}S+\sum_{h\in H}A_g(h,k)T_{g+h}T_{g+h+k})\iota_2(\theta_{-k}(x)) \\
 &\times(\frac{\eta_g\delta_{k,0}}{\sqrt{d}}S^*+\sum_{l\in G}\overline{A_g(l,k)}T_{g+l+k}^*T_{g+l}^*) \\
 &=\frac{n}{d}S\iota_2(x)S^*+\frac{1}{\sqrt{d}}\sum_{g,h\in G}\eta_gA_g(h,0)T_{g+h}T_{g+h}\iota_2(x)S^*\\
 &+\frac{1}{\sqrt{d}}\sum_{g,l\in G}\overline{\eta_gA_g(l,0)}S\iota_2(x)T_{g+l}^*T_{g+l}^*\\
 &+\sum_{g,h,k,l}A_g(h,k)\overline{A_g(l,k)}T_{g+h}T_{g+h+k}\iota_2(\theta_{-k}(x))T_{g+l+k}^*T_{g+l}^*\\
 &=\frac{n}{d}S\iota_2(x)S^*+\frac{1}{\sqrt{d}}\sum_{g,h\in G}\eta_gA_g(h-g,0)T_hT_h\iota_2(x)S^*\\
 &+\frac{1}{\sqrt{d}}\sum_{g,l\in G}\overline{\eta_gA_g(l-g,0)}S\iota_2(x)T_l^*T_l^*\\
 &+\sum_{g,h,k,l}A_g(h-g,k)\overline{A_g(l-g,k)}T_hT_{h+k}\iota_2(\theta_{-k}(x))T_{l+k}^*T_l^*\\
\end{align*}
Thanks to Eq.(\ref{hkshift}) and Eq.(\ref{2hshift}), we have 
$$\sum_{g\in G}\eta_gA_g(g-h,0)=\eta_h\sum_{g\in G}A_h(h-g,0)=-\frac{1}{d},$$
\begin{align*}
\lefteqn{\sum_{g\in G}A_g(h-g,k)\overline{A_g(l-g,k)}} \\
 &=\sum_{g\in G}A_h(h-g,k)\overline{A_l(l-g,k)}
\eta_l\eta_{l+k}\overline{\eta_h\eta_{h+k}}\epsilon_h(-g)\epsilon_h(k-g)\epsilon_l(-g)\epsilon_l(k-g) \\
 &=\sum_{g\in G}A_{-h}(h-g,k)\overline{A_{-l}(l-g,k)}
\eta_l\eta_{l+k}\overline{\eta_h\eta_{h+k}}\epsilon_h(-h)\epsilon_h(k-h)\epsilon_l(-l)\epsilon_l(k-l) \\
 &=(\delta_{h,l}-\frac{\eta_{-l}\eta_{-h}^{-1}}{d})
 \eta_l\eta_{l+k}\overline{\eta_h\eta_{h+k}}\epsilon_h(-h)\epsilon_h(k-h)\epsilon_l(-l)\epsilon_l(k-l) \\
 &=\delta_{h,l}-\frac{1}{d}\delta_{k,0}.
\end{align*}
Therefore the third term is equal to 
\begin{align*}
\lefteqn{\frac{n}{d}S\iota_2(x)S^*-\frac{1}{d\sqrt{d}}\sum_{h\in G}T_hT_h\iota_2(x)S^*
-\frac{1}{d\sqrt{d}}\sum_{l\in G}S\iota_2(x)T_l^*T_l^*} \\
 &+ \sum_{h,k\in G}T_hT_{h+k}\iota_2(\theta_{-k}(x))T_{h+k}^*T_h^*
 -\frac{1}{d}\sum_{h,l\in G}T_hT_h\iota_2(x)T_l^*T_l^*,
 \end{align*}
and 
\begin{align*}
\lefteqn{\trho^2(\iota_2(x))} \\
 &=S\iota_2(x)S^*+\sum_{h\in G}T_hS\iota_1(\theta_h(x))S^*T_h^*+\sum_{h,k\in G}T_hT_{h+k}\iota_2(\theta_{-k}(x))T_{h+k}^*T_h^*\\
 &=S\iota_2(x)S^*+\sum_{h\in G}T_h\talpha_h\trho(\iota_2(x))T_h^*. 
\end{align*}
\end{proof}

\end{document}